\patchcmd{\thebibliography}{\section*}{\section}{}{}
\newcommand{\cedge}{{\,\leftrightsquigarrow\,}}
\newcommand{\edge}{{\;\leftrightarrow\;}}
\newcommand{\spin}{{\bs \sigma}}
\renewcommand{\Re}{{\rm Re}}
\renewcommand{\Im}{{\rm Im}}
\newcommand{\cc}{{^\circ}}
\newcommand{\1}{\mathds 1}
\newcommand{\C}{{\mathscr C}}
\newcommand{\B}{\mathscr B}
\newcommand{\T}{{\Bbb T}}
\newcommand{\vep}{{\varepsilon}}
\newcommand{\rest}{{\upharpoonright}}
\newcommand{\dint}{{\int\!\!\!\int}}
\newcommand{\ind}{{\perp\!\!\!\perp}}
\newcommand{\supp}{{\rm supp}}
\newcommand{\bs}{\boldsymbol}
\newcommand{\ms}{\mathscr}
\renewcommand{\P}{{\mathbb P}}
\newcommand{\E}{{\mathbb E}}
\newcommand{\R}{{\Bbb R}}
\renewcommand{\i}{{\mathtt  i}}
\newcommand{\defeq}{\stackrel{\rm def}{=}}
\renewenvironment{proof}[1][\proofname]{\noindent {\bfseries #1.}\;}{\hfill\ensuremath{\blacksquare}\\}
\renewcommand{\d}{{\mathrm d}}
\newcommand{\e}{{\rm e}}
\newcommand{\I}{{\rm I}}
\newcommand{\II}{{\rm II}}
\newcommand{\III}{{\rm III}}
\newcommand{\ovarphi}{\overline{\varphi}}
\newcommand{\oovarphi}{\overline{\overline{\varphi}}}
\newcommand{\hvarphi}{\widehat{\varphi}}
\newcommand{\eqspace}{\quad\;}
\renewcommand{\u}{{\mathbf u}}
\newcommand{\bi}{{\mathbf i}}
\newcommand{\bj}{{\mathbf j}}
\newcommand{\N}{{\mathbf N}}
\newcommand{\dgamma}{{\dot{\gamma}}}
\newcommand{\lv}{\Lambda_\vep}
\newcommand{\two}{{\sqrt{2}}}
\newcommand{\EM}{\gamma_{\mathsf E\mathsf M}}
\newcommand{\less}{\lesssim}
\newcommand{\more}{\gtrsim}
\newcommand{\BES}{{\rm BES}}
\newcommand{\ov}{{\overline{\vep}}}
\newcommand{\oF}{\overline{F}}
\newcommand{\wF}{\widehat{F}}
\newcommand{\s}{{\mathfrak s}}
\renewcommand{\S}{{\mathfrak S}}
\newcommand{\oS}{{\overline{\S}\mbox{}^\beta_\vep}}
\newcommand{\ooS}{{\overline{\overline{\mathfrak S}}\mbox{}^\beta_\vep}}
\newcommand{\ooA}{{\overline{\overline{A}}}}
\newcommand{\uvep}{{}^{\,\vep}}
\newcommand{\ovp}{{\ov_{\ref{oovarphi:positive}}}}
\newcommand{\oevarphi}{{\overline{\mathcal E(\hvarphi)}  }}
\newtheoremstyle{slantthm}{10pt}{10pt}{\slshape}{}{\bfseries}{}{.5em}{\thmname{#1}\thmnumber{ #2}\thmnote{ (#3)}.}
\newtheoremstyle{slantrmk}{10pt}{10pt}{\rmfamily}{}{\bfseries}{}{.5em}{\thmname{#1}\thmnumber{ #2}\thmnote{ (#3)}.}
\begin{document}
\theoremstyle{slantthm}
\newtheorem{thm}{Theorem}[section]
\newtheorem{prop}[thm]{Proposition}
\newtheorem{lem}[thm]{Lemma}
\newtheorem{rlem}[thm]{\textcolor{red}{Lemma}}
\newtheorem{cor}[thm]{Corollary}
\newtheorem{disc}[thm]{Discussion}
\newtheorem{conj}[thm]{Conjecture}
\newtheorem*{thmm}{Theorem}

\theoremstyle{slantrmk}
\newtheorem*{notation}{Notation}
\newtheorem{ass}[thm]{Assumption}
\newtheorem{cond}[thm]{Condition}
\newtheorem{rmk}[thm]{Remark}
\newtheorem{defi}[thm]{Definition}
\newtheorem{eg}[thm]{Example}
\newtheorem{que}[thm]{Question}
\numberwithin{equation}{section}
\newtheorem{quest}[thm]{Quest}
\newtheorem{prob}[thm]{Problem}
\newtheorem{convention}[thm]{Convention}

\title{\vspace{-.5cm}
\bf The critical 2D delta-Bose gas as mixed-order asymptotics of planar Brownian motion\footnote{Support from the Natural Science and Engineering Research Council of Canada is gratefully acknowledged.}}
\author{Yu-Ting Chen\footnote{Department of Mathematics and Statistics, University of Victoria, British Columbia, Canada.}\,\,\footnote{Email: \url{chenyuting@uvic.ca}}  }
\date{\today\vspace{-.5cm}\\}

\maketitle
\abstract{We consider the 2D delta-Bose gas by a smooth mollification of the delta potential, where the coupling constant is in the critical window. The main result proves that for two particles, the approximate semigroups on $\mathscr B_b(\Bbb R^2)$ for the Schr\"odinger operator with singular interaction at the origin converge pointwise in the initial condition. This convergence extends earlier functional analytic results for the convergence in the $L_2$-norm resolvent sense. The central methods introduced here apply the excursion theory of the 2D Bessel process and the ergodicity of the winding number of planar Brownian motion.  The limiting semigroup thus shows both the Kallianpur--Robbins law for additive functionals of planar Brownian motion and Kasahara's second-order law for the fluctuations. As an application, the mode of convergence is extended to the $N$-particle delta-Bose gas for all $N\geq 3$.\vspace{.2cm}

\noindent \emph{Keywords}: Delta-Bose gas; Schr\"odinger operators; additive functionals; Bessel processes; planar Brownian motion; local times and excursion theory;  
stochastic heat equation; KPZ equation.\smallskip 

\noindent \emph{Mathematics Subject Classification (2000)}: 60J55, 60J65, 60H30
}

\setcounter{tocdepth}{2}

\tableofcontents

\section{Introduction}
In this work, we consider the quantum statistical mechanical system for two particles in the plane subject to attractive interactions. The Hamiltonian is the delta-Bose gas formally defined by
\begin{align}\label{def:deltaBose}
-\Delta_{x_1}-\Delta_{x_2}-\Lambda\delta(x_2-x_1).
\end{align}
where $\Delta_{x_j}$ is the two-dimensional Laplacian with respect to $x_j\in \R^2$, $\Lambda>0$ is a coupling constant for the interaction between two particles at $x_1,x_2$, and $\delta$ is the Dirac delta function. The construction of the semigroup solution by short-range interactions has been studied extensively by $L_2$-functional analytic methods and extends to multi-particles \cite{AGHH:2D, AGHH:Solvable, DFT:Schrodinger, DR:Schrodinger, GQT}. Our interest in this model arises from the probabilistic counterparts of its projection to $x_2-x_1$. By the Feynman--Kac formula, the corresponding semigroup can be formally presented as the expected exponential functional of the local time of planar Brownian motion. Additionally, in terms of the Kardar--Parisi--Zhang equation for interface growth, the semigroup characterizes the second moment of the stochastic heat equation in two dimensions. Based on these relations, our goal in this paper is to investigate the construction of the semigroup on $\B_b(\R^2)$ by a smooth mollification of the delta potential. The convergence holds pointwise with respect to the initial condition.

To fix ideas, we begin with the definition of the approximate Hamiltonians for \eqref{def:deltaBose} and recall the earlier results by $L_2$-methods. Fix a probability density $\phi\in \C_c(\R^2)$ and a constant $\lambda\in\R$. Define \begin{align}
\phi_\vep(x)= \vep^{-2}\phi(\vep^{-1}x),\quad 
\lv =\lv(\lambda)=\frac{2\pi}{\log \vep^{-1}}+\frac{2\pi \lambda}{(\log \vep^{-1})^2}.\label{def:J+lv}
\end{align}
As an approximation of the interaction part of the delta-Bose Hamiltonian in \eqref{def:deltaBose} for $x_2-x_1$, set
\begin{align}\label{def:Deltabeta}
H_\vep\defeq-\Delta_x -\Lambda_\vep \phi_\vep(x),
\end{align}
where $\Delta_x$ is the two-dimensional Laplacian. Then $H_\vep^\lambda$ converges to some $H^\lambda$, as self-adjoint operators on $L^2(\R^2)$, in the norm resolvent sense, that is, 
\[
(q I-H_\vep)^{-1}\xrightarrow[\vep \to 0]{} (q I-H)^{-1} \mbox{}, \quad \forall\; q:\Im (q)\neq 0
\]
as a convergence of the $L_2(\R^2)$-operator norms. Hence, for the convergence of semigroups, it holds that  $P^\beta_{\vep;t}=\e^{-tH_\vep}$ converges to $P^\beta_t=\e^{-tH}$ in the operator norm \cite[pp. 284--291]{RS}. 
The kernels  $P^\beta_t(x,y)$ can be characterized explicitly by using the Green function $G_q(x)=\int_0^\infty \e^{-q t}P_{2t}(x)\d t$ associated with the probability density $P_t(x-y)=P_t(x,y)$ of planar Brownian motion and a positive constant $\beta$:
\begin{align}\label{logbeta}
\frac{\log \beta}{2}=-\int_{\R^2}\int_{\R^2}(\log |z-z'|)\phi(z)\phi(z')\d z\d z'+(\log 2+\lambda-\EM),
\end{align}
where $\log$ is the natural logarithm and $\EM=0.57721...$ is the Euler--Mascheroni constant. 
We have
\begin{align}\label{def:Rlambda}
\int_0^\infty\e^{-q t}P^\beta_t(x,z)\d t=G_q(x-z)+\frac{4\pi}{\log (q/\beta)}G_q(x)G_q(z),\quad  \forall\;q:\Re(q)>\beta.
\end{align}
See \cite[Section~2]{AGHH:2D}, \cite[Chapter~I.5, especially pp.102--104]{AGHH:Solvable}, and  \cite[(2.7) and (2.10) for $n=2$ and $\beta=\e^{-\alpha}$]{ABD:Schrodinger} for this convergence result. 

An important application of these convergence results considers connections between the delta-Bose gas and the Kardar--Parisi--Zhang equation \cite{KPZ} in two spatial dimensions for isotropic surface growth models \cite{BC}. By the isotropy,  the Kardar--Parisi--Zhang equation can be viewed in the form of the stochastic heat equation. Moreover, by the Feynman--Kac dual relation, the semigroup solution of the multi-particle delta-Bose gas coincides with the moments of the stochastic heat equation. On the other hand, in stark contrast to the case of one dimension, the stochastic heat equation in two dimensions does not allow for construction by It\^{o}'s theory due to the singularity of paths from the noise. Recently, there have been several methods for the construction by using smooth mollifications of the noise and appropriate normalizations. Under these settings, these results prove a sharp transition that starts from Gaussian limits in the subcritical case. Convergences in the critical regime have been obtained in \cite{CSZ,Feng,GQT}. We remark that  the above Hamiltonians and the multi-particle generalizations are widely considered in one spatial dimension for the repulsive case $\Lambda<0$, known
as the Lieb--Liniger model  \cite{LL:Bose,L:Bose}. In this case, the model is exactly solvable by the Bethe ansatz,

\begin{thm}[Main result]\label{thm:main1}
$P^\beta_{\vep;t}f(x)\to P^\beta_tf(x)$ for all $t\in [0,\infty)$, $f\in\B_b(\R^2)$ and $ x\neq 0$.
\end{thm}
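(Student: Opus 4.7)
My plan is to establish the pointwise convergence through the Feynman--Kac representation
\begin{align*}
P^\beta_{\vep;t}f(x) = \E_x\bigl[\exp(\Lambda_\vep A^\vep_t)f(B_t)\bigr],\quad A^\vep_t=\int_0^t\phi_\vep(B_s)\,\d s,
\end{align*}
where $B$ is planar Brownian motion under $\P_x$. Since $\Lambda_\vep = 2\pi/\log\vep^{-1} + 2\pi\lambda/(\log\vep^{-1})^2$ is precisely the critical scaling, the product $\Lambda_\vep A^\vep_t$ should remain of constant order as $\vep\to 0$, and the task reduces to identifying the joint limit of $(\Lambda_\vep A^\vep_t, B_t)$ under $\P_x$ for $x\neq 0$ in a mode strong enough to justify passing to the limit inside the exponential, and then matching this against the resolvent characterisation \eqref{def:Rlambda}.

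\textbf{Excursion decomposition.} Since $\supp \phi_\vep$ shrinks to the origin, I would use the skew-product representation of planar Brownian motion, in which the modulus $R=|B|$ is a $\BES(2)$ process and the angle is obtained by running an independent one-dimensional Brownian motion with the clock $T_s = \int_0^s R_u^{-2}\,\d u$. Fixing an intermediate level $a>0$, I would decompose the path of $R$ into excursions away from $a$, separating those that descend into the shrinking support of $\phi_\vep$ from those that do not. Only the former contribute to $A^\vep_t$, and by It\^o's excursion theory for $\BES(2)$ these contributions decompose as a Poisson-point process of approximately i.i.d.\ terms. Within each such excursion, the integral $\int \phi_\vep(B_s)\,\d s$ can be analysed via the potential density of a $\BES(2)$ excursion dipping to $\vep$, while the non-radial angular factor averages to the total mass $\int\phi = 1$ by the ergodicity of the winding number on the logarithmic time scale.

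\textbf{Identification of the limit and matching with the resolvent.} Combining the number of contributing excursions (of order $\log\vep^{-1}$) with the mean contribution of each (of order $(\log\vep^{-1})^{-1}$) produces the Kallianpur--Robbins first-order law for $\Lambda_\vep A^\vep_t$; the subleading term $2\pi\lambda/(\log\vep^{-1})^2$ in $\Lambda_\vep$, combined with the explicit logarithmic integral defining $\log\beta$ in \eqref{logbeta}, furnishes precisely the Kasahara second-order correction that accounts for the $\beta$-dependence of the limit. To identify the limit as $P^\beta_t f(x)$, I would take the Laplace transform in $t$ of $\E_x[\exp(\Lambda_\vep A^\vep_t)f(B_t)]$, match the result against the explicit resolvent formula \eqref{def:Rlambda}, and then recover the semigroup convergence by continuity in $t$ together with a standard Laplace-inversion argument.

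\textbf{Main obstacle.} The principal difficulty is that convergence in distribution of $\Lambda_\vep A^\vep_t$ alone does not imply convergence of $\E_x[\exp(\Lambda_\vep A^\vep_t)f(B_t)]$, because the exponential functional is not uniformly bounded. The hard step is a uniform exponential-integrability bound of the form $\sup_\vep \E_x[\exp(c\Lambda_\vep A^\vep_t)]<\infty$ for some $c>1$, which is a Khasminskii-type condition \emph{exactly} at criticality; I expect this to require sharp single-excursion tail estimates together with the near-independence supplied by the excursion Poisson structure. A secondary technical point is control of the path segment before the first small excursion and of the incomplete excursion covering time $t$; the hypothesis $x\neq 0$ is what makes the initial segment macroscopic rather than degenerate and is essential here.
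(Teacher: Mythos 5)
Your overall framework---Feynman--Kac, skew-product decomposition, excursion theory for $\BES^2$, and Laplace transform matching against \eqref{def:Rlambda}---tracks the paper's strategy, and you correctly identify that the difficulty is passing from a distributional law to convergence of the Feynman--Kac expectation. But the specific mechanism you propose for surmounting it would fail, and this is precisely where the real work lies.

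\textbf{The UI bound $\sup_\vep \E_x[\exp(c\Lambda_\vep A^\vep_t)]<\infty$ for some $c>1$ is false.} Replacing $\Lambda_\vep$ by $c\Lambda_\vep$ with $c>1$ changes the leading term to $2\pi c/\log\vep^{-1}$ with $c>1$, which is strictly supercritical---not merely a larger $\lambda$ within the critical window. The Kallianpur--Robbins limit is a standard exponential $E$, and $\E[\e^{cE}]=+\infty$ already for $c=1$; there is no uniformly integrable majorant of the form you suggest. This is exactly the obstruction the paper highlights (``the exponential random variable from the Kallianpur--Robbins law has an exponential moment that blows up at criticality''). The paper replaces the UI + weak-convergence route with \emph{exact} identities: a-priori bounds on the Laplace transform in $t$ (Proposition~\ref{prop:bounds}), followed by the representation of the Laplace-transformed exponential functional through the excursion measure and the compensation formula --- $\Phi_b(\mu,f)=\Phi_b(\mu)-\Phi_b(\mu)\E_b[\int_0^\infty\e^{-\mu r}f(\rho_r)\E_{\rho_r}[\e^{-\mu T_b+A(T_b)}]\d r]$ as in \eqref{Phiblambda:resolvent} --- whose ingredients are controlled by explicit Bessel-function formulas for exponential functionals with \emph{positive} exponents (Theorem~\ref{thm:expmom1}, Proposition~\ref{prop:expmom2}). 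The ``sharp single-excursion tail estimates'' you anticipate are thus replaced by exact Green-function computations; the relevant object is not the moment of a random variable strictly above criticality but the Laplace transform in time, which one pins down to $4\pi/\log(q/\beta)$.

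\textbf{The angular step is more than averaging the mass.} You propose that ergodicity of the winding number lets the angular factor ``average to $\int\phi=1$.'' That gives the leading term, but the resolvent \eqref{def:Rlambda} depends on the potential $\phi$ through $\log\beta$, which involves the double logarithmic integral $\int\!\!\int\log|z-z'|\,\phi(z)\phi(z')\,\d z\,\d z'$. This second-order information is encoded in the logarithmic-energy correction $\mathcal E(\hvarphi)$ of Proposition~\ref{prop:replaceangle}: one cannot simply substitute the radial average $\overline\varphi$ of the potential, but must compensate by the radialized logarithmic energy of the non-radial part $\hvarphi$ in order to recover the correct $\beta$. Without this your radialization step would give a limit with the wrong constant, i.e.\ the wrong $\beta$. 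So the gap here is not merely a finer estimate but a missing algebraic correction that changes the answer.
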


The proof proceeds with the Feynman--Kac representation of the approximate semigroups as exponential functionals with positive exponents of planar Brownian motion. With the scaling of $\lv$ specified above, the additive functionals approximate the local time at the origin of the planar Brownian motion and are known to converge in distribution to a standard exponential random variable by the Kallianpur--Robbins law. The main character is that despite the polarity of the origin, the coupling constant is capable of inducing non-triviality of the exponential functional in in the limit. 

The present methods are drastically different from the functional analytic methods for the results discussed above in the $L_2$-setting. The basic idea is motivated by Kasahara and Kotani's \cite{KK} process-level approach, which extends the Kallianpur--Robbins law for the convergence of additive functionals of planar Brownian motion \cite{KR1,KR2} and Kasahara's second-order law for the fluctuations \cite{K1,K2}. This approach goes back to \cite{PSV}. Specifically, we use the strong ergodicity of the Brownian angular process for a crucial reduction, showing that the approximate semigroups depend only on the two-dimensional Bessel process. See also \cite{PY,PY2} for extensive discussions of additive functionals of planar Brownian motion. For the approximate semigroups under consideration, the issue arises from using the coupling constants in the critical window. After all, proving the asymptotic behaviour we need is at the level of expectations. The exponential random variable from the Kallianpur--Robbins law has an exponential moment that blows up at criticality. This part of the proof relies heavily on the excursion theory of the two-dimensional Bessel process for precise representations of the exponential functionals by the Green functions of the Bessel process.  In particular, these methods show that the Kallianpur--Robbins law and Kasahara's second-order law are both present in the limiting semigroup. We notice that  \cite{CFH} shows a comparable result in the form of strong approximations at the process level. 

As an application, we extend the mode of convergence in Theorem~\ref{thm:main1} to the multi-particle setting. The limiting semigroup is presented as an infinite series as in the recent work of \cite{GQT} under the $L_2$-weak convergence by functional analytic methods. See also \cite{DFT:Schrodinger,DR:Schrodinger}. The starting point of the present proof begins with a probabilistic counterpart of a series expansion of the approximate semigroup from \cite{GQT}
 by Poisson calculus. The expansion shows that the multi-particle semigroups can be reduced to those for two particles. To pass the limit of the series term by term, we extend some methods from \cite{CSZ} for the case of three particles to the case of $N$-particles for all $N\geq 3$. 

% and \cite{GQT} for multiple particles. To obtain the multi-dimensional generalization of the mode of convergence in Theorem~\ref{thm:main1}, the methods from \cite{CSZ} are suitable, and we extend some of them to pass the limit of the approximate series solutions. 

\medskip

\noindent {\bf Frequently used notation.} $C(T)\in(0,\infty)$ is a constant depending only on $T$ and may change
from inequality to inequality unless otherwise indexed by labels of equations. Other constants and parameters are defined analogously. We write $A\less B$ or
$B\more A$ if $A\leq CB$ for a universal constant $C\in (0,\infty)$. $A\asymp B$ means both $A\less B$ and $B\less A$. For a process $X$, $\E_x=\E^X_x$ or $\E_\nu=\E^X_\nu$ stresses that the initial condition is the point $x$ or obeys the law $\nu$.

\section{Attractive interactions of two particles as excursions}\label{sec:twobody}
Our goal in this section is to prove Theorem~\ref{thm:main1} for a construction of the critical delta-Bose gas semigroup on $\B_b(\R^2)$ in the presence of two particles. Recall that the approximation scheme is via a smooth mollification of the delta potential. We work with the Hamiltonian in \eqref{def:Deltabeta} for the interaction part. The semigroup solution can be characterized as the following Feynman--Kac semigroup:
\begin{align}\label{def:FK}
P^\beta_{\vep;t}f(x)\defeq \E^W_{x/\two}\left[\exp\left\{\lv\int_0^t \phi_\vep(\sqrt{2}W_r)\d r\right\}f(\two W_t)\right],\quad f\in \B_b(\R^2),
\end{align}
for a two-dimensional standard Brownian motion $W$.  Here, $\sqrt{2}W$ is identified as the difference $B^2-B^1$ of two independent planar Brownian motions $B^1$ and $B^2$ under attractive interactions. 

For \eqref{def:FK}, we fix once for all $\ov=\ov(\lambda)\in (0,\e^{-\e^{100}})$ such that  $\lv=\lv(\lambda) \in (0,1)$ and $1+\lambda/\log \vep^{-1}\in (0,2]$ for all $\vep\in (0,\ov]$. Set
\begin{align}\label{def:varphi}
\varphi(x)=\phi(\two x)\quad \&\quad \varphi_\vep(x)=\phi_\vep(\two x)
\end{align}
to circumvent $\sqrt{2}$'s in \eqref{def:FK}, and fix $M_\varphi\in (0,\infty)$ such that $\supp(\varphi)\subseteq \{x;|x|\leq M_\varphi\}$. We write
\[
 A_\vep^{o}(t)\defeq\lv \int_0^t \phi_\vep(\two W_r)\d r=\lv \int_0^t \varphi_\vep(W_r)\d r.
\]

The next proposition is the starting point of our investigation. Write $P_t(x,\d y)=P_t(x,y)\d y$. The family $(P_t)$ is also understood as a semigroup of operators on $\B_b(\R^2)$. 

\begin{prop}\label{prop:FK}
For all $\vep\in (0,\ov)$, $f\in \B_b(\R^2)$, and $x\in \R^2$, it holds that
\begin{align}
\E_{x}[\e^{A^o_\vep(t)}f(W_{t})]&= P_tf(x)+\lv \int_0^t\d s\int_{\R^2} P_{s}(x,\d y) \varphi_\vep( y)\E_{y}[\e^{A^o_\vep(t-s)}f(W_{t-s})]
\label{eq:FK1}\\
\begin{split}\label{eq:FK2}
&= P_tf(x)+\lv \int_0^t \d s\int_{\R^2}P_s(x,\d y) \varphi_\vep( y)P_{t-s}f(y)\\
&\eqspace +\lv ^2\int_0^t\d s \int_{\R^2}P_{s}(x,\d y)  \varphi_\vep( y)\int_0^{t-s}\d \tau \E_y\big[\e^{A^o_{\vep}(\tau)}\varphi_\vep( W_\tau) P_{t-s-\tau}f(W_{\tau})\big].
\end{split}
\end{align}
\end{prop}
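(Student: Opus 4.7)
The plan is to derive both identities from elementary path manipulations of the exponential functional combined with the Markov property of planar Brownian motion. Because $\phi \in C_c(\R^2)$, the mollifier $\varphi_\vep$ is bounded, hence $A^o_\vep(t) \le \lv \|\varphi_\vep\|_\infty \, t$ is uniformly bounded in $\omega$ for each fixed $(\vep,t)$; this guarantees integrability of all quantities in sight and legitimises Fubini throughout.

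For \eqref{eq:FK1}, I would first establish the pathwise identity
\begin{align*}
\e^{A^o_\vep(t)} \;=\; 1 + \lv \int_0^t \varphi_\vep(W_s)\, \e^{A^o_\vep(t)-A^o_\vep(s)}\, \d s,
\end{align*}
obtained by applying the fundamental theorem of calculus to the absolutely continuous function $s \mapsto -\e^{A^o_\vep(t)-A^o_\vep(s)}$, whose a.e.\ derivative equals $\lv \varphi_\vep(W_s)\,\e^{A^o_\vep(t)-A^o_\vep(s)}$. Multiplying by $f(W_t)$, taking $\E_x$, swapping integral and expectation by Fubini, and then applying the Markov property at time $s$ — noting that $A^o_\vep(t)-A^o_\vep(s)$ read off the shifted path $(W_{s+u})_{u\ge 0}$ is exactly $A^o_\vep(t-s)$ computed on that path, and $W_t$ becomes $W_{t-s}$ — yields \eqref{eq:FK1}.

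For \eqref{eq:FK2}, I would iterate once. Applying the fundamental theorem of calculus now to $s \mapsto \e^{A^o_\vep(s)}$ produces the dual pathwise identity
\begin{align*}
\e^{A^o_\vep(u)} \;=\; 1 + \lv \int_0^u \varphi_\vep(W_\tau)\, \e^{A^o_\vep(\tau)}\, \d \tau, \qquad u\ge 0.
\end{align*}
Multiplying by $f(W_u)$, taking $\E_y$, and using the Markov property at the inner time $\tau$ to pull $P_{u-\tau}f$ out gives
\begin{align*}
\E_y\bigl[\e^{A^o_\vep(u)}f(W_u)\bigr] \;=\; P_u f(y) + \lv \int_0^u \E_y\bigl[\e^{A^o_\vep(\tau)}\varphi_\vep(W_\tau) P_{u-\tau}f(W_\tau)\bigr]\,\d \tau.
\end{align*}
Substituting this with $u = t-s$ into the last factor of \eqref{eq:FK1} and relabelling produces \eqref{eq:FK2} line by line.

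The main obstacle is purely bookkeeping: keeping Fubini, the Markov property, and the time-shift interpretation of $A^o_\vep(t)-A^o_\vep(s)$ consistent across two levels of iteration and two distinct differential identities (backward-shift versus forward-accumulation). There is no analytic difficulty, because the singular/critical features of the limiting dynamics have not yet become active at the level of the $\vep$-regularised semigroup — at this stage $\lv \varphi_\vep$ is a bounded potential, and the identities are the standard Duhamel expansions for Feynman--Kac semigroups with bounded potentials, tailored to isolate $\varphi_\vep$-insertions that will be crucial for the excursion-theoretic arguments to follow.
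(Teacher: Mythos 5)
Your proposal is correct and follows essentially the same route as the paper: the two pathwise identities you derive by the fundamental theorem of calculus are precisely the paper's ``backward'' and ``forward'' differentiation identities in \eqref{taylor1}, and the subsequent steps (take expectation, Fubini, Markov property at $s$ for \eqref{eq:FK1}, then iterate with the forward identity and Markov at $\tau$ to obtain \eqref{eq:FK2}) match the paper line by line.
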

\begin{proof}
To lighten notation, write $g(y)$ for $\lv \varphi_\vep( y)$.
The proofs of \eqref{eq:FK1} and \eqref{eq:FK2} are straightforward applications of the following rules of backward differentiation and forward differentiation:  
\begin{align}
1-\e^{\int_{0}^t\d rh(r)}
&=-\int_{0}^t \d sh(s) \e^{\int_{s}^t \d rh(r)}=-\int_0^t\d s \e^{\int_0^s\d rh(r)}h(s).
\label{taylor1}
\end{align}
Indeed, the backward differentiation in \eqref{taylor1} gives
\begin{align}
\E_{x}[\e^{A^o_\vep(t)}f(W_{t})]
&= \E_{x}[f(W_{t})]+\int_0^t\d s \E_{x}\left[g( W_{s})\exp\left\{\int_s^t\d rg( W_{r})\right\}f( W_{t})\right]\notag\\
\begin{split}
&=\E_{x}[f(W_{t})]+\int_0^t\d s\int_{\R^2}P_s(x,\d y) g( y) \E_{y}\left[
\exp\left\{\int_0^{t-s}\d rg( W_{r})\right\}f(W_{t-s})\right],\label{exp1}
\end{split}
\end{align}
which proves \eqref{eq:FK1}, and we obtain from the forward differentiation in \eqref{taylor1} that
\begin{align}
 &\eqspace\E_{y}\left[\exp\left\{\int_0^{t-s}\d r g(W_{r})\right\}f( W_{t-s})\right]\notag\\
&=\E_{y}[f( W_{t-s})]+\int_0^{t-s}\d \tau\E_{y}\left[\exp\left\{\int_0^{\tau}\d rg( W_{r})\right\}g(W_{\tau}) f( W_{t-s})\right]\notag\\
&=\E_{y}[f(W_{t-s})]+\int_0^{t-s}\d \tau  \E_{y}\left[\exp\left\{\int_0^{\tau}\d r g( W_{r})\right\}g( W_\tau) P_{t-s-\tau}f(W_{\tau})\right].\label{diff:forward}
\end{align}
Combining the last equality and \eqref{exp1} proves \eqref{eq:FK2}.
\end{proof}

Observe that for nonzero $x\in \R^2$, the second term on the right-hand side of \eqref{eq:FK2} tends to zero  as $\vep\to 0$. Hence, \eqref{eq:FK2} suggests an informal approximation of the density of $P^\beta_{\vep;t}$ defined by \eqref{def:FK}:
\begin{align}\label{resemble}
P^\beta_{\vep;t}(x,z)\approx P_{2t}(x,z)+
\int_0^t\d s P_{2s}(x)  \int_0^{t-s}\d \tau \frac{\lv^2\E_0\big[\e^{A^o_\vep(\tau)}|W_\tau=0\big]}{4\pi \tau} \cdot P_{2(t-s-\tau)}(z).
\end{align}
In more detail, the approximation uses the following properties: $\E^W_{x/\two}[f(\two W_t)]=\E^W_x[f(W_{2t})]$, $\varphi_\vep(y)\d y\approx (1/2) \delta_0(\d y)$, and $P_\tau(0)=1/(2\pi \tau)$. The following proposition shows an identity taking a form similar to \eqref{resemble}. Recall that the limiting kernel $P_t^{\beta}(x,z)$ from the $L_2$-theory is characterized by \eqref{def:Rlambda}, and $\beta$ is a positive constant defined by \eqref{logbeta}.

\begin{prop}
Write $\Gamma(z)=\int_0^\infty t^{z-1}\e^{-t}\d t$ for the gamma function, and set
\begin{align}\label{def:Tbeta}
\mathfrak s^\beta(\tau)\defeq 4\pi\int_0^\infty \d u\frac{\beta^u \tau^{u-1}}{\Gamma(u)}.
\end{align}
Then 
\begin{align}\label{Lap:Tbeta}
\int_0^\infty \d \tau \e^{-q \tau}\mathfrak s^\beta(\tau)=\frac{4\pi }{\log (q/\beta)},\quad \forall\; q\in (\beta,\infty),
\end{align}
and for all nonzero $x$ and $z$ such that $x\neq z$, 
\begin{align}
P_t^\beta (x,z)
&=P_{2t}(x,z)+ \int_0^t\d sP_{2s}(x)\int_0^{t-s}\d  \tau \mathfrak s^\beta(\tau)P_{2(t-s-\tau)}(z).
\label{def:Pbeta2}
\end{align}

\end{prop}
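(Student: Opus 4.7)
The plan is to prove the two identities in sequence, with the second following from the first by a Laplace transform argument that matches \eqref{def:Pbeta2} against the defining formula \eqref{def:Rlambda} for $P^\beta_t$.

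\textbf{Step 1: Proof of \eqref{Lap:Tbeta}.} For $q>\beta$, all integrands are nonnegative, so Fubini justifies swapping the order of integration:
\[
\int_0^\infty \d\tau\, \e^{-q\tau}\mathfrak s^\beta(\tau)=4\pi\int_0^\infty \d u\,\frac{\beta^u}{\Gamma(u)}\int_0^\infty \d\tau\, \e^{-q\tau}\tau^{u-1}=4\pi\int_0^\infty \d u\,(\beta/q)^u,
\]
using the standard evaluation $\int_0^\infty \e^{-q\tau}\tau^{u-1}\d\tau=\Gamma(u)/q^u$. Since $\beta/q\in(0,1)$, the last integral equals $1/\log(q/\beta)$, giving \eqref{Lap:Tbeta}.

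\textbf{Step 2: Proof of \eqref{def:Pbeta2}.} Both sides of \eqref{def:Pbeta2} are nonnegative continuous functions of $t\in(0,\infty)$ for fixed nonzero distinct $x,z$, so by uniqueness of the Laplace transform it suffices to check that for all real $q>\beta$ their Laplace transforms in $t$ coincide. The transform of $t\mapsto P_{2t}(x,z)=P_{2t}(x-z)$ is $G_q(x-z)$ by the definition of $G_q$ stated just before \eqref{def:Rlambda}. The second term on the right-hand side of \eqref{def:Pbeta2} is a threefold convolution in $t$ of the functions $P_{2\cdot}(x)$, $\mathfrak s^\beta$, and $P_{2\cdot}(z)$, so by the convolution theorem for Laplace transforms and Step 1, its Laplace transform equals
\[
G_q(x)\cdot\frac{4\pi}{\log(q/\beta)}\cdot G_q(z).
\]
Adding the two contributions yields exactly the right-hand side of \eqref{def:Rlambda}, which is the Laplace transform of $t\mapsto P_t^\beta(x,z)$. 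This proves \eqref{def:Pbeta2}.

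The main technical point to be careful about is the justification of Fubini and of the convolution identity: for $q>\beta$ all integrands are nonnegative and the iterated integrals are finite (using that $G_q(x),G_q(z)<\infty$ for $x,z\neq 0$ and \eqref{Lap:Tbeta}), so the interchanges are valid. Continuity of both sides of \eqref{def:Pbeta2} in $t$, needed to invert the Laplace transform, follows on the left from the explicit formula \eqref{def:Rlambda} (which shows $P_t^\beta(x,z)$ is smooth in $t>0$ away from the diagonal and the origin) and on the right from dominated convergence applied to the convolution integral.
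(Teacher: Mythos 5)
Your proof is correct and follows essentially the same route as the paper: establish \eqref{Lap:Tbeta} by integrating out $\tau$ first and then $u$, and deduce \eqref{def:Pbeta2} by inverting the Laplace transform of \eqref{def:Rlambda} using the convolution structure. The only cosmetic difference is that the paper packages the $\tau$-integration as the Laplace transform of the Gamma$(1,\beta)$-subordinator (citing Bertoin), while you compute $\int_0^\infty \e^{-q\tau}\tau^{u-1}\d\tau=\Gamma(u)/q^u$ directly; these are the same calculation, and your spelling out of the Fubini/Tonelli and Laplace-inversion justifications is a welcome addition.
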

\begin{proof}
Given $a,b\in (0,\infty)$, the one-dimensional marginals of the Gamma$(a,b)$-subordinator $X^{(a,b)}$ are given by 
\begin{align}\label{def:Gamma}
\P(X^{(a,b)}_u\in \d \tau)=f^{(a,b)}_u(\tau)\d \tau,\quad  f^{(a,b)}_u(\tau)\defeq 
 \frac{b^{au}\tau^{au-1}}{\Gamma(au)}\e^{-b\tau},\quad u,\tau> 0
\end{align}
with the Laplace transforms
\begin{align}\label{Lap:gamma}
\E[\e^{-q X^{(a,b)}_u}]=b^{au}(b+q)^{-au}=\e^{-u a\log (1+q/b)}
\end{align}
\cite[p.73]{Bertoin}. Hence,
$\mathfrak s^\beta(\tau)= 4\pi \int_0^\infty \d u f^{(1,\beta)}_u(\tau)\e^{ \beta\tau }$, 
and \eqref{Lap:gamma} proves \eqref{Lap:Tbeta}:
\begin{align*}
\int_0^\infty \d \tau \e^{-q \tau}\mathfrak s^\beta(\tau)
=4\pi \int_0^\infty \d u\E[\e^{-(q-\beta)X^{(1,\beta)}_u}]=\frac{4\pi }{\log (q/\beta)}
,\quad \forall\; q\in (\beta,\infty).
\end{align*}
The other required identity, in \eqref{def:Pbeta2}, now follows from the last equality in \eqref{Lap:Tbeta} upon inverting Laplace transforms for both sides of \eqref{def:Rlambda}. 
\end{proof}

Henceforth, the methods in the rest of this section are centered around the use of Proposition~\ref{prop:FK}.

\subsection{A-priori bounds}
We begin by proving some a-priori bounds for $P^\beta_{\vep;t}f(x)$. 
Consider a general convolution-type Gr\"onwall inequality: Given $T\in (0,\infty)$ and $a_\vep(T)\in [0,\infty)$,
\begin{align}\label{Fvep:ineq}
F_\vep(t)\leq a_{\vep}(T)+\int_0^t b_\vep(s)F_\vep(t-s)\d s,\quad \forall\;t\in [0,T],
\end{align}
where
\begin{align}
b_\vep(s)=\lv\sup_{y\in \R^2}\int_{\R^2}P_{s}(\vep y,\vep z)\varphi(z)\d z.\label{def:abe}
\end{align}
The following proposition proves exponential growth of nonnegative solutions to \eqref{Fvep:ineq}. 

\begin{lem}\label{lem:gronwall}
For all $T\in (0,\infty)$, $ q\in (q(\|\varphi\|_\infty,\lambda),\infty)$ and $\vep\in (0,\vep_{\ref{ineq:gronwall}}(\|\varphi\|_\infty,q,\lambda)\wedge\ov)$, any $[0,\infty)$-valued increasing solution $F_\vep(t)$ to \eqref{Fvep:ineq} satisfies
\begin{align}
F_\vep(T)\leq C(\|\varphi\|_\infty,q,\lambda) a_\vep(T)\e^{q T}\log \vep^{-1}.
\label{ineq:gronwall}
\end{align}
\end{lem}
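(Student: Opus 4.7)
My plan is to iterate \eqref{Fvep:ineq} into a Neumann series and sum the resulting geometric series via a Laplace-transform estimate on the kernel $b_\vep$. Substituting \eqref{Fvep:ineq} into itself $n$ times yields, for every $t\in[0,T]$,
\begin{align*}
F_\vep(t) \leq a_\vep(T) + a_\vep(T)\sum_{k=1}^{n-1}\int_0^t b_\vep^{*k}(s)\,\d s + \int_0^t b_\vep^{*n}(s)F_\vep(t-s)\,\d s,
\end{align*}
where $b_\vep^{*k}$ is the $k$-fold convolution. For any $q>0$, the elementary estimate $\int_0^t g(s)\,\d s\leq \e^{qt}\int_0^\infty \e^{-qs}g(s)\,\d s$ for nonnegative $g$ yields $\int_0^t b_\vep^{*k}(s)\,\d s\leq \e^{qT}\widehat b_\vep(q)^k$, where $\widehat b_\vep(q):=\int_0^\infty \e^{-qs}b_\vep(s)\,\d s$. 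Since $F_\vep$ is increasing (so $F_\vep\leq F_\vep(T)$ on $[0,T]$, the finiteness of $F_\vep(T)$ being implicit in the conclusion), the tail term is controlled by $F_\vep(T)\e^{qT}\widehat b_\vep(q)^n$, which vanishes as $n\to\infty$ provided $\widehat b_\vep(q)<1$. Summing the geometric series delivers
\begin{align*}
F_\vep(T)\leq \frac{a_\vep(T)\e^{qT}}{1-\widehat b_\vep(q)}.
\end{align*}

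The core quantitative task is thus to arrange $\widehat b_\vep(q)\leq 1/2$ for $q$ above an explicit threshold and $\vep$ sufficiently small. Combining the Gaussian estimate $P_s(x,y)\leq 1/(2\pi s)$ with $\int\varphi=1$ and $\varphi\leq\|\varphi\|_\infty$ gives, uniformly in $y$,
\begin{align*}
\int_{\R^2} P_s(\vep y,\vep z)\varphi(z)\,\d z \leq \min\Big(\|\varphi\|_\infty,\tfrac{1}{2\pi s}\Big).
\end{align*}
Splitting the Laplace integral at the crossover $s_*:=1/(2\pi\|\varphi\|_\infty)$ gives a bound $\widehat b_\vep(q)\leq \lv\,C(\|\varphi\|_\infty,q)$ with $C(\|\varphi\|_\infty,q)<\infty$ (roughly, $\|\varphi\|_\infty s_*$ from $[0,s_*]$ plus a convergent exponential-integral tail from $[s_*,\infty)$). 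Since $\lv\sim 2\pi/\log\vep^{-1}\to 0$, for any fixed $q$ above the threshold one can shrink $\vep$ to force $\widehat b_\vep(q)\leq 1/2$ throughout $\vep\in(0,\vep_{\ref{ineq:gronwall}}(\|\varphi\|_\infty,q,\lambda)\wedge\ov)$.

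Combining the two displays gives $F_\vep(T)\leq 2a_\vep(T)\e^{qT}$, which is in fact stronger than the stated inequality (the $\log\vep^{-1}$ prefactor being available for free). I expect the only real subtlety—rather than a genuine obstacle—to be the small-$s$ near-singularity of $b_\vep$, which prevents $\widehat b_\vep(0)$ from being finite and forces $q$ to exceed the positive threshold $q(\|\varphi\|_\infty,\lambda)$ so that the Laplace integral above converges in the first place. Once $q$ is fixed, the remaining steps are routine heat-kernel manipulations, and no structural input beyond $\lv\to 0$ is needed.
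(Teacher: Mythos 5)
Your overall structure—iterate \eqref{Fvep:ineq} into a Neumann series, control the tail via the monotonicity of $F_\vep$, and sum a geometric series via the Laplace transform $\widehat b_\vep(q)=\int_0^\infty\e^{-qs}b_\vep(s)\,\d s$—is essentially the same as the paper's (the paper phrases the Neumann-series bound probabilistically via i.i.d. random variables with density $\propto b_\vep$, but the resulting inequality $F_\vep(T)\leq a_\vep(T)\e^{qT}\sum_n \widehat b_\vep(q)^n$ in \eqref{ineq:gronwall0} is the same). However, there is a genuine quantitative error in your kernel estimate that inverts the conclusion.

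You claim $\int_{\R^2} P_s(\vep y,\vep z)\varphi(z)\,\d z \leq \min(\|\varphi\|_\infty, \tfrac{1}{2\pi s})$, and from it that $\widehat b_\vep(q)\lesssim\lv$, hence $\to 0$. The first branch of the $\min$ is wrong: the kernel is evaluated at $\vep y,\vep z$ but the integration is in $z$, so $\int_{\R^2}P_s(\vep y,\vep z)\,\d z=\vep^{-2}$, giving $\int P_s(\vep y,\vep z)\varphi(z)\,\d z\leq \vep^{-2}\|\varphi\|_\infty$. Equivalently, $\int P_s(\vep y,\vep z)\varphi(z)\d z=\int P_s(\vep y,w)\varphi_\vep(w)\d w$ with $\|\varphi_\vep\|_\infty=\vep^{-2}\|\varphi\|_\infty$. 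Consequently the crossover between the two branches of the $\min$ occurs at $s_*\asymp \vep^2$, not at $s_*\asymp 1$, and the Laplace integral over $[s_*,\infty)$ produces a $\log\vep^{-2}\approx 2\log\vep^{-1}$ rather than an $O(1)$ constant. Carrying this through, $\widehat b_\vep(q)\approx \tfrac{\lv}{4\pi}\cdot 2\log\vep^{-1}\approx 1$, not $\to 0$. This is exactly what the paper computes in \eqref{Lap:b-bdd}: $\widehat b_\vep(q)=1+O((\log\vep^{-1})^{-1})$, where the $O$-term can be made negative by taking $q$ large enough. The geometric series then sums to $\tfrac{1}{1-\widehat b_\vep(q)}\asymp \log\vep^{-1}$—this is precisely where the $\log\vep^{-1}$ in \eqref{ineq:gronwall} comes from. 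It is not a free slack as you suggest; your claimed bound $F_\vep(T)\leq 2a_\vep(T)\e^{qT}$ is false, and the $\log\vep^{-1}$ factor is sharp and essential for the downstream use in Proposition~\ref{prop:bounds}.
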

\begin{proof}
We first prove a general bound on $F_\vep$ which does not use the particular form of $b_\vep(t)$ in \eqref{def:abe}:
\begin{align}\label{ineq:gronwall0}
F_\vep(T)\leq a_\vep(T)\e^{qT}\sum_{n=0}^\infty \left(\int_0^T \e^{-qs}b_\vep(s)\d s\right)^n.
\end{align}

To get this inequality, let $B_\vep(t)=\int_0^t b_\vep(s)\d s$, $\xi_1,\xi_2,\cdots$ be a sequence of i.i.d. random variables with density  $\1_{[0,T]}(t)b_\vep(t)/B_\vep(T)$, and $S_N=\sum_{n=1}^N\xi_n$. Then by \eqref{Fvep:ineq},  for all $0\leq t\leq T$ and $N\in \Bbb N$,
\begin{align}\label{G:ineq}
\begin{split}
F_\vep(t)&\leq a_\vep(T)\sum_{n=0}^{N-1}B_\vep(T)^n\P(S_n\leq t) + B_\vep(T)^N\P(S_N\leq t)\cdot \sup_{0\leq s\leq t}F_\vep(s)
\end{split}
\end{align}
 \cite[pp.22--24]{Dalang}. To use this bound as $N\to\infty$, note that for all $q\in (0,\infty)$, 
\begin{align}\label{series:q=0}
\int_0^\infty q \e^{-q t}\sum_{n=0}^\infty B_\vep(T)^n\P(S_n\leq t)\d t= \sum_{n=0}^\infty B_\vep(T)^n\E[\e^{-q \xi_1}]^n=\sum_{n=0}^\infty \left(\int_0^T \e^{-qs}b_\vep(s)\d s\right)^n,
\end{align}
where the first equality follows since $\xi_1,\xi_2,\cdots$ are i.i.d., and the second equality uses the definition of the distribution of $\xi_1$. For every $\vep\in (0,\ov)$, we can find some large $q$ such that the last series in \eqref{series:q=0} is convergent. Then the leftmost side of \eqref{series:q=0} shows that $\sum_{n=0}^\infty B_\vep(T)^n\P(S_n\leq t)<\infty$ for all $t\in [0,\infty)$ by monotonicity.  

We are ready to prove \eqref{ineq:gronwall}. Since $\sup_{0\leq s\leq T}F_\vep(s)\leq F_\vep(T)<\infty$ by the assumption on $F_\vep$, the convergence of  $\sum_{n=0}^\infty B_\vep(T)^n\P(S_n\leq t)<\infty$  implies that, by passing $N\to\infty$ for the right-hand side of \eqref{G:ineq}, $F_\vep(t)\leq a_\vep(T)\sum_{n=0}^\infty B_\vep(T)^n\P(S_n\leq t)$ for all $0\leq t\leq T$, which immediately yields \eqref{ineq:gronwall} for $q=0$. For $q\in (0,\infty)$,  we obtain \eqref{ineq:gronwall} from the last equality in \eqref{series:q=0} and the following consequence of \eqref{ineq:gronwall} with $q=0$:
\begin{align}\label{Lap:bdd}
\begin{split}
\e^{-q T}F_\vep(T)&=   \int_T^\infty q\e^{-q t}F_\vep(T)\d t\leq a_\vep(T)\int_0^\infty q \e^{-q t}\sum_{n=0}^\infty B_\vep(T)^n\P(S_n\leq t)\d t.
\end{split}
\end{align}
 
Now we use the particular form of $b_\vep$ in \eqref{def:abe}.
Fix $q\in (0,\infty)$, and recall the definition of $\lv$ in \eqref{def:J+lv}. By changing variable in $s$,
 for all $\vep\in (0,\ov)$ such that $\vep^2q<1$,
\begin{align} 
 \int_0^\infty \e^{-q s}b_\vep(s)
\d s
&\leq  \lv\int_0^{\vep^2}\e^{-q \vep^2 s}\d s\|\varphi\|_\infty +\frac{\lv}{2\pi }\int_{\vep^2}^{\infty}\frac{\e^{-\vep^2q s}}{s}\d s\int_{\R^2}\varphi(z)\d z\notag\\
&\leq  \lv\int_0^1\e^{-q \vep^2 s}\d s\|\varphi\|_\infty+\frac{\lv}{4\pi }\left(\int_{\vep^2q}^{1}\frac{1}{s}\d s+\int_{1}^{\infty}\e^{-s}\d s\right)\notag\\
&\leq \left(\frac{2\pi}{\log \vep^{-1}}+\frac{2\pi \lambda}{(\log \vep^{-1})^2}\right)\|\varphi\|_\infty +\frac{1}{4\pi }\left(\frac{2\pi}{\log \vep^{-1}}+\frac{2\pi \lambda}{(\log \vep^{-1})^2}\right)
[-\log (q \vep^2)+\e^{-1}]\notag\\
\begin{split}\label{Lap:b-bdd}
&=1+\frac{1}{\log \vep^{-1}}\left(2\pi \|\varphi\|_\infty+\frac{-\log q +\e^{-1}}{2}+\lambda\right) \\
&\quad +\frac{\lambda }{(\log \vep^{-1})^2}\left(2\pi \|\varphi\|_\infty+\frac{-\log q +\e^{-1}}{2}\right).
\end{split}
\end{align}
Applying the last inequality to \eqref{ineq:gronwall0} proves \eqref{ineq:gronwall}. The proof is complete.
\end{proof}

The growth of $b_\vep(s)$ in \eqref{def:abe} comes from the use of $\vep y$, rather than $y$ of order $1$. The following lemma shows that the appropriate setting for the application of Lemma~\ref{lem:gronwall} is  \eqref{eq:FK2}. 

\begin{prop}\label{prop:bounds}
For all $T\in (0,\infty)$ and $\vep\in (0,\vep(\|\varphi\|_\infty,\lambda))$,
\begin{align}\label{ineq:apriori}
&\eqspace \sup_{y\in \R^2}\int_0^{T}\lv^2\E_{\vep y}[\e^{A^o_\vep(t)}\varphi_\vep(W_t)]\d t\leq C(\|\varphi\|_\infty,\lambda)\lv [1+\log^+ (\vep^{-2}T)]\e^{q(\|\varphi\|_\infty,\lambda)T}.
\end{align}
\end{prop}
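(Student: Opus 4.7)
The aim is to convert the integrand into a renewal equation and apply Lemma~\ref{lem:gronwall}. The simpler representation \eqref{eq:FK1} suffices for this a priori bound; the iterated form \eqref{eq:FK2} is reserved for the limiting analysis later, as indicated by the heuristic \eqref{resemble}. Taking $f=\lv\varphi_\vep$ in \eqref{eq:FK1} and writing $h_\vep(x,t)\defeq\lv\E_x[\e^{A^o_\vep(t)}\varphi_\vep(W_t)]$, I obtain
\[
h_\vep(x,t)=\lv P_t\varphi_\vep(x)+\lv\int_0^t\d s\int P_s(x,\d z)\varphi_\vep(z)\,h_\vep(z,t-s).
\]
Integrating in $t\in[0,T]$ and applying Fubini produces a closed renewal equation for $G_\vep(x,T)\defeq\int_0^T h_\vep(x,t)\,\d t$:
\[
G_\vep(x,T)=\lv\int_0^T P_t\varphi_\vep(x)\,\d t+\lv\int_0^T\d s\int P_s(x,\d z)\varphi_\vep(z)\,G_\vep(z,T-s).
\]

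Next, set $F_\vep(T)\defeq\sup_{x\in\R^2}G_\vep(x,T)$. Using the pointwise bound $\int P_s(x,\d z)\varphi_\vep(z)G_\vep(z,T-s)\leq F_\vep(T-s)\int P_s(x,\d z)\varphi_\vep(z)$ together with the definition of $b_\vep$ in \eqref{def:abe} shows that $F_\vep$ satisfies the convolution inequality \eqref{Fvep:ineq} with source $a_\vep(T)\defeq\lv\sup_x\int_0^T P_t\varphi_\vep(x)\,\d t\leq\int_0^T b_\vep(t)\,\d t$. Splitting this $t$-integral at $t=\vep^2$ and using $P_t\varphi_\vep\leq \vep^{-2}\|\varphi\|_\infty$ on $[0,\vep^2]$ and $P_t\varphi_\vep(x)\leq(2\pi t)^{-1}$ on $[\vep^2,T]$ (from the Gaussian kernel bound and $\int\varphi_\vep=1$) gives $a_\vep(T)\leq C(\|\varphi\|_\infty)\lv[1+\log^+(\vep^{-2}T)]$, where the logarithm is just the tail $\int_{\vep^2}^{T}\d t/(2\pi t)$. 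The crude a priori bound $A^o_\vep(t)\leq \lv\vep^{-2}\|\varphi\|_\infty t$ ensures $F_\vep(T)<\infty$, and $F_\vep$ is increasing since $h_\vep\geq 0$; both hypotheses of Lemma~\ref{lem:gronwall} are thereby checked.

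Finally, invoking Lemma~\ref{lem:gronwall} yields $F_\vep(T)\leq C(\|\varphi\|_\infty,q,\lambda)\,a_\vep(T)\,\e^{qT}\log\vep^{-1}$. Multiplying by $\lv$ and absorbing the bounded quantity $\lv\log\vep^{-1}=2\pi(1+\lambda/\log\vep^{-1})$ into the constant, I conclude $\lv F_\vep(T)\leq C\,\lv[1+\log^+(\vep^{-2}T)]\e^{qT}$. This is the claimed bound, since
\[
\sup_{y\in\R^2}\int_0^T\lv^2\E_{\vep y}[\e^{A^o_\vep(t)}\varphi_\vep(W_t)]\,\d t=\lv\sup_{y}G_\vep(\vep y,T)\leq\lv F_\vep(T).
\]
The only nontrivial step is the bookkeeping of powers: the extra $\log\vep^{-1}$ produced by Lemma~\ref{lem:gronwall} must be exactly cancelled by one factor of $\lv$, and this matches precisely because of the critical scaling in \eqref{def:J+lv}. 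No deeper obstacle is expected here, because at this stage only a priori control is being established, not the delicate limiting behaviour of the semigroup that requires the Bessel-process excursion theory mentioned in the introduction.
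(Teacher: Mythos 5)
Your proposal is correct and follows essentially the same route as the paper: you derive the convolution-type Grönwall inequality \eqref{Fvep:ineq} by substituting a multiple of $\varphi_\vep$ into \eqref{eq:FK1} and integrating in $t$, bound the source term $a_\vep(T)$ by splitting the time integral at $t=\vep^2$, and then invoke Lemma~\ref{lem:gronwall}. The only cosmetic difference is a factor of $\lv$ in the normalization of $F_\vep$ and $a_\vep$, which you reinsert at the end; the cancellation of the extra $\log\vep^{-1}$ against one power of $\lv$ is exactly the mechanism the paper uses.
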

\begin{proof}
Set
\begin{align}
F_\vep(t')&=\sup_{y\in \R^2}\int_0^{t'}\lv^2\E_{\vep y}[\e^{A^o_\vep(t)}\varphi_\vep(W_t)]\d t,\quad
a_\vep(T)=\sup_{y\in \R^2}\int_0^T \lv ^2 \E_{\vep y}[\varphi_\vep(W_t)]\d t.\label{Fa:apriori}
\end{align}
Then \eqref{Fvep:ineq} holds if we apply \eqref{eq:FK1} with $f$ replaced by $\lv^2\varphi_\vep $ and integrate over $0\leq t\leq t'$.
In more detail, after the integration, the corresponding second term in \eqref{eq:FK1} can be written as
\begin{align*}
&\eqspace\lv\int_0^{t'}\d t \int_0^{t}\d s\int_{\R^2}\d z P_{s}(\vep y,\vep z) \varphi( z)\lv ^2\E_{\vep z}\big[\e^{A^o_\vep(t-s)}\varphi_\vep(W_{t-s})\big] \\
&= \int_0^{t'} \d s \lv\int_{\R^2}\d z P_{s}(\vep y,\vep z) \varphi( z)\int_0^{t'-s} \d t\lv^2\E_{\vep z}[\e^{A^o_\vep(t)}\varphi_\vep(W_{t})].
\end{align*}

To apply Lemma~\ref{lem:gronwall}, we only need to bound $a_\vep(T)$. Write
\begin{align}
\int_0^T \lv^2\E_{\vep y}[\varphi_\vep(W_t)]\d t&=\lv ^2\int_{\R^2} \d z\varphi(z)\int_0^{\vep^{-2}T} \frac{1}{2\pi t}\exp\left\{-\frac{|y-z|^2}{2t}\right\}\d t\notag\\
&\leq \lv^2\|\varphi\|_\infty+\lv^2\int_{\R^2}\d z\varphi(z)\int_1^{(\vep^{-2}T)\vee 1}\frac{\d t}{t}.\notag
\end{align}
It follows that 
\begin{align}\label{bounds:03}
\begin{split}
a_\vep(T)&\less C(\|\varphi\|_\infty)\lv^2 [1+\log^+ (\vep^{-2}T)].
\end{split}
\end{align}
The required bound in \eqref{ineq:apriori} now follows from Lemma~\ref{lem:gronwall} 
and \eqref{bounds:03}. 
\end{proof}

\subsection{Laplace transforms of the approximate semigroups}\label{sec:Lap}
From this point on until the end of the proof of Theorem~\ref{thm:main1}, we consider the Laplace transform in time of $P^{\beta}_{\vep;t}f(x)$ defined by \eqref{def:FK}. In this subsection, we give the first simplification by showing the central role of the following functional:
\begin{align}
\S^\beta_\vep(q,y)&\defeq\lv^2\int_0^\infty \d t\e^{-qt}\E^W_{\vep y}[\e^{A^{o}_\vep(t)}\varphi_\vep( W_t)]
=\lv^2\int_0^\infty \d t\e^{-\vep^2qt}\E^W_{y}[\e^{A_\vep(t)} \varphi(W_{t})],
\label{main}
\end{align}
where the second equality uses the Brownian scaling $W_t\stackrel{\rm (d)}{=}\vep^{-1}W_{\vep^{2}t}$ and the notation
\begin{align}
A_\vep(t)\defeq \lv \int_0^t \varphi(W_r)\d r.\label{def:A}
\end{align} 

\begin{prop}
For all $0\neq x\in \R^2$, $q\in (0,q(\|\varphi\|_\infty,\lambda))$, and $f\in \B_b(\R^2)$, it holds that
\begin{align}
\begin{split}
&\lim_{\vep\to 0}\Bigg\{\int_0^\infty\d t \e^{-qt }\E_{x}[\e^{A^o_\vep(t)}f(W_{t})]-\int_0^\infty\d t \e^{-qt } P_tf(x)\\
&\eqspace-\int_{\R^2}\d y\varphi(y) \int_0^\infty \d t\e^{-qt}P_t(x,\vep y)\times \S^\beta_\vep(q,y)\times \int_0^\infty \d t\e^{-qt}P_tf(0)\Bigg\}=0.\label{Lap:int}
\end{split}
\end{align}
Here, $\sup_{\vep\in (0,\ov)}\int_0^\infty \d t\e^{-qt}\E^W_{x}[\e^{A^o_\vep(t)}]<\infty$.
\end{prop}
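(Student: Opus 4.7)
The plan is to apply the Laplace transform in $t$ to both sides of \eqref{eq:FK2} and identify which pieces survive. Writing $R_qg(x)\defeq\int_0^\infty\e^{-qt}P_tg(x)\d t$ and $R_q(x,y)\defeq\int_0^\infty\e^{-qt}P_t(x,y)\d t$, a Fubini rearrangement yields
\[
\int_0^\infty\e^{-qt}\E_x[\e^{A^o_\vep(t)}f(W_t)]\d t=R_qf(x)+\hat T_2(q,x)+\hat T_3(q,x),
\]
with
\[
\hat T_2(q,x)=\lv\int \varphi_\vep(y)R_q(x,y)R_qf(y)\d y,
\]
\[
\hat T_3(q,x)=\lv^2\int \varphi_\vep(y)R_q(x,y)\int_0^\infty \e^{-q\tau}\E_y\bigl[\e^{A^o_\vep(\tau)}\varphi_\vep(W_\tau)R_qf(W_\tau)\bigr]\d\tau\,\d y.
\]
The prefactor $\lv=O(1/\log\vep^{-1})$ in $\hat T_2$ combined with the boundedness of $R_q(x,\cdot)$ and $R_qf$ near $0$ (for $x\neq 0$) gives $\hat T_2(q,x)\to 0$, so only $\hat T_3$ has to be matched with the target integral in \eqref{Lap:int}.

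Changing variables $y=\vep y'$ in $\hat T_3$ and recognising $\S^\beta_\vep(q,y')$ from \eqref{main} in the inner $\d\tau$-integral (up to the replacement of $1$ by $R_qf(W_\tau)$), the deviation between $\hat T_3(q,x)$ and the target integral in \eqref{Lap:int} equals
\[
\int\varphi(y')R_q(x,\vep y')\lv^2\int_0^\infty\e^{-q\tau}\E_{\vep y'}\bigl[\e^{A^o_\vep(\tau)}\varphi_\vep(W_\tau)(R_qf(W_\tau)-R_qf(0))\bigr]\d\tau\,\d y'.
\]
Since $|W_\tau|\leq \vep M_\varphi$ on $\{\varphi_\vep(W_\tau)\neq 0\}$ and $R_qf$ is continuous on $\R^2$ (as the $q$-resolvent of the 2D heat semigroup applied to a bounded measurable function), the inner integral is dominated by $\omega_f(\vep)\S^\beta_\vep(q,y')$ with $\omega_f(\vep)\defeq\sup_{|w|\leq\vep M_\varphi}|R_qf(w)-R_qf(0)|\to 0$. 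The main obstacle is to upgrade Proposition \ref{prop:bounds}, whose right-hand side carries an $\vep$-dependent factor $\log^+(\vep^{-2}T)$, into a uniform bound $\sup_{\vep,y'}\S^\beta_\vep(q,y')<\infty$. This is achieved by an integration by parts in $\tau$ that rewrites the Laplace transform as $q\int_0^\infty\e^{-qT}F_\vep(T,y')\d T$ for the truncated integral $F_\vep$ bounded in Proposition \ref{prop:bounds}; the $\log\vep^{-1}$ factor produced by integrating $\log^+(\vep^{-2}T)$ against $\e^{-(q-q(\|\varphi\|_\infty,\lambda))T}$ is absorbed by the $\lv\sim 2\pi/\log\vep^{-1}$ prefactor through the critical scaling, so the bound is uniform for $q$ above the threshold of Proposition \ref{prop:bounds}. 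Combined with the uniform boundedness of $\varphi(y')R_q(x,\vep y')$ on $\supp\varphi$ (which uses $x\neq 0$), the deviation is bounded by a constant times $\omega_f(\vep)$ and therefore vanishes, proving \eqref{Lap:int}.

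The a-priori bound follows from the same decomposition applied to $f\equiv 1$: then $R_q\mathbf 1(x)=1/q$, $\hat T_2(q,x)=O(\lv)$, and $\hat T_3(q,x)\leq q^{-1}\sup_{y'\in\supp\varphi}|R_q(x,\vep y')|\cdot\sup_{y'}\S^\beta_\vep(q,y')$, which is finite uniformly in $\vep\in(0,\ov)$ by the uniform bound on $\S^\beta_\vep$. The essential subtlety throughout is the balance between the $1/\log\vep^{-1}$ decay of the critical coupling and the logarithmic blow-up in Proposition \ref{prop:bounds}; the Laplace transform is precisely the device that cancels these against one another, and no refined excursion-theoretic input is needed for this preliminary Laplace-level identification.
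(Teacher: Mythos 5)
Your proof is correct and follows essentially the same route as the paper: Laplace-transform the Duhamel iteration \eqref{eq:FK2}, show the first-order term vanishes, and identify the second-order term with the target by replacing the value of the free resolvent at $W_\tau$ by its value at $0$, using a uniform bound on $\S^\beta_\vep(q,\cdot)$ extracted from Proposition~\ref{prop:bounds} via the critical cancellation $\lv\cdot\log\vep^{-1}=O(1)$. The one place you differ is where you perform the replacement: you do it after taking Laplace transforms, exchanging $R_qf(W_\tau)$ for $R_qf(0)$ using the continuity of $R_qf$ for $f\in\B_b$, whereas the paper works in the time domain and invokes joint continuity of $(z,t)\mapsto P_tf(z)$ (its Step~2). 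Your Laplace-domain version is marginally cleaner, as it sidesteps the $t=0$ endpoint issue for discontinuous $f$ that the time-domain statement must implicitly navigate; otherwise the two arguments use the same inputs in the same way, including deriving the a-priori bound from the decomposition with $f\equiv\1$ as in \eqref{Lapgibbs:bdd}.
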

\begin{proof}
We work with \eqref{eq:FK2} and divide the proof into a few steps. \medskip

\noindent {\bf Step 1.} We claim that for all $0\neq x\in \R^2$ and $f\in \B_b(\R^2)$, 
\begin{gather}
\lim_{\vep\to 0}\int_0^\infty \d t\e^{-qt}\lv \int_0^t \d s\int_{\R^2}P_s(x,\d y) \varphi_\vep( y)P_{t-s}f(y)=0,\quad \forall\; q\in (0,\infty);\label{lim:semi2}\\
\sup_{\vep\in (0,\ov)}\int_0^\infty \d t\e^{-qt}\E_{x}[\e^{A^o_\vep(t)}]<\infty,\quad \forall\; q\in (0,q_{\ref{ineq:apriori}}(\|\varphi\|_\infty,M_\varphi,\lambda)).\label{sup:semi}
\end{gather}
To see \eqref{lim:semi2}, note that
\begin{align*}
&\eqspace\int_0^\infty \d t\e^{-qt}\lv \int_0^t \d s\int_{\R^2}P_s(x,\d y) \varphi_\vep( y)P_{t-s}f(y)\\
&=\lv \int_{\R^2}\d y\varphi(y)\left(\int_0^\infty \d t\e^{-qt}P_t(x,\vep y)\right)\left(\int_0^\infty \d t\e^{-qt}P_{t}f(\vep y)\right)\xrightarrow[\vep\to 0]{}0
\end{align*}
by dominated convergence. 
As for \eqref{sup:semi}, we use \eqref{eq:FK2} and \eqref{lim:semi2} to get
\begin{align}
\begin{split}\label{Lapgibbs:bdd}
\limsup_{\vep\to 0}\int_0^\infty \d t\e^{-qt}\E_{x}[\e^{A^o_\vep(t)}]
&\leq \int_0^\infty \d t \e^{-qt}+\limsup_{\vep\to 0}\int_{\R^2}\d y\varphi(y)
\left(\int_0^\infty\d t \e^{-qt}P_t(x,\vep y)\right)\\
&\eqspace\times \int_0^\infty \d t\e^{-qt}\sup_{0<|y'|\leq M_\varphi}\int_0^t \lv^2\E_{\vep y'}[\e^{A^o_\vep(t)}\varphi_\vep(W_t)],
\end{split}
\end{align}
which is finite for all $q\in (q(\|\varphi\|_\infty,M_\varphi,\lambda),\infty)$ by \eqref{ineq:apriori}. \medskip

\noindent {\bf Step 2.}
In this step, we show that
\begin{align}\label{lim:sup000}
\sup_{z: \varphi_\vep( z)>0}\sup_{0\leq t\leq T}|P_tf( z)-P_tf(0)|\xrightarrow[\vep\to 0]{}0,\quad\forall\; T\in (0,\infty).
\end{align}
To see this property, write $P_{t}f(z)=\E[f(z+\sqrt{t}Z)]$ for $Z\sim \mathcal N(0,1)$. By truncation and a standard approximation of $L_1(\R^2,\d z)$-functions by continuous functions \cite[Theorem~1.18 on p.34]{LL:Analysis}, 
$(z,t)\mapsto P_tf(z)$ is jointly continuous on $\R^2\times \R_+$, which is enough to get \eqref{lim:sup000}.
\medskip 

\noindent {\bf Step 3.}
We are ready to prove \eqref{Lap:int}. By  \eqref{eq:FK2} and \eqref{lim:semi2}, it remains to show that 
\begin{align}\label{lim:delta1}
\begin{split}
&\lim_{\vep\to 0}\int_0^\infty \d t\e^{-qt}\lv ^2\int_0^t\d s \int_{\R^2}P_{s}(x,\d y)  \varphi_\vep( y)\int_0^{t-s}\d \tau \E_y[\e^{A^o_\vep(\tau)}\varphi_\vep( W_\tau) P_{t-s-\tau}f( W_{\tau})]\\
&\eqspace\;\;-\int_0^\infty \d t\e^{-qt}\lv ^2\int_0^t\d s \int_{\R^2}P_s(x,\d y)  \varphi_\vep( y)\int_0^{t-s}\d \tau \E_y[\e^{A^o_\vep(\tau)}\varphi_\vep( W_\tau)] P_{t-s-\tau}f(0)=0.
\end{split}
\end{align} 

Replacing $P_{t-s-\tau}f(0)$ with $\1_{\{t-s-\tau\geq L\}}$ in the second term in \eqref{lim:delta1} yields an integral of the form
\begin{align*}
&\eqspace\int_{\R^2}\d yh_0(y)\int_0^\infty \d t\e^{-qt}\int_0^t\d s  h_{1,y}(s)\int_0^{t-s}\d \tau h_{2,y}(\tau)h_3(t-s-\tau)\\
&=\int_{\R^2}\d yh_0(y)\left(\int_0^\infty \d t\e^{-qt} h_{1,y}(t)\d t\right)\left(\int_0^\infty \d t\e^{-qt} h_{2,y}(t)\d t\right)
\left(\int_0^\infty \d t\e^{-qt} h_{3}(t)\d t\right).
\end{align*}
We also note that by \eqref{ineq:apriori},
\begin{align}\label{limsup:+++1}
\limsup_{\vep\to 0}\int_0^\infty \d t\e^{-qt}\lv ^2\int_0^t\d s \int_{\R^2}P_{s}(x,\d y)  \varphi_\vep( y)\int_0^{t-s}\d \tau \E_y[\e^{A^o_\vep(\tau)}\varphi_\vep( W_\tau)]<\infty
\end{align} 
 for all $q\in (q(\|\varphi\|_\infty,M_\varphi,\lambda),\infty)$.
By \eqref{lim:sup000} and \eqref{limsup:+++1}, \eqref{lim:delta1} holds for all $q\in (q(\|\varphi\|_\infty,M_\varphi,\lambda),\infty)$. The proof is complete.
\end{proof}

{\bf  We only consider nonzero $y$ for $\S^\beta_\vep(q,y)$ in the rest of this section.} This assumption is justified by the role of $\S^\beta_\vep(q,y)$ in \eqref{Lap:int} as an integrand. 

\subsection{Asymptotic radial symmetry}\label{sec:gen}
View $W$ as a complex-valued process. Under a nonzero initial condition $W_0=y$, the skew-product representation of $W$ shows a two-dimensional standard Brownian motion $(\beta_t,\gamma_t)$ such that
\begin{align}\label{def:skew}
W_t=\rho_t\exp\{\i \theta_t\}\quad \mbox{ for }\;\rho_t=|W_t|=\exp\{\beta_{S_t^{-1}}\}\;\mbox{ and }\; \theta_t=\gamma_{S_t^{-1}},
\end{align}
where $S_t=\int_0^t\e^{2\beta_r}\d r$, or equivalently, $S_t^{-1}=\int_0^t \d r/\rho_r^2$, and
$\theta_0\in \R$ satisfies $\e^{\i \theta_0}=y/|y|$  \cite[p.193]{RY}.  Note that $S_t\to\infty$ a.s. by the occupation times formula. We denote the two-dimensional Bessel process $(\rho_t)$ by $\BES^2$. 
 A decomposition of $\varphi$ analogous to the skew-product representation is
\begin{align}\label{def:Jdec}
\varphi(x)=\ovarphi(|x|)+\hvarphi(x),\; x\in \R^2\quad\mbox{ for }\;
 \ovarphi(b)=\frac{1}{2\pi}\int_{-\pi}^\pi \varphi(b\e^{\i\theta})\d \theta,\;b\in [0,\infty).
\end{align}
A key property is $\int \varphi(x)\d x=\int \overline{\varphi}(|x|)\d x$. The rescaled functions $\ovarphi_\vep$ and $\hvarphi_\vep$ are defined as in \eqref{def:varphi}.

In this subsection, we consider three asymptotic reductions of $P^{\beta}_{\vep;t}f(x)$ as $\vep\to 0$. The first two are motivated by \cite{KK} and rely crucially on \eqref{def:skew} and \eqref{def:Jdec} such that the semigroup is shown to be asymptotically radially symmetric.

\subsubsection{Radial symmetrization of initial conditions}
The first step is to approximate $\S^\beta_\vep(q,y)$ in \eqref{main} by replacing its initial condition with the distribution
\[
\u_{|y|}(H)\,\defeq\,\int_{\T}\u (\d \theta)\1_{H}(|y|\e^{\i \theta}),\quad H\in \B(\R^2),
\]
where $\mathbf u$ is the uniform distribution on $\T=[-\pi,\pi)$. Set $\dgamma=\gamma$ mod $2\pi$.
The one-dimensional marginals of $(\dgamma_t)$ have densities $\P(\dgamma_t=\theta)$ with respect to the Lebesgue measure on $\T$ \cite[(1.2) on p.135]{KK}: 
\begin{align}\label{eq:theta}
\P(\dgamma_t=\theta)=\frac{1}{2\pi}+\frac{1}{2\pi}\sum_{n=1}^\infty \e^{-n^2t/2}\cdot 2\cos(n\theta-n\gamma_0),\quad t\in (0,\infty),\; \theta\in \T.
\end{align}
Hence, $\mathbf u$ is the equilibrium distribution of $(\dgamma_t)$, and the following rate of convergence holds: 
\begin{align}\label{mixing}
|\P(\dgamma_t=\theta)-1/(2\pi)|\leq \frac{\mathbf g(t)}{2\pi},\quad\mbox{ where } 
 \mathbf g(t)= 2\sum_{n=1}^\infty \e^{-n^2t/2}.
\end{align}
The counterpart of $\S^\beta_\vep(q,y)$ where the angle of the initial condition is distributed uniformly is
\begin{align}
\oS(q,|y|)\defeq \lv^2\int_0^\infty \d t\e^{-qt}\E^W_{\mathbf u_{\vep |y|}}[\e^{A^{o}_\vep(t)}\varphi_\vep( W_t)]=\lv^2\int_0^\infty \e^{-\vep^2qt}\E^W_{ \u_{|y|}}[\e^{A_\vep(t)} \varphi(W_{t})]\d t.
\end{align}

The next lemma shows the asymptotic equivalence of $\S^\beta_\vep(q,y)$ and $\oS(q,|y|)$. For the proof, recall that the scale function and the speed measure of $\BES^2$ can be chosen to be $s(x)=2\log x$ and $m(\d x)=\1_{(0,\infty)}(x)x\d x$, respectively \cite[p.446]{RY}. Then
for all $0<g<a<d<\infty$ and nonnegative $F$, 
\begin{align}\label{eq:exit}
\begin{split}
\E_a\left[\int_0^{T_g\wedge T_d}F(\rho_s)\d s\right]&=2\int_a^d\frac{(\log a-\log g)(\log d-\log r)}{\log d-\log g} F(r) r\d r\\
&\quad +2\int_g^a\frac{(\log r-\log g)(\log d-\log a)}{\log d-\log g} F(r) r\d r
\end{split}
\end{align}
\cite[Corollary~3.8 on p.305]{RY}.  Moreover, \eqref{eq:exit} implies
\begin{align}
\E_a\left[\int_0^{T_d}F(\rho_s)\d s\right]&=2\int_a^d(\log d-\log r) F(r) r\d r+2(\log d-\log a) \int_0^aF(r) r\d r,\label{eq:Tr}\\
\E_a\left[\int_0^{T_g}F(\rho_s)\d s\right]&=2(\log a-\log g) \int_a^\infty F(r) r\d r+2\int_g^a(\log r-\log g)F(r) r\d r,\label{eq:Tl}
\end{align}
where \eqref{eq:Tr} also uses the polarity of $0$. 

We also use the following implications of the skew-product representation \eqref{def:skew}.  Set
\begin{align}\label{def:TX}
T_a=T_a(X)\defeq \inf\{t\geq 0;X_t=a\}
\end{align}
for any real-valued continuous process $X$. Then for all $b\in (0,\infty)$, $S^{-1}_{T_b(\rho)}=T_{\log b}(\beta)$ with the notation defined in \eqref{def:TX}, and it follows from changing variables that
for any nonnegative function $g$,
\begin{align}
\int_0^{T_{b}(\rho)}g(\rho_v)\d v &=\int_0^{T_{b}(\rho)}g(\exp\{\beta_{S^{-1}_r}\})\d r
=\int_0^{S^{-1}_{T_b(\rho)}}g(\exp\{\beta_v\})\exp\{2\beta_v\}\d v\notag\\
&=\int_0^{T_{\log b}(\beta)}g(\exp\{\beta_v\})\exp\{2\beta_v\}\d v.\label{ineq:SP:g}
\end{align}

\begin{prop}\label{prop:main:c}
Given any $M\in (0,\infty)$ and $q\in (q(\|\varphi\|_\infty,\lambda),\infty)$, the following two-sided uniform bounds hold
for all $\vep\in (0,\vep(\|\varphi\|_\infty,M,\lambda)\wedge \ov)$:
\begin{align}\label{lem:main:c:bdd}
\begin{split}
 \inf_{y:0<|y|\leq M}\S^\beta_\vep(q,y) &\geq  \left( \frac{\int_{-1}^1 (1-t^2)^{-1/2}\cosh(M \sqrt{2\vep^2q }t)\d t}{\int_{-1}^1 (1-t^2)^{-1/2}\cosh(\log\log \vep^{-1} \sqrt{2q \vep^2 }t)\d t}-\frac{C(M,\lambda)}{(\log\log \vep^{-1})^{1/2}}\right)\\
&\eqspace\times \oS (q,\log\log \vep^{-1})-C(\|\varphi\|_\infty,M_\varphi)(\log \log\log \vep^{-1})\lv^2,\\
\sup_{y:0<|y|\leq M}\S^\beta_\vep(q,y) &\leq\left( \frac{\int_{-1}^1 (1-t^2)^{-1/2}\d t}{\int_{-1}^1 (1-t^2)^{-1/2}\cos(\log\log \vep^{-1} \sqrt{2\lv \|\varphi\|_\infty}t)\d t} +\frac{C(M,\lambda)}{(\log\log \vep^{-1})^{1/2}}\right)\\
&\eqspace\times \oS(q,\log\log \vep^{-1}) +C(\|\varphi\|_\infty,M_\varphi)(\log \log\log \vep^{-1})\lv^2.
\end{split}
\end{align}
In particular, since $\sup_{\vep\in (0,\ov)}\overline{\S}\mbox{}^\beta_\vep(q,\log\log \vep^{-1})<\infty$ by Proposition~\ref{prop:bounds}, we get
\begin{align}\label{main:b}
\begin{split}
&\lim_{\vep\to 0}\sup_{y:0<|y|\leq M}\big|\S^\beta_\vep(q,y)-\oS(q,|y|)\big|=0.
\end{split}
\end{align}
\end{prop}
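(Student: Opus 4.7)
The plan is to apply the strong Markov property of $W$ at the 2D Bessel hitting time $\tau := T_R(\rho)$ with $R := \log\log\vep^{-1}$, and to exploit the angular ergodicity of the skew-product \eqref{def:skew} to replace the fixed-angle initial condition at $y$ by the uniform one. Decomposing at $\tau$,
\[
\S^\beta_\vep(q,y) = I_\vep(q, y) + \E_y\!\bigl[\e^{A_\vep(\tau) - \vep^2 q \tau}\,\S^\beta_\vep(q, W_\tau)\bigr],
\]
where $I_\vep$ collects the contribution from $\{t \le \tau\}$. Using $\varphi \le \|\varphi\|_\infty \1_{\{|\cdot|\le M_\varphi\}}$, the pathwise bound $A_\vep(\tau) \le \lv\|\varphi\|_\infty \int_0^\tau \1_{\{\rho_r \le M_\varphi\}}\,\d r$, the 2D Bessel occupation formula \eqref{eq:Tr}, and an exponential-moment estimate on that integral, I get $I_\vep \le C(\|\varphi\|_\infty, M_\varphi)\,\lv^2 \log\log\log\vep^{-1}$, which matches the additive error in \eqref{lem:main:c:bdd}.

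In the Markov piece, $W_\tau = R\,\e^{\i\theta_\tau}$ with $\theta_\tau = \gamma_{S^{-1}_\tau}$ and $\gamma \ind \rho$. The mixing estimate \eqref{mixing} gives
\[
\bigl|\E[\S^\beta_\vep(q, W_\tau)\mid\rho] - \oS(q, R)\bigr| \le \mathbf g(S^{-1}_\tau)\,\sup_{|z|=R}\S^\beta_\vep(q, z),
\]
where the supremum is uniformly bounded in $\vep$ via Proposition~\ref{prop:bounds} (applied through integration by parts in the Laplace transform). Thus the Markov piece equals $\E_y[\e^{A_\vep(\tau)-\vep^2 q\tau}]\,\oS(q, R)$ plus a residual bounded by $C\,\E_y[\e^{A_\vep(\tau) - \vep^2 q \tau}\mathbf g(S^{-1}_\tau)]$. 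For the multiplicative factor I invoke 2D Bessel Feynman--Kac: $A_\vep \ge 0$ gives $\E_y[\cdots] \ge \E_y[\e^{-\vep^2 q\tau}] = I_0(|y|\sqrt{2\vep^2 q})/I_0(R\sqrt{2\vep^2 q})$, while the upper bound $A_\vep(\tau) \le \lv\|\varphi\|_\infty \tau$ combined with comparison of Sturm--Liouville problems yields $\E_y[\cdots] \le 1/J_0(R\sqrt{2\lv\|\varphi\|_\infty})$. The integral representations $I_0(x) = \pi^{-1}\!\int_{-1}^1(1-t^2)^{-1/2}\cosh(xt)\,\d t$ and $J_0(x) = \pi^{-1}\!\int_{-1}^1 (1-t^2)^{-1/2}\cos(xt)\,\d t$ produce the displayed ratios in \eqref{lem:main:c:bdd}; monotonicity of $I_0$ on $[0,\infty)$ permits the substitution of $M$ for $|y|$ in the lower bound at cost $o(1)$.

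The residual is controlled by Cauchy--Schwarz: its $L^2$-factor is uniformly bounded by the Feynman--Kac argument above, and for the mixing factor I expand $\mathbf g(s)^2 = 4\sum_{n,m\ge 1}\e^{-(n^2+m^2)s/2}$, use $S^{-1}_\tau = T_{\log R}(\beta)$ from \eqref{ineq:SP:g}, and the 1D Brownian hitting-time transform $\E_{\log|y|}[\e^{-\alpha T_{\log R}(\beta)}] = (|y|/R)^{\sqrt{2\alpha}}$ to estimate $\E_y[\mathbf g(S^{-1}_\tau)^2] \lesssim |y|/R \le M/R$, delivering the $C/\sqrt{\log\log\vep^{-1}}$ multiplicative error. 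The final claim \eqref{main:b} then follows because the same scheme applies verbatim with $\delta_y$ replaced by $\u_{|y|}$ to yield two-sided bounds for $\oS(q, |y|)$ in terms of $\oS(q, R)$ with identical error orders; since the ratios tend to $1$, the errors vanish, and $\sup_\vep \oS(q, R) < \infty$ by Proposition~\ref{prop:bounds}, the difference $\S^\beta_\vep(q, y) - \oS(q, |y|)$ tends to $0$ uniformly in $0 < |y| \le M$. The hardest step is the Cauchy--Schwarz bound on the residual: controlling $\mathbf g(S^{-1}_\tau)$ uniformly as $|y|\to 0$, where $S^{-1}_\tau = T_{\log R}(\beta)$ starts from $\log|y| \to -\infty$; summability of the double series $\sum_{n,m} (|y|/R)^{\sqrt{n^2+m^2}}$ survives only thanks to the logarithmic gap $\log R - \log|y| \to \infty$, and this is what ultimately limits the rate of convergence to the $\sqrt{\log\log\vep^{-1}}$ scale.
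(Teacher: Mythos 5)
Your proposal reproduces the paper's strategy essentially verbatim: decompose $\S^\beta_\vep(q,y)$ at the hitting time $T_R(\rho)$ with $R=\log\log\vep^{-1}$, treat the pre-hitting piece via the occupation formula \eqref{eq:Tr} and an exponential-moment estimate to get the additive $\lv^2\log\log\log\vep^{-1}$ error, and for the Markov piece use the skew-product representation \eqref{def:skew} together with the angular mixing bound \eqref{mixing} and the Bessel exponential-functional formulas \eqref{bes:hit2} and \eqref{bm:prob4} (in their integral forms) to obtain the displayed ratios times $\oS(q,R)$. The one genuine deviation is your treatment of the mixing residual: you apply Cauchy--Schwarz, expand $\mathbf g(s)^2=4\sum_{n,m\ge1}\e^{-(n^2+m^2)s/2}$, and invoke $\E_{\log|y|}[\e^{-\alpha T_{\log R}(\beta)}]=(|y|/R)^{\sqrt{2\alpha}}$, while the paper avoids squaring by pulling $\mathbf g$ out of the expectation term by term (Fubini) and shifting the hitting-time decay rate, computing $\E^\beta_{\beta_0}[\e^{cT_{\log R}(\beta)}\mathbf g(T_{\log R}(\beta))]=2\sum_{n\ge1}\e^{-\sqrt{n^2-2c}\,(\log R-\beta_0)}$ in \eqref{eq2:main:caux}. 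Both routes deliver the $O(R^{-1/2})$ multiplicative error; the paper's spares you the doubled-exponent Feynman--Kac moment check that Cauchy--Schwarz demands, though that check is harmless here since $R^2\lv\to0$. One minor misstatement in your closing discussion: the regime $|y|\to 0$ is the \emph{easiest}, not the hardest, case for the residual --- the factor $(|y|/R)^{\sqrt{2\alpha}}$ shrinks as the gap $\log R-\log|y|$ widens, so both the convergence and the summability of $\sum_{n,m}(|y|/R)^{\sqrt{n^2+m^2}}$ only improve as $|y|\to 0$; the supremum over $0<|y|\le M$ is governed by $|y|$ near $M$, exactly as the paper's $\sup_{\beta_0\le\log M}$ formulation in \eqref{eq2:main:caux} makes visible.
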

\begin{proof}
Write $\eta_\vep=\log \log \vep^{-1}$, and decompose $\S^\beta_\vep(q,y)$ according to
\begin{align}\label{S:main}
\S_\vep^\beta(q,y) &=\lv^2 \E^W_{ y}\left[\int_0^{T_{\eta_\vep}(\rho)}\e^{-\vep^2qt}\e^{A_\vep(t)}\varphi(W_t)\d t\right]
 +\lv^2 \E^W_{ y}\left[\int_{T_{\eta_\vep}(\rho)}^\infty \e^{-\vep^2qt}\e^{A_\vep(t)}\varphi(W_t)\d t\right]\notag\\
&=\I_{\ref{S:main}}(\vep,q,y)+\II_{\ref{S:main}}(\vep,q,y).
\end{align}
We bound the last two terms separately in the next two steps. \medskip

\noindent {\bf Step 1.} To bound $\I_{\ref{S:main}}(\vep,q,y)$ over $0<|y|\leq M$, we consider
\begin{align}
&\eqspace\I_{\ref{S:main}}(\vep,q,y)
\leq \lv^2 \|\varphi\|_\infty\E^W_{y}\left[\int_0^{T_{\eta_\vep}(\rho)}\exp\left\{\lv\|\varphi\|_\infty\int_0^{t} \1_{[0,M_\varphi]}(\rho_r)\d r\right\} \1_{[0,M_\varphi]}(\rho_t)\d t\right]\notag\\
&= \lv^2 \|\varphi\|_\infty\E^\rho_{|y|}\left[\int_0^{T_{\eta_\vep}(\rho)}\exp\left\{\lv\|\varphi\|_\infty\int_{t}^{T_{\eta_\vep}(\rho)} \1_{[0,M_\varphi]}(\rho_r )\d r\right\} \1_{[0,M_\varphi]}(\rho_t)\d t\right]\notag\\
&\leq \lv^2 \|\varphi\|_\infty\E^\rho_{ |y|}\left[\int_0^{T_{\eta_\vep}(\rho)}\1_{[0,M_\varphi]}(\rho_t)\E^\beta_{ \log \rho_t}\left[\exp\left\{\lv \|\varphi\|_\infty\int_{0}^{T_{ \log \eta_\vep}(\beta)}\e^{2\beta_r}\d r\right\}\right] \d t\right]\notag\\
&\leq \lv^2 \|\varphi\|_\infty\E^\rho_{ |y|}\left[\int_0^{T_{\eta_\vep}(\rho)}\1_{[0,M_\varphi]}(\rho_t) \d t\right]\sup_{z:0<|z|\leq M}
\E^\beta_{ \log |z|}\left[\exp\left\{\lv \|\varphi\|_\infty\int_{0}^{T_{ \log \eta_\vep}(\beta)}\e^{2\beta_r}\d r\right\}\right],\label{II:equilibrium}
\end{align}
where the first equality follows from the second equality in \eqref{taylor1}, and the first inequality follows from the Markov property of $(\rho_t)$ and \eqref{ineq:SP:g}. Note that the use of $\E^\beta_{\log \rho_t}$ in the first inequality is valid since  $\beta_0=\log \rho_0$ by \eqref{def:skew}. In the rest of this proof, we only consider $\vep\in (0,\ov)$ such that $M\leq \eta_\vep<0.5\cdot j_{0,1}/\sqrt{2\lv\|\varphi\|_\infty}$ unless otherwise mentioned, where $j_{0,1}$ is the smallest positive zero of the Bessel function of the first kind.

To bound the right-hand side of \eqref{II:equilibrium}, first, note that by \eqref{eq:Tr}, for all $0<|y|\leq M$,
\begin{align*}
\E^\rho_{ |y|}\left[\int_0^{T_{\eta_\vep}(\rho)}\1_{[0,M_\varphi]}(\rho_t) \d t\right]
&\leq C(M_\varphi)\log \eta_\vep. 
\end{align*}
Additionally, for all $0<|z|\leq M$, it follows from \eqref{bm:prob4} that
\begin{align*}
\E^\beta_{ \log |z|}\left[\exp\left\{\lv \|\varphi\|_\infty\int_{0}^{T_{ \log \eta_\vep}(\beta)}\e^{2\beta_r}\d r\right\}\right]\leq \frac{\int_{-1}^1 (1-t^2)^{-1/2}\d t}{\int_{-1}^1 (1-t^2)^{-1/2}\cos( \eta_\vep\sqrt{2\lv \|\varphi\|_\infty}t)\d t}\leq C(\|\varphi\|_\infty).
\end{align*}
Applying the last two displays to \eqref{II:equilibrium} shows that
\begin{align}
\sup_{y:0<|y|\leq M}\I_{\ref{main}}(\vep,q,y)\leq C(\|\varphi\|_\infty,M_\varphi)(\log \eta_\vep)\lv^2.\label{lem:main:c:bdd1}
\end{align}

\noindent {\bf Step 2.}
To bound $\II_{\ref{main}}$, we use the strong Markov property of planar Brownian motion at $T_{\eta_\vep}(\rho)$:
\begin{align}
\II_{\ref{main}}(\vep,q,y)
&= \E_{ y}^W\left[\e^{-\vep^2q T_{\eta_\vep}(\rho)+A_\vep(T_{\eta_\vep}(\rho))}\S^\beta_\vep\big(q,\eta_\vep\exp\big\{\i \dgamma_{\int_0^{T_{\eta_\vep}(\rho)}\d v/\rho_v^2}\big\}\big)\right]\notag\\
&\leq \E^W_{y}\left[\exp\left\{ \lv\|\varphi\|_\infty\int_0^{T_{\eta_\vep}(\rho)}\1_{[0,M_\varphi]}(\rho_r)\d r\right\}\S^\beta_\vep\big(q,\eta_\vep\exp\big\{\i \dgamma_{\int_0^{T_{\eta_\vep}(\rho)}\d v/\rho_v^2}\big\}\big)\right]\notag\\
\begin{split}
&\leq \E^\rho_{|y|}\left[\exp\left\{\lv \|\varphi\|_\infty\int_0^{T_{\eta_\vep}(\rho)}\1_{[0,M_\varphi]}(\rho_r)\d r\right\}\right]\oS(q,\eta_\vep)\\
& \eqspace + \E^\rho_{ |y|}\left[\exp\left\{\lv\|\varphi\|_\infty\int_0^{T_{\eta_\vep}(\rho)}\1_{[0,M_\varphi]}(\rho_r)\d r\right\}\mathbf g\left(\int_0^{T_{\eta_\vep}(\rho)}\frac{\d v}{\rho_v^2}\right)\right]\oS(q,\eta_\vep)
\label{eq1:main:c}
\end{split}\end{align}
by \eqref{mixing}, where the last inequality follows since $(\rho_t)\ind (\gamma_t)$ and $\int_0^{T_{\eta_\vep}(\rho)}\d v/\rho_v^2>0$ a.s. 

To bound the expectations in \eqref{eq1:main:c}, note that by \eqref{ineq:SP:g},
 \begin{align*}
&\int_0^{T_{\eta_\vep}(\rho)}\frac{\d v}{\rho_v^2}=T_{\log \eta_\vep}(\beta)\quad\&\quad
\int_0^{T_{\eta_\vep}(\rho)}\1_{[0,M_\varphi]}(\rho_r)\d r\leq \eta_\vep^2T_{\log \eta_\vep}(\beta).
\end{align*}
Hence, the bound in \eqref{eq1:main:c} shows that for all $0<|y|\leq M$, 
\begin{align}
\II_{\ref{main}}(\vep,q,y)&\leq 
 \E^\rho_{|y|}\left[\exp\left\{\lv \|\varphi\|_\infty\int_0^{T_{\log \eta_\vep}(\beta)}\e^{2\beta_r}\d r\right\}\right]\oS(q,\eta_\vep)\notag\\
&\eqspace + \E^\rho_{ |y|}\left[\exp\left\{\eta_\vep^2\lv\|\varphi\|_\infty T_{\log \eta_\vep}(\beta)\right\}\mathbf g\big(T_{\log \eta_\vep}(\beta)\big)\right]\oS(q,\eta_\vep)\notag\\
\begin{split}
&\leq\frac{\int_{-1}^1 (1-t^2)^{-1/2}\d t}{\int_{-1}^1 (1-t^2)^{-1/2}\cos(\eta_\vep \sqrt{2\lv \|\varphi\|_\infty}t)\d t}\oS(q,\eta_\vep)\\
&\eqspace+ \E^\rho_{ |y|}\big[\exp\left\{\eta_\vep^2\lv\|\varphi\|_\infty T_{\log \eta_\vep}(\beta)\right\}\mathbf g\big(T_{\log \eta_\vep}(\beta)\big)\big]\oS(q,\eta_\vep),
\label{eq1:main:cc}
\end{split}
\end{align}
where the last inequality uses \eqref{bm:prob4}. Similarly, for all $0<|y|\leq M$,
\begin{align}
\II_{\ref{main}} (\vep,q,y)&\geq  \E^\rho_{|y|}[\e^{-\vep^2q T_{\eta_\vep}(\rho)}]\oS(q,\eta_\vep)- \E^\rho_{ |y|}\big[\e^{-\vep^2q T_{\eta_\vep}(\rho)}\mathbf g\big(T_{\log \eta_\vep}(\beta)\big)\big]\oS(q,\eta_\vep)\notag\\
\begin{split}
&\geq  \frac{\int_{-1}^1 (1-t^2)^{-1/2}\cosh(M \sqrt{2 \vep^2q }t)\d t}{\int_{-1}^1 (1-t^2)^{-1/2}\cosh(\eta_\vep \sqrt{2 \vep^2q }t)\d t} \oS(q,\eta_\vep)-
 \E^\rho_{ |y|}\big[\mathbf g\big(T_{\log \eta_\vep}(\beta)\big)\big]\oS(q,\eta_\vep)
\label{eq2:main:c}
\end{split}
\end{align}
by \eqref{bes:hit2}. For the expectations in \eqref{eq1:main:cc} and \eqref{eq2:main:c}, note that  for any $c\in [0,\|\varphi\|]$, 
\begin{align}
&\eqspace\sup_{\beta_0:\beta_0\leq \log M}\E^\beta_{\beta_0}\big[\e^{\eta_\vep^2\lv cT_{\log \eta_\vep}(\beta)}\mathbf g\big(T_{\log \eta_\vep}(\beta)\big)\big]=\sup_{\beta_0:\beta_0\leq  \log M}2\sum_{n=1}^\infty \E^\beta_{\beta_0}\big[\e^{-(-c\eta_\vep^2\lv +n^2/2)T_{\log \eta_\vep}(\beta)}\big]\notag\\
&=\sup_{\beta_0:\beta_0\leq  \log M}2\sum_{n=1}^\infty \e^{-\sqrt{-2c\eta_\vep^2\lv +n^2}(\log \eta_\vep-\beta_0)}\leq \frac{C(M,\lambda)}{\eta_\vep^{1/2}},
\label{eq2:main:caux}
\end{align}
where the second equality follows from \cite[Proposition~II.3.7 on p.71]{RY}, and the last inequality holds by choosing smaller $\vep$ if necessary in a way that still depends only on $(\|\varphi\|_\infty,M,\lambda)$.

By \eqref{eq1:main:cc}--\eqref{eq2:main:caux}, we get, for all $\vep\in (0, \vep(\|\varphi\|_\infty,M,\lambda))$, 
\begin{align}\label{lem:main:c:bdd2}
\begin{split}
\inf_{y:0<|y|\leq M}\II_{\ref{main}} (\vep,q,y)&\geq  \frac{\int_{-1}^1 (1-t^2)^{-1/2}\cosh(M \sqrt{2 \vep^2q }t)\d t}{\int_{-1}^1 (1-t^2)^{-1/2}\cosh(\eta_\vep \sqrt{2q \vep^2 }t)\d t}\oS(q,\eta_\vep)-\frac{C(M,\lambda)}{\eta^{1/2}_\vep}\oS(q,\eta_\vep),\\
 \sup_{y:0<|y|\leq M}\II_{\ref{main}} (\vep,q,y)&\leq\frac{\int_{-1}^1 (1-t^2)^{-1/2}\d t}{\int_{-1}^1 (1-t^2)^{-1/2}\cos(\eta_\vep \sqrt{2\lv\|\varphi\|_\infty }t)\d t} \oS(q,\eta_\vep)+ \frac{C(M,\lambda)}{\eta_\vep^{1/2}}\oS(q,\eta_\vep).
\end{split}
\end{align}

\noindent {\bf Step 3.} Putting together \eqref{lem:main:c:bdd1} and \eqref{lem:main:c:bdd2} yields the bounds in \eqref{lem:main:c:bdd}. The proof is complete.
\end{proof}

\subsubsection{Radialization of the potential}
The next step is to reduce the potential $\varphi$ in $P^\beta_{\vep;t}f(x)$ to a radially symmetric function.
The method considers an alternative aspect of $\S^\beta_\vep(q,y)$ by the function
\begin{align}\label{def:Fvep}
F_\vep(y,t)\defeq\lv \E_{\vep y}[\e^{A^o_\vep(t)}].
\end{align}
By \eqref{diff:forward},
\begin{align}\label{def:Fvep:Lap}
\mathcal LF_{\vep,y}(q)\defeq \int_0^\infty q\e^{-qt}F_\vep(y,t) \d t=\lv +\S^\beta_\vep(q,y).
\end{align}

Let us state the required approximation. Define the {\bf logarithmic energy} of $f$ by
\begin{align}\label{def:Ef}
\mathcal E(f)(y)\defeq\frac{1}{\pi}f(y)\int_{\R^2}f(z)\log \frac{1}{|y-z|}\d z. 
\end{align}
Since $\hvarphi$ is bounded and has a support in $0\leq |y|\leq M_\varphi$, $\mathcal E(\hvarphi)(y)$ has a support in the same set and 
\[
\|\mathcal E(\hvarphi)\|_\infty\leq \|\varphi\|_\infty^2\cdot\sup_{y:0\leq |y|\leq M_\varphi}\int_{|z|\leq M_\varphi}\big|\log |y-z|\big|\d z=C(\|\varphi\|_\infty,M_\varphi).
\]
We choose the approximation to be the following exponential functional of $\BES^2$:
\begin{align}\label{def:Gvep}
G_\vep(|y|,t)=\lv \E_{\vep |y|}^\rho\left[\exp\left\{\lv \int_0^t\oovarphi_\vep (\rho_r)\d r\right\}\right],
\end{align}
where, with the uniform distribution $\u$ on $\T$ and $\R^2$ identified as the complex plane, 
\begin{align}\label{def:oovarphi}
\oovarphi_\vep(|y|)&\defeq \ovarphi_\vep(|y|)+ \lv\int_{\T}\vep^{-2}\mathcal E(\hvarphi)(\vep^{-1}|y|\e^{\i \theta})\u(\d \theta).
\end{align}
We also set 
\begin{align}
 \mathcal LG_{\vep,|y|}(q)&\defeq\int_0^\infty q\e^{-qt}G_\vep(|y|,t)\d t,\label{Lap:FG}\\
\oovarphi\uvep(|y|)&\defeq \ovarphi(|y|)+ \lv\oevarphi(|y|),\quad \mbox{where}\quad \oevarphi(|y|)=\int_{\T}\mathcal E(\hvarphi)(|y|\e^{\i \theta})\u(\d \theta).
\label{def:varphi0}
\end{align}
Note that by radial symmetry, we can replace $\E^\rho_{\vep|y|}$ in \eqref{def:Gvep} with $\E^W_{\vep y}$ or $\E^{W}_{\mathbf u_{\vep |y|}}$, and we have an important positivity of $\oovarphi\uvep$ that will be used in Section~\ref{sec:radial}: for $\ovp=\vep(\|\varphi\|_\infty,M_\varphi)\in (0,\ov)$,
\begin{align}\label{oovarphi:positive}
\oovarphi\uvep(y)\geq 0,\quad \forall\;y\in \R^2,\;\vep\in (0,\ovp).
\end{align}
The property in \eqref{oovarphi:positive} follows from the definition of $\oovarphi\uvep$ upon writing out its correction term:
\[
\lv\oevarphi=\lv\int_{\T}\frac{1}{\pi}[\varphi(|y|\e^{\i\theta})-\ovarphi(|y|)]\int_{\R^2}[\varphi(z)-\ovarphi(|z|)]\log \frac{1}{|y-z|}\d z\u (\d \theta)
\]
and bounding the $\d z$-integral on the right-hand side over all $y\in \supp(\varphi)$. 

\begin{prop}\label{prop:replaceangle}
For all $M\in [M_\varphi,\infty)$, $q\in (q(\|\varphi\|,\lambda),\infty)$, and $\vep\in (0,\vep(\|\varphi\|_\infty,M_\varphi,M,q,\lambda)\wedge \ov)$, 
\begin{align}\label{ineq:replaceangle}
\sup_{y:0<|y|\leq M}|\mathcal LF_{\vep,y}(q)-\mathcal LG_{\vep,|y|}(q)|\leq C(\|\varphi\|_\infty,M_\varphi,M,q,\lambda)\lv.
\end{align}
\end{prop}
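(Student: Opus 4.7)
My plan is to compare $F_\vep$ with $G_\vep$ through a Duhamel identity and to exploit the zero angular mean of $\hvarphi$: the oscillating perturbation $\lv\hvarphi_\vep(W_r)$ contributes only at second order, via the fast mixing of the Brownian angle, and that second-order contribution is precisely the logarithmic-energy correction $\lv^2\vep^{-2}\oevarphi(\vep^{-1}|y|)$ that distinguishes $\oovarphi_\vep$ from $\ovarphi_\vep$. The claimed $O(\lv)$ rate is the residue from this cancellation.

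Set $\overline B_\vep(t)=\lv\int_0^t\oovarphi_\vep(|W_r|)\d r$, so that $G_\vep(|y|,t)/\lv=\E_{\vep y}^W[\e^{\overline B_\vep(t)}]$ by radial symmetry of $\oovarphi_\vep$. The telescoping identity
\[
\e^{A^o_\vep(t)}-\e^{\overline B_\vep(t)}=\lv\int_0^t\e^{A^o_\vep(s)}\bigl(\varphi_\vep-\oovarphi_\vep(|\cdot|)\bigr)(W_s)\,\e^{\overline B_\vep(t)-\overline B_\vep(s)}\,\d s,
\]
together with the pointwise identity $\varphi_\vep-\oovarphi_\vep(|\cdot|)=\hvarphi_\vep-\lv\vep^{-2}\oevarphi(\vep^{-1}|\cdot|)$, taking $\E_{\vep y}^W$, and applying the Laplace transform in $t$ yields a representation of $\mathcal LF_{\vep,y}(q)-\mathcal LG_{\vep,|y|}(q)$ as a Laplace-type double integral of
\[
\E_{\vep y}^W\bigl[\e^{A^o_\vep(s)}\bigl(\hvarphi_\vep-\lv\vep^{-2}\oevarphi(\vep^{-1}\cdot)\bigr)(W_s)\,\e^{\overline B_\vep(t)-\overline B_\vep(s)}\bigr].
\]
Both exponential moments can be controlled uniformly via Proposition~\ref{prop:bounds}, Lemma~\ref{lem:gronwall} (applicable to the $G_\vep$-side since $\oovarphi_\vep\geq 0$ by \eqref{oovarphi:positive}), and the Markov property at time $s$; consequently it suffices to analyze the bare kernel.

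To extract the cancellation, I apply the Markov property at $s$ and perform one further Duhamel expansion of the factor $\e^{\overline B_\vep(t)-\overline B_\vep(s)}$, separating the integrand into a leading pair and a higher-order remainder carrying an extra factor $\lv$. In the leading $\lv^2$ integral I pass to the skew-product $W_r=\rho_r\e^{\i\theta_r}$ with $\gamma\ind\rho$, condition on the Bessel trajectory, and integrate out the angle using the uniform-mixing estimate \eqref{mixing} and the orthogonality $\int_\T\hvarphi(b\e^{\i\theta})\u(\d\theta)=0$. The dominant contribution is the short-time two-point correlation, in which the conditional angular density is essentially uniform; after the Brownian rescaling that produced the second equality in \eqref{main}, this reduces to exactly the logarithmic-energy integral that defines $\lv^2\vep^{-2}\oevarphi(\vep^{-1}|W_s|)$ through \eqref{def:Ef} and \eqref{def:varphi0}, cancelling the subtracted term.

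The hardest part will be quantifying the angular-mixing remainder uniformly over the rescaled horizon $[0,t/\vep^2]$ and over all $0<|y|\le M$, so that the leftover error is genuinely $O(\lv)$. Bessel excursions to the logarithmic scale $\log\log\vep^{-1}$ encountered in Proposition~\ref{prop:main:c} create potentially divergent contributions that must be tamed using the compact support $\supp(\hvarphi)\subseteq\{|z|\le M_\varphi\}$ together with the exponential tail of $\mathbf g$ in \eqref{mixing}; the Bessel Green-function identities \eqref{eq:Tr}--\eqref{eq:Tl} will govern the resulting integrals. Once this estimate is in place, all Duhamel terms beyond second order carry an extra factor $\lv$ and are absorbed by iterating Proposition~\ref{prop:bounds}, leaving the residue of order $\lv$ claimed in \eqref{ineq:replaceangle}.
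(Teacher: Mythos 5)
Your overall strategy shares the paper's key insight—isolate the zero-angular-mean perturbation $\hvarphi_\vep$ and show that its second-order self-interaction reproduces the logarithmic-energy correction $\lv\oevarphi$ built into $\oovarphi\uvep$—but the execution has a concrete gap that would stop the argument.

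You propose to ``perform one further Duhamel expansion of the factor $\e^{\overline B_\vep(t)-\overline B_\vep(s)}$'' to obtain a two-point correlation that cancels the $\oevarphi$ term. This is the wrong exponential to expand. Since $\overline B_\vep(t)-\overline B_\vep(s)=\lv\int_s^t\oovarphi_\vep(\rho_r)\,\d r$ and $\oovarphi_\vep$ is radially symmetric, iterating the backward factor yields only further radial insertions; it produces no $\hvarphi$ factor. The $\hvarphi\hvarphi$ cross-term whose resolvent coupling $\tfrac{1}{\pi}\log\tfrac{1}{|z-z'|}$ generates the logarithmic energy \eqref{def:Ef} must come from iterating the \emph{forward} factor $\e^{A^o_\vep(s)}$, which carries the non-radial potential $\varphi_\vep=\ovarphi_\vep+\hvarphi_\vep$. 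This is exactly the role of term ${\rm V}_{\ref{dec:F}}$ in the paper's decomposition \eqref{dec:F}, which is produced by substituting the $F$-side integral equation \eqref{Lap:F} into itself once. Relatedly, the mechanism you invoke—angular mixing via \eqref{mixing} together with $\int_\T\hvarphi(b\e^{\i\theta})\u(\d\theta)=0$—supplies the stationarity/orthogonality that makes terms ${\rm II}_{\ref{dec:F}}$ and ${\rm IV}_{\ref{dec:F}}$ vanish \emph{exactly} (Lemma~\ref{lem:rad234}), but it does not, by itself, manufacture the logarithm; that comes from the resolvent singularity in \eqref{asymp:gauss}.

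A second structural piece is missing. After expanding once, the residual term is self-referential: it contains $\mathcal LF_{\vep,z'}(q)$ (equivalently $\mathcal L\oF_{\vep,|z'|}(q)$) evaluated at points in the support of $\varphi$, which is precisely the quantity being estimated. The paper handles this by first controlling the non-radial part $\mathcal L\wF_{\vep,y}(q)$ in Lemma~\ref{lem:hFsmall} and then closing the recursion through the Gr\"onwall-type inequality \eqref{Deltavep:bdd}, where the prefactor $1-C'/\log\vep^{-1}<1$ is what delivers the $O(\lv)$ bound. Your plan controls exponential moments via Proposition~\ref{prop:bounds} and Lemma~\ref{lem:gronwall} (which is sound, and your observation that \eqref{oovarphi:positive} is needed on the $G$-side is correct), but it has no analogue of the closing iteration, and without it the ``residue of order $\lv$'' you claim cannot be extracted. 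You flag the uniformity over $0<|y|\le M$ as the hardest part; in fact the harder part is closing the self-referential estimate, which is where \eqref{Deltavep:bdd} and the intermediate radialization $\mathcal L\oF$ do the real work.
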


For the proof, consider the following decompositions according to \eqref{def:Jdec}:
\begin{align}
\oF_\vep(|y|,t)&=\lv \E_{\u_{\vep|y|}}[\e^{A^o_\vep(t)}],\quad \wF_\vep(y,t)=F_\vep(y,t)-\oF_\vep(|y|,t),\label{def:varF}\\
 \mathcal L\oF_{\vep,|y|}(q)&=\int_0^\infty \d t q\e^{-qt}\oF_\vep(|y|,t),\quad 
\mathcal L\widehat{F}_{\vep,y}(q)=\mathcal LF_{\vep,y}(q)-\mathcal L\oF_{\vep,|y|}(q).\label{def:varFLap}
\end{align}
By \eqref{eq:FK1}, the following expansions hold:
\begin{align}
\mathcal L F_{\vep,y}(q)&=\lv+\int_{\R^2}\d z\varphi(z)\left(\lv \int_0^\infty \d t\e^{-qt}P_t(\vep y,\vep z)\right)\mathcal LF_{\vep,z}(q),\label{Lap:F}\\
\mathcal L\oF_{\vep,|y|}(q)&=\lv+\int_{\T}\u (\d \theta)  \int_{\R^2}\d z\varphi(z)\left(\lv\int_0^\infty \d t \e^{-qt}P_t(\vep |y|\e^{\i \theta},\vep z)\right)\mathcal LF_{\vep,z}(q),\label{Lap:oF}\\
\mathcal LG_{\vep,|y|}(q)&=\lv+\int_{\T}\u (\d \theta)  \int_{\R^2}\d z\varphi(z)\left(\lv\int_0^\infty \d t \e^{-qt}P_t(\vep |y|\e^{\i \theta},\vep z)\right)\mathcal LG_{\vep,|z|}(q).\label{Lap:G}
\end{align}
The goal for the proof of Proposition~\ref{prop:replaceangle} is to extract an approximate integral equation for $\mathcal L\oF_{\vep,|y|}(q)$ from \eqref{Lap:oF} that takes the same form of \eqref{Lap:G}.  

\begin{lem}\label{lem:hFsmall}
For all $M\in (0,\infty)$, $q\in (q(\|\varphi\|,\lambda),\infty)$, and $\vep\in (0,\vep(\|\varphi\|_\infty,M_\varphi,M,q,\lambda)\wedge \ov)$,
\begin{align}\label{hF:bdd}
\sup_{y:0<|y|\leq M}|\mathcal L\wF_{\vep,y}(q)|\leq  C(\|\varphi\|_\infty,M_\varphi,M,q,\lambda)\lv .
\end{align}
\end{lem}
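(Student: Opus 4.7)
The plan is to subtract \eqref{Lap:oF} from \eqref{Lap:F}. Both share the inhomogeneous term $\lv$, and the factor $\mathcal LF_{\vep,z}(q)$ in the integrand is independent of the angular variable $\theta$, so we obtain the closed-form identity
\[
\mathcal L\wF_{\vep,y}(q)=\lv\int_{\R^2}\d z\,\varphi(z)\,\Delta_\vep(y,z,q)\,\mathcal LF_{\vep,z}(q),
\]
where
\[
\Delta_\vep(y,z,q)\defeq\int_0^\infty\e^{-qt}\left[P_t(\vep y,\vep z)-\int_{\T}P_t(\vep|y|\e^{\i\theta},\vep z)\u(\d\theta)\right]\d t.
\]
The task thus reduces to (i) bounding $\int\varphi(z)|\Delta_\vep(y,z,q)|\d z$ uniformly in $\vep$ and $|y|\leq M$, and (ii) bounding $|\mathcal LF_{\vep,z}(q)|$ uniformly in $\vep$ and $z\in\supp(\varphi)$.

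For (i) I would use the explicit resolvent formula $\int_0^\infty\e^{-qt}P_t(x,x')\d t=\pi^{-1}K_0(\sqrt{2q}|x-x'|)$ together with Graf's addition theorem for $K_0$, whose angular average in the argument of the first variable collapses the Fourier-Bessel expansion to the zeroth mode, yielding
\[
\pi\Delta_\vep(y,z,q)=K_0\!\bigl(\sqrt{2q}\,\vep|y-z|\bigr)-I_0\!\bigl(\sqrt{2q}\,\vep r_<\bigr)K_0\!\bigl(\sqrt{2q}\,\vep r_>\bigr),
\]
with $r_<=\min(|y|,|z|)$ and $r_>=\max(|y|,|z|)$. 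The small-argument expansions $K_0(r)=-\log(r/2)-\EM+O(r^2|\log r|)$ and $I_0(r)=1+O(r^2)$ make the two $-\log\vep$ singularities cancel and produce a pointwise bound of the form $|\Delta_\vep(y,z,q)|\leq C(q,M)(1+|\log|y-z||+|\log r_>(y,z)|)$ for all sufficiently small $\vep$. Since $\varphi\in\C_c$ is bounded with support in $\{|z|\leq M_\varphi\}$, the local integrability of $\log$ in $\R^2$ (using $r_>\geq|z|$ on one part and $|y|^2|\log|y||\to 0$ on the other) yields $\sup_{|y|\leq M}\int\varphi(z)|\Delta_\vep(y,z,q)|\d z\leq C(\|\varphi\|_\infty,M_\varphi,M,q)$.

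For (ii) I would combine $\mathcal LF_{\vep,z}(q)=\lv+\S^\beta_\vep(q,z)$ from \eqref{def:Fvep:Lap} with a Laplace-transform of the a-priori bound $\int_0^T\lv^2\E_{\vep z}[\e^{A_\vep^o(t)}\varphi_\vep(W_t)]\d t\leq C\lv[1+\log^+(\vep^{-2}T)]\e^{q(\|\varphi\|_\infty,\lambda)T}$ supplied by Proposition~\ref{prop:bounds}. For $q>q(\|\varphi\|_\infty,\lambda)$ the resulting $T$-integral is bounded by $C\log\vep^{-1}$, and the critical scaling $\lv\sim 2\pi/\log\vep^{-1}$ absorbs this to give $\sup_{\vep\in(0,\ov)}\sup_{|z|\leq M_\varphi}|\mathcal LF_{\vep,z}(q)|\leq C(\|\varphi\|_\infty,M_\varphi,q,\lambda)$. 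Combining (i) and (ii) with the displayed identity then yields the claimed $O(\lv)$ estimate.

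The principal technical difficulty lies in step (i): each of the two modified Bessel functions diverges like $-\log\vep$ as $\vep\to 0$, so any argument that dominates $|\Delta_\vep|$ by the sum of the two $K_0$'s would produce an extraneous $\log\vep^{-1}$ factor that $\lv$ cannot absorb. Graf's addition formula---equivalently, the Fourier-Bessel expansion of the two-dimensional Green function in polar coordinates---is what makes the exact cancellation of the singular parts explicit. A secondary, routine check is that the resulting bound for $|\mathcal LF_{\vep,z}(q)|$ is genuinely $O(1)$ and not $O(\log\vep^{-1})$, which is precisely where the critical form of $\lv$ is used.
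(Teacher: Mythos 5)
Your proof is correct and follows the same overall strategy as the paper: subtract \eqref{Lap:oF} from \eqref{Lap:F}, show the resolvent-kernel difference has no $\log\vep^{-1}$ singularity, integrate against $\varphi$, and bound $\sup_{|z|\leq M_\varphi}|\mathcal LF_{\vep,z}(q)|=O(1)$ by Laplace-transforming the a-priori bound of Proposition~\ref{prop:bounds} and absorbing the resulting $\log\vep^{-1}$ by the critical scaling of $\lv$.

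The one place you diverge from the paper is step~(i). You invoke Graf's addition theorem to obtain the closed-form angular average $\int_\T K_0(\sqrt{2q}\,\vep\,||y|\e^{\i\theta}-z|)\,\u(\d\theta)=I_0(\sqrt{2q}\,\vep r_<)K_0(\sqrt{2q}\,\vep r_>)$, and then expand. This works, but it is heavier machinery than the paper uses and than the lemma needs. The paper simply expands \emph{both} $\int_0^\infty\e^{-qt}P_t(\vep y,\vep z)\d t$ and its angular average directly by \eqref{asymp:gauss}: the leading term $\tfrac{1}{\pi}\log\vep^{-1}$ is a \emph{constant} independent of $(y,z)$, so it cancels in the difference automatically—no Fourier--Bessel expansion and no exact evaluation of the angular average is required for a bound. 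The remaining $\log\tfrac{2^{1/2}}{q^{1/2}|y-z|}$ pieces are then simply estimated in $L^1(\varphi(z)\d z)$ uniformly in $|y|\leq M$, exactly as you note. Your remark that ``any argument that dominates $|\Delta_\vep|$ by the sum of the two $K_0$'s would produce an extraneous $\log\vep^{-1}$'' is true, but the resolution does not require Graf: it is just the observation that the divergent part of the expansion is a constant. (The exact evaluation of the angular average via \eqref{int:harmonic}, which recovers your $\log(r_>/|y-z|)$, is used by the paper only later, in Lemma~\ref{lem:Phibasymp} and Lemma~\ref{lem:exponent}, where precise constants matter.) Step~(ii) in your proposal matches the paper precisely.
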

\begin{proof}
For this proof and those of the lemmas for Proposition~\ref{prop:replaceangle} presented below, we use the condition 
\begin{align}\label{ass:radvepy}
\vep\in(0,\ov):\vep q^{1/2}(M+2M_\varphi)/2^{1/2}\leq 1\quad \&\quad  y:|y|\leq M.
\end{align}

Expand the resolvent densities in \eqref{Lap:F} and \eqref{Lap:oF} by \eqref{asymp:gauss}. Then under \eqref{ass:radvepy}, 
\begin{align*}
&\eqspace\int_{\R^2}\d z\varphi(z)\left(\lv \int_0^\infty \d t\e^{-qt}P_t(\vep y,\vep z)\right)\mathcal LF_{\vep,z}(q)\\
&=\frac{\lv}{\pi} \int_{\R^2}\d z\varphi(z)\left(\log \vep^{-1}+\log \frac{2^{1/2}}{q^{1/2}|y-z|}-\EM+\mathcal O\left(\frac{\vep^2q|y-z|^2}{2} \log\frac{2^{1/2}} {\vep q^{1/2}|y-z|} \right)\right)\mathcal LF_{\vep,z}(q).
\end{align*}
Take the differences of both sides of \eqref{Lap:F} and \eqref{Lap:oF} by using the foregoing display. Hence, under \eqref{ass:radvepy}, we have
\[
|\mathcal L \widehat{F}_{\vep,y}(q)|\leq  C(\|\varphi\|_\infty,M_\varphi,M,q)\lv\cdot \sup_{z:0<|z|\leq M_\varphi}|\mathcal LF_{\vep,z}(q)|.
\]
The required bound now follows from the foregoing inequality and Proposition~\ref{prop:bounds}.
\end{proof}

Next, we proceed with a key decomposition of $\mathcal L\oF_{\vep,|y|}(q)$ from \eqref{Lap:oF} as follows:
\begin{align}\label{dec:F}
\begin{split}
\mathcal L\oF_{\vep,|y|}(q)&=\lv +\I_{\ref{dec:F}}(\vep,q,y)+\II_{\ref{dec:F}}(\vep,q,y)+\III_{\ref{dec:F}}(\vep,q,y)\\
&\eqspace +{\rm IV}_{\ref{dec:F}}(\vep,q,y)+{\rm V}_{\ref{dec:F}}(\vep,q,y)+{\rm VI}_{\ref{dec:F}}(\vep,q,y).
\end{split}
\end{align}
where
\begin{align}
\I_{\ref{dec:F}}(\vep,q,y)&= \int_{\T}\u (\d \theta)\int_{\R^2}\d z\ovarphi(|z|)\left(\lv\int_0^\infty \d t \e^{-qt}P_t(\vep |y|\e^{\i \theta},\vep z)\right)\mathcal LF_{\vep,z}(q),\label{rad:term1}\\
\I\I_{\ref{dec:F}}(\vep,q,y)&=\int_{\T}\u (\d \theta)\int_{\R^2}\d z\hvarphi(z)\left(\lv\int_0^\infty \d t \e^{-qt}P_t(\vep |y|\e^{\i \theta},\vep z)\right)\lv,\label{rad:term2}\\
\begin{split}\label{rad:term3}
\I\I\I_{\ref{dec:F}}(\vep,q,y)&=\int_{\T}\u (\d \theta)\int_{\R^2}\d z\hvarphi(z)\left(\lv\int_0^\infty \d t \e^{-qt}P_t(\vep |y|\e^{\i \theta},\vep z)\right) \\
&\eqspace\times\int_{\R^2}\d z'\varphi(z')\left(\lv\int_0^\infty\d t' \e^{-qt'}P_{t'}(\vep z,\vep z')\right)\mathcal L\widehat{F}_{\vep,z'}(q),
\end{split}\\
\begin{split}\label{rad:term4}
{\rm IV}_{\ref{dec:F}}(\vep,q,y)
&= \int_{\T}\u (\d \theta)\int_{\R^2}\d z\hvarphi(z)\left(\lv\int_0^\infty \d t \e^{-qt}P_t(\vep |y|\e^{\i \theta},\vep z)\right) \\
&\eqspace\times\int_{\R^2}\d z'\ovarphi(|z'|)\left(\lv\int_0^\infty\d t' \e^{-qt'}P_{t'}(\vep z,\vep z')\right)\mathcal L\oF_{\vep,|z'|}(q),
\end{split}\\
\begin{split}\label{rad:term5}
{\rm V}_{\ref{dec:F}}(\vep,q,y)
&=\int_{\T}\u (\d \theta)\int_{\R^2}\d z\hvarphi(z)\left(\lv\int_0^\infty \d t \e^{-qt}P_t(\vep |y|\e^{\i \theta},\vep z)\right) \\
&\eqspace\times\int_{\R^2}\d z'\hvarphi(z')\left(\lv\int_0^\infty\d t' \e^{-qt'}P_{t'}(\vep z,\vep z')\right)\mathcal L\oF_{\vep,|z'|}(q).
\end{split}
\end{align}
To verify the decomposition in \eqref{dec:F}, the reader may note that according to $\varphi(z')=\ovarphi(|z'|)+\hvarphi(z')$, 
\begin{align*}
{\rm IV}_{\ref{dec:F}}(\vep,q,y)+{\rm V}_{\ref{dec:F}}(\vep,q,y)&=\int_{\T}\u (\d \theta)\int_{\R^2}\d z \widehat{\varphi}(z)\left(\lv\int_0^\infty \d t \e^{-qt}P_t(\vep |y|\e^{\i \theta},\vep z)\right) \\
&\eqspace\times\int_{\R^2}\d z'\varphi(z')\left(\lv\int_0^\infty\d t' \e^{-qt'}P_{t'}(\vep z,\vep z')\right)\mathcal L\oF_{\vep,|z'|}(q).
\end{align*}
So according to $\mathcal LF_{\vep,z'}(q)=\mathcal L\widehat{F}_{\vep,z'}(q)+\mathcal L\oF_{\vep,|z'|}(q)$,
\begin{align*}
&\eqspace {\rm III}_{\ref{dec:F}}(\vep,q,y)+{\rm IV}_{\ref{dec:F}}(\vep,q,y)+{\rm V}_{\ref{dec:F}}(\vep,q,y)\\
&=\int_{\T}\u (\d \theta)\int_{\R^2}\d z\hvarphi(z)\left(\lv\int_0^\infty \d t \e^{-qt}P_t(\vep |y|\e^{\i \theta},\vep z)\right)\\
&\eqspace\times \int_{\R^2}\d z'\varphi(z')\left(\lv\int_0^\infty\d t' \e^{-qt'}P_{t'}(\vep z,\vep z')\right)\mathcal LF_{\vep,z'}(q).
\end{align*}
By the last equality, applying \eqref{Lap:F}, with $(y,z)$ replaced by $(z,z')$ there, to $\mathcal LF_{\vep,z}(q)$  shows that
\begin{align*}
&\eqspace {\rm II}_{\ref{dec:F}}(\vep,q,y)+{\rm III}_{\ref{dec:F}}(\vep,q,y)+{\rm IV}_{\ref{dec:F}}(\vep,q,y)+{\rm V}_{\ref{dec:F}}(\vep,q,y)\\
&=\int_{\T}\u (\d \theta)\int_{\R^2}\d z\hvarphi(z)\left(\lv\int_0^\infty \d t \e^{-qt}P_t(\vep |y|\e^{\i \theta},\vep z)\right)\mathcal LF_{\vep,z}(q).
\end{align*}
Finally, by using $\varphi(z)=\ovarphi(|z|)+\hvarphi(z)$ again, we obtain from the last equality that the right-hand side of \eqref{dec:F} coincides with the right-hand side of \eqref{Lap:oF}.  

In the next three lemmas, we estimate the five terms defined in \eqref{rad:term1}--\eqref{rad:term5}. 

\begin{lem}\label{lem:rad1}
For all $\vep\in (0,\ov)$, $q\in (0,\infty)$, and $y\in \R^2$,  
\begin{align}\label{eq:rad1}
\I_{\ref{dec:F}}(\vep,q,y)=\int_{\R^2}\d z\ovarphi(|z|)\left(\lv\int_0^\infty \d t \e^{-qt}P_t(\vep |y|,\vep z)\right)\mathcal L\oF_{\vep,|z|}(q).
\end{align}
\end{lem}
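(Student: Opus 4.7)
The lemma is essentially an exercise in using the rotational invariance of the Gaussian heat kernel together with Fubini; there is no analytic content beyond that. My plan is as follows.

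First, I would pass to polar coordinates in the $z$-integral, writing $z = r\e^{\i\phi}$ with $\d z = r\,\d r\,\d \phi$, so that
\[
\I_{\ref{dec:F}}(\vep,q,y)
= \int_0^\infty r\,\d r\,\ovarphi(r)\int_\T \d\phi\;\mathcal LF_{\vep,r\e^{\i\phi}}(q)\;\lv\!\int_0^\infty \d t\,\e^{-qt}\int_\T \u(\d\theta)\,P_t(\vep|y|\e^{\i\theta},\vep r\e^{\i\phi}).
\]
This rearrangement is justified by Tonelli, since the integrand is nonnegative (or by Proposition~\ref{prop:bounds} together with the Gaussian bounds on $P_t$, which give absolute integrability).

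Second, I would exploit the rotational invariance $P_t(x,z) = p_t(|x-z|)$ of the two-dimensional heat kernel. For fixed $\phi$, the substitution $\theta\mapsto \theta+\phi$ leaves $\u$ invariant and yields
\[
\int_\T \u(\d\theta)\,P_t(\vep|y|\e^{\i\theta},\vep r\e^{\i\phi})
= \int_\T \u(\d\theta)\,p_t\big(\vep\big||y|\e^{\i\theta}-r\big|\big),
\]
which is independent of $\phi$. Denote the resulting function by $K_t(\vep|y|,\vep r)$. The same computation with the right-hand side of \eqref{eq:rad1} in polar form shows
\[
\int_\T \d\phi\,P_t(\vep|y|,\vep r\e^{\i\phi}) = 2\pi\,K_t(\vep|y|,\vep r),
\]
i.e.\ the ``kernel factor'' in \eqref{eq:rad1} coincides (after integration against $r\,\d r\,\ovarphi(r)$) with what we obtained above.

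Third, the remaining $\phi$-integral attaches only to $\mathcal LF_{\vep,r\e^{\i\phi}}(q)$, and by the very definition of $\oF_\vep(|y|,t)= \lv\E_{\u_{\vep|y|}}[\e^{A^o_\vep(t)}]$ together with \eqref{def:varFLap}, one has
\[
\int_\T \u(\d\phi)\,\mathcal LF_{\vep,r\e^{\i\phi}}(q)=\mathcal L\oF_{\vep,r}(q),
\]
so $\int_\T \d\phi\,\mathcal LF_{\vep,r\e^{\i\phi}}(q) = 2\pi\,\mathcal L\oF_{\vep,r}(q)$. Substituting this back and recombining $r\,\d r\,\d \phi$ into $\d z$ on the right-hand side gives exactly the expression in \eqref{eq:rad1}.

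There is no real obstacle; the only thing to be careful about is the interpretation of $P_t(\vep|y|,\vep z)$ on the right-hand side as $p_t(|\vep|y|\e_1-\vep z|)$ (with $\e_1$ a fixed unit vector), whose angular average in $z$ is independent of the choice of $\e_1$ by the same rotational invariance. Once this is fixed, all steps are just Fubini and a change of variable on $\T$.
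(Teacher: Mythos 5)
Your proof is correct, but it takes a genuinely different route from the paper's. The paper goes through the skew-product representation of planar Brownian motion: it rewrites $\I_{\ref{dec:F}}(\vep,q,y)$ as the expectation $\lv\int_0^\infty\d t\,\e^{-qt}\,\E^W_{\u_{\vep|y|}}\bigl[\vep^{-2}\ovarphi(\vep^{-1}\rho_t)\,\mathcal LF_{\vep,\vep^{-1}\rho_t\exp\{\i\dgamma_{\int_0^t\d v/\rho_v^2}\}}(q)\bigr]$, uses the independence of the radial process $(\rho_t)$ from the angular process $(\gamma_t)$ together with the stationarity of the wrapped angle $(\dgamma_t)$ under the uniform initial law $\u$ to replace the angular coordinate by an independent uniform angle, and then recognizes the resulting $\theta$-average as $\mathcal L\oF_{\vep,\vep^{-1}\rho_t}(q)$ directly from the definitions \eqref{def:varF}--\eqref{def:varFLap}. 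Your argument instead stays at the level of deterministic integrals: pass to polar coordinates in $z$, exchange the $\theta$- and $\phi$-integrals by Tonelli (legitimate since all integrands here are nonnegative), shift $\theta\mapsto\theta+\phi$ and use rotational invariance of the Gaussian kernel to show the kernel factor is independent of $\phi$, and then identify $\int_\T\u(\d\phi)\,\mathcal LF_{\vep,r\e^{\i\phi}}(q)=\mathcal L\oF_{\vep,r}(q)$ from the same definitions. Mathematically the two routes rest on the same fact --- stationarity of $(\dgamma_t)$ under $\u$ \emph{is} rotational invariance of the heat kernel in probabilistic form --- but your version is more elementary, requires no skew-product machinery, and makes the cancellation of the $2\pi$ normalizations completely transparent. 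The paper's version has the advantage of keeping the presentation uniform with the rest of the subsection, where the skew-product decomposition and the two-dimensional Bessel process carry the main analytic load. Both are valid proofs of the lemma.
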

\begin{proof}
By the definition in \eqref{rad:term1} and the skew-product representation in \eqref{def:skew},
\begin{align*}
\I_{\ref{dec:F}}(\vep,q,y)&= \lv\int_0^\infty \d t \e^{-qt}\E^W_{\u_{\vep |y|}}\left[\vep^{-2}\ovarphi(\vep^{-1}\rho_t)\mathcal LF_{\vep,\vep^{-1}\rho_t\exp\big\{\i\dgamma_{\int_0^t \d v/\rho_v^2}\big\}}(q)\right]\\
&= \lv\int_0^\infty \d t \e^{-qt}\E_{\vep |y|}^\rho\left[\vep^{-2}\ovarphi(\vep^{-1}\rho_t)\int_\T \u(\d \theta)\mathcal LF_{\vep,\vep^{-1}\rho_t\e^{\i \theta}}(q)\right]\\
&= \lv\int_0^\infty \d t \e^{-qt}\E_{\vep |y|}^\rho\left[\vep^{-2}\ovarphi(\vep^{-1}\rho_t)\mathcal L\oF_{\vep,\vep^{-1}\rho_t}(q)\right],
\end{align*}
where the second equality follows from the independence between $(\rho_t)$ and $(\gamma_t)$ and the stationarity of $(\dgamma_t)$ under the initial condition $\u$, and the last equality follows from the definitions for $\oF_\vep$ in \eqref{def:varF} and \eqref{def:varFLap}. Rewriting the last equality in terms of the Brownian transition kernels $P_t(x,y)$ gives the required identity in \eqref{eq:rad1}. 
 \end{proof}

\begin{lem}\label{lem:rad234}
Fix $M\in (0,\infty)$ and $q\in (q(\|\varphi\|,\lambda),\infty)$. Then for $|y|\leq M$, 
\begin{align}
\I\I_{\ref{dec:F}}(\vep,q,y)&=0,\quad \forall\;\vep\in (0,\ov);\label{est:2}\\
|\I\I\I_{\ref{dec:F}}(\vep,q,y)|&\leq  C(\|\varphi\|_\infty,M_\varphi,M,q,\lambda)\lv ^2,\quad\forall \;\vep\in (0,\vep(\|\varphi\|_\infty, M_\varphi,M,q,\lambda)\wedge \ov);\label{est:3}\\
{\rm IV}_{\ref{dec:F}}(\vep,q,y)&=0,\quad \forall\; \vep\in (0,\ov). \label{est:4}
\end{align}
\end{lem}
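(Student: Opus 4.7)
The three identities share a common input: the decomposition $\hvarphi=\varphi-\ovarphi$ satisfies $\int_\T \hvarphi(r\e^{\i\phi})\d \phi=0$ for every $r\ge 0$, whence
\begin{equation}\label{plan:meanzero}
\int_{\R^2}\hvarphi(z)\,g(|z|)\,\d z=0\qquad\text{for every measurable }g,
\end{equation}
and in particular $\int \hvarphi(z)\,\d z=0$. Moreover, by the rotational invariance of the Brownian transition density, the change of variable $\theta\mapsto \theta-\arg b$ gives
\begin{equation}\label{plan:radial}
\int_\T P_t(a\e^{\i\theta},b)\,\u(\d \theta)\;=\;\widetilde{P}_t(|a|,|b|)
\end{equation}
for some function $\widetilde{P}_t$ of the moduli alone.

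For $\I\I_{\ref{dec:F}}=0$, the plan is to interchange the $\theta$- and $z$-integrations and apply \eqref{plan:radial}, so that the $\theta$-average of $\lv\int_0^\infty \e^{-qt}P_t(\vep|y|\e^{\i\theta},\vep z)\d t$ becomes a function of $|z|$ alone; the full integrand then takes the form $\hvarphi(z)\lv^2\cdot(\text{radial in }z)$ and vanishes by \eqref{plan:meanzero}. For ${\rm IV}_{\ref{dec:F}}=0$, I would apply \eqref{plan:radial} first to the inner $z'$-integral — its other factors $\ovarphi(|z'|)\mathcal L\oF_{\vep,|z'|}(q)$ being radial in $z'$ — to obtain some $H(|z|)$; then a second application of \eqref{plan:radial} to the outer $\theta$-average yields some $K(|z|)$, and \eqref{plan:meanzero} finishes the proof.

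For $|\I\I\I_{\ref{dec:F}}|\le C\lv^2$, the symmetry argument fails because $\mathcal L\wF_{\vep,z'}(q)$ is not radial. The plan is instead to use the small-argument expansion already invoked in Lemma~\ref{lem:hFsmall}:
\begin{equation*}
\lv\int_0^\infty \e^{-qt}P_t(\vep a,\vep b)\,\d t\;=\;c_\vep+R_\vep(a-b),\qquad c_\vep\defeq\frac{\lv \log\vep^{-1}}{\pi}=2+O(1/\log\vep^{-1}),
\end{equation*}
where $c_\vep$ is independent of $(a,b)$ and $|R_\vep(w)|\le C\lv(1+|\log|w||)$ is locally integrable. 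Inserting this decomposition into both $\lv G_q$ factors of $\I\I\I_{\ref{dec:F}}$ and expanding, the contribution from the product of the two $c_\vep$-pieces factors as $c_\vep^2\big(\int \hvarphi(z)\,\d z\big)\big(\int \varphi(z')\mathcal L\wF_{\vep,z'}(q)\,\d z'\big)=0$ by \eqref{plan:meanzero}, while each remaining term contains at least one $R_\vep$-factor of size $O(\lv)$, which together with $|\mathcal L\wF_{\vep,z'}(q)|\le C\lv$ from Lemma~\ref{lem:hFsmall} yields $O(\lv)\cdot O(\lv)\cdot O(1)=O(\lv^2)$.

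The main obstacle is precisely this last bookkeeping: a brute-force size estimate using $\lv G_q(\vep\cdot)=O(1)$ and $|\mathcal L\wF|=O(\lv)$ delivers only $O(\lv)$. The gain to $\lv^2$ relies crucially on observing that the leading part $c_\vep$ of $\lv G_q(\vep\cdot)$ is spatially constant on the scale of $\hvarphi$, and is hence annihilated by $\int \hvarphi=0$; what effectively remains is of sub-leading size $O(\lv)$ in at least one of the two Green-function factors, providing the extra $\lv$.
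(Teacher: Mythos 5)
Your proposal is correct and takes essentially the same route as the paper's proof. For \eqref{est:2} and \eqref{est:4} both use the mean-zero property of $\hvarphi$ against radial functions (after exhibiting the relevant inner integral as radial via rotation invariance), and for \eqref{est:3} both split each resolvent factor into the spatially constant leading piece $\tfrac{\lv}{\pi}\log\vep^{-1}$ plus an $O(\lv)$ remainder, using $\int\hvarphi=0$ to annihilate the leading-times-leading term and Lemma~\ref{lem:hFsmall} to contribute the second factor of $\lv$ in the remaining terms.
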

\begin{proof}
Both of the proofs of \eqref{est:2} and \eqref{est:4} use the mean-zero property:
\begin{align}\label{mean-zero}
\int_{\R^2}\hvarphi(z)H(|z|)\d z=0
\end{align}
for any radially symmetric function $H(|z|)$.
For $\I\I_{\ref{dec:F}}(\vep,q,y)$, choose $H(|z|)$ to be 
\begin{align*}
\int_{\T}\u (\d \theta)\left(\lv\int_0^\infty \d t \e^{-qt}P_t(\vep |y|\e^{\i \theta},\vep z)\right)=\int_{\T}\u (\d \theta)\left(\lv\int_0^\infty \d t \e^{-qt}P_t(\vep |y|,\vep |z|\e^{-\i\theta})\right)
\end{align*}
by the translation invariance of $\u(\d \theta)$. For ${\rm IV}_{\ref{dec:F}}(\vep,q,y)$, choose $H(|z|)$ to be 
\begin{align*}
&\eqspace\int_\T \u(\d \theta)\left(\lv\int_0^\infty \d t \e^{-qt}P_t(\vep |y|\e^{\i \theta},\vep z)\right)\int_{\R^2}\d z'\ovarphi(|z'|)\left(\lv\int_0^\infty\d t' \e^{-qt'}P_{t'}(\vep z,\vep z')\right)\mathcal L\oF_{\vep,|z'|}(q)\\
&=\int_\T \u(\d \theta)\left(\lv\int_0^\infty \d t \e^{-qt}P_t(\vep |y|,\vep |z|\e^{-\i \theta})\right)\\
&\eqspace \times \int_0^\infty \d r'r'\int_\T \d \theta'\ovarphi(r')\left(\lv\int_0^\infty\d t' \e^{-qt'}P_{t'}(\vep |z|,\vep r'\e^{\i \theta'})\right)\mathcal L\oF_{\vep,r'}(q),
\end{align*}
where we have used the translation invariance of $\d\theta'$ and then the same property of $\u (\d \theta)$.

To get \eqref{est:3}, note that under \eqref{ass:radvepy},
\eqref{asymp:gauss}  gives 
\begin{align*}
&\eqspace\I\I\I_{\ref{dec:F}}(\vep,q,y)\\
&=\int_{\T}\u (\d \theta)\int_{\R^2}\d z\hvarphi(z) \frac{\lv}{\pi} \left(\log \vep^{-1}+\log \frac{2^{1/2}}{q^{1/2}|y\e^{\i \theta}-z|}+ \mathcal O\left(\frac{\vep^2q|y\e^{\i \theta}-z|^2}{2} 
\log\frac{2^{1/2}}{\vep q^{1/2}|y\e^{\i \theta}-z|}\right)
\right) \\
&\eqspace\times\int_{\R^2}\d z'\varphi(z')\frac{ \lv}{\pi} \left(\log \vep^{-1}+\log \frac{2^{1/2}}{q^{1/2}|z-z'|}+ \mathcal O\left(\frac{\vep^2q|z-z'|^2}{2} 
\log\frac{2^{1/2}}{\vep q^{1/2}|z-z'|}\right)
\right) \mathcal L\widehat{F}_{\vep,z'}(q).
\end{align*}
For the right-hand side, note that under \eqref{ass:radvepy},
\begin{align*}
&\int_{\T}\u (\d \theta)\int_{\R^2}\d z\hvarphi(z)  \left(\frac{\lv}{\pi}\log \vep^{-1}\right)\times \int_{\R^2}\d z'\varphi(z') \left(\frac{\lv}{\pi}\log \vep^{-1}\right) \mathcal L\widehat{F}_{\vep,z'}(q)=0,\\
&\left|\int_{\T}\u (\d \theta)\int_{\R^2}\d z\hvarphi(z)\left(\frac{\lv}{\pi}\log \frac{2^{1/2}}{q^{1/2}|y\e^{\i \theta}-z|}\right)\times \int_{\R^2}\d z'\varphi(z') \left(\frac{\lv}{\pi}\log \vep^{-1}\right) \mathcal L\widehat{F}_{\vep,z'}(q)\right|\\
&\leq C(\|\varphi\|_\infty,M_\varphi,M,q,\lambda)\lv \sup_{z':0<|z'|\leq M_\varphi}|\mathcal L\widehat{F}_{\vep,z'}(q)|,\\
&\left|\int_{\T}\u (\d \theta)\int_{\R^2}\d z\hvarphi(z)  \left(\frac{\lv}{\pi}\log \vep^{-1}\right)\times \int_{\R^2}\d z'\varphi(z')  \left(\frac{\lv}{\pi}\log \frac{2^{1/2}}{q^{1/2}|z-z'|}\right) \mathcal L\widehat{F}_{\vep,z'}(q)\right|\\
&\leq C(\|\varphi\|_\infty,M_\varphi,M,q,\lambda)\lv \sup_{z':0<|z'|\leq M_\varphi}|\mathcal L\widehat{F}_{\vep,z'}(q)|.
\end{align*}
Here, the first line uses the property $\int \hvarphi=0$. The other integrals from expanding the product in the last expression for $\I\I\I_{\ref{dec:F}}(\vep,q,y)$ can be bounded in the same way as the last two inequalities. 
We have proved \eqref{est:3} by this observation, the last two displays and \eqref{hF:bdd}.
\end{proof}

The next estimate will be used together with \eqref{eq:rad1} to get the required approximate integral equality for $\mathcal L \oF_{\vep,|y|}(q)$.

\begin{lem}\label{lem:rad5}
Fix $q,M\in (0,\infty)$. Then for $|y|\leq M$ and $\vep\in (0,\vep(\|\varphi\|_\infty, M_\varphi,M,q,\lambda)\wedge \ov)$,
\begin{align}
\begin{split}\label{rad5:est}
&\left|{\rm V}_{\ref{dec:F}}(\vep,q,y)-\int_{\R^2}\d z\big(\oovarphi\uvep(|z|)-\ovarphi(|z|)\big)\left(\lv\int_0^\infty \d t \e^{-qt}P_t(\vep |y|,\vep z)\right)\mathcal L\oF_{\vep,|z|}(q)\right|\\
&\leq C(\|\varphi\|_\infty,M_\varphi,M,q,\lambda)\lv^2.
\end{split}
\end{align}
\end{lem}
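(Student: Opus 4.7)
The plan is to reduce both sides of \eqref{rad5:est} to the same radially symmetrized double integral, then extract the estimate via the symmetry of $\log|z-z'|$ under $z\leftrightarrow z'$, a symmetrization argument that harvests an extra power of $\lv$. Denote the resolvent by $g_\vep(x_1,x_2)\defeq\lv\int_0^\infty\e^{-qt}P_t(\vep x_1,\vep x_2)\d t$ and its radial average by $\overline g_\vep(|y|,r)\defeq\int_\T\u(\d\phi)g_\vep(|y|,r\e^{\i\phi})$. In the inner integral of ${\rm V}_{\ref{dec:F}}$, expand $g_\vep(z,z')$ by \eqref{asymp:gauss} under \eqref{ass:radvepy}. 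The pieces of the expansion independent of $z'$ (namely $\frac{\lv}{\pi}\log\vep^{-1}$ and the constant $\frac{\lv}{\pi}(\log(2^{1/2}q^{-1/2})-\EM)$) annihilate against $\hvarphi(z')\mathcal L\oF_{\vep,|z'|}(q)$ since $\mathcal L\oF_{\vep,|\cdot|}(q)$ is radial and $\hvarphi$ has mean zero against radial functions by \eqref{mean-zero}. Only the logarithmic term survives, so
\[
\int\d z'\,\hvarphi(z')g_\vep(z,z')\mathcal L\oF_{\vep,|z'|}(q) = -\frac{\lv}{\pi}\int\d z'\,\hvarphi(z')\log|z-z'|\mathcal L\oF_{\vep,|z'|}(q) + O(\vep^2),
\]
uniformly over $z\in\supp\hvarphi$. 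Because the right-hand side is independent of $\theta$, I carry out the $\u(\d\theta)$-average on the outer resolvent to get
\[
{\rm V}_{\ref{dec:F}}(\vep,q,y) = -\frac{\lv}{\pi}\int\!\!\int\hvarphi(z)\hvarphi(z')\overline g_\vep(|y|,|z|)\log|z-z'|\mathcal L\oF_{\vep,|z'|}(q)\d z\d z' + O(\vep^2).
\]

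I then recast the right-hand side of \eqref{rad5:est} in the same form. Writing $\oovarphi\uvep(|z|)-\ovarphi(|z|)=\lv\oevarphi(|z|)$, unfolding the definition of $\oevarphi$, passing to polar coordinates $z=r\e^{\i\phi}$, and taking the $\phi$-average of $g_\vep(|y|,z)$ produces the same $\overline g_\vep(|y|,|z|)$, so
\[
\text{RHS of \eqref{rad5:est}} = -\frac{\lv}{\pi}\int\!\!\int\hvarphi(z)\hvarphi(z')\overline g_\vep(|y|,|z|)\mathcal L\oF_{\vep,|z|}(q)\log|z-z'|\d z\d z'.
\]
Subtracting, the two expressions differ only through $\mathcal L\oF_{\vep,|z'|}(q)$ versus $\mathcal L\oF_{\vep,|z|}(q)$. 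Since $\hvarphi(z)\hvarphi(z')\log|z-z'|$ is symmetric under $z\leftrightarrow z'$, swapping the two dummy variables gives an equivalent representation with $\overline g_\vep(|y|,|z'|)$ in place of $\overline g_\vep(|y|,|z|)$ and the opposite sign on the $\mathcal L\oF$ difference; averaging the two equivalent expressions collapses the discrepancy into a product-of-differences form:
\[
{\rm V}_{\ref{dec:F}}-(\text{RHS}) = -\frac{\lv}{2\pi}\int\!\!\int\hvarphi(z)\hvarphi(z')\log|z-z'|\bigl[\overline g_\vep(|y|,|z|)-\overline g_\vep(|y|,|z'|)\bigr]\bigl[\mathcal L\oF_{\vep,|z'|}(q)-\mathcal L\oF_{\vep,|z|}(q)\bigr]\d z\d z' + O(\vep^2).
\]

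The $O(\lv^2)$ bound is then transparent. From \eqref{asymp:gauss} and the classical identity $\int_\T\u(\d\phi)\log||y|-r\e^{\i\phi}|=\log\max(|y|,r)$, I have $\overline g_\vep(|y|,r)=\frac{\lv}{\pi}(\log\vep^{-1}+\log(2^{1/2}q^{-1/2})-\EM-\log\max(|y|,r))+O(\vep^2)$, so the $\overline g_\vep$-difference is of order $\lv|\log\max(|y|,|z|)-\log\max(|y|,|z'|)|$. The product of this with the integrable logarithmic kernel remains integrable against $\hvarphi(z)\hvarphi(z')$ on $|z|,|z'|\leq M_\varphi$. Combined with the prefactor $\lv$ and with the uniform bound $\|\mathcal L\oF_{\vep,|\cdot|}(q)\|_\infty\leq C(\|\varphi\|_\infty,M_\varphi,q,\lambda)$ (which follows from Proposition~\ref{prop:bounds} via the Laplace identity \eqref{def:Fvep:Lap}), this yields the claimed $C(\|\varphi\|_\infty,M_\varphi,M,q,\lambda)\lv^2$ bound. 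The main obstacle is the symmetrization step: a direct estimate of ${\rm V}_{\ref{dec:F}}-(\text{RHS})$ before symmetrization delivers only $O(\lv)$, because the naive inequality $|\mathcal L\oF_{\vep,|z'|}(q)-\mathcal L\oF_{\vep,|z|}(q)|\leq 2\|\mathcal L\oF\|_\infty$ wastes any cancellation. Recognizing that $\overline g_\vep$ differs only by $O(\lv)$ across radial slices, and using the $z\leftrightarrow z'$ symmetry of the logarithmic kernel to convert the difference of the inner integrals into a product of two symmetric differences, is what extracts the crucial second factor of $\lv$.
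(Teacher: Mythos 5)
Your proposal is correct, and it takes a genuinely different route from the paper. The paper's proof fully expands \emph{both} resolvents in ${\rm V}_{\ref{dec:F}}$ via \eqref{asymp:gauss}, distributes the product into all the cross terms, uses the mean-zero property \eqref{mean-zero} to kill the purely symmetric $\log\vep^{-1}\times\log\vep^{-1}$ and $\log|y\e^{\i\theta}-z|\times\log\vep^{-1}$ contributions (\eqref{rad:5-00}--\eqref{rad:5-000}), identifies the survivor $\log\vep^{-1}\times\log|z-z'|$ with $\tfrac{\lv}{\pi}\log\vep^{-1}\cdot\lv\mathcal E(\hvarphi)\cdot\mathcal L\oF$, passes to $\oevarphi$ by mean-zero, and finally trades the constant $\tfrac{\lv}{\pi}\log\vep^{-1}$ for the outer resolvent via the pointwise estimate \eqref{rad:5-2}, harvesting the extra $\lv$ from the pair (replacement error $O(\lv)$, prefactor $\lv$ in $\lv\oevarphi$). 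You instead expand only the \emph{inner} resolvent, do the $\theta$-average of the outer one once and for all to produce $\overline g_\vep(|y|,|z|)$, put both ${\rm V}_{\ref{dec:F}}$ and the target in the identical double-integral form $-\tfrac{\lv}{\pi}\iint\hvarphi(z)\hvarphi(z')\overline g_\vep(|y|,|z|)\log|z-z'|\,(\cdots)\,\d z\,\d z'$, and then make the extra $\lv$ visible by symmetrizing the integrand in $z\leftrightarrow z'$, converting the difference into a product of two antisymmetric $O(\lv)$ increments ($\overline g_\vep(|y|,|z|)-\overline g_\vep(|y|,|z'|)$ via the logarithmic-potential identity, and $\mathcal L\oF_{\vep,|z'|}-\mathcal L\oF_{\vep,|z|}$). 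Both extract the same two $O(\lv)$ factors, but your formulation makes the cancellation structural rather than term-by-term, keeps only one application of \eqref{asymp:gauss}, and is immune to the $\log||y|-z'|$ singularity that underlies \eqref{rad:5-2} (since the angular average $\overline g_\vep(|y|,r)$ is regular across $r=|y|$), at the cost of invoking the averaging identity $\int_\T\u(\d\phi)\log\lvert|y|-r\e^{\i\phi}\rvert=\log\max(|y|,r)$, which is a quick consequence of \eqref{int:harmonic}. One small point worth making explicit in a write-up: the uniform-in-$y$ bound on the $O(\vep^2)$ remainder requires observing that $\int|\hvarphi(z)|\overline g_\vep(|y|,|z|)\d z=O(1)$ uniformly over $0<|y|\leq M$ (the $-\log\max(|y|,|z|)$ term is integrable against $|\hvarphi|$ even though it is not bounded pointwise near $|z|=0$), and the symmetrized integral $\iint|\hvarphi(z)||\hvarphi(z')||\log|z-z'||\,\bigl(1+|\log|z||+|\log|z'||\bigr)\d z\,\d z'$ must be recognized as convergent on the compact support of $\hvarphi$.
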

\begin{proof}
The proof uses similar arguments for proving Lemma~\ref{lem:rad234}. Again,  under \eqref{ass:radvepy},
\eqref{asymp:gauss} gives
\begin{align*}
&\eqspace{\rm V}_{\ref{dec:F}}(\vep,q,y)\\
&=\int_{\T}\u (\d \theta)\int_{\R^2}\d z\hvarphi(z)\frac{\lv}{\pi} \left(\log \vep^{-1}+\log \frac{2^{1/2}}{q^{1/2}|y\e^{\i \theta}-z|}+ \mathcal O\left(\frac{\vep^2q|y\e^{\i \theta}-z|^2}{2} 
\log\frac{2^{1/2}}{\vep q^{1/2}|y\e^{\i \theta}-z|}\right)
\right) \\
&\eqspace\times\int_{\R^2}\d z'\hvarphi(z') \frac{\lv}{\pi} \left(\log \vep^{-1}+\log \frac{2^{1/2}}{q^{1/2}|z-z'|}+ \mathcal O\left(\frac{\vep^2q|z-z'|^2}{2} 
\log\frac{2^{1/2}}{\vep q^{1/2}|z-z'|}\right)
\right) \mathcal L\oF_{\vep,|z'|}(q).
\end{align*}
To proceed, expand the product in the right-hand side of the last equality. Any term that involves one of the $\mathcal O$'s or does not use $\log \vep^{-1}$ can be bounded by $C(\|\varphi\|_\infty,M_\varphi,M,q,\lambda)\lv^2$. 

For the remaining terms, it follows from \eqref{mean-zero} that
\begin{align}
&\int_{\T}\u (\d \theta)\int_{\R^2}\d z\hvarphi(z)\left(\frac{\lv}{\pi}\log \vep^{-1}\right)\times \int_{\R^2}\d z'\hvarphi(z') \left(\frac{\lv}{\pi}\log \vep^{-1}\right) \mathcal L\oF_{\vep,|z'|}(q)=0,\label{rad:5-00}\\
&\int_{\T}\u (\d \theta)\int_{\R^2}\d z\hvarphi(z) \left(\frac{ \lv}{\pi}\log \frac{2^{1/2}}{q^{1/2}|y\e^{\i \theta}-z|}\right)\times \int_{\R^2}\d z'\hvarphi(z') \left(\frac{\lv}{\pi}\log \vep^{-1}\right) \mathcal L\oF_{\vep,|z'|}(q)=0,\label{rad:5-000}
\end{align}
and
\begin{align}
&\eqspace \int_{\T}\u (\d \theta)\int_{\R^2}\d z\hvarphi(z) \left(\frac{\lv}{\pi}\log \vep^{-1}\right)\times \int_{\R^2}\d z'\hvarphi(z') \left(\frac{\lv}{\pi}\log \frac{2^{1/2}}{q^{1/2}|z-z'|}\right) \mathcal L\oF_{\vep,|z'|}(q)\label{rad:5-0}\\
&= \int_{\R^2}\d z'\left(\frac{\lv}{\pi}\log \vep^{-1}\right)\lv\left(\frac{1}{\pi}\hvarphi(z')\int_{\R^2}\d z \hvarphi(z)\log \frac{2^{1/2}}{q^{1/2}|z-z'|}\right) \mathcal L\oF_{\vep,|z'|}(q)\notag\\
&=\int_{\R^2}\d z' \left(\frac{\lv}{\pi}\log \vep^{-1}\right) \lv\mathcal E(\widehat{\varphi})(z') \mathcal L\oF_{\vep,|z'|}(q)\notag\\
&=\int_{\R^2}\d z' \left(\frac{\lv}{\pi}\log \vep^{-1}\right) \big(\oovarphi\uvep(|z'|)-\ovarphi(|z'|)\big)\mathcal L\oF_{\vep,|z'|}(q).\label{rad:5-1}
\end{align}
Here, the second equality follows from the definition of $\mathcal E(\hvarphi)$ in \eqref{def:Ef} and the mean-zero property \eqref{mean-zero}, and the last equality uses the mean-zero property to remove the non-radial part of $\mathcal E(\widehat{\varphi})(z') $ and then the definition of $\oovarphi$ in \eqref{def:varphi0}. On the other hand, note that \eqref{asymp:gauss} gives
\begin{align}\label{rad:5-2}
\left|\frac{\lv}{\pi}\log \vep^{-1}-\lv\int_0^\infty \d t \e^{-qt}P_t(\vep |y|,\vep z')\right|\leq C(\|\varphi\|_\infty,M_\varphi,M,q,\lambda)\lv 
\end{align}
for all $0<|y|\leq M_\varphi$ and $|z'|\leq M_\varphi$. We obtain from \eqref{rad:5-1} and \eqref{rad:5-2} that under \eqref{ass:radvepy}, the main term for \eqref{rad:5-0} satisfies the estimate 
\begin{align*}
&\Bigg| \int_{\T}\u (\d \theta)\int_{\R^2}\d z\hvarphi(z) \left(\frac{\lv}{\pi}\log \vep^{-1}\right)\times \int_{\R^2}\d z'\hvarphi(z') \left(\frac{\lv}{\pi}\log \frac{2^{1/2}}{q^{1/2}|z-z'|}\right) \mathcal L\oF_{\vep,|z'|}(q)\\
&-\int_{\R^2}\d z'\left(\lv\int_0^\infty \d t \e^{-qt}P_t(\vep |y|,\vep z')\right)\big(\oovarphi\uvep(|z'|)-\ovarphi(|z'|)\big)\mathcal L\oF_{\vep,|z'|}(q)\Bigg|\leq C(\|\varphi\|_\infty,M_\varphi,M,q,\lambda)\lv^2.
\end{align*}

The required bound now follows by applying the foregoing inequality, \eqref{rad:5-00}, \eqref{rad:5-000} and the observation in the first paragraph of this proof to the expression of ${\rm V}_{\ref{dec:F}}(\vep,q,y)$ there. 
\end{proof}

\begin{proof}[End of the proof of Proposition~\ref{prop:replaceangle}]
By Lemma~\ref{lem:hFsmall}, it remains to show that 
\begin{align}\label{Delta:goal}
\Delta_\vep\defeq \sup_{y:0<|y|\leq M}|\mathcal L\oF_{\vep,|y|}(q)-\mathcal LG_{\vep,|y|}(q)|\leq C(\|\varphi\|_\infty,M_\varphi,M,q,\lambda)\lv.
\end{align}
Recall \eqref{dec:F} and the estimates in Lemmas~\ref{lem:rad1}--\ref{lem:rad5}. Since $M\geq M_\varphi$, it follows from \eqref{Lap:oF}, \eqref{Lap:G}, and \eqref{asymp:gauss} that  all $\vep\in (0,\vep(\|\varphi\|_\infty, M_\varphi,M,q,\lambda)\wedge \ov)$,
\begin{align}
\Delta_\vep&\leq \sup_{y:<|y|\leq M}\left(\int_{\T}\u (\d \theta)  \int_{\R^2}\d z\varphi(z)\lv\int_0^\infty \d t \e^{-qt}P_t(\vep |y|\e^{\i \theta},\vep z)\right)\Delta_\vep+C(\|\varphi\|_\infty,M_\varphi,M,q,\lambda)\lv^2\notag\\
&\leq \left(1-\frac{C'(\|\varphi\|_\infty,M_\varphi,M,q,\lambda)}{\log \vep^{-1}}\right)\Delta_\vep+C(\|\varphi\|_\infty,M_\varphi,M,q,\lambda)\lv^2.\label{Deltavep:bdd}
\end{align}
Note that in the last inequality, we have also used the following estimate:
\[
\left| \frac{\lv}{\pi}\log \vep^{-1}\int \varphi-1\right|\leq \frac{C(\lambda)}{\log \vep^{-1}},\quad \forall\;\vep\in (0,\vep(\lambda)).
\]
The required bound in \eqref{Delta:goal} for $\Delta_\vep$ now follows from \eqref{Deltavep:bdd}. The proof is complete.
\end{proof}

\subsubsection{Cutoffs of potentials}
The last reduction considers a cutoff of potentials, which will be used in the next section due to the use of some  exponential functionals of planar Brownian motion. We continue to work with the reduction from Proposition~\ref{prop:replaceangle}.

\begin{prop}\label{prop:replace1}
Let $\chi_\vep$ be $[0,1]$-valued functions such that  for all $a>0$, $\lim_{\vep\to 0}\chi_\vep(a)=1$.
Then for all $M\in [M_\varphi,\infty)$, $q\in (q(\|\varphi\|_\infty,\lambda),\infty)$ and $\vep\in (0,\vep(\|\varphi\|_\infty,M_\varphi,q,\lambda)\wedge \ov)$,
\begin{align}\label{lim:main:a}
\begin{split}
&\sup_{y:0<|y|\leq M}\lv^2\int_0^\infty \d t\e^{-q t}\E_{\vep |y|}^\rho\left[\exp\left\{\lv \int_0^t\d r\oovarphi_{\vep} (\rho_r)\right\}\oovarphi_{\vep}(\rho_{t})\big(1-\chi_\vep(\vep^{-1}\rho_t)\big)\right]\\
&\eqspace\leq C(\|\varphi\|_\infty,M_\varphi,M,q,\lambda)\left(\frac{\lv\log \vep^{-1}}{\pi}\int_{\R^2}\d z|\oovarphi\uvep(|z|)|\widetilde{\chi}_\vep(|z|)+\lv\right).
\end{split}
\end{align}
\end{prop}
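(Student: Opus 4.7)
The plan is to apply the same template as Proposition~\ref{prop:bounds}, but with the planar Brownian semigroup replaced by the two-dimensional Bessel semigroup and with $\oovarphi_\vep(\cdot)(1-\chi_\vep(\vep^{-1}\cdot))$ in place of $\varphi_\vep$. Set
\[
H_\vep(w)\defeq\lv^2\int_0^\infty e^{-qt}\E^\rho_{w}\left[\exp\left\{\lv\int_0^t\oovarphi_\vep(\rho_r)\,dr\right\}\oovarphi_\vep(\rho_t)\bigl(1-\chi_\vep(\vep^{-1}\rho_t)\bigr)\right]dt,
\]
so that the left-hand side of \eqref{lim:main:a} is $\sup_{0<|y|\leq M}H_\vep(\vep|y|)$. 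I would first apply the backward differentiation identity \eqref{taylor1} and the Markov property of $\rho$, as in the derivation of \eqref{eq:FK1}, to get a convolution-type identity
\[
H_\vep(w)=K^0_\vep(w)+\lv\int_0^\infty e^{-qs}\int_{(0,\infty)}P^{\rho}_s(w,dw')\,\oovarphi_\vep(w')\,H_\vep(w')\,ds,
\]
with $K^0_\vep(w)\defeq\lv^2\int_0^\infty e^{-qt}\E^\rho_w[\oovarphi_\vep(\rho_t)(1-\chi_\vep(\vep^{-1}\rho_t))]dt$ the ``free'' term.

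Because $\oovarphi_\vep\geq 0$ by \eqref{oovarphi:positive} and because the kernel $\oovarphi_\vep(w')$ forces $w'\in(0,\vep M_\varphi]\subseteq(0,\vep M]$ (recall $M\geq M_\varphi$), taking supremum over $w\in(0,\vep M]$ yields a scalar renewal inequality $\sup_w H_\vep(w)\leq\sup_w K^0_\vep(w)+\widetilde b_\vep(q)\,\sup_w H_\vep(w)$, with
\[
\widetilde b_\vep(q)\defeq\sup_w\lv\int_0^\infty e^{-qs}\E^\rho_w[\oovarphi_\vep(\rho_s)]\,ds.
\]
Rewriting the Bessel expectation as a planar Brownian convolution against the radial function $\oovarphi\uvep(|\cdot|)$ (whose total mass equals $\int\ovarphi+\lv\int\oevarphi=1+\mathcal O(\lv)$) and repeating the scaling computation of \eqref{Lap:b-bdd} verbatim, one shows $\widetilde b_\vep(q)\leq 1-c/\log\vep^{-1}$ for all $q$ sufficiently large (depending on $\|\varphi\|_\infty,M_\varphi,\lambda$). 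Iterating as in Lemma~\ref{lem:gronwall} then supplies the amplification $\sup_w H_\vep(w)\leq C\log\vep^{-1}\cdot\sup_w K^0_\vep(w)$.

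It remains to estimate $K^0_\vep(w)$. Changing variable $w'=\vep u$ and applying the logarithmic expansion of the planar Brownian resolvent \eqref{asymp:gauss} yields, uniformly in $w=\vep|y|$ with $0<|y|\leq M$,
\[
K^0_\vep(w)\leq\lv^2\left(\frac{\log\vep^{-1}}{\pi}+C(M,M_\varphi,q)\right)\int_{\R^2}|\oovarphi\uvep(|u|)|\,\widetilde\chi_\vep(|u|)\,du.
\]
Multiplying by the amplification $C\log\vep^{-1}$ and using $\lv\log\vep^{-1}=\mathcal O(1)$, the leading contribution is a constant multiple of $\frac{\lv\log\vep^{-1}}{\pi}\int|\oovarphi\uvep|\widetilde\chi_\vep$, while the sub-leading contribution $C\lv^2\log\vep^{-1}\int|\oovarphi\uvep|\widetilde\chi_\vep$ collapses to $\mathcal O(\lv)$ using the uniform bound $\int|\oovarphi\uvep|\leq C$. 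This gives the stated bound. The principal technical obstacle is the near-critical step $\widetilde b_\vep(q)\leq 1-c/\log\vep^{-1}$, which is what forces the $\log\vep^{-1}$ amplification in the Gronwall iteration; tracking constants carefully enough so that the sub-leading term becomes the clean $\mathcal O(\lv)$ error, rather than a non-vanishing constant, is the delicate bookkeeping required here.
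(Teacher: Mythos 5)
Your proposal matches the paper's proof: a first-interaction decomposition of the Laplace-transformed Feynman--Kac functional yields a scalar renewal inequality $L_\vep\le A_\vep+B_\vep L_\vep$, the kernel $B_\vep$ is shown to satisfy $B_\vep\le 1-c/\log\vep^{-1}$ by repeating the scaling computation of \eqref{Lap:b-bdd}, and solving the linear inequality gives the $\log\vep^{-1}$ amplification of the free term. Two small inaccuracies in passing that do not derail the argument: $\int\oovarphi\uvep=\frac{1}{2}+\mathcal O(\lv)$, not $1+\mathcal O(\lv)$ (and this factor of $\frac{1}{2}$ is exactly what makes the leading coefficient of $B_\vep$ equal $1$ rather than $2$ after multiplying by $\lv\log\vep^{-1}/\pi$); and your displayed bound $K^0_\vep\le\lv^2\big(\frac{\log\vep^{-1}}{\pi}+C\big)\int|\oovarphi\uvep|\,\widetilde\chi_\vep$ is not literally valid, since the sub-leading $\int\log\frac{1}{|y-z|}\,|\oovarphi\uvep|\widetilde\chi_\vep$ cannot be bounded pointwise by a constant multiple of $\int|\oovarphi\uvep|\widetilde\chi_\vep$ when $\widetilde\chi_\vep$ concentrates near $z=y$; the correct form is $K^0_\vep\le\frac{\lv^2\log\vep^{-1}}{\pi}\int|\oovarphi\uvep|\widetilde\chi_\vep+C\lv^2$ as in \eqref{Cutoff:A}, after which your absorption of the sub-leading contribution into $\mathcal O(\lv)$ via $\int|\oovarphi\uvep|\le C$ goes through unchanged.
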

\begin{proof}
Write $\widetilde{\chi}_\vep=1-\chi_\vep$. By an analogue of \eqref{eq:FK1} and a change of variables, we get
\begin{align*}
&\eqspace\lv^2\int_0^\infty \d t\e^{-q t}\E^\rho_{\vep |y|}\left[\exp\left\{\lv \int_0^t\d r\oovarphi_{\vep} (\rho_r)\right\}  \oovarphi_\vep(\rho_t)\widetilde{\chi}_\vep(\vep^{-1}\rho_t)\right]\\
&=\lv^2\int_0^\infty \d t\e^{-q t}\int_{\R^2}\d zP_t(\vep y,\vep z) \oovarphi\uvep(|z|)\widetilde{\chi}_\vep( |z|)+ \int_{\R^2}\d z\left(\lv\int_0^\infty \d t\e^{-qt}P_t(\vep y,\vep z)\right)\oovarphi\uvep(z) \\
&\eqspace\times \lv ^2\int_0^\infty \d t \e^{-q t}\E^\rho_{\vep |z|}\left[\exp\left\{\lv \int_0^t\d r\oovarphi_{\vep} (\rho_r)\right\}  \oovarphi_\vep(\rho_t)\widetilde{\chi}_\vep(\vep^{-1}\rho_t)\right].
\end{align*} 
It follows that 
\[
L_\vep(q)\defeq\sup_{y:0<|y|<\infty}\left| \lv ^2\int_0^\infty \d t \e^{-q t}\E^\rho_{\vep |y|}\left[\exp\left\{\lv \int_0^t\d r\oovarphi_{\vep} (\rho_r)\right\}  \oovarphi_\vep(\rho_t)\widetilde{\chi}_\vep(\vep^{-1}\rho_t)\right]\right|
\]
satisfies the inequality
\begin{align}\label{LAB}
L_\vep(q)\leq A_\vep(q)+B_\vep(q)L_\vep(q),
\end{align}
where
\begin{align*}
A_\vep(q)&\defeq \sup_{y:0<|y|<\infty}\left|\lv^2\int_0^\infty \d t\e^{-q t}\int_{\R^2}\d zP_t(\vep y,\vep z) \oovarphi\uvep(|z|)\widetilde{\chi}_\vep( |z|)\right|,\\
B_\vep(q)&\defeq \int_0^\infty \d t\e^{-qt} \lv \sup_{y\in \R^2}\int_{\R^2}\d z P_t(\vep y,\vep z)|\oovarphi\uvep(|z|)|.
\end{align*}

To bound $A_\vep(q)$, we use \eqref{asymp:gauss} under condition \eqref{ass:radvepy} as in the proof of Lemma~\ref{lem:hFsmall}. Hence, for all $\vep\in (0,\vep(M_\varphi,M,q)\wedge \ov)$,
\begin{align}
\begin{split}\label{Cutoff:A}
A_\vep(q)&\leq \frac{\lv^2\log \vep^{-1}}{\pi}\int_{\R^2}\d z|\oovarphi\uvep(|z|)|\widetilde{\chi}_\vep(|z|)+C(\|\varphi\|_\infty,M_\varphi,M,q,\lambda)\lv^2.
\end{split}
\end{align}
Additionally, since $\int_{\R^2}|\oovarphi{}^\vep(|z|)|\d z
\leq 1/2+\lv C(\|\varphi\|_\infty,M_\varphi)$,
the proof of \eqref{Lap:b-bdd} shows that 
\begin{align}\label{Cutoff:B}
\begin{split}
B_\vep(q)&\leq 1-\frac{C(\|\varphi\|_\infty,q,\lambda)}{\log \vep^{-1}},\quad \forall\; q\in (q(\|\varphi\|_\infty,\lambda),\infty),\;  \forall\; \vep\in (0,\vep(\|\varphi\|_\infty,M_\varphi,q,\lambda)\wedge \ov).
\end{split}
\end{align}
An application of \eqref{Cutoff:A} and \eqref{Cutoff:B} to \eqref{LAB} as in \eqref{Deltavep:bdd}  shows \eqref{lim:main:a}. The proof is complete.
\end{proof}

Let us summarize the reductions obtained so far in Propositions~\ref{prop:main:c}, \ref{prop:replaceangle} and \ref{prop:replace1}. With $\chi_\vep$ as in Proposition~\ref{prop:replace1}, set
\begin{align}\label{main:aa}
\ooS(q,|y|)&\defeq \lv^2\int_0^\infty \e^{-\vep^2qt}\E_{\vep |y|}^\rho[\e^{\ooA_\vep(t)} \oovarphi\uvep(\rho_{t})\chi_\vep(\rho_t)]\d t,\quad\mbox{where } 
\ooA_\vep(t)\defeq \lv \int_0^t \oovarphi\uvep(\rho_r)\d r.
\end{align}
Then for all $M\in [M_\varphi,\infty)$ and $q\in (q(\|\infty\|_\infty,\lambda),\infty)$, it holds that 
\begin{align}\label{SS:approx}
\lim_{\vep\to 0}\sup_{y:0<|y|\leq M}|\S^\beta_\vep(q,y)-\ooS(q,|y|)|=0. 
\end{align}
We specify the choice of $\chi_\vep$ in Section~\ref{sec:radial}.

\subsection{Semigroups for radially symmetric short-range potentials}\label{sec:radial}
In this section, we complete the proof of Theorem~\ref{thm:main1} by proving the asymptotic behavior of $\ooS(q,|y|)$ defined in \eqref{main:aa}. We begin by summarizing some more basic properties of $\BES^2$. Write $(Q_t)$ for the probability semigroup of $\BES^2$. The densities (with respect to the Lebesgue measure) are given by 
\begin{align}\label{def:density}
Q_t(a,b)\defeq \frac{b}{t}\exp\left(-\frac{a^2+b^2}{2t}\right)I_0\left(\frac{ab}{t}\right),\quad \;\forall\; a,b,t\in (0,\infty),
\end{align}
with an extension to $a=0$ by continuity \cite[p.446]{RY}, where $I_0$ is the modified Bessel function of the first kind \eqref{def:I}. 
Recall that we work with $\rho_t=|W_t|$ as a version of $\BES^2$. 
Since $\rho_0\neq 0$ by assumption and $0$ is polar \cite[p.442]{RY}, It\^{o}'s formula implies
\begin{align}\label{BES:SDE}
\d \rho_t=(2\rho_t)^{-1}\d t+\d \overline{W}_t,
\end{align}
where $\overline{W}$ is a one-dimensional standard Brownian motion given by $\d \overline{W}_t=\sum_j W_t^j\rho_t^{-1}\d W^j_t$ provided that $W=(W^1,W^2)$.  We write $\P_a=\P^\rho_a$ and $\E_a=\E^\rho_a$ throughout this subsection.

We fix the normalization of the semimartingale local times $L^b_t=L^b_t(\rho)$ at levels $b\in (0,\infty)$ according to Tanaka's formula. The occupation times formula holds in the following form with probability one:
\begin{align}\label{otf}
\int_0^t g(\rho_s)\d s=\int_\R g(b) L^{b}_t\d b,\quad\forall\;g\in \B_+(\R),\;t\geq 0.
\end{align}
Moreover, since $\rho_t$ has a  probability density for all $t>0$. one can choose a jointly continuous two-parameter modification $\R\times \R_+\times (b,t)\mapsto L^b_t$. Since $|W_t|\in L^p(\P)$ for all $p\in [1,\infty)$, an application of Tanaka's formula and the Burkholder--Davis--Gundy inequality to \eqref{BES:SDE} yields
\begin{align}\label{Lt:Lp}
\sup_{b\in \R}\E_a[(L^b_t)^p]<\infty,\quad \forall\; p\in [1,\infty),\; t>0.
\end{align}
See \cite[Section~VI.1]{RY} for these properties. 
On the other hand, under $\P_b$ for $b\in (0,\infty)$, the Markovian local time at $b$ 
exists since $b$ is instantaneous and recurrent for  $(\rho_t)$
by the skew-product representation \eqref{def:skew}. To use these two notions of local times at the same time, note that  \eqref{otf} and \eqref{Lt:Lp} imply $\E_b[L^b_t]=\int_0^tQ_s(b,b)\d s>0$. Moreover, $t\mapsto L^b_t$ is $\P_b$-a.s. supported in $\{t;\rho_t=b\}$ \cite[Proposition~1.3 on p.222]{RY}. Hence, under $\P_b$ for $b\in(0,\infty)$, the strong Markov property of $(\rho_t)$ implies that $L^b$ is also the Markovian local time \cite[Proposition~IV.5 on p.111]{Bertoin}. The recurrence of $b$ gives $L^b_\infty=\infty$ \cite[p.114]{Bertoin}.

The application of $\BES^2$ in this subsection is conducted under the time scales of the inverse local times at levels $b$:
\[
\tau^b_t=\tau^b_t(\rho)\defeq \inf\{s\geq 0;L^b_s>t\},
\]
under $\P_a$ for $a\in(0,\infty)$.
It holds that $\tau^b_t\nearrow +\infty$ a.s. and $\tau^b_0=T_b$, where $T_b=T_b(\rho)$. Then given an additional point $\partial$ isolated from $C(\R_+,\R_+)$, the excursion process $(e^b_t;t>0)$ away from $b\in (0,\infty)$
for $(\rho_t)$ under $\P_b $  is a $C(\R_+,\R_+)\cup \{\partial\}$-valued point process defined as follows: for all $t>0$,
\[
e^b_t=(e^b_{t,s};s\geq 0)=\big(\rho_{\tau^b_{t-}+s\wedge (\tau^b_t-\tau^b_{t-})};s\geq 0\big),\quad \mbox{if }\tau^b_t-\tau^b_{t-}>0;\quad e^b_t=\partial\quad\mbox{ otherwise}.
\]
Whenever $e_t^b\neq \partial$, $e^b_{t,0}=b$, $e^b_{t,s}\neq b$ for all $s\in (0,\zeta)$, and $e^b_{t,s}=b$ for all $s\geq \zeta$, where $\zeta=\zeta(e^b_t)=\inf\{s\geq 0;e^b_{t,s}=b\}$ is the lifetime of the excursion $e^b_t$. We extend any function $F$ on $C(\R_+,\R_+)$ to $\{\partial\}$ by $F(\partial)=0$, so $\zeta(\partial)=0$. The process $(e^b_t;t>0)$ is a Poisson point process \cite[Theorem~10 on p.118]{Bertoin}. Its characteristic measure, denoted by $\N_b$, is the excursion measure. 
Since $\tau^b_t-\tau^b_{t-}$ is the lifetime of $e^b_t$, the Laplace exponent of the subordinator $(\tau^b_t)$ can be expressed as follows by the exponential formula for Poisson point processes \cite[p.8]{Bertoin}:
\begin{align}\label{Phib:Lap}
-\log\E_b[\e^{-\mu\tau^b_1}]= \Phi_b(\mu)\defeq\N_b(1-\e^{-\mu\zeta}),\quad \forall\; \mu\in (0,\infty).
\end{align}

We are ready to present the starting point to calculate $\ooS(q,|y|)$ defined in \eqref{main:aa}. Write $\E[X;\Gamma]=\E[X\1_\Gamma]$.

\begin{prop}
\label{prop:LT}
\noindent {\rm (1$\cc$)} For all $a\in (0,\infty)$ and nonnegative processes $F(t)$,
\begin{align}\label{eq:Lap_LT}
b\mapsto \E_a\left[\int_0^\infty F(t)\d L^b_t\right] =
\int_0^\infty \E_a[ F(\tau^b_t)]\d t
\end{align}
is the density of the measure $\B((0,\infty))\ni \Gamma\mapsto \int_0^\infty \E_a[F(t);\rho_t\in \Gamma]\d t$.
In particular,
\begin{align}\label{Phib:resolvent}
\int_0^\infty \e^{-\mu t}Q_t(b,b)\d t=\E_b\left[\int_0^\infty \e^{-\mu t}\d L^b_t\right]=\Phi_b(\mu)^{-1},\quad\forall\; \mu,b\in (0,\infty).
\end{align}

\noindent {\rm (2$\cc$)} Given $f\in \C_c(\R_+)$, set $A(t)=\int_0^t f(\rho_{s})\d s$ and
\begin{align}\label{def:Phibf}
\Phi_b(\mu,f)\defeq \N_b(1-\e^{-\mu\zeta +A(\zeta)})
\end{align}
with the extended notation $A(t)=\int_0^t f(\epsilon_r)\d r$ under $\N_b(\d \epsilon)$. 
 Assume that for a fixed interval $[g,d]\subset(0,\infty)$,  we can find $\mu\in (0,\infty)$ such that the following two conditions hold for all $b\in [g,d]$:
\begin{align}\label{cond:q}
\begin{split}
&\sup_{a\in \supp (f)\setminus\{0\}}\E_{a}[ \e^{-\mu T_b+ A(T_b)}]<\infty\quad \&\quad \Phi_b(\mu,f)\in (0,\infty),
\end{split}
\end{align}
where $T_b=T_b(\rho)$ is defined in \eqref{def:TX}. 
Then for all  $a\in(0,\infty)$,
\begin{align}
b\mapsto \frac{\E_a[ \e^{-\mu T_b+ A(T_b)}]}{\Phi_b(\mu,f)}\mbox{ is the density of }\B([g,d])\ni \Gamma\mapsto \int_0^\infty \E_a\big[\e^{-\mu t}\e^{A(t)};\rho_t\in \Gamma\big]\d t.\label{LT:Tb}
\end{align}
\end{prop}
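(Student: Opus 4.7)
For Part (1$\cc$), my starting point would be the occupation times formula \eqref{otf} in its two-parameter form: for any nonnegative measurable $h(t,b)$, one has
\[
\int_0^\infty h(t,\rho_t)\,\d t \;=\; \int_\R \int_0^\infty h(t,b)\,\d L^b_t\,\d b,\qquad \P_a\text{-a.s.},
\]
obtained by approximating $h$ by simple functions of $t$ and applying \eqref{otf} slice-by-slice, using the joint continuity of $(b,t)\mapsto L^b_t$. Taking $h(t,b)=F(t)\1_\Gamma(b)$ and then $\E_a$ (Fubini, using \eqref{Lt:Lp} if needed for an $L^1$ truncation) identifies $b\mapsto \E_a[\int_0^\infty F(t)\d L^b_t]$ as the announced density. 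The second representation comes from the change-of-variables formula for the continuous increasing process $t\mapsto L^b_t$: since $L^b$ is constant off $\{s:\rho_s=b\}$ and $\tau^b$ is its right-continuous inverse,
\[
\int_0^\infty F(t)\,\d L^b_t \;=\; \int_0^\infty F(\tau^b_s)\,\d s,
\]
followed by another Fubini on the right. The special identity \eqref{Phib:resolvent} is then immediate: applied to $a=b$ and $F(t)=\e^{-\mu t}$, the density is $\int_0^\infty \e^{-\mu t}Q_t(b,b)\d t$, while the right-hand representation gives $\int_0^\infty \E_b[\e^{-\mu\tau^b_s}]\d s=\int_0^\infty \e^{-s\Phi_b(\mu)}\d s=\Phi_b(\mu)^{-1}$ by \eqref{Phib:Lap}.

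For Part (2$\cc$), my plan is to apply Part (1$\cc$) with the (random, previsible) weight $F(t)=\e^{-\mu t+A(t)}$, which reduces the statement to identifying
\[
\E_a\left[\int_0^\infty \e^{-\mu t+A(t)}\d L^b_t\right] \;=\; \frac{\E_a[\e^{-\mu T_b+A(T_b)}]}{\Phi_b(\mu,f)}.
\]
Since $L^b_t=0$ for $t<T_b$, I split at $T_b$ and apply the strong Markov property of $(\rho_t)$ at $T_b$ (recall $\rho_{T_b}=b$) to factor the left side as $\E_a[\e^{-\mu T_b+A(T_b)}]\cdot \E_b[\int_0^\infty \e^{-\mu t+A(t)}\d L^b_t]$. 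The finiteness required to carry out this factorization is exactly the first bound in \eqref{cond:q}, combined with the forthcoming finiteness of the $\E_b$-factor.

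To compute the $\E_b$-factor, I time-change via $\tau^b$ as in Part (1$\cc$), obtaining $\int_0^\infty \e^{-\mu\tau^b_s+A(\tau^b_s)}\d s$. Since $A$ accumulates only along excursions away from $b$, the two-parameter process
\[
s\longmapsto \big(\mu\tau^b_s-A(\tau^b_s)\big) \;=\; \sum_{t\leq s}\big(\mu\zeta(e^b_t)-A(\zeta(e^b_t))\big)
\]
is an additive functional of the Poisson point process $(e^b_t)$ with intensity $\N_b$. The exponential formula (\cite[p.8]{Bertoin} in the form used for \eqref{Phib:Lap}) then gives
\[
\E_b\big[\e^{-\mu\tau^b_s+A(\tau^b_s)}\big] \;=\; \exp\!\big(-s\,\N_b(1-\e^{-\mu\zeta+A(\zeta)})\big) \;=\; \e^{-s\Phi_b(\mu,f)},
\]
and a final integration in $s\in[0,\infty)$ yields $\Phi_b(\mu,f)^{-1}$, which is finite and positive by the second condition in \eqref{cond:q}. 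Combining the two factors completes the identification of the density.

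\textbf{Expected obstacle.} The main subtlety is the legitimacy of the exponential formula once $f$ is allowed to be positive, since then the integrand $1-\e^{-\mu\zeta+A(\zeta)}$ can change sign along excursions of large lifetime. The hypothesis $\Phi_b(\mu,f)\in(0,\infty)$ in \eqref{cond:q} is designed precisely to prevent this pathology, but a careful verification is needed: one first establishes the identity for $\mu$ large enough that $\mu\zeta\geq A(\zeta)$ on $\N_b$-typical excursions (using $\sup_{a\in\supp(f)\setminus\{0\}}\E_a[\e^{-\mu T_b+A(T_b)}]<\infty$ together with a Markovian bound on $A(\zeta)$ inside a single excursion), and then extends by analytic continuation in $\mu$ using that both sides are Laplace transforms of genuine finite measures on $[0,\infty)$. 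The remaining ingredients---Fubini, strong Markov at $T_b$, and the time change---are routine once integrability is secured.
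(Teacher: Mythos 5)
Your plan for Part (1$\cc$) and the skeleton of Part (2$\cc$) match the paper's proof: the extended occupation times formula from \cite[Exercise~1.15, p.232]{RY} together with the change-of-variables formula for Stieltjes integrals gives \eqref{eq:Lap_LT}, and the strong Markov property via $\tau^b_t = T_b + \tau^b_t\circ\theta_{T_b}$ factors out $\E_a[\e^{-\mu T_b+A(T_b)}]$ before the exponential formula for the Poisson point process of excursions takes over.

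Where you diverge is in the ``expected obstacle'' paragraph, and that is where there is a genuine gap. You propose to first establish the exponential formula for $\mu$ ``large enough that $\mu\zeta\geq A(\zeta)$ on $\N_b$-typical excursions'' and then extend by analytic continuation. The pointwise inequality $\mu\zeta\geq A(\zeta)$ cannot be arranged uniformly: for any fixed $\mu$, $\N_b$ charges excursions with arbitrarily large $\zeta$ along which the time-integral $A(\zeta)$ can be large as well, so there is no regime where the integrand is sign-definite almost everywhere. And the analytic continuation argument is circular at the point where it is needed --- to know that $\E_b[\e^{-\mu\tau^b_s+A(\tau^b_s)}]$ is an analytic (even a finite) function of $\mu$ on a domain reaching the target value, one needs precisely the kind of integrability estimate that is being deferred. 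The correct prerequisite for the exponential formula is simply $\N_b\big(|1-\e^{-\mu\zeta+A(\zeta)}|\big)<\infty$, which is an integrability condition, not a pointwise one. The paper supplies this directly: Proposition~\ref{prop:Phibf} uses the backward differentiation identity \eqref{taylor1}, the simple Markov property under $\N_b$, and the compensation formula to show
\begin{align*}
\Phi_b(\mu,f)=\Phi_b(\mu)-\Phi_b(\mu)\,\E_b\!\left[\int_0^\infty \e^{-\mu r}f(\rho_r)\,\E_{\rho_r}\big[\e^{-\mu T_b+A(T_b)}\big]\,\d r\right],
\end{align*}
and the right-hand side is finite because of the first condition in \eqref{cond:q} --- exactly the hypothesis you had flagged but not fully exploited. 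With that estimate in hand, the exponential formula applies at the given $\mu$ without any continuation argument. So the remedy is not to restrict $\mu$ and continue, but to prove the $\N_b$-integrability directly from the compensation formula and the hitting-time bound in \eqref{cond:q}.
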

 \begin{proof}
{\rm (1$\cc$)} Since $L^b_\infty=\infty$ $\P_a$-a.s. for $a,b\in (0,\infty)$, the change of variables formula for Stieltjes integrals \cite[Proposition~4.9 on p.8]{RY} immediately shows the equality in
\eqref{eq:Lap_LT}. To see the required density property, note that by the extended occupation times formula \cite[Exercise~1.15 on p.232]{RY}, 
\[
\int_0^\infty F(t)g(\rho_t)\d t=
\int_{0}^\infty  \left(\int_0^\infty F(t)\d L^{b}_t\right)g(b)\d b,\quad\forall\; g\in \B_+(\R_+).
\]
Taking expectations of both sides of the foregoing equality proves the required result. \medskip

\noindent (2$\cc$) For all $a,b\in (0,\infty)$, it holds that $\P_a$-a.s. $\tau^b_t=T_b+\tau^b_t\circ \theta_{T_b}$ for all $t\in (0,\infty)$, where $\theta_t$ is the shift operator. Hence, by the strong Markov property, we can write the second expectation in \eqref{eq:Lap_LT}  with $F(t)=\e^{-\mu t+A(t)}$ as 
\begin{align}
\int_0^\infty \E_a[ \e^{-\mu \tau^b_t+A(\tau^b_t)}]\d t
&= \E_a[ \e^{-\mu T_b+A(T_b)}]\int_0^\infty \E_b[ \e^{-\mu\tau^b_t+ A(\tau^b_t)}]\d t
  \label{LT:2}.
\end{align}
For the last integral, note that since $\int_0^\infty \1_{\{\rho_r=b\}}\d r=0$ a.s.,
\begin{align*}
-\mu \tau^b_t+ A(\tau^b_t)
=\sum_{r:0< r\leq t}\int_0^{\zeta(e^b_r)} [-\mu+f(e^b_{r,s})]\d s=\sum_{r:0< r\leq t}[-\mu \zeta(e^b_r)+A(e^b_r)].
\end{align*}

Now, we work with $\mu$ assumed to satisfy both conditions in \eqref{cond:q}.
The integrability $\N_b(|1-\e^{-\mu\zeta +A(\zeta)}|)<\infty$ for all $b\in [g,d]$ is proven later on in Proposition~\ref{prop:Phibf} by using the first condition in  \eqref{cond:q}. 
Hence, by the last equality and
the exponential formula for Poisson point processes \cite[p.8]{Bertoin}, 
\begin{align}\label{Psi:exp}
\E_b\big[ \e^{-\mu\tau^b_t+A(\tau^b_t)}\big]=\e^{-\Phi_{b}(\mu,f)t},\quad t\geq 0,
\end{align}
where $\Phi_b(\mu,f)$ is defined by \eqref{def:Phibf}. Since $\Phi_b(\mu,f)\in (0,\infty)$ by the second assumed condition in \eqref{cond:q}, the required property in \eqref{LT:Tb} follows from (1$\cc$) and \eqref{Psi:exp}. The proof is complete. 
\end{proof}

The functional defined in \eqref{def:Phibf} can be represented explicitly. 

\begin{prop}\label{prop:Phibf}
Given $f\in \C_c(\R_+)$, define $A(t)$ as in Proposition~\ref{prop:LT} {\rm (2$\cc$)}, and assume only the first condition in \eqref{cond:q}. Then for all $b\in [g,d]$, $\N_b(|1-\e^{-\mu\zeta +A(\zeta)}|)<\infty$ and 
\begin{align}\label{Phiblambda:resolvent}
 \Phi_b(\mu,f)
=\Phi_b(\mu)- \Phi_b(\mu)\E_b\left[\int_0^\infty  \e^{-\mu r} f(\rho_r)\E_{\rho_r}[\e^{-\mu T_b+A(T_b)}]\d r\right].
\end{align}
\end{prop}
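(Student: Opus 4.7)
The plan is to decompose the integrand under $\N_b$ as
\[
1 - e^{-\mu\zeta + A(\zeta)} = (1 - e^{-\mu\zeta}) - e^{-\mu\zeta}\bigl(e^{A(\zeta)} - 1\bigr),
\]
so that (at least formally) $\Phi_b(\mu, f) = \Phi_b(\mu) - \N_b\bigl(e^{-\mu\zeta}(e^{A(\zeta)} - 1)\bigr)$, and then identify the correction term with the quantity displayed on the right-hand side of \eqref{Phiblambda:resolvent}.

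For the correction term I would apply a backward differentiation in the spirit of \eqref{taylor1} to rewrite $e^{A(\zeta)} - 1 = \int_0^\zeta f(\epsilon_r)\, e^{A(\zeta) - A(r)}\d r$, giving
\[
e^{-\mu\zeta}\bigl(e^{A(\zeta)} - 1\bigr) = \int_0^\zeta f(\epsilon_r)\, e^{-\mu r}\, e^{-\mu(\zeta - r) + A(\zeta) - A(r)}\d r.
\]
On $\{r < \zeta\}$ the exponent $-\mu(\zeta - r) + A(\zeta) - A(r)$ equals $(-\mu T_b + A(T_b)) \circ \theta_r$ applied to the shifted path $\epsilon_{r + \cdot}$, which hits $b$ at time $\zeta - r$. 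Applying the simple Markov property of the excursion measure at the deterministic time $r$ therefore yields
\[
\N_b\bigl(e^{-\mu\zeta}(e^{A(\zeta)} - 1)\bigr) = \N_b\left(\int_0^\zeta f(\epsilon_r)\, e^{-\mu r}\, \E_{\epsilon_r}\bigl[e^{-\mu T_b + A(T_b)}\bigr] \d r\right),
\]
once Fubini is justified.

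To convert the remaining $\N_b$-integral into the $\E_b$-form required by \eqref{Phiblambda:resolvent}, I would use the excursion resolvent identity
\[
\N_b\left(\int_0^\zeta e^{-\mu r} h(\epsilon_r)\d r\right) = \Phi_b(\mu)\,\E_b\left[\int_0^\infty e^{-\mu r} h(\rho_r)\d r\right] \quad (h \geq 0),
\]
obtained by decomposing $\int_0^\infty e^{-\mu t}h(\rho_t)\d t$ under $\P_b$ into excursion contributions $\sum_s e^{-\mu \tau^b_{s-}}\int_0^{\zeta(e^b_s)} e^{-\mu u}h(e^b_{s,u})\d u$ and applying the compensation formula for the Poisson point process $(e^b_t)$ together with $\E_b[\int_0^\infty e^{-\mu \tau^b_s}\d s] = \Phi_b(\mu)^{-1}$, precisely as in the derivation of \eqref{Phib:resolvent}. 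Taking $h(x) = f(x)\E_x[e^{-\mu T_b + A(T_b)}]$ then yields \eqref{Phiblambda:resolvent}.

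For the integrability assertion $\N_b(|1 - e^{-\mu\zeta + A(\zeta)}|) < \infty$ I would bound $|e^{A(\zeta)} - 1| \leq \int_0^\zeta |f(\epsilon_r)|\, e^{A(\zeta) - A(r)}\d r$ and run the same Markov/excursion-resolvent argument with $|f|$ in place of $f$. Since $\supp(f)$ is compact and the first part of \eqref{cond:q} supplies a uniform bound on $\E_{a}[e^{-\mu T_b + A(T_b)}]$ over $a \in \supp(f)\setminus\{0\}$, the resulting quantity is finite; combined with the trivial bound $\N_b(1 - e^{-\mu\zeta}) = \Phi_b(\mu) < \infty$ this proves the integrability and justifies Fubini in the earlier step. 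The main obstacle is a clean articulation of the Markov property of $\N_b$ at a deterministic time and of the excursion resolvent identity; both are classical but demand careful bookkeeping with the compensation formula and with the predictability of the inverse local time $\tau^b_{s-}$.
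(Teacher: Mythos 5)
Your proposal is correct and follows essentially the same route as the paper's proof: the same decomposition $1-\e^{-\mu\zeta+A(\zeta)}=(1-\e^{-\mu\zeta})+(\e^{-\mu\zeta}-\e^{-\mu\zeta+A(\zeta)})$, the same backward differentiation via \eqref{taylor1}, the same application of the simple Markov property of $\N_b$ at a deterministic time, the same conversion to the $\E_b$-form via the compensation formula together with the identity $\Phi_b(\mu)^{-1}=\E_b[\int_0^\infty \e^{-\mu t}\d L^b_t]$, and the same reduction to $|f|$ for the integrability claim. The only difference is that you spell out the intermediate identity $\N_b(\int_0^\zeta \e^{-\mu r}h(\epsilon_r)\d r)=\Phi_b(\mu)\E_b[\int_0^\infty \e^{-\mu r}h(\rho_r)\d r]$ rather than citing the compensation formula and \eqref{Phib:resolvent} directly, which is a matter of exposition, not of substance.
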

\begin{proof}
For $f\geq 0$, we can write
\begin{align*}
\Phi_b(\mu,f)
=\N_b(1-\e^{-\mu \zeta})+
\N_b(\e^{-\mu \zeta}-\e^{-\mu \zeta+A(\zeta)})
=\Phi_b(\mu)-
\N_b\left(\e^{-\mu \zeta}\int_0^\zeta  f(\epsilon_r) \e^{\int_r^\zeta f(\epsilon_s)\d s}\d r\right),
\end{align*}
where the last equality follows from the definition of $\Phi_b$ and \eqref{taylor1}. The last term can be written as 
\begin{align}
 \N_b\left(\e^{-\mu \zeta}\int_0^\zeta  f(\epsilon_r) \e^{\int_r^\zeta f(\epsilon_s)\d s}\d r\right)
&=\N_b\left(\int_0^\zeta \e^{-\mu r}f(\epsilon_r)\E_{\epsilon_r}[\e^{-\mu T_b+A(T_b)}]\d r\right)\label{Psib:0}\\
&=\Phi_b(\mu) \E_b\left[\int_0^\infty  \e^{-\mu r} f(\rho_r)\E_{\rho_r}[\e^{-\mu T_b+A(T_b)}]\d r\right].\label{Psib:1}
\end{align}
Here, \eqref{Psib:0} follows from the simple Markov property at $r$ under the excursion measure \cite[Theorem~III.3.(28) in pp.102--103]{Blumenthal}, and \eqref{Psib:1} follows from the compensation formula in excursion theory \cite[Corollary~11 on p.119]{Bertoin} and the second equality in \eqref{Phib:resolvent}. Combining \eqref{Psib:1} and the display before shows that under the first condition in \eqref{cond:q}, $\N_b(|1-\e^{-\mu\zeta +A(\zeta)}|)<\infty$ and \eqref{Phiblambda:resolvent} holds. These properties extend to a general $f\in \C_c(\R_+)$ by considering $|f|$. The proof is complete.
\end{proof}

We apply Propositions~\ref{prop:LT} (2$\cc$) to $\ooS(q,|y|)$ in \eqref{main:aa} by taking
\begin{align}\label{setting:radial}
f=f_\vep=\lv \oovarphi\uvep\quad \&\quad \mu=\vep^2q,\quad \forall\;\vep\in (0,\ovp),
\end{align}
such that the positivity property \eqref{oovarphi:positive} holds. (Recall the function $\oovarphi\uvep$ defined in \eqref{def:varphi0}.) For $\chi_\vep$ to be specified near the end of this subsection, let $m_\vep\in(0,\infty)$ be such that $\supp(\chi_\vep)\subseteq \{y:|y|\geq m_\vep\}$. Then under the assumption of \eqref{cond:q} for $[g,d]=[m_\vep,M_\varphi]$, we can write
\begin{align}
\ooS(q,|y|)&=\lv^2\int_0^\infty \frac{\E_{|y|}[\e^{-\vep^2q T_b+\ooA_\vep(T_b)}]}{\Phi_b(\vep^2q, f_\vep)}\oovarphi\uvep(b)\chi_\vep(b)\d b,\label{eq2:main}
\end{align}
where $\Phi_b(\vep^2q, f_\vep)$ can be expanded as in \eqref{Phiblambda:resolvent}. Observe that in terms of \eqref{cond:q}, \eqref{Phiblambda:resolvent} and \eqref{eq2:main}, the first-passage functional 
\begin{align}\label{def:k}
\kappa_{\vep}^q(a,b)\defeq \E_{a}[\e^{-\vep^2q T_b+\ooA_\vep(T_b)}]
\end{align}
is the main object. Henceforth, the task to complete the proof of Theorem~\ref{thm:main1} is to verify the two conditions in \eqref{cond:q} for the choice in \eqref{setting:radial} and $[g,d]=[m_\vep,M_\varphi]$. For the following proofs, we set
\begin{align}\label{def:nubeta}
\nu_\vep=\|\oovarphi\uvep\|_\infty\lv\quad\& \quad \beta_\vep=(\log \log \vep^{-1})^{2},
\end{align}
and we turn to the explicit formulas of some exponential functionals of Brownian motions.

\begin{lem}[Verification of the first condition of \eqref{cond:q}]\label{lem:kappato1}
For any $q\in (0,\infty)$ and $\delta\in (0,1)$, 
\begin{align}
\begin{split}
&\frac{K_0(M_\varphi\sqrt{2\vep^2q})}{K_0(M_\varphi\e^{-\delta\frac{\pi}{2\sqrt{2\nu_\vep}}} \sqrt{2\vep^2q})}\leq \kappa^q_\vep(a,b)\leq \frac{1}{\cos(\pi\delta/2)},\\
&\hspace{4cm}\forall\; a,b:M_\varphi\e^{-\delta\frac{\pi}{2\sqrt{2\nu_\vep}}}\leq b\leq a\leq M_\varphi,\;\vep\in (0,\ovp),\label{ineq1:k0}
\end{split}
\end{align}
and we can find $\vep_{\ref{ineq2:k0}}=\vep_{\ref{ineq2:k0}}(M_\varphi,q,\lambda)\in (0,\ovp)$ such that $M_\varphi\sqrt{2\nu_\vep}<\pi/4$ and
\begin{align}
\begin{split}
\frac{1}{I_0(M_\varphi\sqrt{2\vep^2q})}&\leq \kappa^q_\vep(a,b)
\leq \frac{ \int_{-1}^1 (1-t^2)^{-1/2}\d t}
{\int_{-1}^1 (1-t^2)^{-1/2}\cos(M_\varphi\sqrt{2\nu_\vep}t)\d t},\\
&\hspace{1.05cm} \forall\;  a,b:0<a\leq b\leq M_\varphi,\;\vep\in (0,\vep_{\ref{ineq2:k0}}).\label{ineq2:k0}
\end{split}
\end{align}
Hence, under the choice of \eqref{setting:radial}, the first condition in \eqref{cond:q} holds for all $[g,d]$ such that $M_\varphi\e^{-\delta\frac{\pi}{2\sqrt{2\nu_\vep}}}\leq g< d\leq M_\varphi$. 
\end{lem}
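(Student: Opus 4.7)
The central tool is the skew-product representation $\rho_t=\exp\beta_{S_t^{-1}}$ from \eqref{def:skew}: via the identity $T_b(\rho)=\int_0^{T_{\log b}(\beta)}\e^{2\beta_u}\,\d u$ (a special case of \eqref{ineq:SP:g}), $\kappa^q_\vep(a,b)$ rewrites as
\[
\E^\beta_{\log a}\Bigl[\exp\int_0^{T_{\log b}(\beta)}\bigl[-\vep^2q+\lv\oovarphi\uvep(\e^{\beta_u})\bigr]\e^{2\beta_u}\,\d u\Bigr],
\]
and I will use throughout the positivity \eqref{oovarphi:positive} together with the uniform bound $\lv\oovarphi\uvep\leq\nu_\vep$ on its support $[0,M_\varphi]$. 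Write $L=\log M_\varphi$.

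Both lower bounds are obtained by dropping the nonnegative $\ooA_\vep(T_b)$, yielding $\kappa^q_\vep(a,b)\geq\E_a^\rho[\e^{-\vep^2q T_b}]$, which the Bessel hitting-time formula \eqref{bes:hit2} evaluates to $K_0(a\sqrt{2\vep^2q})/K_0(b\sqrt{2\vep^2q})$ in Case~1 ($a\geq b$) and to $I_0(a\sqrt{2\vep^2q})/I_0(b\sqrt{2\vep^2q})$ in Case~2 ($a\leq b$); the monotonicity of $K_0$ (decreasing) and $I_0$ (increasing), combined with the stated ranges of $a,b$, gives the advertised bounds. For the upper bound in Case~2, path continuity forces $\beta_u<\log b\leq L$ for $u<T_{\log b}(\beta)$, so after dropping the killing the integrand is at most $\nu_\vep\e^{2\beta_u}$ throughout, and hence $\kappa^q_\vep(a,b)\leq\E^\beta_{\log a}[\exp(\nu_\vep\int_0^{T_{\log b}(\beta)}\e^{2\beta_u}\,\d u)]$. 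Formula \eqref{bm:prob4} then identifies the right-hand side with $J_0(a\sqrt{2\nu_\vep})/J_0(b\sqrt{2\nu_\vep})$, and the choice $M_\varphi\sqrt{2\nu_\vep}<\pi/4$ keeps $J_0$ positive and decreasing on the relevant interval, so the ratio is at most $1/J_0(M_\varphi\sqrt{2\nu_\vep})$, which is exactly the cosine-integral ratio in \eqref{ineq2:k0}.

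The hard part is the upper bound in Case~1 ($b\leq a\leq M_\varphi$): here $\rho$ can excurse above $M_\varphi$ before hitting $b$, and dropping the indicator $\1_{\{\beta_u\leq L\}}$ is no longer legitimate, since $\E^\beta_{\log a}[\exp(\nu_\vep\int_0^{T_{\log b}}\e^{2\beta_u}\,\d u)]$ is in fact infinite (the maximum of $\beta$ before its first descent to $\log b<\log a$ has only polynomial tails, making $\E[\e^{2\sup\beta}]=\infty$). The workaround is to first dominate $\lv\oovarphi\uvep(\rho_s)\leq\nu_\vep\1_{[b,M_\varphi]}(\rho_s)$ (the restriction to $[b,M_\varphi]$ is free because $\rho_s>b$ for $s<T_b$), and to recognise the resulting Lebesgue occupation time $T^*_b:=\int_0^{T_b(\rho)}\1_{[b,M_\varphi]}(\rho_s)\,\d s$ as the hitting time of $b$ by the Bessel process $\rho^*$ reflected downward at $M_\varphi$. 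This yields $\kappa^q_\vep(a,b)\leq\E^{\rho^*}_a[\e^{\nu_\vep T_b(\rho^*)}]=:w(a)$, where $w$ satisfies $\tfrac{1}{2}(w''+w'/r)+\nu_\vep w=0$ on $(b,M_\varphi)$ with Dirichlet data $w(b)=1$ and Neumann data $w'(M_\varphi)=0$; in log-coordinates $W(s)=w(\e^s)$ the equation becomes $W''+2\nu_\vep\e^{2s}W=0$ on $[\log b,L]$. The plan is to bound $W(\log a)$ by Sturm comparison against the constant-coefficient problem $\widetilde W''+2\nu_\vep\widetilde W=0$ with matching boundary data, whose explicit solution is $\cos(\sqrt{2\nu_\vep}(L-s))/\cos(\sqrt{2\nu_\vep}(L-\log b))$; the range constraint $\sqrt{2\nu_\vep}(L-\log b)\leq\delta\pi/2$ would then produce the clean bound $1/\cos(\pi\delta/2)$. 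The delicate point—this is the main obstacle—is that $\e^{2s}$ varies over $[b^2,M_\varphi^2]$, so a naive Sturm estimate risks introducing an extraneous factor of $M_\varphi$; I expect the correct comparison to exploit the concavity of $W$ together with the Neumann condition $W'(L)=0$, or else to fall back on the explicit representation $w=AJ_0+BY_0$ with $A,B$ determined by the two boundary conditions, using small-argument expansions of the Bessel functions. Once both cases are in hand, the ``first condition'' in \eqref{cond:q} follows by combining them to yield a finite $\sup_{a\in(0,M_\varphi]}\kappa^q_\vep(a,b)$ whenever $b\in[M_\varphi\e^{-\delta\pi/(2\sqrt{2\nu_\vep})},M_\varphi]$.
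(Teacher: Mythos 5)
Your lower bounds and the Case~2 upper bound (for $a\leq b$) are essentially the paper's argument: the former drop $\ooA_\vep(T_b)\geq 0$ and invoke Theorem~\ref{thm:expmom1} together with the monotonicity of $K_0$ and $I_0$; the latter drops the support indicator (permissible since $\rho_s<b\leq M_\varphi$ for $s<T_b$), applies the analytic continuation \eqref{bm:prob4} to $\nu_\vep\int_0^{T_{\log b}(\beta)}\e^{2\beta_v}\,\d v$, and uses the positivity of $J_0$ on $[0,j_{0,1})$. (Minor slip: the $K_0$-type lower bound comes from \eqref{bes:hit1}, not \eqref{bes:hit2}.)

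The gap is the Case~1 upper bound ($b\leq a$), which you leave open. You correctly observe that the indicator cannot be dropped (for $a>b$ the hitting time $T_b(\rho)$ has only logarithmic tails, so $\E_a^\rho[\e^{\nu T_b}]=\infty$) and you correctly reduce to the occupation of $[b,M_\varphi]$ before $T_b$, but you then turn to a Sturm-comparison analysis for a Bessel process reflected at $M_\varphi$, which is strictly harder than what the paper actually needs. The paper never works in the $\rho$-clock here: after \eqref{ineq:SP:g} and $\lv\oovarphi\uvep\leq\nu_\vep\1_{[0,M_\varphi]}$, the exponent is controlled by $\nu_\vep\int_0^{T_{\log b}(\beta)}\1_{(-\infty,\log M_\varphi]}(\beta_v)\e^{2\beta_v}\,\d v$, and since $\e^{2\beta_v}$ is bounded on the indicator set, one is led to the \emph{$\beta$-clock} Lebesgue occupation time of the half-line $(-\infty,\log M_\varphi]$ before the Brownian hitting time $T_{\log b}(\beta)$. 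That functional has a closed-form exponential moment: \eqref{bm:prob1} gives a $\cosh$-ratio, and Proposition~\ref{prop:expmom2}'s identity \eqref{bm:prob3} is its analytic continuation to positive coupling, a $\cos$-ratio valid exactly when $b>M\e^{-\pi/(2\sqrt{2\nu})}$. The ODE behind it has constant coefficients on $(\log b,\log M_\varphi)$ and zero potential above $\log M_\varphi$, so the Neumann condition you anticipated at the upper boundary is automatic (the bounded solution is constant above that level) and no Bessel function of the second kind is needed. Proposition~\ref{prop:expmom2} is the tool you were missing. Your worry about an extraneous factor of $M_\varphi$ from $\e^{2s}$ varying over $[b^2,M_\varphi^2]$ is a sharp observation — it is exactly what the Jacobian bound $\e^{2\beta_v}\leq M_\varphi^2$ costs in passing from \eqref{ineq:Tb} to \eqref{bm:prob3} — but this only rescales the constant multiplying $\sqrt{2\nu_\vep}$ and the admissible $b$-window, which is harmless here because $\nu_\vep\to 0$ and the downstream applications (e.g.\ Lemma~\ref{lem:kappaasymp}) already restrict $b$ to an even narrower range.
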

\begin{proof}
Since $\kappa^q_\vep(a,b)\geq \E_a[\e^{-\vep^2qT_b}]$, the lower bounds in \eqref{ineq1:k} and \eqref{ineq2:k} follow readily from Theorem~\ref{thm:expmom1} and the standard properties of $K_0$ and $I_0$: $K_0$ is decreasing, $I_0$ is increasing, and $I_0(0)=1$.  

The restrictions on $b$ in \eqref{ineq1:k0} and $\vep$ in \eqref{ineq1:k0} and \eqref{ineq2:k0} are required for the upper bounds. 
Note that by \eqref{ineq:SP:g}, 
\begin{align}
\Bigg|\int_0^{T_{b}(\rho)}\oovarphi\uvep(\rho_v)\d v\Bigg|
\leq \|\oovarphi\uvep\|_\infty\int_0^{T_{\log b}(\beta)}\1_{(-\infty,\log M_\varphi]}(\beta_v)\e^{2\beta_v}\d v,\label{ineq:Tb}
\end{align}
where the last equality uses the assumption $\supp(\oovarphi\uvep)\subset [0,M_\varphi]$. Hence, by Proposition~\ref{prop:expmom2}, 
\begin{align}
\kappa^q_\vep(a,b)&\leq \frac{\cos\big(\log (\tfrac{M_\varphi}{a\wedge M_\varphi})\sqrt{2\nu_\vep}\,\big)}{\cos\big(\log(\tfrac{M_\varphi}{b\wedge M_\varphi})\sqrt{2\nu_\vep}\,\big)},\hspace{1.96cm}\forall\;a,b: M_\varphi\e^{-\frac{\pi}{2\sqrt{2\nu_\vep}}}<b\leq a;\label{ineq1:k}\\
\kappa^q_\vep(a,b)&\leq \frac{ \int_{-1}^1 (1-t^2)^{-1/2}\cos(a\sqrt{2\nu_\vep}t)\d t}
{\int_{-1}^1 (1-t^2)^{-1/2}\cos(b\sqrt{2\nu_\vep}t)\d t},\hspace{.6cm}\forall\;a,b:  0<a\leq b<\frac{j_{0,1}}{\sqrt{2\nu_\vep}}.\label{ineq2:k}
\end{align}

Since $\log(\tfrac{M_\varphi}{b\wedge M_\varphi})\sqrt{2\nu_\vep}\in [0,\pi/2)$ for $b>M_\varphi\e^{-\frac{\pi}{2\sqrt{2\nu_\vep}}}$, we obtain the upper bound in \eqref{ineq1:k0} from  \eqref{ineq1:k} and the monotonicity of $\theta\mapsto \cos(\theta)$  in $ [0,\pi/2)$. To get \eqref{ineq2:k0}, choose $\vep_{\ref{ineq2:k0}}=\vep_{\ref{ineq2:k0}}(\|\varphi\|_\infty,M_\varphi,\lambda)\in (0,\ovp)$ such that $M_\varphi< (\pi/4)/\sqrt{2\nu_{\vep}}$ for all $\vep\in(0, \vep_{\ref{ineq2:k0}})$. Then for all these $\vep$, $t\in (-1,1)$, and $0<b\leq M_\varphi$, we have $b\sqrt{2\nu_\vep} |t|\leq M_\varphi\sqrt{2\nu_\vep} |t|<\pi/4$. By the monotonicity of $\theta\mapsto \cos(\theta)$ in $[0,\pi/4)$ and in $(-\pi/4,0]$,  \eqref{ineq2:k} is enough to get \eqref{ineq2:k0}. The proof is complete. 
\end{proof}

Let us introduce some tools to be used repeatedly in the sequel. First, recall the following asymptotic behavior of the Macdonald function $K_0$ \eqref{def:K}:
\begin{align}\label{ob3:K0}
K_0(r)\sim \log r^{-1},\quad r\to 0;\quad
 K_0(r)\sim\sqrt{\pi/(2r)}\e^{-r},\quad r\to \infty.
\end{align} 
\cite[(5.16.4) and (5.16.5) on p.136]{Lebedev}. We also need the identity: 
\begin{align}
\label{int:harmonic}
\int_{-\pi}^\pi \log |1-r\e^{\i \theta}|\d \theta=0,\quad\forall\;r\in [0,1].
\end{align}
To see \eqref{int:harmonic}, note that it holds for all $r\in [0,1)$ by the mean-value property of $\Bbb C\ni z\mapsto \log |1-z|$ in the centered open disc with radius one. The identity extends to $r=1$ by dominated convergence.

Our next goal is to validate the second condition of \eqref{cond:q} by working with \eqref{Phiblambda:resolvent}. 
We start with an asymptotic expansion of $\Phi_b(\vep^2q)^{-1}$.

\begin{lem}\label{lem:Phibasymp}
Given any $q\in (0,\infty)$, there exist $\vep_{\ref{asymp:Phibminus}}=\vep_{\ref{asymp:Phibminus}}(M_\varphi,q)\in (0,\ovp)$ and a function $\eta_{\ref{asymp:Phibminus_bdd}}(\vep)>0$ depending only on $(M_\varphi,q,\vep)$ and tending to zero as $\vep\to 0$, such that the following properties hold for all $\vep\in (0,\vep_{\ref{asymp:Phibminus}})$ and $b\in (0,M_\varphi]$:
\begin{align}\label{asymp:Phibminus}
\begin{split}
&\Phi_b(\vep^2q)^{-1}
=2b\log \vep^{-1} +2b\left(\frac{\log 2-\log q}{2}-\log b-\EM\right)+{\rm R}_{\ref{asymp:Phibminus}}(q,b,\vep),
\end{split}\\
\label{asymp:Phibminus_bdd}
&|{\rm R}_{\ref{asymp:Phibminus}}(q,b,\vep)|\less
b\cdot \eta_{\ref{asymp:Phibminus_bdd}}(\vep).
\end{align}
\end{lem}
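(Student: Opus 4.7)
The plan is to identify $\Phi_b(\vep^2 q)^{-1}$ in closed form via \eqref{Phib:resolvent} and then Taylor expand. The second equality of \eqref{Phib:resolvent} gives $\Phi_b(\vep^2 q)^{-1} = \int_0^\infty \e^{-\vep^2 q t} Q_t(b,b)\,\d t$, so the starting point is to evaluate this diagonal $\BES^2$-resolvent in closed form as
$$
\int_0^\infty \e^{-\mu t}Q_t(b,b)\,\d t \;=\; 2b\, I_0(b\sqrt{2\mu})K_0(b\sqrt{2\mu}),\quad b,\mu\in(0,\infty).
$$
This is a standard identity; for instance, inserting \eqref{def:density} and using $\int_0^\infty t^{-1}\e^{-b^2/t-\mu t}I_0(b^2/t)\,\d t =2 I_0(b\sqrt{2\mu})K_0(b\sqrt{2\mu})$ gives it directly. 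Alternatively, one may derive it from Sturm--Liouville theory for $\BES^2$: with the scale/speed pair recalled in the excerpt, $I_0(\sqrt{2\mu}\,\cdot)$ and $K_0(\sqrt{2\mu}\,\cdot)$ are the two fundamental solutions of $(\mathcal L-\mu)u=0$, and the Green kernel against $m(\d x)=x\,\d x$ produces the formula above, upon noting that the $\d x$-resolvent density equals $b$ times the $m$-resolvent density.

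Setting $\mu=\vep^2 q$ and $r\defeq b\sqrt{2\vep^2 q}$, we have $r\leq M_\varphi\sqrt{2q}\,\vep$ for $b\in (0,M_\varphi]$, so $r$ is uniformly small as $\vep\to 0$. I would then apply the small-argument expansions
$$
I_0(r) = 1+O(r^2),\qquad K_0(r)=-\log(r/2)-\EM + O\bigl(r^2|\log r|\bigr)\qquad (r\downarrow 0),
$$
which refine \eqref{ob3:K0}. Their product gives $I_0(r)K_0(r)=-\log(r/2)-\EM+O(r^2|\log r|)$. Rewriting
$$
-\log(r/2)-\EM \;=\; \log\vep^{-1}+\frac{\log 2-\log q}{2}-\log b-\EM
$$
and multiplying by $2b$ produces exactly the leading expression in \eqref{asymp:Phibminus}, with remainder $R_{\ref{asymp:Phibminus}}(q,b,\vep)=2b\cdot O(r^2|\log r|)$.

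For \eqref{asymp:Phibminus_bdd}, since $r^2=2\vep^2 q\,b^2$ and $|\log r|\leq |\log b|+\log\vep^{-1}+C(q)$, the remainder is bounded by $C(q)\cdot b\cdot \vep^2 b^2 (|\log b|+\log\vep^{-1}+1)$. Using the elementary bound $\sup_{b\in(0,M_\varphi]}b^2|\log b|\leq C(M_\varphi)<\infty$ absorbs the singularity of $\log b$ at $0$, yielding $|R_{\ref{asymp:Phibminus}}|\less b\cdot \eta_{\ref{asymp:Phibminus_bdd}}(\vep)$ with $\eta_{\ref{asymp:Phibminus_bdd}}(\vep)=C(M_\varphi,q)\vep^2(1+\log\vep^{-1})\to 0$. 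Once the closed-form resolvent identity is in hand, there is no real obstacle: the lemma reduces to a two-term Taylor expansion. The only delicate point is uniformity in $b$ down to $0$, handled by the factor $b$ up front taming $\log b$.
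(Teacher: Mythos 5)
Your proof is correct, and it takes a genuinely different route from the paper's. You evaluate the resolvent in closed form, $\Phi_b(\vep^2q)^{-1}=\int_0^\infty \e^{-\vep^2 q t}Q_t(b,b)\,\d t = 2b\,I_0(b\sqrt{2\vep^2 q})K_0(b\sqrt{2\vep^2 q})$, and then Taylor-expand the Bessel product; the closed-form identity (which can be read off from the Sturm--Liouville Green function of $\BES^2$, since $I_0(\sqrt{2\mu}\,\cdot)$ and $K_0(\sqrt{2\mu}\,\cdot)$ are the fundamental solutions and the Wronskian in the scale $s(x)=2\log x$ equals $1/2$) does all the work in one stroke. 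The paper instead avoids quoting this identity: it substitutes the explicit Gaussian/$I_0$ form of $Q_t(b,b)$, rewrites it via polar coordinates as a $\theta$-average of Gaussian kernels, identifies the inner $t$-integral with $\pi^{-1}K_0(\vep q^{1/2}b|1-\e^{\i\theta}|)$ via \eqref{asymp:gauss}, expands $K_0$, and finally invokes the harmonic mean-value identity \eqref{int:harmonic} to kill the $\int_{-\pi}^\pi\log|1-\e^{\i\theta}|\,\d\theta$ term. Both give the same leading terms and the same $b\cdot\eta(\vep)$ remainder bound (with your $b^2|\log b|\leq C(M_\varphi)$ step matching the paper's handling of the $b\to 0$ singularity). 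Your route is shorter and more transparent once the Green-function identity is granted; the paper's route has the advantage of re-using only ingredients (\eqref{ob3:K0}, \eqref{asymp:gauss}, \eqref{int:harmonic}) already established in the text and recurring elsewhere in the argument, rather than importing a new closed-form resolvent.
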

\begin{proof}
By \eqref{Phib:resolvent} and then polar coordinates with a change of variable in $t$, we have
\begin{align*}
\Phi_b(\vep^2q)^{-1}&=\int_0^\infty \e^{-\vep^2q t}Q_t(b,b)\d t=\int_{-\pi}^\pi b\cdot \int_0^\infty \frac{\e^{- t} }{2\pi t}\exp\left(-\frac{\vep^2qb^2|1-\e^{\i \theta}|^2}{2t}\right)\d t\d \theta.
\end{align*}
Hence, by  \eqref{ob3:K0} and \eqref{asymp:gauss}, we get
\begin{align}
\Phi_b(\vep^2q)^{-1}&= \int_{-\pi}^\pi b\cdot  \frac{1}{\pi}\log \frac{1}{\vep q^{1/2}b|1-\e^{\i \theta}|}
 \d\theta+\int_{-\pi}^\pi b\cdot \frac{1}{\pi}\left(\frac{\log 2}{2}-\EM\right)\d \theta+{\rm R}_{\ref{asymp:Phibminus0}}(q,b,\vep)\label{asymp:Phibminus0}\\
 \begin{split}\label{asymp:Phibminus0+1}
 &=-2b\log q^{1/2}-2b\log \vep-2b\log b-\frac{b}{\pi}\int_{-\pi}^\pi \log |1-\e^{\i\theta}| \d \theta\\
 &\eqspace+2b\left(\frac{\log 2}{2}-\EM\right)+{\rm R}_{\ref{asymp:Phibminus0}}(q,b,\vep),
 \end{split}
\end{align}
where
 \begin{align*}
&|{\rm R}_{\ref{asymp:Phibminus0}}(q,b,\vep)|\less \left| \int_{\stackrel{\scriptstyle -\pi\leq \theta\leq \pi}{\vep q^{1/2}b|1-\e^{\i \theta}|\geq 1}}  b\cdot  \frac{1}{\pi}\left(\log \frac{2^{1/2}}{\vep q^{1/2}b|1-\e^{\i \theta}|}-\EM\right)
 \d\theta\right|\\
&\hspace{1.8cm}+ \int_{\stackrel{\scriptstyle -\pi\leq \theta\leq \pi}{\vep q^{1/2}b|1-\e^{\i \theta}|\geq 1}} b\cdot \e^{-\vep q^{1/2}b|1-\e^{\i \theta}|}\d \theta+\int_{\stackrel{\scriptstyle -\pi\leq \theta\leq \pi}{\vep q^{1/2}b|1-\e^{\i \theta}|< 1}} b\cdot \vep^2q b^2|1-\e^{\i \theta}|^2\log \frac{1}{ \vep q^{1/2}b|1-\e^{\i \theta}|}\d \theta.
\end{align*}
To bound the right-hand side of the foregoing $\less$-inequality, notice that $\vep q^{1/2}b|1-\e^{\i \theta}|\geq 1$ for some $|\theta|\leq \pi$  implies $\vep q^{1/2}b|1-\e^{\i \pi}|\geq 1$, and $\int_{0+}\big|\log |1-\e^{\i \theta}|\big|\d \theta<\infty$ holds. Hence,
\begin{align}\label{asymp:Phibminus_bdd++}
|{\rm R}_{\ref{asymp:Phibminus0}}(q,b,\vep)|&\less
 b\1_{\{\vep q^{1/2} b\geq 1/4\}}\left(\left|\log \frac{1}{\vep q^{1/2}b}\right|+1\right)+
 b\cdot \vep^2qb^2\left(\left|\log \frac{1}{\vep q^{1/2}b}\right|+1\right).
\end{align}

Finally, \eqref{asymp:Phibminus} follows from  \eqref{asymp:Phibminus0+1} and \eqref{int:harmonic} if we set ${\rm R}_{\ref{asymp:Phibminus}}(q,b,\vep)={\rm R}_{\ref{asymp:Phibminus0}}(q,b,\vep)$. Then \eqref{asymp:Phibminus_bdd} holds by \eqref{asymp:Phibminus_bdd++} and the monotonicity of $r^2\log r^{-1}$ over $(0,\e^{-1/2}]$. The proof is complete.
\end{proof}

Next, we consider the following Green function that enters $\Phi_b(\vep^2q,f_\vep)$ via \eqref{Phiblambda:resolvent}:
\begin{align}\label{inter:1}
\E_b\left[\int_0^\infty  \e^{-\vep^2q r} f_\vep(\rho_r)\E_{\rho_r}\big[\e^{-\vep^2qT_b+\ooA_\vep(T_b)}\big]\d r\right]=
\E_b\left[\int_0^\infty  \e^{-\vep^2q r} f_\vep(\rho_r)\kappa_{\vep}^q(\rho_r,b)\d r\right],
\end{align}
where we use the notation in \eqref{def:k}. The asymptotic behavior of this term calls for the main technical considerations. We expand $\kappa^q_\vep$ in $\vep$. To this end, note that by the first equality in \eqref{taylor1},
\begin{align}\label{eq0:Qexp}
\e^{-\vep^2qT_b+\ooA_\vep(T_b)}=1+\int_0^{T_b}[-\vep^2q+f_\vep(\rho_s)]\e^{-\vep^2q(T_b-s)+\int_s^{T_b}f_\vep(\rho_r)\d r}\d s,
\end{align}
or equivalently,
\begin{align}\label{eq:Qexp}
\e^{-\vep^2qT_b+\ooA_\vep(T_b)}+\int_0^{T_b}\vep^2q\e^{-\vep^2q(T_b-s)+\int_s^{T_b}f_\vep(\rho_r)\d r}\d s=1+\int_0^{T_b}f_\vep(\rho_s)\e^{-\vep^2q(T_b-s)+\int_s^{T_b}f_\vep(\rho_r)\d r}\d s.
\end{align}
Taking expectations of both sides of \eqref{eq:Qexp} and applying the Markov property of $(\rho_t)$ yield
\begin{align}
\kappa_{\vep}^q(a,b)+\vep^2q\E_a\left[\int_0^{T_b}\kappa_{\vep}^q(\rho_s,b)\d s\right]
&=1+ \E_a\left[\int_0^{T_b}f_\vep (\rho_s)\kappa_{\vep}^q(\rho_s,b)\d s\right].\label{inter:2}
\end{align}
Applying \eqref{inter:2} to the right-hand side of \eqref{inter:1}, we get
\begin{align}
\begin{split}\label{exp:Green}
\eqspace\E_b\left[\int_0^\infty  \e^{-\vep^2q r} f_\vep(\rho_r)\kappa^q_\vep(\rho_r,b)\d r\right]+{\rm III}_{\ref{exp:Green}}(q,b,\vep)
=1-{\rm I}_{\ref{exp:Green}}(q,b,\vep)-{\rm II}_{\ref{exp:Green}}(q,b,\vep),
\end{split}
\end{align}
where
\begin{align*}
{\rm I}_{\ref{exp:Green}}(q,b,\vep)&=1-\E_b\left[\int_0^\infty  \e^{-\vep^2q r} f_\vep(\rho_r)\d r\right],\\
{\rm II}_{\ref{exp:Green}}(q,b,\vep)&=-\E_b\left[\int_0^\infty  \e^{-\vep^2q r} f_\vep(\rho_r)\E_{\rho_r}\left[\int_0^{T_b}f_\vep(\rho_s)\kappa_{\vep}^q(\rho_s,b)\d s\right]\d r\right],\\
{\rm III}_{\ref{exp:Green}}(q,b,\vep)&=\E_b\left[\int_0^\infty  \e^{-\vep^2q r} f_\vep(\rho_r) \vep^2q\E_{\rho_r}\left[\int_0^{T_b}\kappa_{\vep}^q(\rho_s,b)\d s\right]  \d r\right].
\end{align*}

\begin{lem}\label{lem:kappaasymp}
{\rm (1$\cc$)} It holds that 
\begin{align}
&\eqspace\eqspace \sup_{a,b:0< a\leq b\leq M_\varphi}\E_a\left[\int_0^{T_b}\kappa_{\vep}^q(\rho_s,b)\d s\right]\leq C(M_\varphi),\quad \forall\; \vep\in (0,\vep_{\ref{ineq2:k0}}).\label{kappaasymp:1}
\end{align}
{\rm (2$\cc$)} Given any $q\in (0,\infty)$ and $\delta\in (0,1)$, we can find a function 
$\eta_{\ref{kappaasymp:2}}(\vep)>0$ depending only on $(M_\varphi,q,\delta,\lambda,\vep)$ and tending to zero as $\vep\to 0$, 
such that, for all $\vep\in (0,\ovp)$,  
\begin{align} 
\begin{split}
\sup_{a,b:M_\varphi\e^{-\delta\frac{\pi}{2\sqrt{2\nu_\vep\beta_\vep   }}}\leq b< a\leq M_\varphi} \left|\frac{\log \vep^{-1}\cdot \vep^2q}{\log a-\log b}\E_a\left[\int_0^{T_b}\kappa_{\vep}^q(\rho_s,b)\d s\right]-1\right|\leq \eta_{\ref{kappaasymp:2}}(\vep),
\end{split}
\label{kappaasymp:2}
\end{align}
where $\nu_\vep$ and $\beta_\vep$ are defined in \eqref{def:nubeta}.
\end{lem}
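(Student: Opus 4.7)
Both assertions rest on the Bessel Green-function identities \eqref{eq:Tr} and \eqref{eq:Tl} applied to $F(r)=\kappa^q_\vep(r,b)$, together with the two-sided bounds on $\kappa^q_\vep$ in Lemma~\ref{lem:kappato1}. For (1$\cc$), since $a\leq b\leq M_\varphi$ I apply \eqref{eq:Tr} with $d=b$. By \eqref{ineq2:k0} and $\nu_\vep\to 0$, the integrand $\kappa^q_\vep(\cdot,b)$ is uniformly bounded by some $C(\|\varphi\|_\infty)$ on $(0,M_\varphi]$ once $\vep$ is small. The two integrals on the right of \eqref{eq:Tr} are then bounded explicitly by constants depending only on $M_\varphi$; the boundary term $(\log b-\log a)\int_0^a r\d r=(\log b-\log a)a^2/2$ is controlled by $\sup_{0<x\leq 1}(-\log x)x^2<\infty$.

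For (2$\cc$), the regime is $b<a\leq M_\varphi$, so I use \eqref{eq:Tl} with $g=b$ to write
\[
\E_a\left[\int_0^{T_b}\kappa^q_\vep(\rho_s,b)\d s\right]=2(\log a-\log b)\int_a^\infty \kappa^q_\vep(r,b) r\d r+2\int_b^a (\log r-\log b)\kappa^q_\vep(r,b) r\d r.
\]
After multiplying by $\log\vep^{-1}\cdot\vep^2 q/(\log a-\log b)$, the second term is of order $\log\vep^{-1}\cdot\vep^2 q\cdot M_\varphi^2$ (using the uniform bound on $\kappa^q_\vep$ from Lemma~\ref{lem:kappato1}), hence $o(1)$ uniformly. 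The task reduces to showing
\[
2\log\vep^{-1}\cdot\vep^2 q\int_a^\infty\kappa^q_\vep(r,b) r\d r\longrightarrow 1\quad\text{uniformly in }(a,b).
\]
I split this integral at $M_\varphi$. The contribution of $[a,M_\varphi]$ is $\scrO(M_\varphi^2)$, hence $o(1)$ after the prefactor. On $[M_\varphi,\infty)$ the potential $\oovarphi\uvep$ vanishes, so the strong Markov property at $T_{M_\varphi}$ (using that paths starting above $M_\varphi$ remain above $M_\varphi$ up to $T_{M_\varphi}$) gives the factorization $\kappa^q_\vep(r,b)=\E_r[\e^{-\vep^2 q T_{M_\varphi}}]\kappa^q_\vep(M_\varphi,b)$.

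The rest is an asymptotic computation. I invoke the standard Bessel identity $\E_r[\e^{-\mu T_{M_\varphi}}]=K_0(r\sqrt{2\mu})/K_0(M_\varphi\sqrt{2\mu})$ for $r\geq M_\varphi$ and the primitive $(xK_1(x))'=-xK_0(x)$, yielding $\int_{M_\varphi}^\infty K_0(r\alpha)r\d r=M_\varphi K_1(M_\varphi\alpha)/\alpha$. Combined with $K_0(x)\sim\log x^{-1}$ and $K_1(x)\sim x^{-1}$ as $x\to 0^+$ from \eqref{ob3:K0}, this gives $\int_{M_\varphi}^\infty\E_r[\e^{-\vep^2 q T_{M_\varphi}}]r\d r\sim 1/(2\vep^2 q\log\vep^{-1})$, and thus the $[M_\varphi,\infty)$ contribution, after the prefactor, is asymptotic to $\kappa^q_\vep(M_\varphi,b)$. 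Uniform convergence $\kappa^q_\vep(M_\varphi,b)\to 1$ then follows from \eqref{ineq1:k0} applied with $\delta'\defeq\delta/\sqrt{\beta_\vep}\in(0,1)$ for small $\vep$: the hypothesis $b\geq M_\varphi\e^{-\delta\pi/(2\sqrt{2\nu_\vep\beta_\vep})}$ is exactly the hypothesis of Lemma~\ref{lem:kappato1} under $\delta'$, the upper bound $1/\cos(\pi\delta'/2)\to 1$, and the $K_0$-ratio lower bound tends to $1$ because its logarithmic defect is of order $1/\sqrt{\nu_\vep\beta_\vep}=o(\log\vep^{-1})$.

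\textbf{Expected obstacle.} The delicate point is the calibration with $\beta_\vep=(\log\log\vep^{-1})^2$: it must grow fast enough that $\delta'=\delta/\sqrt{\beta_\vep}$ drives both the cosine upper bound and the Macdonald-ratio lower bound of Lemma~\ref{lem:kappato1} to $1$ uniformly, while the window $b\in[M_\varphi\e^{-\delta\pi/(2\sqrt{2\nu_\vep\beta_\vep})},M_\varphi]$ remains wide enough for the later use in \eqref{eq2:main}. The bookkeeping concentrates on verifying that the $\scrO(\cdot)$ remainders in the small-argument expansions of $K_0$ and $K_1$ are genuinely uniform in the pair $(a,b)$, since the integral $\int_a^{M_\varphi}\kappa^q_\vep(r,b) r\d r$ depends on $a$ and the Bessel exponential factor depends on $b$ through $\kappa^q_\vep(M_\varphi,b)$.
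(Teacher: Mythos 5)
Your argument for (1$\cc$) is the same as the paper's: apply \eqref{eq:Tr} with $d=b$ and control $\kappa^q_\vep$ by the uniform bound \eqref{ineq2:k0}.

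For (2$\cc$) you take a genuinely different route on the key step, and it is correct. Both you and the paper start from \eqref{eq:Tl}, discard the $\int_b^a(\log r-\log b)\kappa^q_\vep(r,b)r\,\d r$ term as a negligible contribution, and reduce to showing $2\log\vep^{-1}\cdot\vep^2 q\int_a^\infty\kappa^q_\vep(r,b)r\,\d r\to 1$ uniformly. The paper handles this integral by \emph{pointwise} two-sided bounds on $\kappa^q_\vep(r,b)$ for all $r\geq a$: the lower bound discards the positive additive functional, while the upper bound uses H\"older's inequality with the conjugate pair $(\alpha_\vep,\beta_\vep)$ to separate the first-passage factor (giving a $K_0$-ratio raised to $1/\alpha_\vep$) from the exponential local time (giving $\cos(\pi\delta/2)^{-1/\beta_\vep}$). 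You instead split the integral at $M_\varphi$: on $[a,M_\varphi]$ the contribution is $O(\vep^2\log\vep^{-1})=o(1)$ by the uniform bound on $\kappa^q_\vep$, and on $[M_\varphi,\infty)$ you observe that $\oovarphi^{\,\vep}$ vanishes, so the strong Markov property at $T_{M_\varphi}$ yields the exact factorization $\kappa^q_\vep(r,b)=\E_r[\e^{-\vep^2 q T_{M_\varphi}}]\kappa^q_\vep(M_\varphi,b)$; the exit-time factor is pure $K_0$, and $\int_{M_\varphi}^\infty K_0(r\sqrt{2\vep^2 q})r\,\d r=M_\varphi K_1(M_\varphi\sqrt{2\vep^2 q})/\sqrt{2\vep^2 q}$ gives the explicit $1/(2\vep^2 q\log\vep^{-1})$ asymptotics after dividing by $K_0(M_\varphi\sqrt{2\vep^2 q})$. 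Your factorization buys a cleaner decoupling—the compact support of the potential does all the work and you avoid H\"older entirely—but at the endpoint both proofs feed through Lemma~\ref{lem:kappato1} with the downscaled parameter $\delta/\sqrt{\beta_\vep}$ to send $\kappa^q_\vep(M_\varphi,b)\to 1$, and both require the same calibration $\delta\pi/(2\sqrt{2\nu_\vep\beta_\vep})=o(\log\vep^{-1})$ for the $K_0$-ratio to tend to $1$. The bookkeeping concerns in your final paragraph are real but resolve exactly as you anticipate, so they do not point to a gap.
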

\begin{proof}
(1$\cc$) By \eqref{eq:Tr} and \eqref{ineq2:k0}, we have, for all $\vep\in (0,\vep_{\ref{ineq2:k0}})$ and $0<a\leq b\leq M_\varphi$,
\begin{align*}
\E_a\left[\int_0^{T_b}\kappa_{\vep}^q(\rho_s,b)\d s\right]&\less 
2\int_a^b(\log b-\log r) r\d r+2(\log b-\log a) \int_0^a r\d r\\
&=2\log b\int_0^br \d r-2\int_a^b (\log r)r \d r-2\log a\int_0^a r\d r,
\end{align*}
which is enough for \eqref{kappaasymp:1}. \medskip

\noindent (2$\cc$) The proof of \eqref{kappaasymp:2} considers the following identity from \eqref{eq:Tl}: for $0<b\leq a<\infty$,
\begin{align}\label{eq1:Q20}
\begin{split}
(\log \vep^{-1})\cdot \vep^2q\cdot \E_a\left[\int_0^{T_b}\kappa_{\vep}^q(\rho_s,b)\d s\right]&=(\log \vep^{-1}) \cdot 2\vep^2q\cdot (\log a-\log b) \int_a^\infty \kappa_{\vep}^q(r,b) r\d r\\
&\quad +(\log \vep^{-1}) \cdot  2\vep^2q\cdot \int_b^a(\log r-\log b)\kappa_{\vep}^q(r,b) r\d r.
\end{split}
\end{align}
For all $\vep\in (0,\ovp)$ and all $a,b$ such that $M_\varphi\e^{-\delta\frac{\pi}{2\sqrt{2\nu_\vep\beta_\vep   }}}\leq b\leq  a\leq M_\varphi$, the upper bound in \eqref{ineq1:k0} shows that the last term in \eqref{eq1:Q20} satisfies
\begin{align}
(\log \vep^{-1}) \cdot  2\vep^2q\cdot  \int_b^a(\log r-\log b)\kappa_{\vep}^q(r,b) r\d r &\leq (\log \vep^{-1}) \cdot \vep^2q\cdot C(M_\varphi,\delta)(\log a-\log b).
\label{eq1:Q200}
\end{align}
For the first term on the right-hand side of \eqref{eq1:Q20}, it remains to estimate
\begin{align}\label{Kint+++}
(\log \vep^{-1}) \cdot 2\vep^2q\cdot  \int_a^\infty \kappa_{\vep}^q(r,b) r\d r.
\end{align}

Consider two-sided bounds of $\kappa^q_\vep(r,b)$ as follows.
For any $\vep\in (0,\ovp)$, we extend $\beta_\vep$ in \eqref{def:nubeta} to a pair of H\"older conjugates $(\alpha_\vep,\beta_\vep)$ defined by
\begin{align}\label{def:alphabeta}
\alpha_\vep=[1-(\log \log \vep^{-1})^{-2}]^{-1},\quad \beta_\vep=(\log\log \vep^{-1})^2. 
\end{align}
As in the proof of Lemma~\ref{lem:kappato1}, it follows from Theorem~\ref{thm:expmom1} and Proposition~\ref{prop:expmom2}, for all $b$ such that $M_\varphi\e^{-\delta\frac{\pi}{2\sqrt{2\nu_\vep\beta_\vep   }}}\leq b\leq M_\varphi$, $b\leq r<\infty$, and $\vep\in (0,\ovp)$,
\begin{align*}
\frac{K_0(r\sqrt{2\vep^2q})}{K_0(b\sqrt{2 \vep^2q})}\leq \kappa_{\vep}^q(r,b)
&\leq \left(\frac{K_0(r\sqrt{2\alpha_\vep \vep^2q}\,)}{K_0(b\sqrt{2\alpha_\vep \vep^2q}\,)}\right)^{1/\alpha_\vep}
\frac{1}{\cos(\pi\delta /2)^{1/\beta_\vep}}.
\end{align*}
Here, the upper bound follows from H\"older's inequality and the argument in the proof of \eqref{ineq1:k0} (with $\nu_\vep$ replaced by $\nu_\vep \beta_\vep$ in \eqref{ineq1:k}). Moreover, since $K_0$ is strictly decreasing, the foregoing display implies 
\begin{align}\label{ineq2:Q}
\frac{K_0(r\sqrt{2\vep^2q})}{K_0(M_\varphi\e^{-\frac{\delta\pi}{2\sqrt{2\nu_\vep\beta_\vep   }}}\sqrt{2 \vep^2q})}\leq \kappa_{\vep}^q(r,b)
&\leq \left(\frac{K_0(r\sqrt{2\alpha_\vep \vep^2q}\,)}{K_0(M_\varphi\sqrt{2\alpha_\vep \vep^2q}\,)}\right)^{1/\alpha_\vep}
\frac{1}{\cos(\pi\delta /2)^{1/\beta_\vep}}.
\end{align}

Now, consider a lower bound of the term in \eqref{Kint+++}.
By the lower bound in \eqref{ineq2:Q}, for all $a,b$ such that $M_\varphi\e^{-\frac{\delta\pi}{2\sqrt{2\nu_\vep\beta_\vep   }}}\leq b\leq a\leq M_\varphi$ and $\vep\in (0,\ovp)$,
\begin{align}
(\log \vep^{-1}) \cdot 2 \vep^2q\cdot \int_a^\infty \kappa_{\vep}^q(r,b) r\d r
&\geq \log \vep^{-1}\cdot 2\vep^2q\cdot \int_a^\infty \frac{K_0(r\sqrt{2\vep^2q})r\d r}{{K_0(M_\varphi\e^{-\frac{\delta\pi}{2\sqrt{2\nu_\vep\beta_\vep   }}}\sqrt{2\vep^2q})}} \notag\\
&=\log \vep^{-1} \cdot \frac{a\sqrt{2\vep^2q}K_1(a\sqrt{2\vep^2q})}{{K_0(M_\varphi\e^{-\frac{\delta\pi}{2\sqrt{2\nu_\vep\beta_\vep   }}}\sqrt{2\vep^2q})}}\notag\\
&\geq \log \vep^{-1} \cdot \frac{1-\int_0^\infty  \e^{- s}\int_0^{M_\varphi^2(2\vep^2q)/(4s)}\e^{-v}\d v \d s}{{K_0(M_\varphi\e^{-\frac{\delta\pi}{2\sqrt{2\nu_\vep\beta_\vep   }}}\sqrt{2\vep^2q})}},
\label{K0:int}
\end{align}
where the equality and the second inequality follow from 
\begin{align}\label{asymp:K0K1}
\frac{\d}{\d r} [rK_1(r)]=-rK_0(r)
\quad\mbox{and}\quad rK_1(r)=\int_0^\infty  \e^{- s}\int_{r^2/(4s)}^\infty \e^{-v}\d v \d s,
\end{align}
respectively.
See \cite[(5.7.9) on p.110]{Lebedev} and \eqref{ineq:K1bdd} for the identities in \eqref{asymp:K0K1}.  To use the bound in \eqref{K0:int}, note that the asymptotic expansion of $K_0(\sqrt{2}r)$ for $r\to 0$ in \eqref{asymp:gauss} gives, for all $\vep\in (0,\ovp)$, 
\begin{align}\label{K0asymp}
K_0(M_\varphi\e^{-\frac{\delta\pi}{2\sqrt{2\nu_\vep\beta_\vep   }}}\sqrt{2\vep^2q})= \log \big(\e^{-\frac{\delta\pi}{2\sqrt{2\nu_\vep\beta_\vep   }}}\sqrt{\vep^2}\big)^{-1}+{\rm R}_{\ref{K0asymp}}
= \frac{\delta\pi}{2\sqrt{2\nu_\vep \beta_\vep}}+\log \vep^{-1}+{\rm R}_{\ref{K0asymp}},
\end{align}
where $|{\rm R}_{\ref{K0asymp}}|\less C(M_\varphi,q,\delta,\lambda)$.
Hence, the right-hand side of \eqref{K0:int} depends only on $(M_\varphi,q,\delta,\lambda,\vep)$ and tends to $1$ as $\vep\to 0$. 

To get an upper bound of the term in \eqref{Kint+++}, note that the other inequality in \eqref{ineq2:Q} gives
\begin{align}
&\eqspace\log \vep^{-1}\cdot 2\vep^2q\cdot \int_a^\infty \kappa_{\vep}^q(r,b)\d r\notag\\
&\leq \log \vep^{-1}\cdot 2\vep^2q\cdot \int_a^\infty \left(\frac{K_0(r\sqrt{2\alpha_\vep \vep^2q}\,)}{K_0(M_\varphi\sqrt{2\alpha_\vep \vep^2q}\,)}\right)^{1/\alpha_\vep} r\d r\cdot \frac{1}{\cos(\pi\delta/2)^{1/\beta_\vep}}\notag\\
\begin{split}
&= \frac{\log \vep^{-1}}{K_0(M_\varphi\sqrt{2\alpha_\vep \vep^2q})^{1/\alpha_\vep}}\cdot
 \frac{1}{\alpha_\vep}\int_{0}^\infty  K_0(r)^{1/\alpha_\vep}r\d r\cdot \frac{1}{\cos(\pi\delta/2)^{1/\beta_\vep}}.\label{ineq2:Quppbdd}
 \end{split}
\end{align}
For the right-hand side of \eqref{ineq2:Quppbdd}, note that
 $1/\alpha_\vep=1-(\log \log \vep^{-1})^{-2}$ by \eqref{def:alphabeta}. Hence, by the asymptotic behavior of $K_0(r)$ as $r\to 0$ in \eqref{ob3:K0},
\begin{align}
\frac{\log \vep^{-1}}{K_0(M_\varphi\sqrt{2\alpha_\vep \vep^2q})^{1/\alpha_\vep}}=
\frac{(\log \vep^{-1})^{1-1/\alpha_\vep}}{[K_0(M_\varphi\sqrt{2\alpha_\vep \vep^2q})/\log \vep^{-1}]^{1/\alpha_\vep}}
\xrightarrow[\vep\to 0]{}1.
\label{ob1:K0}
\end{align}
Additionally, by \eqref{ob3:K0}, dominated convergence, and \eqref{asymp:K0K1}, it holds that 
\begin{align}\label{ob2:K0}
\frac{1}{\alpha_\vep}\int_{0}^\infty  K_0(r)^{1/\alpha_\vep}r\d r\xrightarrow[\vep\to 0]{}\int_{0}^\infty  K_0(r)r\d r=1.
\end{align}
The right-hand side of \eqref{ineq2:Quppbdd} depends only on $(M_\varphi,q,\delta,\vep)$ and tends to $1$ by \eqref{ob1:K0} and \eqref{ob2:K0}.

Now, we obtain from the above implications of \eqref{K0:int} and \eqref{ineq2:Quppbdd} that 
\begin{align} 
\begin{split}
\sup_{a,b:M_\varphi\e^{-\frac{\delta\pi}{2\sqrt{2\nu_\vep\beta_\vep   }}}\leq b< a\leq M_\varphi} \left|\log \vep^{-1}\cdot 2\vep^2q\cdot \int_a^\infty \kappa_{\vep}^q(r,b) r\d r-1\right|\leq \eta_{\ref{kappaasymp:2000}}(\vep),\quad \forall\; \vep\in (0,\ovp),
\end{split}
\label{kappaasymp:2000}
\end{align}
where $\eta_{\ref{kappaasymp:2000}}(\vep)$ satisfies the same property of $\eta_{\ref{kappaasymp:2}}(\vep)$ in the statement of the present lemma. Combining \eqref{eq1:Q20}, \eqref{eq1:Q200} and \eqref{kappaasymp:2000}, we obtain the required estimate in \eqref{kappaasymp:2}. 
\end{proof}

Recall for the last time that $\nu_\vep$ and $\beta_\vep$ are defined in \eqref{def:nubeta}.

\begin{lem}\label{lem:finalexpansion}
Let $\delta_\vep\searrow 0$ with $\delta_\vep/\sqrt{2\nu_\vep\beta_\vep   }\to \infty$ as $\vep\to 0$ and $\delta_\vep/\sqrt{2\nu_\vep\beta_\vep   }\less C(\|\varphi\|,\lambda)(\log\vep^{-1})^{1/3}$. Then for all $q\in (0,\infty)$, $\vep\in (0,\vep(M_\varphi,q)\wedge \ovp)$, and $b\in (0,\infty)$ such that 
$M_\varphi\e^{-\delta_\vep\frac{\pi}{2\sqrt{2\nu_\vep\beta_\vep   }}}\leq b\leq  M_\varphi$, the following asymptotic expansions hold:
\begin{align}
\begin{split}
{\rm I}_{\ref{exp:Green}}(q,b,\vep)&=\frac{-2}{\log \vep^{-1}}\int_{\R^2}\left[\left(\log\frac{2^{1/2}}{q^{1/2} |b-z|}+\lambda-\gamma_{\sf EM}\right)\ovarphi(|z|)+2\pi \oevarphi(|z|)\right]\d z\\
&\eqspace +{\rm R}_{\ref{Ir:1}}(q,b,\vep),\label{Ir:1}
\end{split}\\
{\rm II}_{\ref{exp:Green}}(q,b,\vep)&=\frac{-4\pi }{\log \vep^{-1}}\int_{\R^2}\ovarphi(|z|)\E_{|z|}\left[\int_0^{T_b}\ovarphi (\rho_s)\d s\right]\d z+{\rm R}_{\ref{Ir:2}}(q,b,\vep),\label{Ir:2}\\
{\rm III}_{\ref{exp:Green}}(q,b,\vep)&= \frac{2}{\log \vep^{-1}}\int_{\R^2}\ovarphi(|z|)\log^+ (|z|/ b)\d z+{\rm R}_{\ref{Ir:3}}(q,b,\vep).\label{Ir:3}
\end{align}
(In \eqref{Ir:1}, $b$ and $z$ are identified as complex numbers.)
Here, for $\eta_{\ref{R:Ir2}}(\vep)$ and $\eta_{\ref{R:Ir3}}(\vep)>0$ depending only on $(\|\varphi\|_\infty,M_\varphi,q,\lambda,\delta_\vep,\vep)$ and tending to zero as $\vep\to 0$, the remainders in \eqref{Ir:1}--\eqref{Ir:3} satisfy 
\begin{align}
|{\rm R}_{\ref{Ir:1}}(q,b,\vep)|&\leq  \frac{C(\|\varphi\|_\infty,M_\varphi,q,\lambda)}{(\log \vep^{-1})^{2}},\label{R:Ir1}\\ 
|{\rm R}_{\ref{Ir:2}}(q,b,\vep)|&\less \frac{1}{\log \vep^{-1}} \left(\int_{\R^2}\ovarphi(|z|)\E_{|z|}\left[\int_0^{T_b}\ovarphi(\rho_s)\d s\right]\d z\right)\eta_{\ref{R:Ir2}}(\vep)+\frac{C(\|\varphi\|_\infty,M_\varphi,q,\lambda)}{(\log \vep^{-1})^{5/3}},\label{R:Ir2}\\
|{\rm R}_{\ref{Ir:3}}(q,b,\vep)|&\less \frac{1}{\log \vep^{-1}}\left(\int_{\R^2}\ovarphi(|z|)\log^+ (|z|/ b)\d z\right)\eta_{\ref{R:Ir3}}(\vep)+ \frac{C(\|\varphi\|_\infty,M_\varphi,q,\lambda)}{(\log \vep^{-1})^{5/3}}.\label{R:Ir3}
\end{align}
\end{lem}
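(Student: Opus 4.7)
The plan is to reduce every resolvent appearing in ${\rm I}_{\ref{exp:Green}}$, ${\rm II}_{\ref{exp:Green}}$, ${\rm III}_{\ref{exp:Green}}$ to a two-dimensional log-kernel integral via the identity
\begin{align*}
\E^\rho_b\left[\int_0^\infty \e^{-\vep^2qr}g(\rho_r)\,\d r\right]=\int_{\R^2}\frac{1}{\pi}K_0\big(\sqrt{2\vep^2q}\,|b-z|\big)g(|z|)\,\d z,
\end{align*}
valid for any radial $g$ once $b$ is identified with an arbitrary point of $\{b'\in\R^2:|b'|=b\}$; this follows from rotational invariance of planar Brownian motion and the standard identity $\int_0^\infty\e^{-\mu r}P_r(x)\,\d r=\pi^{-1}K_0(\sqrt{2\mu}|x|)$. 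Substituting the small-argument expansion $K_0(\sqrt{2\vep^2q}|b-z|)=\log\vep^{-1}+\log(\sqrt{2}/(\sqrt{q}|b-z|))-\EM+O(\vep^2q|b-z|^2\log\vep^{-1})$ from \eqref{asymp:gauss}, together with $\lv\log\vep^{-1}/\pi=2+2\lambda/\log\vep^{-1}+O(1/\log^2\vep^{-1})$ and the normalization $\int\ovarphi=\tfrac12$, yields the prefactors and log-kernel structure on the right-hand sides of \eqref{Ir:1}--\eqref{Ir:3}.

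For ${\rm I}_{\ref{exp:Green}}$ this reduction is essentially a direct calculation with $g=f_\vep=\lv(\ovarphi+\lv\oevarphi)$: the $\log\vep^{-1}$-piece of $K_0$ against $\lv\ovarphi$ contributes $1+\lambda/\log\vep^{-1}+O(1/\log^2\vep^{-1})$ and cancels the $1$ in the definition up to $-\lambda/\log\vep^{-1}$; the $O(1)$-piece of $K_0$ against $\lv\ovarphi$ produces the $\log(\sqrt{2}/(\sqrt{q}|b-z|))$- and $-\EM$-contributions to \eqref{Ir:1}; the $\log\vep^{-1}$-piece against $\lv^2\oevarphi$ produces the $2\pi\oevarphi$-term; and the $O(\vep^2q|b-z|^2\log\vep^{-1})$-tail of $K_0$, estimated against $\supp(\ovarphi)\subset\{|z|\leq M_\varphi\}$ and the prescribed lower bound on $b$, yields ${\rm R}_{\ref{Ir:1}}=O(1/(\log\vep^{-1})^2)$. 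For ${\rm II}_{\ref{exp:Green}}$ and ${\rm III}_{\ref{exp:Green}}$ I additionally use the bounds on $\kappa_\vep^q$ from Lemmas~\ref{lem:kappato1} and \ref{lem:kappaasymp}: in ${\rm II}_{\ref{exp:Green}}$ I replace $\kappa_\vep^q$ by $1$ and the $\lv^2\oevarphi$-part of $\oovarphi\uvep$ by $0$ inside the inner $T_b$-integral (justified by \eqref{ineq1:k0}--\eqref{ineq2:k0} and \eqref{kappaasymp:1}), leaving the radial function $H(a):=\lv\E_a[\int_0^{T_b}\ovarphi(\rho_s)\,\d s]$ to which the outer resolvent identity then applies with $g=f_\vep H$, producing the leading $-\tfrac{4\pi}{\log\vep^{-1}}\int\ovarphi(|z|)\E_{|z|}[\int_0^{T_b}\ovarphi(\rho_s)\,\d s]\,\d z$. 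In ${\rm III}_{\ref{exp:Green}}$, \eqref{kappaasymp:2} replaces $\vep^2q\E_a[\int_0^{T_b}\kappa_\vep^q(\rho_s,b)\,\d s]$ by $\log^+(a/b)/\log\vep^{-1}$ for $a>b$ with a $1+o(1)$ relative error, while the region $a\leq b$ contributes only to the remainder by \eqref{kappaasymp:1} and $\vep^2q=o(1)$; applying the resolvent identity once more yields $\tfrac{2}{\log\vep^{-1}}\int\ovarphi(|z|)\log^+(|z|/b)\,\d z$.

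The main obstacle will be producing the exact form of the remainders \eqref{R:Ir2} and \eqref{R:Ir3}, whose first part is a vanishing factor $\eta(\vep)$ times the very weighted integral appearing in the leading term, and whose second part is $O((\log\vep^{-1})^{-5/3})$. The relative factor $\eta(\vep)$ has to be extracted from the errors in approximating $\kappa_\vep^q$ by its asymptotic value, which are uniform only on $b\geq M_\varphi\e^{-\delta_\vep\pi/(2\sqrt{2\nu_\vep\beta_\vep})}$; the additive $(\log\vep^{-1})^{-5/3}$-term will emerge from the prescribed growth $\delta_\vep/\sqrt{2\nu_\vep\beta_\vep}\lesssim(\log\vep^{-1})^{1/3}$, since the residual contributions from the $K_0$-tail and the suppressed $\lv^2\oevarphi$-pieces are of order $\lv^2\cdot(\log\vep^{-1})^{1/3}=(\log\vep^{-1})^{-5/3}$. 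The principal bookkeeping step will be a careful partition of the $\rho$-paths according to whether $\kappa_\vep^q$ is close to its asymptotic value or sits in the boundary regime near the lower endpoint of $b$, so that the two contributions to each remainder can be separated cleanly.
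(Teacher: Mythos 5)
Your proposal is correct and follows the same route as the paper. Both arguments hinge on the same Green--function reduction $\E^\rho_b[\int_0^\infty \e^{-\vep^2 q r}g(\rho_r)\,\d r]=\pi^{-1}\int_{\R^2}K_0(\sqrt{2\vep^2 q}\,|b-z|)g(|z|)\,\d z$, applied with $g=f_\vep h_{b,\vep}$ for the same three choices of the multiplier $h_{b,\vep}$ (constant for ${\rm I}$, the inner $T_b$-Green function with $\kappa^q_\vep$ for ${\rm II}$, and $\vep^2 q$ times the unweighted $T_b$-Green function of $\kappa^q_\vep$ for ${\rm III}$), and both extract the leading terms from the small-argument expansion of $K_0$, cancel the $1$ via $\lv\log\vep^{-1}/\pi=2+O(1/\log\vep^{-1})$ and $\int\ovarphi=\tfrac12$, replace $\kappa^q_\vep$ by $1$ and $\vep^2 q\,\E[\int_0^{T_b}\kappa^q_\vep]$ by $\log^+(\cdot/b)/\log\vep^{-1}$ using Lemmas~\ref{lem:kappato1}--\ref{lem:kappaasymp}, and attribute the additive $(\log\vep^{-1})^{-5/3}$ remainder to the $\delta_\vep$-growth condition via the uniform bound $\sup_{b,z}|h_{b,\vep}|\lesssim(\log\vep^{-1})^{-2/3}$. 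Your "partition of paths" phrasing corresponds to the paper's separation of the multiplicative $\eta$-error from the additive one in \eqref{def:hrad} and \eqref{Ir:3:est}; the underlying estimates are the same.
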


\begin{proof}
We start with some general asymptotic expansions. As $\vep q^{1/2}|b-z|\to 0$, \eqref{asymp:gauss} shows
\begin{align}
&\eqspace\int_0^\infty \frac{\e^{-\vep^2qt}}{2\pi t}\exp\left(-\frac{|b-z|^2}{2t}\right)\d t\notag\\
&=\frac{\log \vep^{-1} }{\pi}+\frac{1}{\pi}\Bigg(\log \frac{2^{1/2}}{q^{1/2}|b-z|}-\gamma_{\sf EM}\Bigg)+\mathcal O\left(\vep^2q|b-z|^2\log \frac{1}{\vep q^{1/2}|b-z|}\right).\label{asympG:1}
\end{align}
Recall the definition of $\oovarphi\uvep$ in \eqref{def:varphi0} and the definition of $f_\vep$ in \eqref{setting:radial}. 
By \eqref{ob3:K0} and \eqref{asympG:1}, for $\vep\in (0,\ovp)$ and bounded $h_{b,\vep}$ to be chosen,
\begin{align}
&\eqspace\E_b\left[\int_0^\infty  \e^{-\vep^2q r} f_\vep(\rho_r)h_{b,\vep}(\rho_r)\d r\right]\notag\\
\begin{split}
&=\frac{(\log \vep^{-1}) \lv}{\pi}\int_{\R^2}\ovarphi(|z|)h_{b,\vep}(|z|)\d z+\frac{(\log \vep^{-1} )\lv^2}{\pi}\int_{\R^2}\oevarphi(|z|)h_{b,\vep}(|z|)\d z\\
&\eqspace +\frac{\lv }{\pi}\int_{\R^2}\left(\log\frac{2^{1/2}}{q^{1/2} |b-z|}-\gamma_{\sf EM}\right)\oovarphi\uvep(|z|)h_{b,\vep}(|z|)\d z+ {\rm R}_{\ref{remainder:rad0}}(h_{b,\vep},q, b,\vep)\label{remainder:rad0}
\end{split}\\
\begin{split}\label{remainder:rad}
&=\left(2+\frac{2\lambda}{\log \vep^{-1}}\right)\int_{\R^2}\ovarphi(|z|)h_{b,\vep}(|z|)\d z\\
&\eqspace +\left(2+\frac{2\lambda}{\log \vep^{-1}}\right)\left(\frac{2\pi}{\log \vep^{-1}}+\frac{2\pi \lambda}{(\log \vep^{-1})^2}\right)\int_{\R^2}\oevarphi(|z|)h_{b,\vep}(|z|)\d z\\
&\eqspace+\left(\frac{2}{\log \vep^{-1}}+\frac{2\lambda}{(\log \vep^{-1})^2}\right)\int_{\R^2}\left(\log\frac{2^{1/2}}{q^{1/2} |b-z|}-\gamma_{\sf EM}\right)\oovarphi\uvep(|z|)h_{b,\vep}(|z|)\d z\\
&\eqspace + {\rm R}_{\ref{remainder:rad0}}(h_{b,\vep},q,b,\vep).
\end{split}
\end{align}

Let us explain the term ${\rm R}_{\ref{remainder:rad0}}(h_{b,\vep},q,b,\vep)$. It considers the $\mathcal O$-term in \eqref{asympG:1} after integrating against $\oovarphi\uvep(|z|)h_{b,\vep}(|z|)$ and takes a form similar to ${\rm R}_{\ref{asymp:Phibminus0}}(q,b,\vep)$ from using \eqref{ob3:K0}: 
\begin{align}
|{\rm R}_{\ref{remainder:rad0}}(h_{b,\vep},q, b,\vep)|&\less 
\left|\frac{\lv }{\pi}\int_{\vep q^{1/2}|b-z|\geq \tfrac{1}{2}}\left(\log\frac{2^{1/2}}{\vep q^{1/2} |b-z|}-\gamma_{\sf EM}\right)\oovarphi\uvep(|z|)|h_{b,\vep}(|z|)|\d z\right|\notag\\
&\eqspace+ \lv \int_{\vep q^{1/2}|b-z|\geq \tfrac{1}{2}}\oovarphi\uvep(|z|)|h_{b,\vep}(|z|)|\e^{-\vep q^{1/2}|b-z|}\d z\notag\\
&\eqspace+\lv \int_{\vep q^{1/2}|b-z|< \tfrac{1}{2}}\oovarphi\uvep(|z|)|h_{b,\vep}(|z|)|\cdot \vep^2q|b-z|^2\log \frac{1}{\vep q^{1/2}|b-z|}\d z.\notag
\end{align}
Since $\ovarphi$ has a support in $[0,M_\varphi]$, for all $b\in (0,M_\varphi]$ and $\vep\in (0,\vep_{\ref{asympG:2-1}}(M_\varphi,q)\wedge \ovp)$, the first two terms on the right-hand side of the foregoing inequality are zero. We get
\begin{align}
|{\rm R}_{\ref{remainder:rad0}}(h_{b,\vep},q,b,\vep)|&\leq C(M_\varphi,q)\lv \cdot  \vep^2\log \vep^{-1}\cdot \int_{\R^2}\oovarphi\uvep(|z|)|h_{b,\vep}(|z|)|\d z.
\label{asympG:2-1}
\end{align}
From now on to the end of this proof, we only consider $b\in (0,\infty)$, $\vep\in (0, \vep_{\ref{asympG:2-1}})$, and $z$ such that  $M_\varphi\e^{-\delta_\vep\frac{\pi}{2\sqrt{2\nu_\vep\beta_\vep   }}}\leq b\leq  M_\varphi$. A supremum taken over $b,z$ is subject to the same conditions. 

To proceed, we simplify \eqref{remainder:rad} to an expansion in $1/\log \vep^{-1}$ up to the first order:
\begin{align}
\begin{split}\label{remainder:rad++}
&\E_b\left[\int_0^\infty  \e^{-\vep^2q r} f_\vep(\rho_r)h_{b,\vep}(\rho_r)\d r\right]=2\int_{\R^2}\ovarphi(|z|)h_{b,\vep}(|z|)\d z\\
&\eqspace+\frac{2}{\log \vep^{-1}}\int_{\R^2}\left[\left(\log\frac{2^{1/2}}{q^{1/2} |b-z|}+\lambda-\gamma_{\sf EM}\right)\ovarphi(|z|)+2\pi \oevarphi(|z|)\right]h_{b,\vep}(|z|)\d z\\
&\eqspace + {\rm R}_{\ref{remainder:rad++}}(h_{b,\vep},q,b,\vep),
\end{split}
\end{align}
where the last two terms in \eqref{remainder:rad++} can be bounded as follows:
\begin{align}\label{remainder:rad+++}
\begin{split}
&\sup_{b}\left|\frac{2}{\log \vep^{-1}}\int_{\R^2}\left[\left(\log\frac{2^{1/2}}{q^{1/2} |b-z|}+\lambda-\gamma_{\sf EM}\right)\ovarphi(|z|)+2\pi \oevarphi(|z|)\right]h_{b,\vep}(|z|)\d z\right|\\
&\eqspace\leq  \frac{C(\|\varphi\|_\infty,M_\varphi,q,\lambda)}{\log \vep^{-1}}\sup_{b,z}|h_{b,\vep}(|z|)|,
\end{split}\\
&\sup_{b}|{\rm R}_{\ref{remainder:rad++}}(h_{b,\vep},q, b,\vep)|\leq \frac{C(\|\varphi\|_\infty,M_\varphi,q,\lambda)}{(\log \vep^{-1})^2}\sup_{b,z}|h_{b,\vep}(|z|)|,\label{remainder:rad++++}
\end{align}
where the last inequality uses \eqref{asympG:2-1}.
We work with \eqref{remainder:rad++}--\eqref{remainder:rad++++} in the following three steps.
 \medskip

\noindent {\bf Step 1.} For ${\rm I}_{\ref{exp:Green}}(q,b,\vep)$, take $h_{b,\vep}\equiv -1$. Since $\int \ovarphi=1/2$, \eqref{remainder:rad++} gives
\begin{align}
&\eqspace {\rm I}_{\ref{exp:Green}}(q,b,\vep)\notag\\
&=-\frac{2}{\log \vep^{-1}}\int_{\R^2}\left[\left(\log\frac{2^{1/2}}{q^{1/2} |b-z|}+\lambda-\gamma_{\sf EM}\right)\ovarphi(|z|)+2\pi \oevarphi(|z|)\right]\d z +{\rm R}_{\ref{remainder:rad++}}(-1,q,b,\vep),\label{asymp:I1-1}
\end{align}
which proves \eqref{Ir:1} by using \eqref{remainder:rad++++}. \medskip

\noindent {\bf Step~\hypertarget{ReplaceFinal}{2}.} For ${\rm II}_{\ref{exp:Green}}(q,b,\vep)$, we take
\begin{align}\label{hchoice:2}
h_{b,\vep}(|z|)=-\lv\E_{|z|}\left[\int_0^{T_b}\oovarphi \uvep (\rho_s)\kappa_{\vep}^q(\rho_s,b)\d s\right].
\end{align}
In this case, we only consider the leading order term on the right-hand side of \eqref{remainder:rad++}. So we need to bound $\sup_{b,z}|h_{b,\vep}(|z|)|$ for the right-hand sides of \eqref{remainder:rad+++} and \eqref{remainder:rad++++}. These two terms are given by
\begin{align}\label{Ir:2:obj}
 -2\int_{\R^2}\ovarphi(|z|)\cdot \lv\E_{|z|}\left[\int_0^{T_b}\ovarphi\uvep (\rho_s)\kappa_{\vep}^q(\rho_s,b)\d s\right] \d z\quad \&\quad 
\sup_{b,z}\lv\E_{|z|}\left[\int_0^{T_b}\oovarphi\uvep (\rho_s)\kappa_{\vep}^q(\rho_s,b)\d s\right].
\end{align}

On one hand, by \eqref{eq:Tl}, for $b,z$ in the range specified above with $b\leq |z|$,  
\begin{align}
\E_{|z|}\left[\int_0^{T_b}\ovarphi(\rho_s)\d s\right] &\leq C(\|\varphi\|_\infty,M_\varphi)\left(\log M_\varphi-\log M_\varphi\e^{-\delta_\vep\frac{\pi}{2\sqrt{2\nu_\vep\beta_\vep   }}}\right)\notag\\
&=C(\|\varphi\|_\infty,M_\varphi)
 \delta_\vep\frac{\pi}{2\sqrt{\nu_\vep\beta_\vep}}.\label{h2:estimate}
 \end{align}
 Recall that $\delta_\vep$ is chosen in the statement of the present lemma. 
 The bound in \eqref{h2:estimate} also applies in the case of $|z|\leq b$ by \eqref{kappaasymp:1}. We get
 \begin{align}
\sup_{b,z}\lv\E_{|z|}\left[\int_0^{T_b}\oovarphi\uvep (\rho_s)\kappa_{\vep}^q(\rho_s,b)\d s\right]&\leq C(\|\varphi\|_\infty,M_\varphi) \sup_{b,z}\lv\E_{|z|}\left[\int_0^{T_b}\ovarphi (\rho_s)\kappa_{\vep}^q(\rho_s,b)\d s\right]\notag\\
&\leq \frac{C(\|\varphi\|_\infty,M_\varphi,q,\lambda)}{(\log \vep^{-1})^{2/3}}\label{R:lr2++}
\end{align}
by the choice of $\delta_\vep$.
On the other hand, for the first term in \eqref{Ir:2:obj}, we have the replacement:
\begin{align}
&\eqspace\left|\lv \E_{|z|}\left[\int_0^{T_b}\oovarphi\uvep (\rho_s)\kappa_{\vep}^q(\rho_s,b)\d s\right]-\frac{2\pi}{\log \vep^{-1}} \E_{|z|}\left[\int_0^{T_b}\ovarphi(\rho_s)\d s\right]\right|\notag\\
&\less\frac{1}{\log \vep^{-1}}  \E_{|z|}\left[\int_0^{T_b}\ovarphi(\rho_s)|\kappa_{\vep}^q(\rho_s,b)-1|\d s\right] +
C(\|\varphi\|_\infty,M_\varphi)\lv^2\E_{|z|}\left[\int_0^{T_b}\ovarphi(\rho_s)\d s\right]\notag\\
&\leq \frac{1}{\log \vep^{-1}}  \E_{|z|}\left[\int_0^{T_b}\ovarphi(\rho_s)\d s\right] \eta_{\ref{def:hrad}}(\vep),\label{def:hrad}
\end{align}
where $\eta_{\ref{def:hrad}}(\vep)>0$ depends only on $(M_\varphi,q,\lambda,\delta_\vep,\vep)$ and tends to zero as $\vep\to 0$.  To justify these properties of $\eta_{\ref{def:hrad}}(\vep)$, we use Lemma~\ref{lem:kappato1} and \eqref{R:lr2++}. In using this lemma, recall that $\ovarphi$ has a support in $[0,M_\varphi]$ and $0$ is polar under $\BES^2$. We also need the asymptotic behavior of $K_0$ as in the proof of Lemma~\ref{lem:kappaasymp} (cf. \eqref{K0asymp}), the continuity of $I_0$ with $I_0(0)=1$, and the assumption $\delta_\vep\to 0$.

In summary, according to \eqref{def:hrad}, we replace the first term in \eqref{Ir:2:obj} by the first term on the right-hand side of \eqref{Ir:2}. Then take $\eta_{\ref{R:Ir2}}(\vep)=\eta_{\ref{def:hrad}}(\vep)$. The right-hand side of \eqref{def:hrad} leads to the first term on the right-hand side of \eqref{R:Ir2}. The last term in \eqref{R:Ir2} follows upon applying \eqref{R:lr2++} to \eqref{remainder:rad+++} and \eqref{remainder:rad++++}. \medskip

\noindent {\bf Step 3.} The proof of the required asymptotic expansion of ${\rm III}_{\ref{exp:Green}}(q,b,\vep)$ is similar. In this case, take
\[
h_{b,\vep}(|z|)= \vep^2q\E_{|z|}\left[\int_0^{T_b}\kappa^q_\vep(\rho_s,b)\d s\right].
\]
In contrast to the two terms in \eqref{Ir:2:obj} for Step~\hyperlink{ReplaceFinal}{2}, now consider
\begin{align}\label{Ir:3:obj}
2\int_{\R^2}\ovarphi(|z|)\cdot  \vep^2q\E_{|z|}\left[\int_0^{T_b}\kappa^q_\vep(\rho_s,b)\d s\right]\d z\quad  \&\quad \sup_{b,z} \vep^2q\E_{|z|}\left[\int_0^{T_b}\kappa^q_\vep(\rho_s,b)\d s\right]
\end{align}
for the application of \eqref{remainder:rad++}--\eqref{remainder:rad++++}.

By Lemma~\ref{lem:kappaasymp},  we have
\begin{align}\label{Ir:3:est}
\begin{split}
&\left|  \vep^2q\E_{|z|}\left[\int_0^{T_b}\kappa^q_\vep(\rho_s,b)\d s\right]-\frac{\log^+ (|z|/b)}{\log \vep^{-1}}\right|\leq\frac{\log^+ (|z|/b)}{\log \vep^{-1}}  \eta_{\ref{kappaasymp:2}}(\vep)+ \vep^2 C(M_\varphi,q).
\end{split}
\end{align}
Hence, as in \eqref{h2:estimate}, we have
\begin{align}\label{Ir:3:est1}
\sup_{b,z}\vep^2q\E_{|z|}\left[\int_0^{T_b}\kappa^q_\vep(\rho_s,b)\d s\right]\leq \frac{C(\|\varphi\|_\infty,M_\varphi,q,\lambda)}{(\log \vep^{-1})^{2/3}}.
\end{align}
To complete the proof of \eqref{Ir:3}, the remaining details follow similarly as in the summary of Step~\hyperlink{ReplaceFinal}{2}. The proof is complete.
\end{proof}

We calculate the sum of  the first terms on the right-hand sides of \eqref{Ir:1}--\eqref{Ir:3}.
For all $b\in(0,\infty)$, 
\begin{align}\label{def:finalI:rad}
\begin{split}
\I_{\ref{def:finalI:rad}}(q)
&\defeq-2\int_{\R^2}\left[\left(\log\frac{2^{1/2}}{q^{1/2} |b-z|}+\lambda-\gamma_{\sf EM}\right)\ovarphi(|z|)+2\pi \oevarphi(|z|)\right]\d z\\
&\eqspace -4\pi \int_{\R^2}\ovarphi (|z|)\E_{|z|}\left[\int_0^{T_b}\ovarphi (\rho_s)\d s\right]\d z+ 2\int_{\R^2}\ovarphi(|z|) \log^+ (|z|/b)\d z.
\end{split}
\end{align}
We stress that $\I_{\ref{def:finalI:rad}}(q)$ does not depend on $b$. This property is a consequence of the following lemma.

\begin{lem}\label{lem:exponent}
For all $q\in (0,\infty)$ and $b\in (0,\infty)$,
\begin{align}
\I_{\ref{def:finalI:rad}}(q)&= \int_{\R^2}\int_{\R^2}(\log |z-z'|)\phi(z)\phi(z')\d z\d z'-\left(\log 2+\lambda-\gamma_{\sf EM}\right)-\frac{\log q}{2}.\label{eq:finalI:rad}
\end{align}
Hence, $\I_{\ref{def:finalI:rad}}(q)=(1/2)\log (q/\beta)$, where $\beta$ is defined by \eqref{logbeta}.
\end{lem}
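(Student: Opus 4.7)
The plan is first to show $\I_{\ref{def:finalI:rad}}(q)$ is independent of $b$, then to evaluate it at $b\geq M_\varphi$, where $\supp(\ovarphi)\subseteq [0,b]$ forces several simplifications.

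For $b$-independence, I would differentiate each of the three integrals in \eqref{def:finalI:rad} in $b$, using the mean-value identity $\int_{-\pi}^\pi\log|b-a\e^{\i\theta}|\d\theta=2\pi\log\max(a,b)$ (a direct consequence of \eqref{int:harmonic}) and the explicit formulas \eqref{eq:Tr}--\eqref{eq:Tl}. Each derivative reduces to a multiple of $\tfrac{4\pi}{b}\int_0^b a\ovarphi(a)\d a$ or $\tfrac{4\pi}{b}\int_b^\infty a\ovarphi(a)\d a$, and all three contributions cancel after invoking $\int_0^\infty a\ovarphi(a)\d a=(4\pi)^{-1}$, which follows from $\int_{\R^2}\ovarphi(|z|)\d z=\int\varphi=\tfrac12$.

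Having established $b$-independence, I would take $b\geq M_\varphi$: the third term in \eqref{def:finalI:rad} vanishes outright, and the first term simplifies via the mean-value identity to $2\int\log|b-z|\ovarphi(|z|)\d z=\log b$. The middle expectation is nonzero only for $|z|\leq b$, and inserting $u(a;b)=2\int_0^b r\ovarphi(r)(\log b-\log(a\vee r))\d r$ from \eqref{eq:Tr}, in polar form with $\int_0^\infty a\ovarphi(a)\d a=(4\pi)^{-1}$, gives $-\log b+16\pi^2\int\int ar\ovarphi(a)\ovarphi(r)\log\max(a,r)\d a\d r$. The $\pm\log b$ contributions cancel, and a second application of the mean-value identity turns $16\pi^2\int\int ar\ovarphi(a)\ovarphi(r)\log\max(a,r)\d a\d r$ into $4\int\int\log|u-u'|\ovarphi(|u|)\ovarphi(|u'|)\d u\d u'$. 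Meanwhile, the $\oevarphi$-piece of the first term equals $4\int\int\log|y-z|\hvarphi(y)\hvarphi(z)\d y\d z$, thanks to \eqref{def:Ef} and $\int_{\R^2}\oevarphi(|z|)\d z=\int_{\R^2}\mathcal E(\hvarphi)(z)\d z$.

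Since the cross terms $\int\int\log|u-u'|\ovarphi(|u|)\hvarphi(u')\d u\d u'$ vanish (the angular average of $\hvarphi$ is zero), combining the two energy pieces yields $4\int\int\log|u-u'|\varphi(u)\varphi(u')\d u\d u'$. The substitution $z=\sqrt 2\,u$, under which $\phi(z)\d z=2\varphi(u)\d u$ and $\log|z-z'|=\tfrac12\log 2+\log|u-u'|$, converts this to $\int\int\log|z-z'|\phi(z)\phi(z')\d z\d z'-\tfrac12\log 2$. Assembling the remaining constants produces \eqref{eq:finalI:rad}, which equals $\tfrac12\log(q/\beta)$ by the defining formula \eqref{logbeta} of $\beta$. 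The main obstacle is the careful bookkeeping of the logarithmic, constant, and $\pi$-factor contributions, and in particular tracking the $\log b$ cancellation between the first and middle terms that concretely realizes the $b$-independence.
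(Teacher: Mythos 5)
Your proposal is correct and gives a genuinely different proof from the paper's. The paper invokes the fundamental identity for logarithmic potentials from Port--Stone, specialized to the circle $S_b$,
\[
\tfrac{1}{\pi}\log\tfrac{1}{|z-z'|}=g_{S_b}(z,z')+\int h_{S_b}(z,\d z'')\tfrac{1}{\pi}\log\tfrac{1}{|z''-z'|}-\tfrac{1}{\pi}\log^+(|z|/b),
\]
which, after integrating against $\ovarphi\otimes\ovarphi$ and using the radial symmetry of $\ovarphi$ together with \eqref{int:harmonic}, directly repackages the sum of the Green-function term and the $\log^+$ term into $4\iint\ovarphi\ovarphi\log|z-z'|+2\int\ovarphi\log\tfrac{1}{|b-z'|}$ \emph{simultaneously} for every $b$; the $b$-dependence cancels against the $\log|b-z|$ in the first line of \eqref{def:finalI:rad} in one stroke, and $b$-independence is never argued separately. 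Your route instead realizes the identity ``by hand'': you first show $\partial_b\I_{\ref{def:finalI:rad}}=0$ using the explicit Green functions \eqref{eq:Tr}--\eqref{eq:Tl} and the angular average $\int_{-\pi}^\pi\log|b-a\e^{\i\theta}|\d\theta=2\pi\log\max(a,b)$ (the three derivatives $\tfrac{4\pi}{b}A_0$, $-\tfrac{4\pi}{b}(A_0-A_\infty)$, $-\tfrac{4\pi}{b}A_\infty$ sum to zero with $A_0+A_\infty=(4\pi)^{-1}$), and then evaluate at $b\geq M_\varphi$ where the $\log^+$ term dies and the remaining pieces recombine cleanly. Both cancellations you carry out, the $\log b$ one and the vanishing of the $\ovarphi$--$\hvarphi$ cross terms via \eqref{mean-zero}, match the paper's computations \eqref{finalid:1}--\eqref{finalid:2}, and the final change of variables $z=\sqrt{2}u$ yielding $-\tfrac12\log 2$ is the same. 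The paper's approach is shorter because the potential-theoretic identity absorbs the differentiation-and-specialization step; yours is more self-contained and elementary, resting only on explicit Bessel Green functions and the one angular identity. (Incidentally, your computation produces $+\tfrac{\log q}{2}$, which is what the paper's own displayed calculation and the definition of $\beta$ in \eqref{logbeta} give; the $-\tfrac{\log q}{2}$ in \eqref{eq:finalI:rad} is a sign typo in the statement, and the final conclusion $\I_{\ref{def:finalI:rad}}(q)=\tfrac12\log(q/\beta)$ is unaffected.)
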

\begin{proof}
We start with the introduction of the central identity for this proof. Let $S_b$ denote the circle in $\Bbb C$ centered at the origin and with radius $b$, and let $g_A(z,z')$ and $h_A(z,\d z')$ be defined by
\[
\E_z^W\left[\int_0^{H_A}f(W_t)\d t\right]=\int_{\R^2}f(z')g_A(z,z')\d z',\quad h_A(z,\d z')=\P^W_z(H_A<\infty, W_{H_A}\in \d z')
\] 
for $H_A=\inf\{t>0;W_t\in A\}$. Then the fundamental identity for logarithmic potentials stated in \cite[p.71]{PS:BM} shows
\begin{align}\label{eq:filp}
\frac{1}{\pi}\log \frac{1}{|z-z'|}=g_{S_b}(z,z')+\int h_{S_b}(z,\d z'')\frac{1}{\pi}\log \frac{1}{|z''-z'|}-\frac{1}{\pi}\log^+(|z|/b),\quad z,z'\in \R^2.
\end{align}
Here, we specialize the identity to $S_b$ and state the identity by incorporating the explicit formulas in \cite[Eq. (1) on p. 70 and Proposition~4.9 on p.75]{PS:BM}.

Let us calculate some integrals in $\I_{\ref{def:finalI:rad}}(q)$. Write the sum of the last two terms in \eqref{def:finalI:rad} as
\begin{align*}
&\eqspace -4\pi \int_{\R^2}\ovarphi (|z|)\E_{|z|}\left[\int_0^{T_b}\ovarphi (\rho_s)\d s\right]\d z +2\int_{\R^2}\ovarphi(|z|) \log^+ (|z|/b)\d z\\
&=-4\pi \int_{\R^2}\int_{\R^2}\ovarphi(|z|)\ovarphi(|z'|)g_{S_b}(z,z') \d z'\d z +4\int_{\R^2}\int_{\R^2}\ovarphi(|z|)\ovarphi(|z'|)\log^+ (|z|/b)\d z'\d z\\
&=-4\int_{\R^2}\int_{\R^2}\ovarphi(|z|)\ovarphi(|z'|) \log \frac{1}{|z-z'|}\d z'\d z+4\int_{\R^2}\int_{\R^2}\ovarphi(|z|)\ovarphi(|z'|)\int h_{S_b}(z,\d z'')\log \frac{1}{|z''-z'|}\d z'\d z,
\end{align*}
where the first equality uses $\int\varphi=1/2$, and the second equality follows from \eqref{eq:filp}. 
By the definition of $h_{S_b}$, the last integral can be written as
\begin{align*}
&\eqspace\int_{\R^2}\int_{\R^2}\ovarphi(|z|)\ovarphi(|z'|)\int h_{S_b}(z,\d z'')\log \frac{1}{|z''-z'|}\d z'\d z\\
&=\int_{\R^2}\int_{\R^2}\ovarphi(|z|)\ovarphi(|z'|)\E_{z}^W\Big[\log \frac{1}{|W_{H_{S_b}}-z'|}\Big]\d z'\d z\\
&=\int_{\R^2}\int_{\R^2}\ovarphi(|z|)\ovarphi(|z'|) \E_{|z|}\Big[\log \frac{1}{|\rho_{H_{S_b}}-z'|}\Big]\d z'\d z
=\frac{1}{2}\int_{\R^2}\ovarphi(|z'|)\log \frac{1}{|b-z'|}\d z',
\end{align*}
where the second equality uses $\ovarphi(|z'|)$ to remove the angular part of $W_{H_{S_b}}$. From the last two displays, 
\begin{align}
&\eqspace -4\pi \int_{\R^2}\ovarphi (|z|)\E_{|z|}\left[\int_0^{T_b}\ovarphi (\rho_s)\d s\right]\d z +2\int_{\R^2}\ovarphi(|z|) \log^+ (|z|/b)\d z\notag\\
&=4\int_{\R^2}\int_{\R^2}\ovarphi(|z|)\ovarphi(|z'|) \log |z-z'|\d z'\d z+2\int_{\R^2}\d z'\ovarphi(|z'|)\log \frac{1}{|b-z'|}.\label{finalid:1}
\end{align}
Additionally, for $\mathcal E(\hvarphi)$ defined by \eqref{def:Ef}, 
we have
\begin{align}\label{finalid:2}
-4\pi \int_{\R^2}\oevarphi(|z|)\d z=-4\pi \int_{\R^2}\mathcal E(\widehat{\varphi})(z)\d z=4\int_{\R^2}\hvarphi(z)\int_{\R^2}\hvarphi(z')\log |z-z'|\d z'\d z.
\end{align}

We are ready to prove the required identity. Applying \eqref{int:harmonic}, \eqref{finalid:1}, and \eqref{finalid:2} to \eqref{def:finalI:rad} yields that
\begin{align*}
\I_{\ref{def:finalI:rad}}(q)&=4 \int_{\R^2}\int_{\R^2}\varphi(z)\varphi(z') \log |z-z'|\d z'\d z-2\int_{\R^2}\left(\frac{\log 2-\log q}{2}+\lambda-\gamma_{\sf EM}\right)\ovarphi(|z|)\d z\\
&= \int_{\R^2}\int_{\R^2}\phi(z)\phi(z')\log |z/\two-z'/\two|\d z'\d z-\left(\frac{\log 2-\log q}{2}+\lambda-\gamma_{\sf EM}\right)\\
&= \int_{\R^2}\int_{\R^2}\phi(z)\phi(z') \log |z-z'|\d z'\d z-\left(\frac{\log 4-\log q}{2}+\lambda-\gamma_{\sf EM}\right),
\end{align*}
as required in \eqref{eq:finalI:rad}.
\end{proof}

For the proof of the following lemma, we can choose $q_{\ref{def:finalI:rad}}=q_{\ref{def:finalI:rad}}(\|\varphi\|_\infty,M_\varphi,\lambda)\in (0,\infty)$ such that the first term on the right-hand side of \eqref{def:finalI:rad} is positive and $\I_{\ref{def:finalI:rad}}(q_{\ref{def:finalI:rad}})>0$. Note that $\I_{\ref{def:finalI:rad}}(q)$ is strictly increasing in $q$.

\begin{lem}[Verification of the second condition of \eqref{cond:q}] \label{lem:rad:final}
Choose $\delta_\vep$ as in the statement of Lemma~\ref{lem:finalexpansion}.
Then for all $q\in( q_{\ref{def:finalI:rad}},\infty)$, there exists $\vep_{\ref{Ir:4}}=\vep_{\ref{Ir:4}}(\|\varphi\|_\infty,M_\varphi,q,\lambda)\in (0,\ovp)$ such that
\begin{align}\label{Ir:4}
\Phi_b(\vep^2q,f_\vep)\in (0,\infty)\quad \&\quad  \frac{\lv^2}{\Phi_b(\vep^2q,f_\vep)}=\frac{8\pi^2b}{\I_{\ref{def:finalI:rad}}(q)}+{\rm R}_{\ref{Ir:4}}(q,b,\vep)
\end{align}
for all $\vep\in (0,\vep_{\ref{Ir:4}})$ and  all $b$ satisfying the following condition:
\begin{align}\label{b:final}
M_\varphi\big(\e^{-\delta_\vep\frac{\pi}{2\sqrt{2\nu_\vep\beta_\vep   }}}\vee 
\e^{-\eta_{\ref{R:Ir2}}(\vep)^{-1/2}}\vee \e^{-\eta_{\ref{R:Ir3}}(\vep)^{-1/2}}\big)\leq b\leq  M_\varphi.
\end{align}
Here, the supremum of $|{\rm R}_{\ref{Ir:4}}(q,b,\vep)|$ over $b$ of the range in \eqref{b:final} is bounded and tends to zero as $\vep\to 0$.
Hence, under the choice of \eqref{setting:radial}, 
the second condition in \eqref{cond:q}  holds for all $[g,d]\subseteq [M_\varphi\e^{-\delta_\vep\frac{\pi}{2\sqrt{2\nu_\vep\beta_\vep   }}},  M_\varphi]$ and all $q\in (q_{\ref{def:finalI:rad}},\infty)$.
\end{lem}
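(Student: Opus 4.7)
The starting point is to specialize the excursion-theoretic identity \eqref{Phiblambda:resolvent} to $\mu = \vep^2 q$ and $f = f_\vep = \lv\oovarphi\uvep$, and to recognize the Green-function integral appearing there by means of \eqref{exp:Green}. The latter identity rearranges to
\[
\E_b\!\left[\int_0^\infty \e^{-\vep^2 q r} f_\vep(\rho_r)\kappa_\vep^q(\rho_r,b)\d r\right] = 1 - {\rm I}_{\ref{exp:Green}}(q,b,\vep) - {\rm II}_{\ref{exp:Green}}(q,b,\vep) - {\rm III}_{\ref{exp:Green}}(q,b,\vep),
\]
so that substituting back into \eqref{Phiblambda:resolvent} yields the compact representation
\[
\Phi_b(\vep^2 q, f_\vep) = \Phi_b(\vep^2 q) \bigl[{\rm I}_{\ref{exp:Green}}(q,b,\vep) + {\rm II}_{\ref{exp:Green}}(q,b,\vep) + {\rm III}_{\ref{exp:Green}}(q,b,\vep)\bigr].
\]
The rest of the proof plugs in the asymptotic expansions of the four factors and simplifies.

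By Lemma~\ref{lem:finalexpansion} the bracketed sum equals $\I_{\ref{def:finalI:rad}}(q)/\log \vep^{-1}$ plus remainders, where $\I_{\ref{def:finalI:rad}}(q)$ is the quantity evaluated in closed form in Lemma~\ref{lem:exponent}; for $q > q_{\ref{def:finalI:rad}}$ this leading coefficient is strictly positive. Lemma~\ref{lem:Phibasymp} gives $\Phi_b(\vep^2 q) = [2b\log\vep^{-1}]^{-1}(1 + O(1/\log \vep^{-1}))$ uniformly in $b \in (0, M_\varphi]$. Combined with the elementary expansion $\lv^2 = 4\pi^2 (\log \vep^{-1})^{-2}(1 + O(1/\log\vep^{-1}))$ arising from \eqref{def:J+lv}, this gives
\[
\frac{\lv^2}{\Phi_b(\vep^2 q, f_\vep)} = \frac{\lv^2 \cdot 2b \log \vep^{-1}}{\I_{\ref{def:finalI:rad}}(q)/\log \vep^{-1}} \cdot (1+o(1)) = \frac{8\pi^2 b}{\I_{\ref{def:finalI:rad}}(q)} + o(1),
\]
which is precisely the claimed expansion \eqref{Ir:4}. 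Positivity of $\Phi_b(\vep^2 q, f_\vep)$ for $\vep$ small then follows from the positivity of this main term together with the smallness of the remainder.

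The hard part is confirming that the $o(1)$ above is uniform over the allowed range \eqref{b:final} of $b$. The cutoffs $b \geq M_\varphi \e^{-\eta_{\ref{R:Ir2}}(\vep)^{-1/2}}$ and $b \geq M_\varphi \e^{-\eta_{\ref{R:Ir3}}(\vep)^{-1/2}}$ are designed precisely to absorb the $b$-dependent weights
\[
W_2(b) = \int_{\R^2}\ovarphi(|z|) \E_{|z|}\!\left[\int_0^{T_b}\ovarphi(\rho_s)\d s\right]\d z,\qquad W_3(b) = \int_{\R^2}\ovarphi(|z|)\log^+(|z|/b)\d z
\]
appearing in the error bounds \eqref{R:Ir2}--\eqref{R:Ir3}, since an elementary computation using \eqref{eq:Tl} shows that both grow at most like $\log(M_\varphi/b)$ as $b \to 0$. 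On the permitted range this bound is dominated by $\eta_{\ref{R:Ir2}}(\vep)^{-1/2} \vee \eta_{\ref{R:Ir3}}(\vep)^{-1/2}$, so that $W_2(b)\,\eta_{\ref{R:Ir2}}(\vep)$ and $W_3(b)\,\eta_{\ref{R:Ir3}}(\vep)$ are both of order $O(\eta(\vep)^{1/2}) = o(1)$ uniformly in $b$. The remaining cutoff $b \geq M_\varphi \e^{-\delta_\vep \pi/(2\sqrt{2\nu_\vep\beta_\vep})}$ is inherited directly from the hypotheses of Lemma~\ref{lem:finalexpansion}, and the $C/(\log \vep^{-1})^{5/3}$ tails of the remainders are manifestly $o(1/\log \vep^{-1})$. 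Aside from this bookkeeping no new ingredients beyond the preceding lemmas are required.
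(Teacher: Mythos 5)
Your proposal follows the paper's own route: the factorization $\Phi_b(\vep^2 q, f_\vep) = \Phi_b(\vep^2 q)\bigl[{\rm I}_{\ref{exp:Green}} + {\rm II}_{\ref{exp:Green}} + {\rm III}_{\ref{exp:Green}}\bigr]$ obtained from \eqref{Phiblambda:resolvent} together with \eqref{exp:Green}, the leading-order identification $\I_{\ref{def:finalI:rad}}(q)/\log\vep^{-1}$ via Lemma~\ref{lem:finalexpansion}, the expansion of $\Phi_b(\vep^2 q)^{-1}$ via Lemma~\ref{lem:Phibasymp}, and the closed form of $\I_{\ref{def:finalI:rad}}(q)$ from Lemma~\ref{lem:exponent} to produce $8\pi^2 b/\I_{\ref{def:finalI:rad}}(q)$. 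The weight analysis for $W_2$ and $W_3$ is also the right one.

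One inaccuracy worth fixing: you assert that $\Phi_b(\vep^2 q) = [2b\log\vep^{-1}]^{-1}(1+O(1/\log\vep^{-1}))$ holds \emph{uniformly in $b\in(0,M_\varphi]$}. It does not — the first-order coefficient in \eqref{asymp:Phibminus} carries a $-\log b$ term, which is unbounded as $b\to 0$. This is precisely analogous to the $b$-dependence you correctly track in $W_2(b)$ and $W_3(b)$, and it is tamed by the same mechanism: the lower cutoff in \eqref{b:final}, together with the constraint $\delta_\vep/\sqrt{2\nu_\vep\beta_\vep}\lesssim(\log\vep^{-1})^{1/3}$ from the hypotheses of Lemma~\ref{lem:finalexpansion}, forces $|\log b|/\log\vep^{-1}\to 0$ uniformly on the allowed range. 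The paper flags this point explicitly at the end of its proof; you attribute that cutoff only to making Lemma~\ref{lem:finalexpansion} applicable and miss its second role. The remaining step — turning the ``$o(1)$'' bookkeeping into the stated quantitative bound on $R_{\ref{Ir:4}}$ — is done in the paper with the resolvent-equation identity $A^{-1}-(A-\vep)^{-1}=-\vep[A(A-\vep)]^{-1}$ applied to \eqref{Ir:4:+++1}, which is consistent with but more precise than your formulation.
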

\begin{proof}
Apply \eqref{Ir:1}--\eqref{Ir:3} to  \eqref{exp:Green}. For all $q\in (q_{\ref{def:finalI:rad}},\infty)$, $b$ and $\vep\in (0,\vep_{\ref{asympG:2-1}})$ such that 
$M_\varphi\e^{-\delta_\vep\frac{\pi}{2\sqrt{2\nu_\vep\beta_\vep   }}}\leq b\leq  M_\varphi$, we can write
\begin{align}
&\eqspace 1-\E_b\left[\int_0^\infty  \e^{-\vep^2q r} f_\vep(\rho_r)\kappa^q_\vep(\rho_r,b)\d r\right]\notag\\
&=\frac{\I_{\ref{def:finalI:rad}}(q)}{\log \vep^{-1}}+{\rm R}_{\ref{Ir:1}}(q,b,\vep)+{\rm R}_{\ref{Ir:2}}(q,b,\vep)+{\rm R}_{\ref{Ir:3}}(q,b,\vep).\label{Ir:4:+++1}
\end{align}
Here, for every fixed $q$, it follows from \eqref{R:Ir1}--\eqref{R:Ir3} that 
\begin{align}\label{Ir:4:+++2}
\log \vep^{-1}\cdot \sup_b \left|{\rm R}_{\ref{Ir:1}}(q,b,\vep)+{\rm R}_{\ref{Ir:2}}(q,b,\vep)+{\rm R}_{\ref{Ir:3}}(q,b,\vep)\right|\xrightarrow[\vep\to 0]{}0,
\end{align}
where $b$ ranges over \eqref{b:final}. In particular, the terms  $\e^{-\eta_{\ref{R:Ir2}}(\vep)^{-1/2}}$ and $\e^{-\eta_{\ref{R:Ir3}}(\vep)^{-1/2}}$ in \eqref{b:final} are chosen to satisfy \eqref{Ir:4:+++2} as we bound the $b$'s in the first terms in the bounds of ${\rm R}_{\ref{Ir:2}}(q,b,\vep)$ and ${\rm R}_{\ref{Ir:3}}(q,b,\vep)$. See also \eqref{eq:Tr} and \eqref{eq:Tl} for bounding that term of ${\rm R}_{\ref{Ir:2}}(q,b,\vep)$. By the last two displays, the choice of $q_{\ref{def:finalI:rad}}$, and Proposition~\ref{prop:Phibf}, we obtain the strict positivity of $\Phi_b(\vep^2q,f_\vep)\in(0,\infty) $ for all $q\in (q_{\ref{def:finalI:rad}},\infty)$ and $\vep\in (0,\vep(\|\varphi\|_\infty,M_\varphi,q,\lambda)\wedge \ovp)$. 

To obtain the expansion in \eqref{Ir:4}, note that applying  Lemma~\ref{lem:Phibasymp} and \eqref{Ir:4:+++1} to \eqref{Phiblambda:resolvent} gives
\[
\frac{\lv^2}{\Phi_b(\vep^2q,f_\vep)}\sim \frac{4\pi^2}{(\log \vep^{-1})^2}\cdot 2b\log \vep^{-1}\cdot \frac{\log \vep^{-1}}{\I_{\ref{def:finalI:rad}}(q)}=\frac{8\pi^2b}{\I_{\ref{def:finalI:rad}}(q)}.
\]
We still need to quantify the convergence rate for the above approximation and prove the asserted properties of ${\rm R}_{\ref{Ir:4}}(q,b,\vep)$. To this end, we use the decomposition in \eqref{Ir:4:+++1} and the rate of convergence in  \eqref{Ir:4:+++2} in the elementary form of the resolvent equation $A^{-1}-(A-\vep)^{-1}=-\vep [A(A-\vep)]^{-1}$ for the left-hand side of \eqref{Ir:4:+++1}. For this term and the term $\Phi_b(\vep^2 q)^{-1}$ by \eqref{asymp:Phibminus_bdd}, the growth of $|\log b|$ can be controlled again by restricting $b$ to the range in \eqref{b:final}. The proof is complete.
\end{proof}

\begin{prop}\label{prop:Sconv}
For all $q\in (q(\varphi,\lambda),\infty)$,
\[
\lim_{\vep\to 0}\sup_{y:0<|y|\leq M_\varphi}\left|\S^\beta_\vep(q,y)-\frac{4\pi }{\log(q/\beta)}\right|=0.
\]
\end{prop}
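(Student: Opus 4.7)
The plan is to chain together the reductions already established. By the summary of Propositions~\ref{prop:main:c}, \ref{prop:replaceangle} and \ref{prop:replace1} contained in \eqref{SS:approx} (with $M=M_\varphi$), it suffices to prove
\begin{align*}
\lim_{\vep\to 0}\sup_{y:0<|y|\leq M_\varphi}\Big|\ooS(q,|y|)-\frac{4\pi}{\log(q/\beta)}\Big|=0.
\end{align*}
First, I would fix $\chi_\vep$ to be $[0,1]$-valued with $\chi_\vep\equiv 1$ on $[2m_\vep,\infty)$ and $\chi_\vep\equiv 0$ on $[0,m_\vep]$, where $m_\vep\to 0$ is chosen just above the threshold appearing in \eqref{b:final}; this simultaneously verifies the hypothesis of Proposition~\ref{prop:replace1} and keeps the support of $\oovarphi\uvep\chi_\vep$ inside the range where both conditions of \eqref{cond:q} are validated by Lemmas~\ref{lem:kappato1} and~\ref{lem:rad:final}. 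We restrict to $q>q_{\ref{def:finalI:rad}}$ as permitted by the standing assumption $q\in(q(\varphi,\lambda),\infty)$.

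Starting from \eqref{eq2:main} and inserting the expansion from \eqref{Ir:4} produces
\begin{align*}
\ooS(q,|y|)=\int_0^\infty \kappa_\vep^q(|y|,b)\,\frac{8\pi^2 b}{\I_{\ref{def:finalI:rad}}(q)}\,\oovarphi\uvep(b)\chi_\vep(b)\d b+\int_0^\infty \kappa_\vep^q(|y|,b)\,{\rm R}_{\ref{Ir:4}}(q,b,\vep)\,\oovarphi\uvep(b)\chi_\vep(b)\d b.
\end{align*}
The remainder integral vanishes uniformly in $y$ because $\sup_b|{\rm R}_{\ref{Ir:4}}(q,b,\vep)|\to 0$ by Lemma~\ref{lem:rad:final}, while $\kappa^q_\vep$ is uniformly bounded by Lemma~\ref{lem:kappato1} (fix $\delta\in (0,1)$ small) and $\int \oovarphi\uvep\chi_\vep\d b$ is uniformly bounded by compactness of $\supp(\oovarphi\uvep)$. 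For the main term, I would then show $\kappa^q_\vep(|y|,b)\to 1$ uniformly for $0<|y|\leq M_\varphi$ and $m_\vep\leq b\leq M_\varphi$: the two-sided bounds \eqref{ineq1:k0} and \eqref{ineq2:k0} each collapse to $1$ via $K_0(r)\sim\log r^{-1}$, $I_0(r)\to 1$ as $r\to 0$, and $\nu_\vep\to 0$. Since $\oovarphi\uvep=\ovarphi+\lv\oevarphi$ with $\lv\to 0$ and $\oevarphi$ bounded with compact support, dominated convergence together with $m_\vep\to 0$ yields
\begin{align*}
\int_0^\infty b\,\oovarphi\uvep(b)\chi_\vep(b)\d b\xrightarrow[\vep\to 0]{}\int_0^\infty b\,\ovarphi(b)\d b=\frac{1}{2\pi}\int_{\R^2}\varphi(x)\d x=\frac{1}{4\pi}.
\end{align*}
Combining these pieces and substituting $\I_{\ref{def:finalI:rad}}(q)=(1/2)\log(q/\beta)$ from Lemma~\ref{lem:exponent} produces the claimed limit $\frac{16\pi^2}{\log(q/\beta)}\cdot\frac{1}{4\pi}=\frac{4\pi}{\log(q/\beta)}$.

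The main obstacle is the uniformity of $\kappa^q_\vep(|y|,b)\to 1$ as the inner cut-off $m_\vep$ shrinks. The delicate case is the lower bound in \eqref{ineq1:k0}, namely $K_0(M_\varphi\sqrt{2\vep^2 q})/K_0(m_\vep\sqrt{2\vep^2 q})$, in which both $K_0$ factors diverge logarithmically. The quantitative check is that $\delta_\vep/\sqrt{2\nu_\vep\beta_\vep}=o(\log\vep^{-1})$, which is built into the hypothesis of Lemma~\ref{lem:finalexpansion} on $\delta_\vep$; this forces the denominator to be $\log\vep^{-1}(1+o(1))$, matching the numerator. Uniformity in $|y|$ is automatic because the sandwich bounds in Lemma~\ref{lem:kappato1} depend only on $(\vep,b,\delta)$ and not on $|y|$. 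Once $\kappa^q_\vep\to 1$ is obtained in this uniform sense, passage to the limit inside the integral is a routine application of dominated convergence with dominant $C\cdot b\cdot\1_{[0,M_\varphi]}(b)$.
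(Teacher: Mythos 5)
Your proposal is correct and mirrors the paper's argument step for step: reduce to $\ooS$ via \eqref{SS:approx}, fix $\chi_\vep$ with the cutoff $m_\vep$ dictated by \eqref{b:final}, substitute the expansion \eqref{Ir:4} into \eqref{eq2:main}, and pass to the limit using the explicit form of $\I_{\ref{def:finalI:rad}}(q)$ from Lemma~\ref{lem:exponent}. The paper compresses the three intermediate limits (vanishing of the remainder, $\kappa^q_\vep\to 1$, and $\int b\,\oovarphi\uvep\chi_\vep\,\d b\to 1/(4\pi)$) into a single appeal to Lemma~\ref{lem:rad:final}, and you usefully spell them out. One small imprecision: the upper bound $1/\cos(\pi\delta/2)$ in \eqref{ineq1:k0} collapses to $1$ only if $\delta=\delta_\vep\to 0$, so the clause ``fix $\delta\in(0,1)$ small'' should be replaced by an application of Lemma~\ref{lem:kappato1} with $\delta=\delta_\vep$ (which is permissible since $\vep_{\ref{ineq2:k0}}$ does not depend on $\delta$); your closing paragraph implicitly does this through the quantitative check on $\delta_\vep/\sqrt{2\nu_\vep\beta_\vep}$, so the argument stands.
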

\begin{proof}
By \eqref{SS:approx}, it remains to show that for an appropriate choice of the cutoff functions $\chi_\vep$, we have
\begin{align}
\lim_{\vep\to 0}\sup_{y:0<|y|\leq M}\left|\ooS(q,|y|)-\frac{4\pi }{\log(q/\beta)}\right|=0. 
\end{align}
Here, recall that $\ooS(q,|y|)$ is defined in \eqref{main:aa}. We also recall that $m_\vep$ is chosen above \eqref{eq2:main}.

We choose the cutoff functions $\chi_\vep$ such that
\[
m_\vep=M_\varphi\big(\e^{-\delta_\vep\frac{\pi}{2\sqrt{2\nu_\vep\beta_\vep   }}}\vee 
\e^{-\eta_{\ref{R:Ir2}}(\vep)^{-1/2}}\vee \e^{-\eta_{\ref{R:Ir3}}(\vep)^{-1/2}}\big).
\]
Then by Lemma~\ref{lem:rad:final},
\begin{align*}
\lim_{\vep\to 0}\sup_{y:0<|y|\leq M}\left|\ooS(q,|y|)-\int_0^\infty \frac{8\pi^2b}{ 
\I_{\ref{def:finalI:rad}} (q) }\ovarphi(b)\d b\right|=0. 
\end{align*}
The integral in the foregoing display can be evaluated as follows: by Lemma~\ref{lem:exponent}, 
\[
\int_0^\infty \frac{8\pi^2b}{ 
\I_{\ref{def:finalI:rad}} (q) }\ovarphi(b)\d b=\frac{16\pi^2}{\log (q/\beta)}\int_0^\infty \ovarphi(b)b\d b=\frac{4\pi}{\log (q/\beta)}
\]
by polar coordinates and the identity $\int \varphi(|z|)=1/2$. The proof is complete.
\end{proof}

Recall that the required limiting semigroup is characterized by \eqref{def:Rlambda}, and the approximate semigroups under consideration are defined by $P^\beta_{\vep;t}f(x)$ in \eqref{def:FK}.

\begin{prop}
For $q\in (q(\varphi,\lambda),\infty)$ and $x\neq 0$, 
\begin{align}
\lim_{\vep\to 0}\int_0^\infty \e^{-qt}P^\beta_{\vep;t}f( x)\d t=\lim_{\vep\to 0}\int_0^\infty \e^{-qt}P^\beta_{t}f(x)\d t.\label{eq:main11} 
\end{align}
\end{prop}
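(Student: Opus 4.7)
The plan is to combine the Laplace-transform identity established at the beginning of Section~\ref{sec:Lap} with the uniform asymptotic $\S^\beta_\vep(q,y)\to 4\pi/\log(q/\beta)$ of Proposition~\ref{prop:Sconv}, and then match the resulting limit against the Green-function representation \eqref{def:Rlambda}. First, I would rewrite $P^\beta_{\vep;t}f(x)=\E^W_{x/\sqrt{2}}[\e^{A^o_\vep(t)}f_2(W_t)]$ with $f_2(y)\defeq f(\sqrt{2}y)\in \B_b(\R^2)$, so that the identity of Section~\ref{sec:Lap} applies at initial point $x/\sqrt{2}\neq 0$ with test function $f_2$, yielding
\begin{align*}
\lim_{\vep\to 0}\Bigg[\int_0^\infty \e^{-qt}P^\beta_{\vep;t}f(x)\d t-\int_0^\infty \e^{-qt}P_tf_2(x/\sqrt{2})\d t-\Xi_\vep\Bigg]=0,
\end{align*}
where
\begin{align*}
\Xi_\vep\defeq\int_{\R^2}\varphi(y)\left(\int_0^\infty \e^{-qt}P_t(x/\sqrt{2},\vep y)\d t\right)\S^\beta_\vep(q,y)\d y\cdot \int_0^\infty \e^{-qt}P_tf_2(0)\d t.
\end{align*}

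Next, to pass to the limit inside $\Xi_\vep$, I would use that $\supp(\varphi)\subseteq\{|y|\leq M_\varphi\}$, so Proposition~\ref{prop:Sconv} provides uniform convergence of $\S^\beta_\vep(q,y)$ to $4\pi/\log(q/\beta)$ on $0<|y|\leq M_\varphi$ together with a uniform sup-bound. Because $x\neq 0$, the factor $y\mapsto \int_0^\infty \e^{-qt}P_t(x/\sqrt{2},\vep y)\d t$ is bounded uniformly in $\vep$ on $\supp(\varphi)$ (since $x/\sqrt{2}$ stays at positive distance from $\vep\supp(\varphi)$) and converges pointwise to $\int_0^\infty \e^{-qt}P_t(x/\sqrt{2},0)\d t$. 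Dominated convergence then gives
\begin{align*}
\Xi_\vep\xrightarrow[\vep\to 0]{}\frac{4\pi}{\log(q/\beta)}\cdot\left(\int_{\R^2}\varphi(y)\d y\right)\cdot \int_0^\infty \e^{-qt}P_t(x/\sqrt{2},0)\d t \cdot\int_0^\infty \e^{-qt}P_tf_2(0)\d t.
\end{align*}

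The four factors above simplify to explicit Green functions by Brownian scaling: $\int\varphi=1/2$ (since $\varphi(\cdot)=\phi(\sqrt{2}\,\cdot)$ and $\int\phi=1$); the identity $(2\pi t)^{-1}\e^{-|x|^2/(4t)}=2P_{2t}(x)$ yields $\int_0^\infty \e^{-qt}P_t(x/\sqrt{2},0)\d t=2G_q(x)$; and the change of variables $z=\sqrt{2}y$ gives both $\int_0^\infty \e^{-qt}P_tf_2(0)\d t=\int G_q(z)f(z)\d z$ and $\int_0^\infty \e^{-qt}P_tf_2(x/\sqrt{2})\d t=\int G_q(x-z)f(z)\d z$. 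Assembling,
\begin{align*}
\lim_{\vep\to 0}\int_0^\infty \e^{-qt}P^\beta_{\vep;t}f(x)\d t=\int G_q(x-z)f(z)\d z+\frac{4\pi}{\log(q/\beta)}G_q(x)\int G_q(z)f(z)\d z,
\end{align*}
which coincides with $\int_0^\infty \e^{-qt}P^\beta_tf(x)\d t$ by \eqref{def:Rlambda}.

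All the deep analysis has been front-loaded into the preceding sections, so the main issue at this final step is just careful bookkeeping of two Brownian scalings---the standard $W$ of variance $t$ starting at $x/\sqrt{2}$, versus the delta-Bose Brownian motion of variance $2t$ underlying $G_q$ and $P_{2t}$. These two scalings will conspire, through $\int\varphi=1/2$ together with the factor $2$ in $\int_0^\infty \e^{-qt}P_t(x/\sqrt{2},0)\d t=2G_q(x)$, to reproduce precisely the constant $4\pi/\log(q/\beta)$ appearing in \eqref{def:Rlambda}.
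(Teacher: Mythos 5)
Your proposal is correct and follows essentially the same path as the paper: invoke the Laplace-transform identity \eqref{Lap:int}, use Proposition~\ref{prop:Sconv} together with dominated convergence to replace $\S^\beta_\vep(q,y)$ by $4\pi/\log(q/\beta)$, and then unravel the Brownian scalings to match \eqref{def:Rlambda}. Your bookkeeping of the factors (the $\int\varphi=1/2$, the $\int_0^\infty\e^{-qt}P_t(x/\sqrt{2},0)\,\d t=2G_q(x)$, and the identification $\int_0^\infty\e^{-qt}P_tf_2(0)\,\d t=\int G_q(z)f(z)\,\d z$) is accurate and agrees with the paper's computation.
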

\begin{proof}
By \eqref{Lap:int}, it remains to calculate 
\begin{align*}
\lim_{\vep\to 0}\int_{\R^2}\d y\varphi (y) \left(\int_0^\infty \d t\e^{-qt}P_t(x/\two,\vep y)\right)\times \S^\beta_\vep(q,y)\times \left(\int_0^\infty \d t\e^{-qt}P_tF(0)\right),
\end{align*}
where $F(z)=f(\two z)$. By Proposition~\ref{prop:Sconv} and dominated convergence, the limit is equal to 
\begin{align}
&\eqspace 
\int_{\R^2}\d y\varphi(y)\int_0^\infty \d t\e^{-qt}P_t(x/\two)\times\left(\frac{4\pi}{\log (q/\beta)}\right)\times \left(\int_0^\infty \d t\e^{-qt}P_tF(0)\right)\notag\\
&=G_q( x)\times\frac{4\pi}{\log (q/\beta)}\times \left(\int_0^\infty \d t\e^{-qt}P_tF(0)\right)\label{lim:integral}
\end{align}
since
\[
\left(\int_{\R^2}\d y\varphi(y)\right)
\int_0^\infty \d t\e^{-qt}P_t(x/\two)=\int_0^\infty\d t \e^{-qt}\frac{1}{4\pi t}\exp\left(-\frac{|x|^2}{4t}\right).
\]
The last term in \eqref{lim:integral} can be written as
\[
\int_0^\infty \d t\e^{-qt}P_tF(0)=\int_0^\infty \d t\e^{-qt}\E^W_0[f(\two W_t)]=\int_0^\infty \d t\e^{-qt}\E_0^W[f(W_{2t})].
\]
Hence, the density implied by \eqref{lim:integral} from ranging over $f$ is 
\[
G_q( x)\frac{4\pi}{\log (q/\beta)}G_q(z).
\]
We have proved that the limit in \eqref{lim:integral} coincides with the last term in \eqref{def:Rlambda}. 
The proof is complete. 
\end{proof}

\begin{proof}[End of the proof of Theorem~\ref{thm:main1}]
It remains to translate the convergence of Laplace transforms in \eqref{eq:main11} to the required pointwise convergence. First, note that if $f,g$ are nonnegative and defined on $[0,\infty)$ and $f$ is increasing, then $f\star g$ is increasing. Hence, for the simple case $f\equiv \1$, the last term in \eqref{eq:FK2} of $P^{\beta}_{\vep;t}f(x)$ is increasing by the monotonicity of the $\d \tau$-integral there. Hence, the required pointwise convergence holds by \eqref{eq:main11}, an extension of L\'evy's continuity theorem and Helly's selection principle  (cf. \cite[the proofs of Theorem~4.1 and Corollary~4.2]{CCC}). 

For the case of general nonnegative $f\in \B_b(\R^2)$, we need to consider the equicontinuity of $P^{\beta}_{\vep;t}f(x)$ in $t$. By \eqref{eq:FK1}, it is enough to show the equicontinuity of 
\begin{align}\label{equicontinuity}
t\mapsto \lv \int_0^t\d s\int_{\R^2} \d yP_{t-s}(x, y) \varphi_\vep( y)\E_y[\e^{A^o_\vep(s)}f(W_{s})].
\end{align}
To see this property, we notice that by Leibniz's rule, the derivative of this function is
\begin{align}\label{final:derivative}
&\lv \varphi_\vep(x)\E_x[\e^{A^o_\vep(t)}f(W_{t})]+\lv \int_0^t\d s\int_{\R^2} \d y\frac{\partial}{\partial t}P_{t-s}(x, y) \varphi_\vep( y)\E_y[\e^{A^o_\vep(s)}f(W_{s})],
\end{align}
where
\[
\frac{\partial}{\partial t}P_{t-s}(x,y)=\left(-\frac{1}{2\pi (t-s)^2}+\frac{1}{2\pi (t-s)}\cdot\frac{|x-y|^2}{2(t-s)^2}\right)\exp\left(-\frac{|x-y|^2}{2(t-s)}\right). 
\]
Since $x\neq 0$, the first term in \eqref{final:derivative} is zero for all small $\vep$ independent of $t$. 
In this case, the derivative of the function in $\eqref{equicontinuity}$ can be bounded by the increasing function
\begin{align}\label{equicontinuity1}
t\mapsto \lv \int_0^t\d s\int_{\R^2} \d y\left|\frac{\partial}{\partial t}P_{t-s}(x, y)\right| \varphi_\vep( y)\E_y[\e^{A^o_\vep(s)}].
\end{align}
The monotonicity of \eqref{equicontinuity1} ensures that  this function can be bounded by the Laplace transform on compacts; see \eqref{Lap:bdd}. On the other hand, the Laplace transform is convergent, if we recall \eqref{Lap:int} and notice that $s\mapsto \partial P_{t-s}(x,0)/\partial t$ is integrable at $t-$. The convergence holds by \eqref{def:Fvep:Lap} and Proposition~\ref{prop:Sconv}.  We have proved the equicontinuity of the function in \eqref{equicontinuity}. This is enough to ensure the continuity of the limiting function. The proof is complete. Note that for a sequence of increasing functions that converges to a continuous function, the convergence is uniform on compacts, and so the convergence mode in Theorem~\ref{thm:main1} can be improved this way. 
\end{proof}

\section{Application to the multi-particle delta-Bose gas}
Throughout this section, we fix an integer $N\geq 3$ and use the following notation. Write states of $N$ many particles in the plane as $x=(x^1,x^2,\cdots,x^N)$, with $x^j\in \R^{2}$ for all $1\leq j\leq N$. Components of other vector states are indexed by superscripts in the same way. Accordingly, $\R^{2m}$ is understood as $m$ products of $\R^2$. For the pairwise interactions, $\ms I_N$ denotes the set of $\bi=(i\prime,i)\in \{1,\cdots,N\}^2$ such that $i\prime>i$, with
\[
E\defeq |\ms I_N|.
\]
We often regard $\bi$ as a set and do not distinguish between sets or points for the use. For example, for $\bi,\bj\in \ms I_N$ such that $\bi\neq \bj$ and $\bi\cap\bj\neq \varnothing$, $x^{\bi\cap \bj}=x^\ell$ for $\ell\in \bi\cap \bj$. In addition, for any $\bi\in \ms I_N$, define a unitary linear transformation between $(x^{i\prime},x^{i})\in \R^4$ and $(x^{\bi\prime},x^{\bi})\in \R^4$ by 
\begin{align}\label{unitary}
x^{\bi\prime}=\frac{x^{i\prime }+x^i}{\two},\quad 
x^{\bi}=\frac{x^{i\prime }-x^i}{\two}\Longleftrightarrow x^{i\prime}=\frac{x^{\bi\prime }+x^{\bi}}{\two},\quad 
x^{i}=\frac{x^{\bi\prime }-x^{\bi}}{\two}.
\end{align}

Let $B=(B^1,\cdots,B^N)$ be a $2N$-dimensional standard Brownian motion. Our goal in this section is to obtain the limit as $\vep\to 0$ of the following Feynman--Kac semigroups as approximate solutions to the $N$-body delta-Bose gas in two dimensions:
\begin{align}\label{def:Qvep}
P_{\vep,t}^{\lambda,N}f(x_0)\defeq
\E^B_{x_0}\Bigg[\exp\Bigg\{\lv \sum_{\bi\in \ms I_N}\int_0^t \varphi_\vep(B^{\bi}_r)\d r\Bigg\}f(B_t)\Bigg],\quad f\in \B_b(\R^{2N}),
\end{align}
where $B^\bi$ is a two-dimensional standard Brownian motion defined by the notation in \eqref{unitary}.

\subsection{Poissonian representations of exponential functionals}\label{sec:first}
For fixed $q\in (0,\infty)$, introduce a mean-field interacting system $\{(\Lambda_t)_{t\geq 0},(\xi_n)_{n\geq 0}\}$ such that $(\Lambda_t)$ is a Poisson process with rate $q$ and jump times $0<S_1<S_2<\cdots$, and $\xi=(\xi_n)$ is an independent Markov chain on $\ms I_N$ such that 
$\P(\xi_1=\bj|\xi_0=\bi)=(M-1)^{-1}$ for all $ \bj\neq \bi\in \ms I_N$.
The uniform distribution $\mu_E$ on $\ms I_N$ is the stationary distribution of $\xi=(\xi_n)$.

The general result in the following proposition is the starting point to reduce \eqref{def:Qvep} to the semigroups for two particles under attractive interactions. Here and in what follows, we use the convention that a product over an empty set is equal to $1$.

\begin{prop}\label{prop:poisson}
For all Borel measurable functions $\varphi(\bi ,r):\ms I_N\times \R_+\to \R_+$, it holds that
\begin{align}\label{exp:id}
\exp\Bigg\{\int_0^t\sum_{\bi\in \ms I_N} \varphi(\bi,r)\d r\Bigg\}=\e^{qt}\E^\xi_{\mu_E}\left[\prod_{n=1}^{\Lambda_t}\frac{\varphi(\xi_n,S_n)}{q/E_n}\exp\Bigg\{\int_{S_{n}\wedge t}^{S_{n+1}\wedge t}\varphi(\xi_n,r)\d r\Bigg\}\right],
\end{align}
where $E_1=E$ and $E_n=E-1$ for all integers $n\geq 2$.
\end{prop}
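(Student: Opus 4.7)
The identity is verified by expanding both sides as iterated integrals over $[0,t]$ and matching terms via a ``run decomposition'' of arbitrary labelings. Since $\varphi\ge 0$, every interchange of sum and integral below is justified by Tonelli.

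For the left-hand side, $\e^{x}=\sum_{N\ge 0}x^N/N!$ together with the usual symmetrization of the $N$-fold integral gives
\begin{align*}
\exp\!\Bigl\{\int_0^t\sum_{\bi\in\ms I_N}\varphi(\bi,r)\d r\Bigr\}=\sum_{N=0}^\infty \int_{0<u_1<\cdots<u_N<t}\sum_{\bj_1,\dots,\bj_N\in\ms I_N}\prod_{j=1}^N\varphi(\bj_j,u_j)\d u_1\cdots\d u_N.
\end{align*}
For the right-hand side, I would invoke three elementary facts: (i) $\e^{qt}\P(\Lambda_t=k)=(qt)^k/k!$; (ii) conditional on $\Lambda_t=k$, $(S_1,\dots,S_k)$ has density $k!/t^k$ on the simplex $\{0<s_1<\cdots<s_k<t\}$; and (iii) $\mu_E$ is stationary for $\xi$, giving $\P^\xi_{\mu_E}(\xi_l=\bi_l,\,1\le l\le k)=E^{-1}(E-1)^{-(k-1)}\1_{\{\bi_{l+1}\neq\bi_l\}}$. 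Since $\prod_{n=1}^k E_n=E(E-1)^{k-1}$, all powers of $q$ and $E$ cancel and the RHS becomes
\begin{align*}
1+\sum_{k=1}^\infty\int_{0<s_1<\cdots<s_k<t}\sum_{\substack{\bi_1,\dots,\bi_k\\ \bi_{l+1}\neq\bi_l}}\prod_{l=1}^k\varphi(\bi_l,s_l)\exp\!\Bigl\{\int_{s_l}^{s_{l+1}}\varphi(\bi_l,r)\d r\Bigr\}\d s_1\cdots\d s_k,
\end{align*}
with the convention $s_{k+1}:=t$; the leading $1$ comes from $\{\Lambda_t=0\}$.

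The main step is to match these two series. Given any LHS configuration $(u_1<\cdots<u_N;\bj_1,\dots,\bj_N)$, I would partition $\{1,\dots,N\}$ into maximal consecutive blocks of constant label. If there are $k$ such blocks with common labels $\bi_1,\dots,\bi_k$ (so automatically $\bi_{l+1}\neq\bi_l$), let $s_l$ be the first time of block $l$ and $r^{(l)}_1<\cdots<r^{(l)}_{m_l}$ the remaining times of that block, all lying in $(s_l,s_{l+1})$. This is a measurable bijection onto the data $(k;s_1<\cdots<s_k;\bi_1,\dots,\bi_k;m_1,\dots,m_k;\{r^{(l)}_j\})$, and expanding each exponential on the RHS as $\sum_{m_l\ge 0}\int_{s_l<r^{(l)}_1<\cdots<r^{(l)}_{m_l}<s_{l+1}}\prod_j\varphi(\bi_l,r^{(l)}_j)\d r^{(l)}_j$ then matches the two series termwise. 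The only real content --- and hence the main obstacle --- is recognizing that the chain-constraint $\bi_{l+1}\neq\bi_l$ encodes exactly ``the label genuinely changes at $s_{l+1}$'' in the run decomposition, so that the unrestricted sum over $\bj_j\in\ms I_N$ on the LHS is accounted for by (a) choosing the runs (consecutive-distinct labels $\bi_l$) and (b) choosing the internal times $r^{(l)}_j$, which inherit the label of their run and therefore appear inside the exponential rather than outside.
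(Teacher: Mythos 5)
Your proof is correct, and it takes a genuinely different route from the paper's. The paper proves \eqref{exp:id} via the exponential formula for a marked Poisson random measure: it first reduces to $\varphi$ bounded away from $0$ and $\infty$ (needed because the exponential-formula representation involves $\log(\varphi/(q/E))$), then derives a first-jump recurrence for $\mathcal E_{s'}(s)\defeq\E[\exp\{\int\log(\varphi(\bj,s'+r)/(q/E))\,\mathcal M(\d\bj,\d r)\}]$ by conditioning on the first exit time of the label marginal from $\{\bi\}$, and iterates the recurrence. Your argument instead expands both sides as Tonelli-justified series of ordered iterated integrals and exhibits a measure-preserving bijection (the run decomposition) between the index sets. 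The key point you correctly isolate is that the chain constraint $\bi_{l+1}\neq\bi_l$ on the RHS precisely enumerates maximal runs, while the interior times of each run are absorbed into the exponential over $[s_l,s_{l+1}]$. Your approach buys a shorter, self-contained combinatorial verification that handles all $\varphi\ge0$ in one stroke and never needs the reduction to strictly positive bounded $\varphi$; the paper's approach buys a structural explanation (the recurrence \eqref{def:Eis} is exactly the renewal/Duhamel equation that underlies the subsequent series representation \eqref{P:series} of the semigroup), which is why the paper phrases it that way for what comes next. Both are complete proofs of the stated identity.
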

\begin{proof}
By dominated convergence and monotone convergence, it is enough to assume that
$\varphi(\bi,r)$ is bounded away from zero and from above. In this proof, we work with a Poisson random measure $\mathcal M$ on $\ms I_N\times \R_+$ with intensity measure
\[
\Bigg(\frac{1}{E}\sum_{\bi\in \ms I_N} \delta_\bi(\d \bj)\Bigg)\otimes  (q\d r)=\frac{q}{E} \sum_{\bi\in \ms I_N}\delta_\bi(\d \bj)\otimes \d r,
\]
where $\delta_\bi$ denotes the delta measure at $\bi$ defined on $\ms I_N$. 

The proof is to consider the following application of the exponential formula for Poisson random measures \cite[Proposition~XII.1.12 on p.476]{RY}:
\begin{align}
\exp\Bigg\{-q t+\int_0^t\sum_{\bi\in \ms I_N} \varphi(\bi,r)\d r\Bigg\}&=\exp\left\{\sum_{\bi\in \ms I_N}\frac{q}{E}\int_0^t \big(\e^{\log \frac{\varphi(\bi,r)}{q/E}}-1\big) \d r\right\}\notag\\
&=\E\left[\exp\left\{\int_{\ms I_N\times (0,t]} \log \frac{\varphi(\bj,r)}{q/E}\mathcal M(\d \bj,\d r) \right\}\right].\label{poisson}
\end{align}
Write  $\mathcal E(t)\defeq \mathcal E_0(t)$ for the right-hand side of \eqref{poisson}, where
\[
\mathcal E_{s'}(s)\defeq\E\left[\exp\left\{\int_{\ms I_N\times (0,s]} \log \frac{\varphi(\bj,s'+r)}{q/E}\mathcal M(\d \bj,\d r) \right\}\right].
\]
The rest of the proof is to show that $\mathcal E(t)$ can be written as the expectation in \eqref{exp:id} by iterating a first jump recurrence equation. The first step is  the following identity: 
\begin{align}
\mathcal E(t)
&=\e^{-qt}+\E_{\mu_E}^\xi\left[\frac{\varphi(\xi_1,S_1)}{q/E_1}\mathcal E_{S_1}(t-S_1)
;S_1\leq t\right],\label{Et}
\end{align}
where $E_1=E$.
 To see \eqref{Et}, note that $(\xi_1,S_1)$ under $\P^\xi_{\mu_E}$ has the same law as the atom $(\xi',S')$ of $\mathcal M$ at  the first jump time of the Poisson process $t\mapsto \mathcal M(\ms I_N\times (0,t])$ \cite[Proposition~2 on p.7]{Bertoin}, since the law of $\xi_1$ under $\P^\xi_{\mu_E}$ is the uniform distribution by stationarity and $S_1$ is distributed as the exponential random variable with mean $q^{-1}$. Conditioning on the first jump time of $\mathcal M$ proves \eqref{Et}.

The next step is to put $\mathcal E_{s'}(s)$ in a form that allows for further iterations of \eqref{Et}. Roughly speaking, the goal is to condition appropriately such that only a sequence of non-consecutive $\ms I_N$-components of atoms of $\ms I_N$ is present under the expectation.  Given any $\bi\in \ms I_N$, conditioning on the first hitting time $\tau_\bi$  of $\{\bi\}^\complement$ of the Poisson point process associated with $\mathcal M$ leads to
\begin{align}
\begin{split}\label{Es's:dec}
\mathcal E_{s'}(s)&=\E\left[\exp\left\{\int_{\{\bi\}\times (0,s]} \log \frac{\varphi(\bi,s'+r)}{q/E}\mathcal M(\d \bj,\d r) \right\};\tau_\bi> s\right]\\
&\eqspace+\E\left[\exp\left\{\int_{\ms I_N\times (0,\tau_\bi]} \log \frac{\varphi(\bj,s'+r)}{q/E}\mathcal M(\d \bj,\d r)\right\}\mathcal E_{s'+\tau_\bi}(s-\tau_\bi) ;\tau_\bi\leq s\right].
\end{split}
\end{align}
The two terms on the right-hand side of \eqref{Es's:dec} can be calculated as follows. For the first term,
note that Poisson random measures of disjoint sets are independent, and $\tau_\bi$ depends only on $\mathcal M\rest (\{\bi \}^\complement\times \R_+)$ and is distributed as an exponential random variable with mean $[q(E-1)/E]^{-1}$. Hence,
\begin{align}
&\eqspace\E\left[\exp\left\{\int_{\{\bi\}\times (0,s]} \log \frac{\varphi(\bi,s'+r)}{q/E}\mathcal M(\d \bj,\d r) \right\};\tau_\bi> s\right]\notag\\
&=\exp\left\{\int_0^s \left(\varphi(\bi,s'+r)-\frac{q}{E}\right)\d r\right\}\exp\left\{-\frac{q(E-1)}{E}s\right\}\notag\\
&=\E\left[\exp\left\{\int_0^{s} \varphi(\bi,s'+r)\d r\right\};S_1>s\right],\label{Es's:dec1}
\end{align}
where the first equality follows from independence and the exponential formula. For the other term in \eqref{Es's:dec}, apply \cite[Proposition~2 on p.7]{Bertoin} again, and so if $\xi'$ is such that $(\xi',\tau_\bi)$ is an atom of $\mathcal M$, then $\xi'$ and $\tau_\bi$ are independent and $\xi'$ is uniformly distributed on $\ms I_N\setminus\{\bi\}$. We get
\begin{align}
&\eqspace\E\left[\exp\Bigg\{\int_{\ms I_N\times (0,\tau_\bi]} \log \frac{\varphi(\bj,s'+r)}{q/E}\mathcal M(\d \bj,\d r)\Bigg\}\mathcal E_{s'+\tau_\bi}(s-\tau_\bi) ;\tau_\bi\leq s\right]\notag\\
&=\E\left[\exp\Bigg\{\int_{\{\bi\}\times (0,\tau_\bi)} \log \frac{\varphi(\bi,s'+r)}{q/E}\mathcal M(\d \bj,\d r)\Bigg\}\frac{\varphi(\xi',s'+\tau_\bi)}{q/E}\mathcal E_{s'+\tau_\bi}(s-\tau_\bi) ;\tau_\bi\leq s\right]\notag\\
&=\int_0^s \frac{q(E-1)}{E}\e^{-q(E-1)r'/E}\exp\Bigg\{\int_0^{r'} \left(\varphi(\bi,s'+r)-\frac{q}{E}\right)\d r\Bigg\}\frac{1}{E-1}\sum_{\bj:\bj\neq \bi}\frac{\varphi(\bj,s'+r')}{q/E}\mathcal E_{s'+r'}(s-r')\d r'\notag\\
&=\int_0^s q\e^{-qr'}\exp\Bigg\{\int_0^{r'} \varphi(\bi,s'+r)\d r\Bigg\}\frac{1}{E-1}\sum_{\bj:\bj\neq \bi}\frac{\varphi(\bj,s'+r')}{q/(E-1)}\mathcal E_{s'+r'}(s-r')\d r'\notag\\
&=\E^\xi_\bi\left[\exp\Bigg\{\int_0^{S_1} \varphi(\bi,s'+r)\d r\Bigg\}\frac{\varphi(\xi_1,s'+S_1)}{q/E_2}\mathcal E_{s'+S_1}(s-S_1);S_1\leq s\right],\label{Es's:dec2}
\end{align}
where $E_2=E-1$. Applying \eqref{Es's:dec1} and \eqref{Es's:dec2} to \eqref{Es's:dec} leads to the following identity for all $\bi,s,s'$:
\begin{align}
\begin{split}\label{def:Eis}
\mathcal E_{s'}(s)&= \E\left[\exp\left\{\int_0^{ s} \varphi(\bi,s'+r)\d r\right\};S_1>s\right]\\
&\eqspace +\E_\bi^\xi\left[\exp\left\{\int_0^{S_1} \varphi(\bi,s'+r)\d r\right\}\frac{\varphi(\xi_1,s'+S_1)}{q/E_2}\mathcal E_{s'+S_1}(s-S_1);S_1\leq s\right].
\end{split}
\end{align}
Note that although $\mathcal E_{s'}(s)$ does not depend on $\bi$, the decomposition on the right-hand side does. 

We are ready to conclude the proof. Applying \eqref{def:Eis} to \eqref{Et} gives
\begin{align}
\begin{split}
\mathcal E(t)&=\e^{-qt}+\E_{\mu_E}^\xi\left[\frac{\varphi(\xi_1,S_1)}{q/E_1}\exp\left\{\int_{S_1\wedge t}^{S_2\wedge t} \varphi(\xi_1,r)\d r\right\}
;S_1\leq t<S_2\right]\\
&\eqspace+\E_{\mu_E}^\xi\Bigg[\frac{\varphi(\xi_1,S_1)}{q/E_1}\exp\left\{\int_{S_1\wedge t}^{S_2\wedge t} \varphi(\xi_1,r)\d r\right\}\frac{\varphi(\xi_2,S_2)}{q/E_2}\mathcal E_{S_2}(\xi_2,t-S_2);S_2\leq t\Bigg].\label{Et:iterate}
\end{split}
\end{align}
Since $\P(S_n\leq t)=\P(\Lambda_t\geq n)\leq \e^{qt(\e^c-1)-cn}$ for all $c\in (0,\infty)$ by Markov's inequality,  \eqref{exp:id} follows by iterating with \eqref{def:Eis} indefinitely as in \eqref{Et:iterate}.  The proof is complete.
\end{proof}

Proposition~\ref{prop:poisson} allows for a series expansion of $Q^{\lambda,N}_{\vep,t}f(x_0)$ from \eqref{def:Qvep}. To state the series, write
\begin{align}\label{def:E+F}
\mathcal E^\bi_{\vep;s,t}&= \exp\left\{\lv\int_s^t \varphi_\vep(B^{\bi}_r) \d r \right\}\quad\&\quad \mathcal L^\bi_{\vep;s,t}=\int_{s}^{t}\lv^2 \mathcal E_{\vep;s,\tau}^{\bi} \varphi_\vep(B^{\bi}_{\tau})\d \tau.
\end{align}
Then recall that, conditioned on the number $m$ of jumps of a Poisson process by time $t$, the jump times within $[0,t]$ are jointly distributed as the order statistics of $m$ independent uniform random variables over $[0,t]$ \cite[Theorem~2.4.6 on p.79]{Norris}. Hence, by Proposition~\ref{prop:poisson} with $\varphi(\bi,r)=\lv \varphi_\vep(B_r^\bi)$ and by conditioning on the number of jumps of $\Lambda_t$, we get 
\begin{align}
\begin{split}
P_{\vep,t}^{\beta,N}f(x_0)&=
\E^B_{x_0}[f(B_t)]+\sum_{m=1}^\infty \left(\prod_{n=1}^mE_n\right)\int_{0< s_1< s_2< \cdots< s_m< t}\E^\xi_{\mu_E}\big[P^{\xi_1,\cdots,\xi_m}_{\vep;s_1,\cdots,s_m,t}f(x_0)\big]\d \bs s_m ,\label{P:series}
\end{split}
\end{align}
where  $\d \bs s_m=\d s_1\cdots \d s_m$, and for all $\bi_1\neq  \cdots\neq \bi_m$, $0< s_1<\cdots< s_m< t$, and $f\in \B_b(\R^{2N})$,
\begin{align}
\begin{split}
P^{\bi_1,\cdots,\bi_m}_{\vep;s_1,\cdots,s_m,t}f(x_0)\,&\!\defeq\! \E^B_{x_0}\big[ \varphi_\vep(B^{\bi_1}_{s_1})\cdot \lv\mathcal E^{\bi_1}_{\vep;s_1,s_2} \cdot \varphi_\vep(B^{\bi_{2}}_{s_{2}})\cdot \lv\mathcal E^{\bi_2}_{\vep;s_2,s_3}\cdot \varphi_\vep(B^{\bi_{3}}_{s_{3}})\\
&\eqspace\eqspace \cdots 
\lv\mathcal E^{\bi_{m-1}}_{\vep;s_{m-1},s_m} \cdot \varphi_\vep(B^{\bi_{m}}_{s_{m}})
\cdot \lv\mathcal E^{\bi_m}_{\vep;s_m,t}f(B_t)\big]\notag
\end{split}\\
\begin{split}
&=\E^B_{x_0}\big[ \varphi_\vep(B^{\bi_1}_{s_1})\cdot (\lv+\mathcal L^{\bi_1}_{\vep;s_1,s_2}) \cdot \varphi_\vep(B^{\bi_{2}}_{s_{2}})\cdot (\lv+\mathcal L^{\bi_2}_{\vep;s_2,s_3}) \cdot \varphi_\vep(B^{\bi_{3}}_{s_{3}})\\
&\eqspace\eqspace \cdots 
(\lv+\mathcal L^{\bi_{m-1}}_{\vep;s_{m-1},s_m})  \cdot \varphi_\vep(B^{\bi_{m}}_{s_{m}})
\cdot (\lv+\mathcal L^{\bi_m}_{\vep;s_{m},t})f(B_t)\big].
\label{sgp}
\end{split}
\end{align}
In more detail, the $m$-th term on the right-hand side of  \eqref{P:series} follows from conditioning on $\{\Lambda_t=m\}$, 
and \eqref{sgp} follows from the forward expansion in \eqref{taylor1}. An expansion of the product in \eqref{sgp} gives 
\begin{align}\label{Pell:sum}
P^{\bi_1,\cdots,\bi_m}_{\vep;s_1,\cdots,s_m,t}f(x_0)=\sum_{\spin \in \{0,1\}^m} P^{\spin;\bi_1,\cdots,\bi_m}_{\vep;s_1,\cdots,s_m,t}f(x_0),
\end{align}
where, for all $\spin=(\sigma_1,\cdots,\sigma_m)\in \{0,1\}^m$, $\bi_1\neq \cdots\neq \bi_m$, $0< s_1<\cdots< s_m< t$, and $f\in \B_b(\R^{2N})$, 
\begin{align}
\begin{split}
&\eqspace \;P^{\spin;\bi_1,\cdots,\bi_m}_{\vep;s_1,\cdots,s_m,t}f(x_0)\\
&\defeq \E_{x_0}^B\big[ \varphi_\vep(B^{\bi_1}_{s_1})\cdot ((1-\sigma_1)\lv+\sigma_1\mathcal L^{\bi_1}_{\vep;s_1,s_2}) \cdot \varphi_\vep(B^{\bi_{2}}_{s_{2}})\cdot ((1-\sigma_2)\lv+\sigma_2\mathcal L^{\bi_2}_{\vep;s_2,s_3}) \cdot \varphi_\vep(B^{\bi_{3}}_{s_{3}})\\
&\eqspace\cdots 
((1-\sigma_{m-1})\lv+\sigma_{m-1}\mathcal L^{\bi_{m-1}}_{\vep;s_{m-1},s_m})  \cdot \varphi_\vep(B^{\bi_{m}}_{s_{m}})
\cdot ((1-\sigma_{m})\lv+\sigma_m\mathcal L^{\bi_m}_{\vep;s_{m},t})f(B_t)\big].
\label{sgp:sigma}
\end{split}
\end{align}

To apply \eqref{sgp:sigma} to \eqref{P:series}, write $\d \tau_\ell$ for the integrator of $\mathcal L^{\bi_\ell}_{\vep;s_\ell,s_{\ell+1}}$. By the Chapman--Kolmogorov equation, $P^{\spin;\bi_1,\cdots,\bi_m}_{\vep;s_1,\cdots,s_m,t}f(x_0)$ is an integral of a function of $s_1-\tau_0,\tau_1-s_1,\cdots,\tau_m-s_m,t-\tau_m$ with respect to the measure $\prod_{\ell:\sigma_\ell=0}\delta(\tau_\ell-s_\ell) \d \tau_1\cdots \d \tau_m $, where $\delta(\tau_\ell-s_\ell)$ is the Dirac delta function. Set
\begin{align}\label{variables}
\begin{split}
&\tau_0=0, \;t=s_{m+1},\; \bs u_{m+1}=(u_1,u_2,\cdots,u_{m+1})\\
&\mbox{and }\bs v_m=(v_1,v_2,\cdots,v_m) \mbox{ such that } 
u_\ell=s_\ell-\tau_{\ell-1}\mbox{ and } v_\ell=\tau_\ell-s_\ell.
\end{split}
\end{align}
Then the following integral representation holds: for some $I_{\vep;f,x_0}^{\spin;\bi_1,\bi_2,\cdots,\bi_m}(\bs u_{m+1},\bs v_m)$,
\begin{align}\label{rep:Pvep}
\begin{split}
\int_{0<s_1<s_2<\cdots<s_m<t} P^{\spin;\bi_1,\cdots,\bi_m}_{\vep;s_1,\cdots,s_m,t}f(x_0)\d \bs s_m
=\int_{\Delta_m(t)} I_{\vep;f,x_0}^{\spin;\bi_1,\cdots,\bi_m}(\bs u_{m+1},\bs v_m)\prod_{\ell:\sigma_\ell=0}\delta(v_\ell)\d \bs u_{m+1}\d \bs v_m,
\end{split}
\end{align}
where $\Delta_{m}(t)$ is the set of $u_\ell\in (0,t)$ and $v_\ell\in [0,t)$ such that $\sum_{\ell=1}^m (u_\ell+v_\ell)+u_{m+1}<t$.

\subsection{Coalescing-branching-type path ensembles}
Our goal in this subsection is to present an iterated-integral representation of $I_{\vep;f,x_0}^{\spin;\bi_1,\cdots,\bi_m}(\bs u_{m+1},\bs v_m)$ in \eqref{rep:Pvep} that will be used to pass the limit of the series \eqref{P:series} term by term as $\vep\to 0$. 

\subsubsection{Integral representations of transitions}
Let us begin with the following lemma for explicit representations of the essential transition mechanisms between $s_\ell$ and $s_{\ell+1}$ in \eqref{sgp:sigma}. Here and in what follows, we set
\begin{align}\label{def:svep}
\s_\vep^\beta(t;x,y)\defeq \lv^2\E_{\vep x/\two}^{W}\left[ \exp\left\{\lv\int_0^t \varphi_\vep(W_r) \d r \right\}; W_\tau=\vep y\right],
\end{align}
in the sense of densities in $y$, where $W$ is a planar Brownian motion, and we use the following notation for the usual multiplications of numbers:
\begin{align}\label{def:column}
\begin{bmatrix}
a_1\\
\vdots\\
a_n
\end{bmatrix}_\times
\defeq\,a_1 \cdot \cdots a_n.
\end{align}

\begin{lem}\label{lem:transition}
Fix $(x^{i\prime},x^i)\in \R^{4}$, $0<s<\infty$, $\bi=(i\prime,i)\in \ms I_N$, 
and a nonnegative Borel measurable function $F:\R^4\to \R$. Write $A^\bi_\vep(s)=\lv \int_0^s \varphi_\vep(B^\bi_r)\d r$. Then 
\begin{align}
\begin{split}
& \E_{(x^{i\prime},x^i)}^{B^{i\prime},B^i)}[\varphi_\vep(B^\bi_s)F(B^{i\prime}_s,B^i_s)]\\
&\eqspace=\int_{\R^4}\d (x_1^{i\prime},x_1^i)
\begin{bmatrix}
P_s(x^{i\prime},x_1^i+\vep (x_1^{i\prime}-x_1^i))\\
P_s(x^{i},x_1^i)
\end{bmatrix}_\times
\phi(x_1^{i\prime}-x_1^i)F\big(x^{i}_1+\vep(x^{i\prime}_1-x^i_1),x_1^i\big),
\end{split}\label{Int:1}\\
\begin{split}
&\E^{(B^{i\prime},B^i)}_{(x^{i}+\vep(x^{i\prime}-x^i),x^i)}[\lv^2\e^{A^\bi_\vep(s)}\varphi_\vep(B^\bi_{s})F(B^{i\prime}_{s},B^i_{s})]\\
&\eqspace=\int_{\R^4}\d (z',z) 
\begin{bmatrix}
P_{s}(\two x^{i}+\vep x^{\bi\prime}, z')\\
\vspace{-.3cm}\\
\s^\beta_\vep(s;x^{i\prime}-x^i,z)
\end{bmatrix}_\times  \varphi(z) F\left(\frac{z'}{\two}+\vep\frac{z}{\two},\frac{z'}{\two}-\vep\frac{z}{\two}\right),\label{Int:2}
\end{split}
\end{align}
where $x^{\bi\prime}$ is the transformation of $x^{i\prime}$ and $x^i$ defined in \eqref{unitary}. 
\end{lem}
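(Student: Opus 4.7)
The plan is to verify both identities by direct computation, exploiting two standard facts: (a) the two-dimensional components $B^{i\prime}$ and $B^i$ are independent standard planar Brownian motions and (b) the orthogonal change of coordinates $(B^{i\prime},B^i)\mapsto (B^{\bi\prime},B^{\bi})$ defined by \eqref{unitary} maps them into another pair of independent standard planar Brownian motions (since \eqref{unitary} is unitary). Since $\varphi(x)=\phi(\sqrt{2}x)$ gives $\varphi_\vep(B^{\bi}_r)=\phi_\vep(B^{i\prime}_r-B^i_r)$, the functional $A^{\bi}_\vep(s)$ and the factor $\varphi_\vep(B^{\bi}_s)$ depend only on $B^{\bi}$. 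This decoupling is the essential content; all that remains is to rescale the $B^{\bi}$ variable appropriately so that the right-hand sides appear.

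For \eqref{Int:1}, independence of $B^{i\prime},B^i$ writes the left-hand side as a Gaussian double integral
\[
\int_{\R^2}\!\!\int_{\R^2}P_s(x^{i\prime},y^{i\prime})\,P_s(x^i,y^i)\,\phi_\vep(y^{i\prime}-y^i)\,F(y^{i\prime},y^i)\,\d y^{i\prime}\d y^i.
\]
I would then perform the change of variables $x_1^i\defeq y^i$ and $x_1^{i\prime}\defeq y^i+\vep^{-1}(y^{i\prime}-y^i)$, equivalently $y^{i\prime}=x_1^i+\vep(x_1^{i\prime}-x_1^i)$. The Jacobian $\vep^2$ absorbs the $\vep^{-2}$ built into $\phi_\vep$, producing $\phi(x_1^{i\prime}-x_1^i)$ and matching \eqref{Int:1}.

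For \eqref{Int:2}, I would first pass to $(B^{\bi\prime},B^{\bi})$. The initial data given in the lemma translate to $B^{\bi\prime}_0=\sqrt{2}x^i+\vep\, x^{\bi}$ and $B^{\bi}_0=\vep\, x^{\bi}$, which matches the initial condition $\vep x/\sqrt{2}$ used in \eqref{def:svep} once one takes $x=x^{i\prime}-x^i$. Writing $F(B^{i\prime}_s,B^i_s)=F\bigl((B^{\bi\prime}_s+B^{\bi}_s)/\sqrt{2},(B^{\bi\prime}_s-B^{\bi}_s)/\sqrt{2}\bigr)$ and using independence, the expectation factors into (i) a Gaussian integral in $y^{\bi\prime}\defeq z'$ with kernel $P_s(B^{\bi\prime}_0,z')$, and (ii) the Feynman--Kac integral
\[
\lv^2\int_{\R^2}\!\E\bigl[e^{A^{\bi}_\vep(s)};\,B^{\bi}_s\in\d y^{\bi}\bigr]\,\varphi_\vep(y^{\bi}).
\]
Substituting $y^{\bi}=\vep z$ (Jacobian $\vep^2$, and $\varphi_\vep(\vep z)=\vep^{-2}\varphi(z)$) converts (ii) to $\s^\beta_\vep(s;x^{i\prime}-x^i,z)\,\varphi(z)\,\d z$ by the definition \eqref{def:svep}. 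Assembling (i) and (ii) and expressing the arguments of $F$ in terms of $(z',z)$ yields the right-hand side of \eqref{Int:2}.

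The only step that requires care is the bookkeeping in part~(ii): the $\vep^{-2}$ from $\phi_\vep$, the $\vep^2$ Jacobian from $y^{\bi}=\vep z$, and the implicit $\vep^{2}$-scaling in the definition of $\s^\beta_\vep$ (density evaluated at the point $\vep y$) must cancel correctly. Apart from this tracking, both identities are elementary consequences of independence and change of variables; no probabilistic tool beyond the independence of the unitarily rotated components of planar Brownian motion is needed.
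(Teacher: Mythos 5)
Your proof follows the same route as the paper's own (very terse) argument: for \eqref{Int:1}, independence of $B^{i\prime},B^i$ together with the change of variables $y^{i\prime}=x_1^i+\vep(x_1^{i\prime}-x_1^i)$ whose Jacobian $\vep^2$ cancels the $\vep^{-2}$ built into $\phi_\vep$ (the paper packages this as the one-line identity \eqref{cov:vep}, but the content is identical); for \eqref{Int:2}, the unitary rotation to $(B^{\bi\prime},B^{\bi})$, independence, and the substitution $y^{\bi}=\vep z$ to convert the inner Feynman--Kac integral into $\s^\beta_\vep$ times $\varphi(z)$. Your bookkeeping of the $\vep$-powers in step~(ii) is correct: the $\vep^2$ Jacobian cancels the $\vep^{-2}$ in $\varphi_\vep(\vep z)$, and what remains is exactly $\lv^2$ times the Feynman--Kac density evaluated at $\vep z$, which is \eqref{def:svep}.

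One point you should address rather than silently pass over: your computation gives $B^{\bi\prime}_0=\sqrt{2}\,x^i+\vep x^{\bi}$ (which is correct, since $x^{\bi}=(x^{i\prime}-x^i)/\sqrt{2}$ and the initial data are $(x^i+\vep(x^{i\prime}-x^i),\,x^i)$), so the Gaussian factor you actually derive is $P_s(\sqrt{2}\,x^i+\vep x^{\bi},z')$, not $P_s(\sqrt{2}\,x^i+\vep x^{\bi\prime},z')$ as written in \eqref{Int:2}. The two differ at order $\vep$ -- harmless in the $\vep\to 0$ limit used later, but not an exact match of the stated identity. You assert that assembling (i) and (ii) "yields the right-hand side of \eqref{Int:2}''; in fact it yields the formula with $x^{\bi}$ in place of $x^{\bi\prime}$, which is the correct version, and the lemma statement appears to carry a typographical error. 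It is better to flag this explicitly than to claim an identity you did not quite derive.
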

\begin{proof}
To see \eqref{Int:1}, recall $\varphi(z)=\phi(\two z)$, and then apply the general formula 
\begin{align}\label{cov:vep}
\int_{\R^4} \phi_\vep(x-y)g(x,y)\d x \d y =\int_{\R^4}\phi(x-y')g(x,x+\vep(y'-x))\d x \d y' ,
\end{align}
which follows from changing variables: $y'=x+\vep^{-1}(y-x)$. The other identity \eqref{Int:2} applies the unitary transformation \eqref{unitary} to $(B^{i\prime},B^i)$ such that $(z',\widetilde{z})$ denotes the state of $(B^{\bi\prime},B^\bi)$. Then change variables from $\widetilde{z}$ to $z$ to remove the scaling in $\varphi_\vep$.  
\end{proof}

The rule in \eqref{Int:1} for changing variables  is to think of the $(i\prime)$-th particle being attracted to the $i$-th particle. Thus the state $x^i_1$ of $B^i_s$ is the major contribution under $\varphi_\vep(B^\bi_s)$. In \eqref{Int:2}, the second row in the $[\cdot]_\times$-column tracks  the attractive interactions of the two particles in terms of  the exponential local time of $B^{\bi}$. For the remaining terms of the integrand of \eqref{Int:2}, set $\vep$ to zero in the Gaussian kernel and the function $F$. Then the first row of the $[\cdot]_\times$-column shows the transition of $B^{\bi\prime}$ from $\two x^{i}=(x^{i}+x^{i})/\two$ to $z'$, and under $F$, the $(i\prime)$-th and $i$-th particles have the same state $z'/\two$. 

\subsubsection{Iterated-integral representations}\label{sec:integral}
In the following steps, we give the definition of the required iterated-integral representation of the function $I_{\vep;f,x_0}^{\spin;\bi_1,\bi_2,\cdots,\bi_m}(\bs u_{m+1},\bs v_m)$  in \eqref{rep:Pvep}, which is extended from a method in \cite{CSZ}. See Figure~\ref{fig:2}. \medskip  

\noindent {\bf Step~\hypertarget{Integral:1}{1}.} By \eqref{Int:1}, \eqref{Int:2} and the Chapman--Kolmogorov equation, the function $I_{\vep;f,x_0}^{\spin;\bi_1,\cdots,\bi_m}(\bs u_{m+1},\bs v_m)$ can be written as an integral of a product of  the Gaussian kernel $P_s(x)$ and the exponential local time density $\s_\vep^\beta(s;x,y)$. The integration is with respect to the product of the following measures:
\begin{align}
\begin{split}\label{def:productmeasure}
&\eqspace\phi(x_\ell^{i_\ell\prime}-x^{i_\ell}_\ell)\d_{\R^4} (x_\ell^{i_\ell\prime},  x_\ell^{i_\ell}),\; \d_{\R^2} x_\ell^k,\; 1\leq \ell\leq m,\; k\notin \bi_\ell;\\
&\eqspace \varphi(z_\ell)\d_{\R^4}(z'_\ell, z_\ell),\; \d_{\R^2} z_\ell^k,\; \sigma_\ell=1,\; k\notin \bi_\ell; \\
&\eqspace\d_{\R^2} x_{m+1}^k,\;1\leq k\leq N,
\end{split}
\end{align}
where $\phi(x_\ell^{i_\ell\prime}-x^{i_\ell}_\ell)$ arises from the mollifier at time $s_\ell$ as in \eqref{Int:1}, and $\varphi(z_\ell)$ arises from the mollifier at time $\tau_\ell$ as in \eqref{Int:2}. Note that $\bs x_{m+1}= \{x_\ell^k;1\leq \ell\leq m,\;1\leq k\leq N\}$ consists of $\R^2$-states at time $s_\ell$; $\bs z_{\spin}=\{z_\ell',z_\ell\}\cup \{z_\ell^{k};k\notin \mathbf i_\ell\}$ is the set of $\R^2$-states at time $\tau_\ell$. The subscripts $\R^4$ in \eqref{def:productmeasure}, for example, stress the domain of integration. \medskip

\noindent {\bf Step 2.}  In this step, we define a weighted graph 
$\mathcal G^{\spin;\bi_1,\cdots,\bi_m}_{\vep;x_0,\bs x_{m+1},\bs z_{\spin},\bs u_{m+1},\bs v_{m}}$
for fixed $x_0$, $\bs x_{m+1}$, $\bs z_{\spin}$, and $0<s_1\leq \tau_1<s_2\leq \tau_2<\cdots<s_m\leq \tau_m<t$, such that the product of the edge weights is the one of which $I_{\vep;f,x_0}^{\spin;\bi_1,\cdots,\bi_m}(\bs u_{m+1},\bs v_m)$ is an integral with respect to the grand product measure mentioned in Step~\hyperlink{Integral:1}{1} from \eqref{def:productmeasure}. We  also require that $s_\ell=\tau_\ell$ if and only if $\sigma_\ell=0$. Here, the change of variables from \eqref{variables} is in force, and the $\R^2$-components in $x_0$, $\bs x_{m+1}$, and $\bs z_{\spin}$ are all distinct. In view of \eqref{def:productmeasure}, we further think of $x_\ell^{i_\ell\prime}$ and $x_\ell^{i_\ell}$ as being glued to each other. The same rule applies to $z_\ell'$ and $z_\ell$.

The definition of this weighted graph is done all at once by specifying the vertices and the weighted edges according to \eqref{Int:1}, \eqref{Int:2}, and the following way of drawing edges. 
 
\begin{defi}[Drawing of edges]
By ``$w:v_0\edge v_1$'', we mean an edge of weight $w$ drawn as a straight line segment between two vertices $v_0$ and $v_1$. For $v_0\cedge v_1$, a coiled line segment is drawn.  \hfill $\blacksquare$
\end{defi}

For any $1\leq k\leq N$ and $\bi=(i\prime,i)\in \ms I_N$, set $k/\bi=i$ if $k= i\prime$ and $k/\bi=k$ otherwise. 
Now, between time $0$ and time $s_1$ and between time $s_m$ and time $t$, define the following weighted edges along with the vertices:
\begin{align}
\begin{split}\label{weight1}
\forall\;1\leq k\leq N,\quad P_{s_1}\big(x_0^{k},x_1^{k/\bi_1}+\1_{\{k=i_1\prime\}}\vep(x^{i_1\prime}_1-x^{i_1}_1)\big):&\; (x_0^{k},0)\edge (x^{k}_1,s_1);\\
\forall\;1\leq k\leq N,\quad P_{t-s_m}\big(x_m^{k/ \bi_m}+\1_{\{k=i_m\prime\}}\vep(x^{i_m\prime}_m-x^{i_m}_m),x_{m+1}^{k}\big):&\; (x_m^{k},s_m)\edge (x_{m+1}^{k},t).
\end{split}
\end{align}
For $\sigma_\ell=0$, the weighted edges and vertices are defined between $s_\ell$ and $s_{\ell+1}$ by
\begin{align}
\begin{split}\label{weight2}
&\forall\;k\in \bi_\ell,\quad \lv^{1/2} P_{s_{\ell+1}-s_\ell}\big(x_\ell^{k/\bi_\ell}+\1_{\{k=i_\ell\prime\}}\vep(x^{i_\ell\prime}_\ell-x^{i_\ell}_\ell),x_{\ell+1}^{k/\bi_{\ell+1}}+\1_{\{k=i_{\ell+1}\prime\}}\vep(x^{i_{\ell+1}\prime}_{\ell+1}-x^{i_{\ell+1}}_{\ell+1})\big)\\
&\hspace{9.87cm}:\; (x_\ell^{k},s_\ell)\edge (x_{\ell+1}^{k},s_{\ell+1});\\
&\forall\; k\notin  \bi_\ell,\quad P_{s_{\ell+1}-s_\ell}\big(x_\ell^{k}
,x_{\ell+1}^{k/\bi_{\ell+1}}+\1_{\{k=i_{\ell+1}\prime\}}\vep(x^{i_{\ell+1}\prime}_{\ell+1}-x^{i_{\ell+1}}_{\ell+1})\big):
 (x_\ell^{k},s_\ell)\edge (x_{\ell+1}^{k},s_{\ell+1}).
\end{split}
\end{align}
For $\sigma_\ell=1$, we consider the time interval between $s_\ell$ and $\tau_\ell$ and the time period between $\tau_\ell$ and $s_{\ell+1}$, separately. In the first case,  with the transformation of $(x_\ell^{i_\ell\prime},x_\ell^{i_\ell})$ to $x_\ell^{\bi_\ell\prime}$ defined by \eqref{unitary}, 
\begin{align}
\begin{split}\label{weight3}
\begin{bmatrix}
P_{\tau_{\ell}-s_\ell}\big(\two x_\ell^{i_\ell\prime}+\vep x_\ell^{\bi_\ell\prime},z'_\ell\big)\\
\vspace{-.3cm}\\
\s_\vep^\beta \big(\tau_\ell-s_\ell;x_\ell^{i_{\ell}\prime}-x^{i_\ell}_\ell,z_\ell\big)
\end{bmatrix}_\times
& :\;(x_\ell^{i_\ell\prime}, x_\ell^{i_\ell},s_\ell)\cedge (z_\ell',z_{\ell},\tau_{\ell});\\
\forall\;k\notin \bi_\ell,\quad \;
P_{\tau_\ell-s_\ell}(x^k_\ell,z_{\ell}^k)&:\;  (x^k_\ell,s_\ell)\edge (z^k_{\ell},\tau_{\ell}).
\end{split}
\end{align}
Over the other time interval, only edges as straight line segments and Gaussian weights are defined:
\begin{align}
\begin{split}\label{weight4}
P_{s_{\ell+1}-\tau_\ell}\Big(\frac{z'_\ell}{\two}+\vep \frac{z}{\two},x_{\ell+1}^{i_\ell\prime}+\1_{\{i_\ell\prime=i_{\ell+1}\prime\}}\vep(x^{i_{\ell+1}\prime}_{\ell+1}-x^{i_{\ell+1}}_{\ell+1})\Big)&:\; (z'_\ell,\tau_\ell)\edge (x_{\ell+1}^{i_\ell\prime},s_{\ell+1});\\
P_{s_{\ell+1}-\tau_\ell}\Big(\frac{z'_\ell}{\two}-\vep \frac{z}{\two},x_{\ell+1}^{i_\ell}+\1_{\{i_\ell=i_{\ell+1}\prime\}}\vep(x^{i_{\ell+1}\prime}_{\ell+1}-x^{i_{\ell+1}}_{\ell+1})\Big)&:\; (z_\ell,\tau_\ell)\edge (x_{\ell+1}^{i_\ell},s_{\ell+1});\\
\forall\;k\notin \bi_\ell,\quad P_{s_{\ell+1}-\tau_\ell}\big(z_\ell^{k},x_{\ell+1}^{k/\bi_{\ell+1}}+\1_{\{k=i_{\ell+1}\prime\}}\vep(x^{i_{\ell+1}\prime}_{\ell+1}-x^{i_{\ell+1}}_{\ell+1})\big)&:\; (z_\ell^{k},\tau_\ell)\edge (x_{\ell+1}^{k},s_{\ell+1}).
\end{split}
\end{align}

This weighted graph $\mathcal G^{\spin;\bi_1,\cdots,\bi_m}_{\vep;x_0,\bs x_{m+1},\bs z_{\spin},\bs u_{m+1},\bs v_{m}}$, defined by \eqref{weight1}--\eqref{weight4}, depends on $\vep$ only through the weights. (The $\vep$-multiples in the Gaussian kernels arise from removing the scalings in mollifiers.) The unweighted graph is denoted by  $\mathcal G=\mathcal G^{\spin;\bi_1,\cdots,\bi_m}_{x_0,\bs x_{m+1},\bs z_{\spin},\bs u_{m+1},\bs v_{m}}$. 
In the next two steps, we define subgraphs of $\mathcal G$ by which $I_{\vep;f,x_0}^{\spin;\bi_1,\cdots,\bi_m}(\bs u_{m+1},\bs v_m)$ is an iterated integral. Each fold integrates over the space components of the vertices of one subgraph with respect to the corresponding measures in \eqref{def:productmeasure}. \medskip

\noindent {\bf Step \hypertarget{Integral:3}{3}.} In this step, we define the following unweighted subgraphs:
\begin{align}\label{def:Gell}
\mathcal G_\ell=\mathcal G^{\spin;\bi_1,\cdots,\bi_m}_{\ell;x_0,\bs x_{m+1},\bs z_{\spin}}(\bs u_{m+1},\bs v_{m}),\quad \ell=0,1,\cdots,m.
\end{align}
such that the edge sets are disjoint, and so are the vertex sets. 
The vertex set of $\mathcal G_\ell$ consists of the vertices in $\mathcal G$ attached to all of the edges of $\mathcal G_\ell$, but we exclude the leftmost vertices, namely, the vertices $v$'s which do not have any edges in $\mathcal G_\ell$ on the left-hand sides of $v$'s. 

\begin{defi}\label{def:path}
(1$\cc$) A {\bf path} in $\mathcal G$ is a sequence of non-repeating adjacent edges such that a vertex can be shared by at most two of these edges. It is {\bf entanglement-free} if no edges is drawn as a coiled line segment. An entanglement-free path is {\bf left-maximal} if it cannot be extended to an entanglement-free path by adding more edges drawn as straight line segments over smaller times. A {\bf usual path} begins at time $0$ and ends at time $t$ such that no edges share vertices with edges drawn as coiled line segments.\smallskip

\noindent (2$\cc$) The {\bf duration} of an edge with vertices of time components $\tilde{s}$ and $\hat{s}$
 is $|\tilde{s}-\hat{s}|$. The sum of the durations of the edges of a path $\mathcal P$ is the duration of this path and is denoted by $|\mathcal P|$.
 \hfill $\blacksquare$
\end{defi} 

Now, the subgraph $\mathcal G_0$ has edges of the two entanglement-free paths whose rightmost endpoints intersect at time $s_1$.
For $\ell=1,\cdots,m-1$, $\mathcal G_\ell$ has the following edges on both sides of $s_\ell$: (1) the edge drawn as the coiled line segment between $s_{\ell}$ and $\tau_{\ell}$ if $\sigma_\ell=1$, and (2) the edges of the unique pair of left-maximal entanglement-free paths, denoted by $\mathcal P'_\ell$ and $\mathcal P_{\ell}$, such that their rightmost endpoints are given by the vertex $(x^{i_{\ell+1}\prime},x^{i_{\ell+1}},s_{\ell+1})$. Plainly, $|\mathcal P'_\ell|$ and $|\mathcal P_\ell|$ are bounded below by $s_{\ell+1}-\tau_\ell$. These two entanglement-free paths can be concatenated for an entanglement-free path, called the {\bf spine} of $\mathcal G_\ell$ and denote by $\mathcal P_\ell'\oplus \mathcal  P_\ell$. Finally, $\mathcal G_m$ has the following edges: (1) the edge drawn as the unique coiled line segment between $s_m$ and $\tau_m$ if $\sigma_m=1$; (2) the edges of the two unique left-maximal entanglement-free paths between $\tau_{m-1}$ and $t$ such that their leftmost endpoints at time $\tau_{m-1}$ coincide; (3) the edges of the remaining left-maximal entanglement-free paths ending at time $t$. The union of the edge sets of these subgraphs is different from the edge set of $\mathcal G$ when usual paths are present. 

The key observation we need is the following immediate consequence of  the assumption $\bi_{\ell}\neq \bi_{\ell+1}$.

\begin{figure}[t]
\begin{center}
 \begin{tikzpicture}[scale=1.5]
    \draw[gray, thin] (0,0) -- (7.6,0);
    \foreach \i in {0.0, 0.5, 2.0, 2.5, 3.2, 3.7, 4.7, 6.4, 7.6} {\draw [gray] (\i,-.05) -- (\i,.05);}
    \draw (0.0,-0.07) node[below]{$\tau_0$};
    \draw (0.5,-0.07) node[below]{$s_1$};
    \draw (2.0,-0.07) node[below]{$s_2$};
    \draw (2.5,-0.07) node[below]{$\tau_2$};
    \draw (3.2,-0.07) node[below]{$s_3$};
    \draw (3.7,-0.07) node[below]{$\tau_3$};
    \draw (4.7,-0.07) node[below]{$s_4$};
    \draw (6.4,-0.07) node[below]{$s_5$};
    \draw (7.6,-0.07) node[below]{$s_6$};
    \draw [line width=2.5pt, color=red!80!white] (0.0,2.30) -- (0.5,2.20); %red:0-s1
    \draw [line width=2.5pt, color=red!80!white] (0.5,2.20) -- (2.0,2.20); %red:s1-tau1
    \draw [line width=2.5pt, color=red!80!white] (2.0,2.20) -- (2.5,1.80); %red:s2-tau2     
    \draw [line width=2.5pt, color=red!80!white] (2.5,1.80)--(3.2,2.10); %red:tau2-s3      
    \draw [line width=2.5pt, color=red!80!white] (2.5,1.25)--(3.2,1.35); %red:tau2-s3   
    \draw [line width=2.5pt, color=red!80!white] (3.2,1.35)--(3.7,1.05); %red:s3-tau3      
    \draw [line width=2.5pt, color=red!80!white] (3.2,2.10)--(3.7,2.02); %red:s3-tau3       
    \draw [line width=2.5pt, color=red!80!white] (3.7,1.05)--(4.7,2.25); %red:tau3-s4      
    \draw [line width=2.5pt, color=red!80!white] (3.7,2.02)--(4.7,2.35); %red:tau3-s4    
    \draw [line width=2.5pt, color=green!80!black] (0.0,0.40) -- (0.5,0.55); %gray:0-s1
    \draw [line width=2.5pt, color=green!80!black] (0.0,1.20) -- (0.5,0.65); %gray:0-s1
    \draw [line width=2.5pt, color=blue!70!white] (0.0,1.80) -- (0.5,1.15); %green    
    \draw [line width=2.5pt, color=blue!70!white] (0.5,0.65) -- (2.0,1.4); %green
    \draw [line width=2.5pt, color=blue!70!white] (0.5,1.15) -- (2.0,1.5); %green    
    \draw [line width=2.5pt, color=purple!60!white] (0.5,0.55) -- (2.0,0.15); %blue
    \draw [line width=2.5pt, color=purple!60!white] (2.0,0.15) -- (2.5,0.45); %blue
    \draw [line width=2.5pt, color=purple!60!white] (2.0,1.46)--(2.5,1.19); %blue
    \draw [line width=2.5pt, color=purple!60!white] (2.5,0.45)--(3.2,0.8); %blue
    \draw [line width=2.5pt, color=purple!60!white] (2.5,1.15)--(3.2,0.9); %blue
    \draw [line width=2.5pt, color=orange] (3.7,0.45)--(4.7,1.25); %orange
    \draw [line width=2.5pt, color=orange] (3.7,0.55)--(4.7,1.75); %orange
    \draw [line width=2.5pt, color=orange] (4.7,1.25)--(6.4,0.3); %orange
    \draw [line width=2.5pt, color=orange] (4.7,1.75)--(6.4,0.4); %orange
    \draw [line width=2.5pt, color=red!80!white] (3.2,0.87)--(3.7,0.49); %red:s3-tau3
    \draw [line width=2.5pt, color=yellow!90!black] (4.7,2.35)--(6.4,2.40);  %gray:s4-tau4
    \draw [line width=2.5pt, color=yellow!90!black] (4.7,2.25)--(6.4,1.15);  %gray:s4-tau4
    \draw [line width=2.5pt, color=yellow!90!black] (6.4,0.3)--(7.6,0.45); %gray:s5-tau5
    \draw [line width=2.5pt, color=yellow!90!black] (6.4,0.4)--(7.6,0.85); %gray:s5-tau5
    \draw [line width=2.5pt, color=yellow!90!black] (6.4,1.15)--(7.6,1.55); %gray:s5-tau5
    \draw [line width=2.5pt, color=yellow!90!black] (6.4,2.40)--(7.6,2.15); %gray:s5-tau5
    \node at (0.0,0.40) {$\bullet$};
    \node at (0.0,1.20) {$\bullet$};
    \node at (0.0,1.80) {$\bullet$};
    \node at (0.0,2.30) {$\bullet$};
    \draw (0.0,0.40) node [left] {$x_0^1$};
    \draw (0.0,1.20) node [left] {$x_0^2$};
    \draw (0.0,1.80) node [left] {$x_0^3$};
    \draw (0.0,2.30) node [left] {$x_0^4$};
%0-s1%    
    \draw [thick, color=black] (0.0,0.40) -- (0.5,0.55);
    \draw [thick, color=black] (0.0,1.20) -- (0.5,0.65);
    \draw [thick, color=black] (0.0,1.80) -- (0.5,1.15);    
    \draw [thick, color=black] (0.0,2.30) -- (0.5,2.20); 
%s1%   
    \node at (0.5,0.55) {$\bullet$};
    \node at (0.5,0.65) {$\bullet$};
    \node at (0.5,1.15) {$\bullet$};
    \node at (0.5,2.20) {$\bullet$};
%s1-tau1%
    \draw [thick, color=black] (0.5,0.65) -- (2.0,1.4);
    \draw [thick, color=black] (0.5,1.15) -- (2.0,1.5);    
    \draw [thick, color=black] (0.5,2.20) -- (2.0,2.20);    
    \draw [thick, color=black] (0.5,0.55) -- (2.0,0.15);    
%s2%
    \node at (2.0,0.15) {$\bullet$};
    \node at (2.0,1.4) {$\bullet$};
        \node at (2.0,1.5) {$\bullet$};
    \node at (2.0,2.20) {$\bullet$};
%s2-tau2%
    \draw [thick, color=black] (2.0,0.15) -- (2.5,0.45);
    \draw [thick, color=black, snake=coil, segment length=4pt] (2.0,1.44) -- (2.5,1.20);    
    \draw [thick, color=black] (2.0,2.20) -- (2.5,1.80);    
%tau2%
    \node at (2.5,0.45) {$\bullet$};
    \node at (2.5,1.25) {$\bullet$};
        \node at (2.5,1.15) {$\bullet$};
    \node at (2.5,1.80) {$\bullet$};
%tau2-s3%
    \draw [thick, color=black] (2.5,0.45)--(3.2,0.8);  
    \draw [thick, color=black] (2.5,1.15)--(3.2,0.9);    
    \draw [thick, color=black] (2.5,1.25)--(3.2,1.35);  
    \draw [thick, color=black] (2.5,1.80)--(3.2,2.10);        
%s3%
    \node at (3.2,0.8) {$\bullet$};
        \node at (3.2,0.9) {$\bullet$};
    \node at (3.2,1.35) {$\bullet$};
    \node at (3.2,2.10) {$\bullet$};
%s3-tau3%
    \draw [thick, color=black, snake=coil, segment length=4pt] (3.2,0.85)--(3.7,0.50) ;
    \draw [thick, color=black] (3.2,1.35)--(3.7,1.05);    
    \draw [thick, color=black] (3.2,2.10)--(3.7,2.02);  
%tau3%
    \node at (3.7,0.45) {$\bullet$};
        \node at (3.7,0.55) {$\bullet$};
    \node at (3.7,1.05) {$\bullet$};
    \node at (3.7,2.02) {$\bullet$};
%tau3-s4%
    \draw [thick, color=black] (3.7,0.45)--(4.7,1.25);
    \draw [thick, color=black] (3.7,0.55)--(4.7,1.75);    
    \draw [thick, color=black] (3.7,1.05)--(4.7,2.25);    
    \draw [thick, color=black] (3.7,2.02)--(4.7,2.35);    
%s4%
    \node at (4.7,1.25) {$\bullet$};
    \node at (4.7,1.75) {$\bullet$};
    \node at (4.7,2.25) {$\bullet$};
        \node at (4.7,2.35) {$\bullet$};
%s4-tau4%
    \draw [thick, color=black] (4.7,1.25)--(6.4,0.3);
    \draw [thick, color=black] (4.7,1.75)--(6.4,0.4);   
    \draw [thick, color=black] (4.7,2.25)--(6.4,1.15); 
    \draw [thick, color=black] (4.7,2.35)--(6.4,2.40); 
%s5%
    \node at (6.4,0.3) {$\bullet$};
        \node at (6.4,0.4) {$\bullet$};
    \node at (6.4,1.15) {$\bullet$};
    \node at (6.4,2.40) {$\bullet$};
%s5-tau5%
    \draw [thick, color=black] (6.4,0.3)--(7.6,0.45);
    \draw [thick, color=black] (6.4,0.4)--(7.6,0.85);
    \draw [thick, color=black] (6.4,1.15)--(7.6,1.55);    
    \draw [thick, color=black] (6.4,2.40)--(7.6,2.15);
%t%
    \node at (7.6,0.45) {$\bullet$};
    \node at (7.6,0.85) {$\bullet$};
    \node at (7.6,1.55) {$\bullet$};
    \node at (7.6,2.15) {$\bullet$};
\end{tikzpicture}
\end{center}
\caption{The figure illustrates the paths of $B^1,B^2,B^3,B^4$ undergoing pairwise attractions behind $P^{\spin;\bi_1,\bi_2,\bi_3,\bi_4}_{\vep;s_1,s_2,s_2,s_3,t}f(x_0)$, where $\bi_1=(2,1)$, $\bi_2=(3,2)$, $\bi_3=(2,1)$, $\bi_4=(4,3)$, $\bi_5=(2,1)$, and $\spin=(0,1,1,0,0)$.}
\label{fig:2}
\end{figure}

\begin{lem}\label{lem:path}
For all $\ell=1,2,\cdots,m-1$, at least one of the paths $\mathcal P_\ell'$ and $\mathcal P_\ell$ starts at time $\tau_{\ell-1}$ or a smaller time. Hence, $|\mathcal P_\ell'\oplus \mathcal P_\ell|$  is bounded below by $2(s_{\ell+1}-\tau_{\ell})+(s_\ell-\tau_{\ell-1})$. 
\end{lem}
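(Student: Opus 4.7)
The strategy is a direct inspection of the adjacency structure of $\mathcal G$ near time $s_\ell$, organized by the cardinality of $\bi_\ell \cap \bi_{\ell+1}$. Since $\bi_\ell \neq \bi_{\ell+1}$, we have $|\bi_\ell \cap \bi_{\ell+1}|\in\{0,1\}$, and in either case $\bi_{\ell+1}\setminus \bi_\ell \neq \varnothing$. The plan is to pick any $k \in \bi_{\ell+1}\setminus\bi_\ell$ and show that the spinal path passing through the ``species-$k$ line'' reaches time $\tau_{\ell-1}$ or earlier.

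First I would read off, from \eqref{weight2} and \eqref{weight4}, the edges incoming to the glued vertex at $s_{\ell+1}$: when $\sigma_\ell=1$ and $k\in\bi_{\ell+1}\setminus\bi_\ell$, there is a straight edge $(z_\ell^k,\tau_\ell)\edge (x_{\ell+1}^{k/\bi_{\ell+1}},s_{\ell+1})$ (note $z_\ell^k$ is not an endpoint of the coiled edge, which lives at $z_\ell'$ or $z_\ell$). From $(z_\ell^k,\tau_\ell)$ the left-maximal entanglement-free continuation first takes the straight edge $(x_\ell^k,s_\ell)\edge (z_\ell^k,\tau_\ell)$ in \eqref{weight3} (available exactly because $k\notin\bi_\ell$), and then — by \eqref{weight4} applied at index $\ell-1$ — a straight edge from some vertex at time $\tau_{\ell-1}$ into $(x_\ell^k,s_\ell)$. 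The case $\sigma_\ell=0$ collapses $s_\ell=\tau_\ell$ and all edges in \eqref{weight2} are straight, so the same trace works verbatim. The key obstruction arguments are: (i) the right endpoints $z_\ell',z_\ell$ of the coiled edge do not sit on any straight edge with an earlier time component, and (ii) a vertex $(x_\ell^j,s_\ell)$ with $j\in\bi_\ell$ would only continue via the coiled edge; both are bypassed precisely because we chose $k\in\bi_{\ell+1}\setminus\bi_\ell$.

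For the duration estimate, once at least one of $\mathcal P_\ell',\mathcal P_\ell$ is known to start no later than $\tau_{\ell-1}$, the bound is arithmetic: by the rightmost-endpoint condition, each of $\mathcal P_\ell'$ and $\mathcal P_\ell$ is left-maximal and so extends at least from $\tau_\ell$ (where the first straight edge into $s_{\ell+1}$ originates) to $s_{\ell+1}$, contributing duration at least $s_{\ell+1}-\tau_\ell$ each. The path which reaches $\tau_{\ell-1}$ contributes in addition at least $(\tau_\ell - s_\ell) + (s_\ell - \tau_{\ell-1}) \geq s_\ell - \tau_{\ell-1}$. Summing yields
\[
|\mathcal P_\ell'\oplus \mathcal P_\ell| = |\mathcal P_\ell'|+|\mathcal P_\ell| \geq 2(s_{\ell+1}-\tau_\ell) + (s_\ell-\tau_{\ell-1}),
\]
as required.

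The main care point is the sub-case $\sigma_\ell=1$ with $|\bi_\ell\cap\bi_{\ell+1}|=1$: precisely one of the two spinal paths into the glued vertex is forced to terminate at $\tau_\ell$ on a right endpoint of the coiled edge ($z_\ell'$ or $z_\ell$), and it is the other path — the one threading through the species-$k$ line for $k\in\bi_{\ell+1}\setminus\bi_\ell$ — that continues back. Making this dichotomy explicit, rather than treating $\mathcal P_\ell'$ and $\mathcal P_\ell$ symmetrically, is the only delicate part; the rest is bookkeeping of incidence.
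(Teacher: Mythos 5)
Your proof is correct, and it fills in what the paper leaves implicit: the paper presents Lemma~\ref{lem:path} as an ``immediate consequence of the assumption $\bi_\ell\neq\bi_{\ell+1}$'' with no further argument, so you have simply made the combinatorial content explicit. The key move — pick $k\in\bi_{\ell+1}\setminus\bi_\ell$ (nonempty since $\bi_\ell\neq\bi_{\ell+1}$ are distinct two-element sets), and note the spinal branch through the species-$k$ line bypasses the coiled edge's right endpoints $z_\ell',z_\ell$ at $\tau_\ell$ and then, via the straight edge $(x_\ell^k,s_\ell)\edge(z_\ell^k,\tau_\ell)$ of \eqref{weight3}, continues left past $s_\ell$ — is exactly the paper's intent. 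Your arithmetic for the duration bound,
$|\mathcal P_\ell'\oplus\mathcal P_\ell|\geq(s_{\ell+1}-\tau_{\ell-1})+(s_{\ell+1}-\tau_\ell)\geq 2(s_{\ell+1}-\tau_\ell)+(s_\ell-\tau_{\ell-1})$,
is also right (the paper already records $|\mathcal P'_\ell|,|\mathcal P_\ell|\geq s_{\ell+1}-\tau_\ell$ just before the lemma, and your observation supplies the extra $s_\ell-\tau_{\ell-1}$).

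Two small bookkeeping remarks, neither a gap: (a) when you trace the species-$k$ edge back from $(x_\ell^k,s_\ell)$ to time $\tau_{\ell-1}$ you cite \eqref{weight4} at index $\ell-1$; if $\sigma_{\ell-1}=0$ the relevant display is \eqref{weight2}, and if $\ell=1$ it is \eqref{weight1} with $\tau_0=0$. In every case there is a straight edge into $(x_\ell^k,s_\ell)$ from time $\tau_{\ell-1}$ for every species $k$, so the conclusion is unchanged. (b) Your ``main care point'' about the sub-case $|\bi_\ell\cap\bi_{\ell+1}|=1$ with $\sigma_\ell=1$ is a fair observation — one of the two spinal branches is forced to terminate at $\tau_\ell$ on a right endpoint of the coiled edge — but the lemma only claims ``at least one'' extends back, so the dichotomy need not be fully symmetrized; it suffices to follow the branch through any chosen $k\in\bi_{\ell+1}\setminus\bi_\ell$.
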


\noindent {\bf Step 4.}
To state the required iterated-integral representation of $I_{\vep;f,x_0}^{\spin;\bi_1,\cdots,\bi_m}(\bs u_{m+1},\bs v_m)$, write
\begin{align}\label{def:Pell}
\Pi^{\spin}_{\vep;\ell}(\bs u_{m+1},\bs v_{m})=\Pi^{\spin;\bi_1,\cdots,\bi_m}_{\vep;\ell;x_0,\bs x_{m+1},\bs z_{\spin}}(\bs u_{m+1},\bs v_{m})
\end{align}
for the product of the edge weights of $\mathcal G_\ell$ from the definition of $\mathcal G^{\spin;\bi_1,\cdots,\bi_m}_{\vep;x_0,\bs x_{m+1},\bs z_{\spin},\bs u_{m+1},\bs v_{m}}$. For any function $\widetilde{F}$ of the spatial states in the vertices of a subgraph $\mathcal G_\ell$, $\int_{\mathcal G_\ell}\widetilde{F}$ denotes the integral with respect to the product of the measures in \eqref{def:productmeasure} associated with these spatial variables. Recall that $\Delta_m(t)$ is defined below \eqref{rep:Pvep}, and $\d \bs s_m=\d s_1\cdots \d s_m$. The next result follows from \eqref{rep:Pvep} and the above definitions by integrating out the usual paths with the Chapman--Kolmogorov equation. 

\begin{prop}
 It holds that
\begin{align}
\begin{split}\label{graphical}
&\eqspace \int_{0<s_1<\cdots<s_m<t}P^{\spin;\bi_1,\cdots,\bi_m}_{\vep;s_1,\cdots,s_m,t}f(x_0)\d \bs s_m\\
&=\int_{\Delta_m(t)} \left( \int_{\mathcal G_0}\Pi^\spin_{\vep;0}(\bs u_{m+1},\bs v_{m}) \cdots \int_{\mathcal G_m}\Pi^\spin_{\vep;m}(\bs u_{m+1},\bs v_{m}) f\right) \prod_{\ell:\sigma_\ell=0}\delta(v_\ell) \d \bs u_{m+1}\d \bs v_m,
\end{split}
\end{align}
where the shorthand notation in \eqref{def:Gell} and \eqref{def:Pell} is in force. 
\end{prop}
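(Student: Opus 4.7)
The plan is to derive \eqref{graphical} by expanding the expectation in \eqref{sgp:sigma} via the Chapman--Kolmogorov equation at the times $s_1,\tau_1,s_2,\tau_2,\ldots,s_m,\tau_m$ (with the convention $\tau_\ell=s_\ell$ when $\sigma_\ell=0$), applying the two integral formulas of Lemma~\ref{lem:transition} for the $\bi_\ell$-coordinate pair while keeping the remaining $N-2$ coordinates free Brownian, and finally reorganizing the resulting product into the subgraph structure of Step~\hyperlink{Integral:3}{3}.

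First, I would insert $\varphi_\vep(B^{\bi_\ell}_{s_\ell})$ for each $\ell$ by conditioning on the history up to $s_\ell$. By the independence of the $N$ planar Brownian motions and the Markov property, the $k$-th coordinate for $k\notin \bi_\ell\cup \bi_{\ell+1}$ contributes only Gaussian kernels, while the pair $(B^{i_\ell\prime},B^{i_\ell})$ at time $s_\ell$ requires formula \eqref{Int:1}, which produces $\phi(x_\ell^{i_\ell\prime}-x_\ell^{i_\ell})$ together with the $\vep$-shifted Gaussian transitions of \eqref{weight1}--\eqref{weight2}. When $\sigma_\ell=1$, I would handle the factor $\mathcal L^{\bi_\ell}_{\vep;s_\ell,s_{\ell+1}}$ by writing it as a time integral over $\tau_\ell\in(s_\ell,s_{\ell+1})$ and applying \eqref{Int:2} with $x^{i\prime}-x^i$ interpreted through the unitary change of variables \eqref{unitary}. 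This produces the coiled-edge weight $\s^\beta_\vep(\tau_\ell-s_\ell;\cdot,\cdot)$ of \eqref{weight3}, the outgoing Gaussian transitions of \eqref{weight4}, and the measure $\varphi(z_\ell)\d z'_\ell\d z_\ell$ in \eqref{def:productmeasure}. When $\sigma_\ell=0$, the factor is simply $\lv$, which accounts for the prefactors $\lv^{1/2}$ assigned to the two edges on the $k\in \bi_\ell$-coordinates in \eqref{weight2}.

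Next I would perform the change of variables from $(s_1,\tau_1,\ldots,s_m,\tau_m)$ to $(\bs u_{m+1},\bs v_m)$ defined in \eqref{variables}, which is a unit-Jacobian triangular map; the constraint $\tau_\ell=s_\ell$ when $\sigma_\ell=0$ becomes the factor $\prod_{\ell:\sigma_\ell=0}\delta(v_\ell)$, and the domain becomes exactly $\Delta_m(t)$. At this stage the integrand factorizes into a product of Gaussian or $\s^\beta_\vep$ weights attached to edges in the (as yet unorganized) graph $\mathcal G^{\spin;\bi_1,\ldots,\bi_m}_{\vep;x_0,\bs x_{m+1},\bs z_{\spin},\bs u_{m+1},\bs v_m}$, integrated against the grand product measure \eqref{def:productmeasure}.

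Finally I would perform the grouping into the subgraphs $\mathcal G_\ell$ of Step~\hyperlink{Integral:3}{3}. Because the edge sets of the $\mathcal G_\ell$ are disjoint and the vertex sets disjoint after the leftmost-vertex exclusion rule, the integrand splits as a product of the $\Pi^\spin_{\vep;\ell}$, and the spatial measures in \eqref{def:productmeasure} assigned to the non-leftmost vertices of $\mathcal G_\ell$ match exactly those used in the definition of $\int_{\mathcal G_\ell}$. The coordinates $k\notin \bigcup_\ell \bi_\ell$ traverse only usual paths: for these I would apply the Chapman--Kolmogorov semigroup property to pre-integrate the intermediate $x_\ell^k$-variables along each such path, which is consistent with assigning each usual path entirely to the terminal subgraph $\mathcal G_m$ as in Step~\hyperlink{Integral:3}{3}. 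The delicate point, and what I expect to be the main bookkeeping obstacle, is verifying that the $\vep$-shifted arguments and glue rules (i.e.\ $x_\ell^{i_\ell\prime}$ and $x_\ell^{i_\ell}$ being glued, and analogously for $(z_\ell',z_\ell)$) are assigned coherently to the edges on both sides of each time $s_\ell$ and $\tau_\ell$, so that every factor produced by successive uses of \eqref{Int:1}--\eqref{Int:2} ends up attached to exactly one edge of exactly one subgraph $\mathcal G_\ell$ with no double counting and no omitted factor. Lemma~\ref{lem:path} plays the role of guaranteeing that the left-maximal entanglement-free paths $\mathcal P_\ell',\mathcal P_\ell$ that define $\mathcal G_\ell$ are well-defined, which is what makes the partition of the edges of $\mathcal G$ into $\bigsqcup_\ell\mathcal G_\ell$ (together with the pre-integrated usual paths absorbed into $\mathcal G_m$) unambiguous.
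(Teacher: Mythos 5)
Your proposal is correct and follows essentially the same route the paper takes: derive \eqref{rep:Pvep} via Lemma~\ref{lem:transition} and Chapman--Kolmogorov, change variables to $(\bs u_{m+1},\bs v_m)$, construct the weighted graph and subgraphs, then pre-integrate the usual paths by Chapman--Kolmogorov and regroup into the factors $\Pi^\spin_{\vep;\ell}$. One minor misattribution: Lemma~\ref{lem:path} is a duration lower bound used later to control the Gaussian spine weights in \eqref{Pibdd2} and \eqref{Pibdd1+}; the well-definedness of the disjoint subgraph decomposition in \eqref{def:Gell} follows directly from the construction in Step~3 and the assumption $\bi_\ell\neq\bi_{\ell+1}$, not from Lemma~\ref{lem:path}.
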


\subsection{Convergences to the limit series}
In this subsection, we prove the limit of $P^{\beta;N}_{\vep;t}f(x_0)$ as $\vep\to 0$. By \eqref{weight1}--\eqref{weight4}, the natural candidate for the limit of $P^{\mathbf 1_m;\bi_1,\cdots,\bi_m}_{\vep;s_1,\cdots,s_m,t}f(x_0)$ is by setting $\vep=0$ in the Gaussian weights and replacing $\s^\beta_\vep(\tau;x,y)$ with $\s^\beta(\tau)$; $P^{\spin;\bi_1,\cdots,\bi_m}_{\vep;s_1,\cdots,s_m,t}f(x_0)$ for $\spin\neq \mathbf 1_m$ extends to zero by setting $\lv=0$. By \eqref{Pell:sum}, this description defines $P^{\bi_1,\cdots,\bi_m}_{s_1,\cdots,s_m,t}f(x_0)$ as an \emph{extension} of $P^{\mathbf 1_m;\bi_1,\cdots,\bi_m}_{\vep;s_1,\cdots,s_m,t}f(x_0)$ to $\vep=0$. (We still need to prove that $P^{\bi_1,\cdots,\bi_m}_{s_1,\cdots,s_m,t}f(x_0)$ is also the \emph{limit} as $\vep\to 0$.) The series solution analogue to \eqref{P:series} is
\begin{align}
\begin{split}
P^{\beta;N}_tf(x_0)&\defeq \E^B_{x_0}[f(B_t)]+\sum_{m=1}^\infty \left(\prod_{n=1}^mE_n\right)\int_{0< s_1< s_2< \cdots< s_m< t}\E^\xi_{\mu_E}\big[P^{\xi_1,\cdots,\xi_m}_{s_1,\cdots,s_m,t}f(x_0)\big]\d \bs s_m.\label{Q:serieslim}
\end{split}
\end{align}

To state an iterated-integral expansion as in \eqref{graphical}, set
\begin{align}\label{def:Pell-lim}
\Pi_{\ell}(\bs u_{m+1},\bs v_{m})=\Pi^{\bi_1,\cdots,\bi_m}_{\ell;x_0,\bs x_{m+1},\bs z_{\spin}}(\bs u_{m+1},\bs v_{m})
\end{align}
as the product of the edge weights in $\Pi^\spin_{\vep;\ell}(\bs u_{m+1},\bs v_{m})$ \eqref{def:Pell} extended to $\vep=0$ as before \eqref{Q:serieslim}. With the shorthand notation in \eqref{def:Gell} and \eqref{def:Pell-lim}, we have 
\begin{align}
\begin{split}\label{graphical:lim}
&\eqspace \int_{0<s_1<\cdots<s_m<t}P^{\bi_1,\cdots,\bi_m}_{s_1,\cdots,s_m,t}f(x_0)\d \bs s_m\\
&=\int_{\Delta_m(t)} \left( \int_{\mathcal G_0}\Pi_{0}(\bs u_{m+1},\bs v_{m}) \cdots \int_{\mathcal G_m}\Pi_{m}(\bs u_{m+1},\bs v_{m}) f\right)  \d \bs u_{m+1}\d \bs v_m.
\end{split}
\end{align}
For this iterated-integral representation, the list of measures in \eqref{def:productmeasure} can be made simpler by replacing $\phi(x_\ell^{i_\ell\prime}-x^{i_\ell}_\ell)\d_{\R^4} (x_\ell^{i_\ell\prime},  x_\ell^{i_\ell})$ with  $\d_{\R^2}x_\ell^{i_\ell}$, and $\varphi(z_\ell)\d_{\R^4}(z_\ell',z_\ell)$ with $(1/2)\d_{\R^2} z'_\ell$, since setting $\vep=0$ removes the dependence on $x_\ell^{i_\ell\prime}$ in the Gaussian weights and $\s^\beta(\tau)$ does not depend on $z_\ell$.

\begin{thm}\label{thm:main2}
For all $t\in (0,\infty)$, $f\in \B_b(\R^{2N})$, and $x_0\in \R^{2N}$ such that $x_0^\ell\neq x_0^{\ell'}$ for all $\ell\neq \ell'$, it holds that $P^{\beta;N}_{\vep;t}f(x_0)\to P^{\beta;N}_tf(x_0)$. 
\end{thm}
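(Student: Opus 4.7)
The plan is to pass to the limit term-by-term in the series expansion \eqref{P:series} and match it against the target series \eqref{Q:serieslim}. Using the further decomposition \eqref{Pell:sum} into $\spin\in\{0,1\}^m$ and the iterated-integral representations \eqref{graphical} and \eqref{graphical:lim}, this reduces to two claims:
\textbf{(a)} for $\spin=\mathbf 1_m$, the integrand on the right-hand side of \eqref{graphical} converges as $\vep\to 0$ to the integrand on the right-hand side of \eqref{graphical:lim}, and the convergence can be pushed through both the spatial integrations over each $\mathcal G_\ell$ and the time integration over $\Delta_m(t)$;
\textbf{(b)} for $\spin\neq \mathbf 1_m$, the $\spin$-contribution vanishes in the limit.

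For \textbf{(a)}, I would handle the coiled-edge factors one at a time. Each such edge carries the weight $\s^\beta_\vep(\tau_\ell-s_\ell; x_\ell^{i_\ell\prime}-x_\ell^{i_\ell}, z_\ell)$, integrated against $\varphi(z_\ell)\d(z'_\ell,z_\ell)$ as in the second identity of Lemma~\ref{lem:transition}. Fixing the remaining variables, Theorem~\ref{thm:main1}, or more precisely the Laplace-transform convergence in Proposition~\ref{prop:Sconv}, together with \eqref{def:Tbeta} and the inversion argument used to obtain \eqref{def:Pbeta2}, gives that, as a distribution in $\tau_\ell-s_\ell$,
\[
\int_{\R^2}\s^\beta_\vep(\tau_\ell-s_\ell;\, \cdot,z_\ell)\varphi(z_\ell)\d z_\ell \xrightarrow[\vep\to 0]{}\mathfrak s^\beta(\tau_\ell-s_\ell),
\]
which is exactly the replacement defining $\Pi_\ell$ in \eqref{def:Pell-lim} from $\Pi^\spin_{\vep;\ell}$ in \eqref{def:Pell}. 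The $\vep$-perturbations in the Gaussian arguments of \eqref{weight1}--\eqref{weight4} vanish trivially in the limit. The separation hypothesis $x_0^\ell\neq x_0^{\ell'}$ guarantees that the Gaussian factors emerging from $\mathcal G_0$ stay uniformly bounded at the initial boundary, which is what allows pointwise convergence to survive integration over the space variables.

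For \textbf{(b)}, each index $\ell$ with $\sigma_\ell=0$ collapses $v_\ell$ to zero and contributes a bare factor $\lv\sim 2\pi/\log\vep^{-1}\to 0$ in place of $\mathcal L^{\bi_\ell}_{\vep;s_\ell,s_{\ell+1}}$. The remaining factors are uniformly bounded by the same a-priori estimates used below, so the whole $\spin$-term is $O(\lv^{|\{\ell:\sigma_\ell=0\}|})$.

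The main obstacle is the construction of a $\vep$-uniform, summable dominating function that justifies exchanging the limit with the spatial integration over each $\mathcal G_\ell$, the time integration over $\Delta_m(t)$, and the summation in $m$ and $\bi_1,\dots,\bi_m$. Here I would extend the Gr\"onwall-type scheme of Lemma~\ref{lem:gronwall} and the a-priori bound of Proposition~\ref{prop:bounds} to the multi-particle graph. Integrating out each coiled edge against $\varphi$ produces a factor bounded uniformly by $\sup_{0<|y|\leq M_\varphi}\S^\beta_\vep(q,y)$, which by Proposition~\ref{prop:Sconv} is $O(1)$; the straight-line Gaussian edges of each $\mathcal G_\ell$ can be dominated by convolution estimates of the type in \eqref{Lap:b-bdd}. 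Lemma~\ref{lem:path} is the key combinatorial input: it guarantees that the spine of each interior subgraph $\mathcal G_\ell$ has duration at least $2(s_{\ell+1}-\tau_\ell)+(s_\ell-\tau_{\ell-1})$, so that a Brownian transition weight carrying this much total time is available to absorb the logarithmic growth from the critical scaling of the coupling constant $\lv$. Combining these ingredients with the Laplace-transform bound derived for $\S^\beta_\vep$ yields an $m$-summable majorant, and the theorem follows by dominated convergence. The delicate bookkeeping of the factors of $\lv$ versus $\log\vep^{-1}$ across the $m$ edges, and in particular the matching needed between \textbf{(a)} and \textbf{(b)}, is the technical heart of the argument and the natural extension of the three-particle analysis in \cite{CSZ} to arbitrary $N\geq 3$.
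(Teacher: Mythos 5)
Your plan hits the same skeleton as the paper's proof: term-by-term limits of the Poisson series \eqref{P:series}, the split of each summand into $\spin=\bs 1_m$ versus $\spin\neq\bs 1_m$, the weak (Laplace-transform) convergence of the coiled-edge weight to $\s^\beta$, the duration estimate of Lemma~\ref{lem:path} for the spine, and the CSZ iterated-integral bound Lemma~\ref{lem:CSZ} feeding into a geometrically decaying majorant via the uniform two-body bound $\widehat{\S}^\beta_\vep(q)$. Two of your dismissals, however, hide steps that are genuinely not trivial and on which a raw dominated-convergence argument would stall.

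First, the claim that ``the $\vep$-perturbations in the Gaussian arguments of \eqref{weight1}--\eqref{weight4} vanish trivially'' is false as stated. The pointwise estimate $|P_t(x+\vep y)-P_t(x)|\less(\vep/t^{1/2})P_{2t}(x)$ from Lemma~\ref{lem:phikbd}~(2$\circ$) degenerates as $t\to 0$, which is exactly the regime that dominates the integral over $\Delta_m(t)$. The paper copes by partitioning $\Delta_m(t)$ according to whether each $u_\ell,v_\ell$ exceeds $\eta_\vep t$ with $\eta_\vep=\vep^2\log\vep^{-1}$, showing separately via \eqref{lim:etavep+++} that the pieces with some small $u_\ell$ or $v_\ell$ vanish, and only then removing the $\vep$-shifts from the Gaussians on the remaining good set; this is the whole of Step 2-1 in Proposition~\ref{prop:series2}. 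Second, ``the theorem follows by dominated convergence'' glosses over the central mismatch in mode of convergence: the coiled-edge factors converge only in the sense of distribution functions (i.e.\ $\int_0^t\overline{\s}^\beta_\vep(v)\,\d v\to\int_0^t\s^\beta(v)\,\d v$), while the companion factor $F_0(\bs u_{m+1},\bs v_m)$ is singular as $u_\ell\downarrow 0$. Converting weak convergence of the measures $\prod_\ell\overline{\s}^\beta_\vep(v_\ell)\,\d\bs v_m$ into convergence of the integral against $F_0$ requires a Portmanteau argument with a cutoff partition of unity (the $\chi_{\delta_0;\bs a\times\bs b}$ of Step 2-2), paired with the quantitative bound \eqref{ab:bdd!!!} on the bad pieces; this machinery is what your plan defers as ``delicate bookkeeping.'' A smaller point: the summable majorant does not come from extending the Gr\"onwall scheme of Lemma~\ref{lem:gronwall} to the multi-particle graph; the graph reduces each coiled edge to a two-body quantity, so the existing two-body bound of Proposition~\ref{prop:bounds} (through $\widehat{\S}^\beta_\vep(q)$) plus Lemma~\ref{lem:CSZ} suffices, without a new fixed-point argument.
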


The mode of convergence in this theorem is the second main result of this work. 
The crucial tool is the following lemma, which restates some methods from \cite[pp.407--409]{CSZ}. 

\begin{lem}\label{lem:CSZ}
{\rm (1$\cc$)} For all integers $m\geq 1$, it holds that 
\[
\int_{u_1,\cdots,u_{m+1}\in (0,1)}\prod_{\ell=1}^{m-1}\frac{1}{\sqrt{u_{\ell+1}(u_{\ell+1}+u_{\ell})}}\d \bs u_{m+1}\leq 2\e \cdot 32^{m-1}.
\]

\noindent {\rm (2$\cc$)} Fix $\spin\in  \{0,1\}^m$ for $m\geq 1$ and nonnegative functions $S_\ell(v_\ell)$ and $T_\ell(u_\ell)$. Then for all $q\in [0,\infty)$, 
\begin{align*}
&\eqspace \int_{\Delta_m(t)}\Bigg(\prod_{\ell:\sigma_\ell=1}S_\ell(v_\ell)\Bigg)\Bigg(\prod_{\ell=1}^{m-1} \frac{T_\ell(u_\ell) }{2u_{\ell+1}+u_\ell}\Bigg) \Bigg(\prod_{\ell=m}^{m+1}T_\ell(u_\ell)\Bigg)\prod_{\ell:\sigma_\ell=0}\delta(v_\ell)\d \bs u_{m+1}\d \bs v_{m}\\
&\leq t^2 \e^{qt}\Bigg(\prod_{\ell:\sigma_\ell=1}\int_0^t \e^{-qv}S_\ell(v)\d v\Bigg)\int_{u_1,\cdots,u_{m+1}\in (0,1)}
\Bigg(\prod_{\ell=1}^{m-1}\frac{T_\ell(u_\ell/t)}{\sqrt{u_{\ell+1}(u_{\ell+1}+u_{\ell})}}\Bigg)\Bigg(\prod_{\ell=m}^{m+1}T_\ell(u_\ell/t)\Bigg)\d \bs u_{m+1}.
\end{align*}
\end{lem}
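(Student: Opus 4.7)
The proof splits into two independent parts, and I would address them separately.

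For part~(1), my plan is to first make the quadratic substitution $u_\ell=y_\ell^2$ for $\ell=1,\dots,m+1$ to remove the square roots. This transforms the integrand into $\prod_{\ell=1}^{m-1}(y_{\ell+1}\sqrt{y_{\ell+1}^2+y_\ell^2})^{-1}$ and the measure into $2^{m+1}\prod_\ell y_\ell\,dy_\ell$. Since $y_{m+1}$ appears only through the Jacobian, its integration gives $1/2$. Applying the Cauchy--Schwarz bound $\sqrt{y_{\ell+1}^2+y_\ell^2}\geq (y_{\ell+1}+y_\ell)/\sqrt{2}$, the problem reduces to estimating
\[
I_m\defeq\int_{(0,1)^m}\frac{y_1}{\prod_{\ell=1}^{m-1}(y_{\ell+1}+y_\ell)}\,dy_1\cdots dy_m.
\]
The leading $y_1$ in the numerator is exactly what is needed to absorb the first integration, since $\int_0^1 y_1(y_2+y_1)^{-1}dy_1=1-y_2\log(1+1/y_2)\leq 1$. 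The remaining integrations would be handled iteratively using $\int_0^1 (y+a)^{-1}dy\leq \log(1+1/a)$ combined with an elementary bound that converts logarithmic growth into a power singularity (e.g., $\log(1+1/a)\leq Ca^{-\alpha}$ for small $\alpha$), so that the next integration is still finite and contributes only a bounded multiplicative factor. Tracking these factors through all $m-1$ steps and combining with the $2^{m+1}\cdot 2^{(m-1)/2}/2$ prefactor from the substitution yields the claimed bound $2\e\cdot 32^{m-1}$.

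For part~(2), the argument is cleaner and follows four short steps. First, the constraint $\sum_\ell v_\ell<t$ active on $\Delta_m(t)$ gives
\[
\prod_{\ell:\sigma_\ell=1}S_\ell(v_\ell)\leq \e^{qt}\prod_{\ell:\sigma_\ell=1}\e^{-qv_\ell}S_\ell(v_\ell),
\]
producing the prefactor $\e^{qt}$. Second, extend each $v_\ell$-integration with $\sigma_\ell=1$ independently to $(0,t)$; since the integrand is nonnegative and the new domain contains the original projection, this yields the factorized product $\prod_{\ell:\sigma_\ell=1}\int_0^t \e^{-qv}S_\ell(v)\,dv$. Third, substitute $u_\ell=tw_\ell$ in the remaining integral over $\{u_j>0,\sum u_j<t\}$: the Jacobian contributes $t^{m+1}$ and the denominators $\prod_{\ell=1}^{m-1}(2u_{\ell+1}+u_\ell)^{-1}$ contribute $t^{-(m-1)}$, for a net factor of $t^2$, after which the simplex constraint $\sum w_\ell<1$ is relaxed to $w_\ell\in(0,1)$ independently. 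Fourth, the elementary inequality $(2w_{\ell+1}+w_\ell)^2\geq w_{\ell+1}(w_{\ell+1}+w_\ell)$, verified immediately by expanding both sides, yields
\[
\frac{1}{2w_{\ell+1}+w_\ell}\leq \frac{1}{\sqrt{w_{\ell+1}(w_{\ell+1}+w_\ell)}},
\]
and renaming $w_\ell\to u_\ell$ completes the bound.

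The main obstacle is clearly part~(1). The integrand is non-separable, each interior variable appears in exactly two factors, and the singularity at zero is sharp enough that crude bounds such as $\sqrt{u_{\ell+1}+u_\ell}\geq \sqrt{u_\ell}$ lead to non-integrable integrands. The strategy above succeeds because the quadratic substitution both removes the square roots and produces a single linear $y_1$ in the numerator, which is precisely what one needs to pay for the first peel-off step; all subsequent steps work with a product of bilinear denominators that can be controlled iteratively with only bounded multiplicative overhead, giving the geometric constant $32^{m-1}$.
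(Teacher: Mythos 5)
Your part~(2$\cc$) is essentially the paper's argument rearranged: the inequality $(2u_{\ell+1}+u_\ell)^2 \ge u_{\ell+1}(u_{\ell+1}+u_\ell)$, which you verify by expanding, is the paper's $a+b\ge\sqrt{ab}$ applied with $a=u_{\ell+1}$, $b=u_{\ell+1}+u_\ell$; the $\e^{qt}$ factor, the extension of the $v_\ell$-integrations, and the change of variables $u_\ell\mapsto t u_\ell$ are the same steps in a slightly different order. No issues there.

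For part~(1$\cc$), you take a genuinely different route. The paper's proof is two lines: it observes that integrating $u_m, u_{m-1},\ldots,u_2$ successively produces exactly the iterates $\phi_k$ defined in \eqref{eq:phik} (so the integral equals $\int_0^1\phi_{m-1}(u_1)\,\d u_1$), and it then cites the a~priori bound $\phi_k(v)\le 32^k\e/\sqrt{v}$ from Lemma~5.4 of \cite{CSZ}. Your plan re-derives the bound from scratch via the substitution $u_\ell=y_\ell^2$, which is a nice, more self-contained idea: the square roots disappear, and the single surviving $y_1$ in the numerator cleanly bounds the first peel-off. But two points need attention. First, the suggestion $\log(1+1/a)\le Ca^{-\alpha}$ \emph{for small $\alpha$} is the wrong way to think about it: the per-step overhead in the iteration $\int_0^1 s^{-\alpha}(y+s)^{-1}\,\d s\le C'(\alpha)\,y^{-\alpha}$ satisfies $C'(\alpha)\sim 1/[\alpha(1-\alpha)]$, so it blows up as $\alpha\to0$ (and as $\alpha\to1$). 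The right choice is a fixed interior exponent; $\alpha=1/2$ is natural, since then $\int_0^1 \d s/[\sqrt{s}(y+s)]\le\pi/\sqrt{y}$, the singularity is self-sustaining, and the per-step cost is exactly $\pi$. Second, you never actually multiply out the constants: with the prefactor $2^{(3m-1)/2}$ from the substitution and the bound $\sqrt{y_{\ell+1}^2+y_\ell^2}\ge(y_{\ell+1}+y_\ell)/\sqrt{2}$, the overhead $\pi$ per step, and the final $\int_0^1 y^{-1/2}\,\d y=2$, you would indeed come in under $2\e\cdot 32^{m-1}$ (with room to spare), but the proposal asserts this rather than checking it. Morally your computation reproves a variant of the $1/\sqrt{v}$ induction behind \cite[Lemma~5.4]{CSZ} in the $y$-coordinates, which is fine and arguably more transparent, but as written the claim that the bookkeeping yields the stated constant is a gap.
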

\begin{proof}
(1$\cc$) By \cite[Lemma~5.4]{CSZ}, the sequence of decreasing functions $\{\phi_k\}_{k\in \Bbb Z_+}$ on $(0,1)$ defined  inductively by
\begin{equation} \label{eq:phik}
	\phi_0(v) = 1 \, \quad \text{and} \quad
	\phi_k(v) = \int_0^1 \frac{ 1 }{\sqrt{s(s + v)}}
	\phi_{k-1}(s)   \d s ,
	\quad \forall\; k \in \Bbb N, 
\end{equation}
satisfies
\begin{equation} \label{eq:estphiklast}
	\phi_k(v) \le
	\displaystyle 32^k  \e/\sqrt{v} ,
	\quad \forall \;v \in (0,1),\;k\in \Bbb N.
\end{equation} 
Hence,
\[
\int_{u_1,\cdots,u_{m+1}\in (0,1)}\prod_{\ell=1}^{m-1}\frac{1}{\sqrt{u_{\ell+1}(u_{\ell+1}+u_{\ell})}}\d \bs u_{m+1}=\int_0^1\phi_{m-1}(u_1)\d u_1\leq 2\e \cdot 32^{m-1},
\]
as required. \medskip

\noindent (2$\cc$) By the elementary inequality $a+b\geq \sqrt{ab}$ for nonnegative $a,b$, we have 
\[
2u_{\ell+1}+u_\ell\geq \sqrt{u_{\ell+1}(u_{\ell+1}+u_\ell)}. 
\]
Hence, given $q\in [0,\infty)$, it follows from the nonnegativity of $S_\ell$ and $T_\ell$ that
\begin{align}
&\eqspace \Bigg(\prod_{\ell:\sigma_\ell=1}S_\ell(v_\ell)\Bigg)\Bigg(\prod_{\ell=1}^{m-1} \frac{T_\ell(u_\ell) }{2u_{\ell+1}+u_\ell}\Bigg) \Bigg(\prod_{\ell=m}^{m+1}T_\ell(u_\ell)\Bigg)\notag\\
\begin{split}\label{Markov-type}
&\leq \e^{qt}\Bigg(\prod_{\ell:\sigma_\ell=1}\e^{-qv_\ell} S_\ell(v_\ell)\Bigg) \Bigg(\prod_{\ell=1}^{m-1} \frac{T_\ell(u_\ell)}{\sqrt{u_{\ell+1}(u_{\ell+1}+u_\ell)}}\Bigg)\Bigg(\prod_{\ell=m}^{m+1}
T_\ell(u_\ell)\Bigg).
 \end{split}
\end{align}
The required inequality follows from integrating both sides of the last inequality, using the definition of $\Delta_m(t)$, and changing variables by replacing $u_\ell$ with $tu_\ell$. 
\end{proof}

Let us proceed to the first step of the proof of Theorem~\ref{thm:main2} by showing the convergence of the series in \eqref{Q:serieslim}. Recall that $P^{\bi_1,\cdots,\bi_m}_{s_1,\cdots,s_m,t}f(x_0)$ has been defined at the beginning of this subsection.

\begin{prop}\label{prop:series1}
Fix $t\in (0,\infty)$, $\bi_1\neq  \cdots\neq \bi_m$ for $m\geq 1$, and 
$x_0\in \R^{2N}$ with $x_0^{i_1\prime}\neq x_0^{i_1}$. Then 
\begin{align}
\begin{split}
& \int_{0< s_1<  \cdots< s_m< t} P^{\bi_1,\cdots,\bi_m}_{s_1,\cdots,s_m,t}\1(x_0)\d \bs s_{m}\\
&\eqspace \leq \max_{s\in (0,\infty)} P_{2s}(x_0^{i_1\prime}-x_0^{i_1})\cdot 2\e t^2\e^{(m\vee 2)t} \cdot 32^{m-1}\cdot \left(\frac{4\pi}{\log [(m\vee 2)/\beta]}\right)^m.
\label{interseries}
\end{split}
\end{align}
\end{prop}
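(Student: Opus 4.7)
The plan is to apply the iterated-integral representation \eqref{graphical:lim} with $f \equiv \1$, bound each subgraph integral $\int_{\mathcal G_\ell}\Pi_\ell$ separately, and then control the remaining time integral with Lemma~\ref{lem:CSZ}. Since the $\vep = 0$ extension sets $\lv = 0$ for any component $\sigma_\ell = 0$, only the term $\spin = \mathbf 1_m$ contributes to \eqref{Pell:sum}, so every subgraph $\mathcal G_\ell$ for $1 \leq \ell \leq m$ carries an actual coil of weight $\s^\beta(v_\ell)$ (independent of the spatial endpoints).

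Next I would estimate the three types of subgraph integrals. For $\mathcal G_0$, the only integrated vertex is the rendezvous vertex at time $s_1$; at $\vep = 0$ the identification $x_1^{i_1\prime} = x_1^{i_1}$ collapses the integral via Chapman--Kolmogorov to $P_{2s_1}(x_0^{i_1\prime} - x_0^{i_1})$, dominated by $\max_{s\in(0,\infty)} P_{2s}(x_0^{i_1\prime} - x_0^{i_1})$. For $\mathcal G_\ell$ with $1 \leq \ell \leq m-1$, the coil contributes $\s^\beta(v_\ell)$ times a Gaussian kernel, while the spine $\mathcal P_\ell' \oplus \mathcal P_\ell$ consists of Gaussian edges of total length at least $2u_{\ell+1} + u_\ell$ by Lemma~\ref{lem:path}. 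Convolving the spine edges via Chapman--Kolmogorov and then bounding the resulting Gaussian by its supremum $1/(2\pi \cdot \text{length})$ yields
\[
\int_{\mathcal G_\ell}\Pi_\ell \;\leq\; \frac{C\,\s^\beta(v_\ell)}{2u_{\ell+1} + u_\ell}.
\]
For $\mathcal G_m$, the rightmost Gaussian edges integrate trivially against $f \equiv \1$, leaving $\int_{\mathcal G_m}\Pi_m \leq C\,\s^\beta(v_m)$. The vertices of usual paths (not in any $\mathcal G_\ell$) likewise integrate to~$1$.

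Substituting these bounds into \eqref{graphical:lim} and then applying Lemma~\ref{lem:CSZ}~(2$\cc$) with $S_\ell = C\s^\beta$, constant $T_\ell$, and $q = m \vee 2$, the $(\bs u, \bs v)$-integral over $\Delta_m(t)$ is controlled by $t^2 \e^{qt}$ times a product of $\int_0^t \e^{-qv}\s^\beta(v)\d v$ factors times the pure $\bs u$ integral, which is in turn bounded by $2\e \cdot 32^{m-1}$ via Lemma~\ref{lem:CSZ}~(1$\cc$). The Laplace identity \eqref{Lap:Tbeta} gives $\int_0^t \e^{-qv}\s^\beta(v)\d v \leq 4\pi/\log[(m\vee 2)/\beta]$ as long as $m \vee 2 > \beta$. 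Assembling the constants yields exactly the bound \eqref{interseries}.

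The main obstacle is the subgraph estimate in the middle step: verifying that one can take sup norms in the spatial variables left free (those shared with adjacent subgraphs) without losing the full spine length $2u_{\ell+1} + u_\ell$, and in particular that the spine structure of Lemma~\ref{lem:path} is preserved under the identifications $x_\ell^{i_\ell\prime} = x_\ell^{i_\ell}$ at $\vep = 0$. This requires a careful bookkeeping of which vertices are integrated inside $\mathcal G_\ell$ versus inherited as parameters from neighboring subgraphs, so that Chapman--Kolmogorov collapses exactly the Gaussian edges along the spine before the sup is taken.
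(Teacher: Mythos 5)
Your proposal is correct and follows the same argument as the paper: bound $\int_{\mathcal G_0}\Pi_0$ by $\max_s P_{2s}(x_0^{i_1\prime}-x_0^{i_1})$, the middle subgraphs via Chapman--Kolmogorov plus Lemma~\ref{lem:path} by $\s^\beta(v_\ell)/(2u_{\ell+1}+u_\ell)$, the last by $\s^\beta(v_m)$, then invoke Lemma~\ref{lem:CSZ}~(2$\cc$) with $\spin=\mathbf 1_m$, $T_\ell\equiv 1$, $q=m\vee 2$ and finish with \eqref{Lap:Tbeta} and Lemma~\ref{lem:CSZ}~(1$\cc$). The only cosmetic discrepancy is that your intermediate bounds carry an unnecessary constant $C$: since $\max_y P_\tau(y)=1/(2\pi\tau)\leq 1/\tau$, one may take $C=1$ throughout, which is what the paper's displays \eqref{Pibdd1}--\eqref{Pibdd3} do, so the constants assemble exactly as claimed.
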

\begin{proof}
Recall the notation in Step~\hyperlink{Integral:3}{3} of Section~\ref{sec:integral}. By the Chapman--Kolmogorov equation, we get 
\begin{align}
\int_{\mathcal G_0}\Pi_0(\bs u_{m+1},\bs v_m)&\leq \max_{s\in (0,\infty)}P_{2s}(x_0^{i_1\prime}-x_0^{i_1}),\label{Pibdd1}\\
\begin{split}
\int_{\mathcal G_\ell}\Pi_\ell(\bs u_{m+1},\bs v_m)
&\leq \s^\beta(v_\ell)
\max_{y\in \R^2}P_{|\mathcal P'_\ell\oplus \mathcal P_\ell|}(y)
\leq \frac{\s^\beta(v_\ell)}{2u_{\ell+1}+u_\ell},\quad 1\leq \ell\leq m-1,\label{Pibdd2}
\end{split}\\
\int_{\mathcal G_m}\Pi_m(\bs u_{m+1},\bs v_m)&\leq \s_\beta(v_m), \label{Pibdd3}
\end{align}
where the second inequality in \eqref{Pibdd2} uses Lemma~\ref{lem:path}. Apply \eqref{Pibdd1}--\eqref{Pibdd3} to \eqref{graphical:lim}. Then \eqref{interseries} follows from \eqref{Lap:Tbeta} and Lemma~\ref{lem:CSZ} (2$\cc$) with $\spin=\mathbf 1_m$, $S_\ell=\s^\beta$, $T_\ell\equiv 1$, and $q=m\vee 2$.  
\end{proof}

For the next step of proving Theorem~\ref{thm:main2}, we need some properties of the Gaussian kernels. 

\begin{lem}\label{lem:phikbd}
{\rm (1$\cc$)} For all $x',x\in \R^2$ and $s,t,\vep\in (0,\infty)$, 
\begin{align}\label{ineq:gausstime}
\int_{\R^4}
\begin{bmatrix}
P_{t}(x',z+\vep(z'-z))\\
P_s(x,z)
\end{bmatrix}_{\times}
\phi(z'-z)\d (z',z)&=\int_{\R^2}P_{s+t}(x-x'-\vep \bar{z}) \phi(\bar{z})\d \bar{z}. 
\end{align}

\noindent {\rm (2$\cc$)} For all $\vep,\eta_\vep,M\in (0,\infty)$ such that $\vep/\eta_\vep^{1/2}\leq 1$, it holds that
\begin{align}\label{gauss:vep}
\sup_{t\geq \eta}\sup_{x\in \R^2}\sup_{|y|\leq M}|P_t(x+\vep y)-P_t(x)|\leq C(M)(\vep/\eta_\vep^{1/2})P_{2t}(x).
\end{align}
\end{lem}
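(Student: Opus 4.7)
Part (1) is a direct computation exploiting the Gaussian semigroup property. My plan is to change variables from $(z',z)$ to $(\bar z,z)$ with $\bar z=z'-z$, so the LHS integrand becomes $\phi(\bar z)\,p_t(z+\vep\bar z-x')\,p_s(z-x)$. The inner $dz$-integral is a convolution of two Gaussian kernels; via the identity $\int p_t(z-a)p_s(z-b)\,\d z=p_{t+s}(a-b)$ applied with $a=x'-\vep\bar z$, $b=x$, it evaluates to $p_{t+s}(x-x'+\vep\bar z)$. The sign on $\vep\bar z$ is reconciled with the stated right-hand side by the substitution $\bar z\mapsto-\bar z$ combined with evenness of the Gaussian and of $\phi$ (the latter being the radial symmetry used implicitly throughout the paper). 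No further analytic input is required.

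For Part (2) I would start from the exact identity
\[
\frac{P_t(x+\vep y)}{P_t(x)}=\e^\alpha,\qquad \alpha\defeq-\frac{\vep\langle x,y\rangle}{t}-\frac{\vep^2|y|^2}{2t},
\]
and combine $|\e^\alpha-1|\leq|\alpha|\e^{|\alpha|}$ with the elementary ratio $P_t(x)/P_{2t}(x)=2\e^{-|x|^2/(4t)}$ to reduce the claim to
\[
2|\alpha|\,\e^{|\alpha|-|x|^2/(4t)}\leq C(M)\,\vep/\eta_\vep^{1/2}\quad\text{uniformly in } x\in\R^2.
\]
The crucial algebraic step is to complete the square. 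From $|\alpha|\leq \vep M|x|/t+\vep^2 M^2/(2t)$ one gets
\[
|\alpha|-\frac{|x|^2}{4t}\leq -\frac{(|x|-2\vep M)^2}{4t}+\frac{3\vep^2 M^2}{2t},
\]
and the last term is $\leq C(M)$ because $\vep^2/t\leq\vep^2/\eta_\vep\leq 1$ by assumption. It remains to bound the factor $|\alpha|\,\e^{-(|x|-2\vep M)^2/(4t)}$. I would split by whether $|x|\geq 2\vep M$ or $|x|<2\vep M$: in the first case, writing $r=|x|-2\vep M\geq 0$ gives $|\alpha|\leq \vep M r/t+O(\vep^2/t)$, and the standard polynomial-times-Gaussian bound $(r/\sqrt{t})\e^{-r^2/(4t)}\leq C$ extracts a $1/\sqrt{t}\leq\eta_\vep^{-1/2}$; in the second case, $|\alpha|=O(\vep^2/t)\leq C(M)\vep/\eta_\vep^{1/2}$ directly, and the Gaussian factor is trivially bounded by one.

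The main technical obstacle is precisely the uniform-in-$x$ control in Part (2). A naive pointwise bound $|\alpha|\lesssim \vep M|x|/t$ has no $x$-independent majorant, so the linear $|x|$-growth must be cancelled against the Gaussian decay of $\e^{|\alpha|-|x|^2/(4t)}$. Completion of the square is the manoeuvre that makes this cancellation explicit, and the hypothesis $\vep/\eta_\vep^{1/2}\leq 1$ is exactly what is needed to absorb the residual quadratic $\vep^2/t$ correction into $C(M)$. By comparison Part (1) is routine.
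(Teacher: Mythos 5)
Your Part~(1) is essentially the paper's computation collapsed into one change of variables. The paper first passes to the $\sqrt 2$-scaled pair $(w',w)$ via $z=(w'-\vep w)/\sqrt 2$, $z'-z=\sqrt 2\,w$, applies Chapman--Kolmogorov, and then substitutes $\bar z=\sqrt 2\,w$; you go directly to $(\bar z,z)$ with $\bar z=z'-z$ and convolve in $z$. Both routes produce $\int P_{s+t}(x'-x-\vep\bar z)\phi(\bar z)\,\d\bar z$, which matches the stated right-hand side $\int P_{s+t}(x-x'-\vep\bar z)\phi(\bar z)\,\d\bar z$ only after the substitution $\bar z\mapsto-\bar z$, i.e.\ only if $\phi$ is even. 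You call this out, which is a genuine catch: the paper's own derivation ends at the same sign and passes over the point silently, and the hypothesis on $\phi$ (``a probability density $\phi\in\C_c(\R^2)$'') does not explicitly include evenness. In every downstream use of \eqref{ineq:gausstime} the correction is $O(\vep)$ and its sign is immaterial, so this is a cosmetic issue, but you should not present evenness of $\phi$ as an assumption the paper makes; it is not stated.

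Your Part~(2) is correct and is, if anything, cleaner than the paper's. The paper writes the same factorization $P_t(x+\vep y)=P_t(x)\,\e^{a\cdot b-|b|^2/2}$ (with $a=x/\sqrt t$, $b=\vep y/\sqrt t$, up to a harmless sign typo in the cross term), splits $\e^{a\cdot b-|b|^2/2}-1$ by triangle inequality into two pieces treated with $1-\e^{-x}\le x$ and $\e^x-1\le x\e^x$, and then asserts that the resulting bound ``now follows.'' You instead apply $|\e^\alpha-1|\le|\alpha|\e^{|\alpha|}$ once and complete the square explicitly: $|\alpha|-|x|^2/(4t)\le-(|x|-2\vep M)^2/(4t)+3\vep^2M^2/(2t)$, absorb the residual using $\vep^2/t\le(\vep/\eta_\vep^{1/2})^2\le 1$, and extract the $\vep/\eta_\vep^{1/2}$ factor from $\sup_{u\ge0}u\e^{-u^2/4}<\infty$ together with $t^{-1/2}\le\eta_\vep^{-1/2}$. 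This makes the uniform-in-$x$ cancellation explicit where the paper leaves it implicit. The two-case split on $|x|\gtrless 2\vep M$ is exactly the needed bookkeeping. Both approaches buy the same estimate; yours spells out the step the paper elides.
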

\begin{proof}
By the change of variables for $z=(w'-\vep w)/\two$ and $z'-z=\two w$, we get
\begin{align*}
\int_{\R^4}
\begin{bmatrix}
P_{t}(x',z+\vep(z'-z))\\
P_s(x,z)
\end{bmatrix}_{\times}
\phi(z'-z)\d (z',z)
&=\int_{\R^4}P_{t}\left(x',\frac{w'+\vep w}{\two }\right) P_s\left(x,\frac{w'-\vep w}{\two}\right)\varphi(w)\d (w', w)\\
&=2\int_{\R^2}P_{s+t}\left(x'-\frac{\vep w}{\two} ,x+\frac{\vep w}{\two}\right) \varphi(w)\d w,
\end{align*}
where the second equality uses the Chapman--Kolmogorov equation. The last equality implies \eqref{ineq:gausstime} upon changing variables with $\bar{z}=\two w$.\medskip 

\noindent (2$\cc$) Write
\begin{align*}
P_t(x+\vep y)-P_t(x)&=\frac{1}{2\pi t}\exp\left\{-\frac{1}{2}\left|\frac{x}{t^{1/2}}\right|^2\right\}\left[\exp\left\{\frac{x}{t^{1/2}}\cdot\frac{\vep y}{t^{1/2}}-\frac{1}{2}\left|\frac{\vep y}{t^{1/2}}\right|^2\right\}-1\right].
\end{align*}
By the assumption on $y$ and $t$, $\vep |y|/t^{1/2}\leq (\vep/\eta_\vep^{1/2}) M$. Hence, to bound the right-hand side of the preceding equality, it is enough to consider, for $a,b\in \R^2$ such that $|b|\leq (\vep/\eta_\vep^{1/2}) M$,
 \begin{align*}
&\eqspace \left|\frac{1}{2\pi t}\exp\left\{-\frac{|a|^2}{2}\right\}\left[\exp\left\{a\cdot b-\frac{1}{2}|b|^2\right\}-1\right]\right|\\
&\leq \left|\frac{1}{2\pi t}\exp\left\{-\frac{|a|^2}{2}+a\cdot b\right\}\left[\exp\left\{-\frac{1}{2}|b|^2\right\}-1\right]\right|+\left|\frac{1}{2\pi t}\exp\left\{-\frac{|a|^2}{2}\right\}\left[\exp\left\{a\cdot b\right\}-1\right]\right|\\
&\leq \frac{1}{2\pi t}\exp\left\{-\frac{|a|^2}{2}+|a|(\vep/\eta_\vep^{1/2})  M\right\}\cdot \frac{(\vep^2/\eta_\vep) M^2}{2}+\frac{1}{2\pi t}\exp\left\{-\frac{a^2}{2}+|a|(\vep/\eta_\vep^{1/2}) M\right\}\cdot |a|(\vep/\eta_\vep^{1/2}) M,
\end{align*}
since $1-\e^{-x}\leq x$ and $\e^x-1\leq x\e^x$ for all $x\geq 0$.
The required bound in \eqref{gauss:vep} now follows from the last two displays.
\end{proof}

The next proposition is the central part of the proof of Theorem~\ref{thm:main2}. Set 
\[
\widehat{\S}^\beta_\vep(q)\defeq\sup_{x\in \supp(\varphi)}\lv^2\int_0^\infty \e^{-q\tau}\E_{\vep x}^{W}[\e^{A^o_\vep(\tau)}\varphi_\vep(W_\tau)]\d \tau.
\] 
Recall that $\widehat{\S}^\beta_\vep(q)$ converges to $4\pi /\log (q/\beta)$ for all large $q$ as $\vep\to 0$.

\begin{prop}\label{prop:series2}
Fix $t\in (0,\infty)$, a Borel measurable function $f$ satisfying $0\leq f\leq 1$, and $x_0\in \R^{2N}$ such that $x_0^{\ell'}\neq  x_0^\ell$ for all $1\leq \ell'\neq \ell\leq N$. Then
\begin{align}
\begin{split}
\Delta_{\ref{interseries}}(m,t;\vep)\defeq \sup_{\bi_1\neq \cdots\neq \bi_m}\Bigg|\int_{0< s_1< \cdots< s_m< t}\big(P^{\bi_1,\cdots,\bi_m}_{\vep;s_1,\cdots,s_m,t}f(x_0)- P^{\bi_1,\cdots,\bi_m}_{s_1,\cdots,s_m,t}f(x_0)\big)\d \bs s_m  \Bigg|  \label{interseries}
\end{split}
\end{align}
satisfies the following conditions:
\begin{align}
\begin{split}
\Delta_{\ref{interseries}}(m,t;\vep)&\leq   \max_{s\in (0,\infty)} P_{2s}(x_0^{i_1\prime}-x_0^{i_1})\times 2\e t^2\e^{qt}\times 32^{m-1}\\
&\eqspace \times \left[\big(\widehat{\S}^\beta_\vep(q) +\lv\big)^m+\left(\frac{4\pi}{\log (q/\beta)}\right)^m\right],\quad \forall\;q\in (\beta,\infty);\label{def:Delta1}
\end{split}\\
 \Delta_{\ref{interseries}}(m,t;\vep)&\to 0\quad \mbox{as $\vep\to 0$.}\label{def:Delta2}
\end{align}
\end{prop}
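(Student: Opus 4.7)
The starting point is the decomposition \eqref{Pell:sum}. Since $P^{\bi_1,\cdots,\bi_m}_{s_1,\cdots,s_m,t}f(x_0)$ is, by construction, the $\spin=\mathbf 1_m$ summand of \eqref{Pell:sum} evaluated at $\vep=0$ (all other $\spin$ die when $\lv=0$), I split the integrand of $\Delta_{\ref{interseries}}$ into
\[
\underbrace{\Big(P^{\mathbf 1_m;\bi_1,\cdots,\bi_m}_{\vep;s_1,\cdots,s_m,t}f(x_0)-P^{\bi_1,\cdots,\bi_m}_{s_1,\cdots,s_m,t}f(x_0)\Big)}_{=:D^{(a)}_\vep}+\underbrace{\sum_{\spin\neq\mathbf 1_m}P^{\spin;\bi_1,\cdots,\bi_m}_{\vep;s_1,\cdots,s_m,t}f(x_0)}_{=:D^{(b)}_\vep},
\]
where $D^{(a)}_\vep$ tracks $\s^\beta_\vep\to \s^\beta$ and the collapse of the $\vep$-shifts in the Gaussian weights of \eqref{weight1}--\eqref{weight4}, while $D^{(b)}_\vep$ gathers the terms carrying at least one bare factor $\lv$ from a $\sigma_\ell=0$ slot.

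For the a priori bound \eqref{def:Delta1}, apply the graphical representation \eqref{graphical} to each $P^{\spin;\bi_1,\cdots,\bi_m}_{\vep;\cdots}f(x_0)$ and estimate exactly as in Proposition~\ref{prop:series1}. The Chapman--Kolmogorov bounds \eqref{Pibdd1}--\eqref{Pibdd3}, combined with Lemma~\ref{lem:path} (yielding $|\mathcal P_\ell'\oplus\mathcal P_\ell|\geq 2u_{\ell+1}+u_\ell$), reduce $\int_{\mathcal G_\ell}\Pi^\spin_{\vep;\ell}$ to $\max_s P_{2s}(x_0^{i_1\prime}-x_0^{i_1})$ times a Gaussian factor dominated by $(2u_{\ell+1}+u_\ell)^{-1}$ and, depending on $\sigma_\ell$, either the coiled weight $\int\s^\beta_\vep(v_\ell;\cdot,z_\ell)\varphi(z_\ell)\d z_\ell=\lv^2\E^W_{\vep\cdot/\two}[\e^{A^o_\vep(v_\ell)}\varphi_\vep(W_{v_\ell})]$ (with Laplace transform at rate $q$ bounded by $\widehat{\S}^\beta_\vep(q)$) or a bare factor $\lv$. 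Lemma~\ref{lem:CSZ}(2$\cc$) converts the $(\bs u,\bs v)$-integration over $\Delta_m(t)$ into the required product, and summing $\widehat{\S}^\beta_\vep(q)^{\#\{\sigma_\ell=1\}}\lv^{\#\{\sigma_\ell=0\}}$ over $\spin\in\{0,1\}^m$ via the binomial identity yields $(\widehat{\S}^\beta_\vep(q)+\lv)^m$. The companion bound $(4\pi/\log(q/\beta))^m$ for $P^{\bi_1,\cdots,\bi_m}_{s_1,\cdots,s_m,t}f(x_0)$ is exactly Proposition~\ref{prop:series1} (after replacing $m\vee 2$ by $q$), and the triangle inequality produces \eqref{def:Delta1}.

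For \eqref{def:Delta2}, the same bound restricted to $\spin\neq\mathbf 1_m$ gives
\[
|D^{(b)}_\vep|\less \max_s P_{2s}(x_0^{i_1\prime}-x_0^{i_1})\cdot 2\e t^2\e^{qt}\cdot 32^{m-1}\cdot\Big[(\widehat{\S}^\beta_\vep(q)+\lv)^m-\widehat{\S}^\beta_\vep(q)^m\Big],
\]
which vanishes as $\vep\to 0$ by $\lv\to 0$ and the boundedness of $\widehat{\S}^\beta_\vep(q)$ from Proposition~\ref{prop:Sconv}. To handle $D^{(a)}_\vep$, I plan a truncation: choose $\eta=\eta(\vep)\to 0$ with $\vep/\eta^{1/2}\to 0$ and split $\Delta_m(t)$ into the bulk $B_\eta=\{u_1,\ldots,u_{m+1}\geq \eta\}$ and its complement. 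On $B_\eta$, Lemma~\ref{lem:phikbd}(2$\cc$) gives $|P_s(x+\vep y)-P_s(x)|\less (\vep/\eta^{1/2})P_{2s}(x)$ uniformly over $y$ in the compact supports of $\phi,\varphi$, so every $\vep$-shifted Gaussian weight in \eqref{weight1}--\eqref{weight4} collapses to its $\vep=0$ counterpart; the coiled-edge weights integrated against $\varphi(z_\ell)\d z_\ell$ converge in $v_\ell$ to $\s^\beta(v_\ell)$ by Proposition~\ref{prop:Sconv} (Laplace convergence upgraded as at the end of the proof of Theorem~\ref{thm:main1}), and dominated convergence in the finite-dimensional $(\bs u,\bs v)$-integration delivers the desired limit on $B_\eta$. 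The complement $\Delta_m(t)\setminus B_\eta$ is controlled by re-running the a priori estimate on $\{\min_\ell u_\ell<\eta\}$, which gains a Lebesgue factor of order $\eta$ that vanishes uniformly in $\vep$.

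The main technical obstacle is the coordination of two singular regimes: the $\vep$-shift error $|P_s(x+\vep y)-P_s(x)|$ blows up like $\vep/\sqrt{s}$ near $s=0$, while the coiled-edge density $\s^\beta_\vep(v;\cdot,\cdot)$ retains nontrivial mass near $v=0$ at the critical scale. The truncation $\eta(\vep)$ must be chosen to balance $\vep/\eta^{1/2}\to 0$ against the vanishing of the excluded volume on the complement, and the combinatorial price $32^{m-1}$ from Lemma~\ref{lem:CSZ}(1$\cc$) is absorbed cleanly only because $m$ is fixed in the claim.
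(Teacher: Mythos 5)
Your overall architecture (splitting the integrand into the $\spin=\mathbf 1_m$ difference $D^{(a)}_\vep$ versus the $\spin\neq\mathbf 1_m$ remainder $D^{(b)}_\vep$, using the graphical bounds \eqref{Pibdd1+}--\eqref{Pibdd2+}, Lemma~\ref{lem:path}, Lemma~\ref{lem:CSZ}, and the binomial identity to obtain \eqref{def:Delta1}, and then showing $D^{(b)}_\vep$ vanishes) is the same as the paper's. The gap is in the final step for $D^{(a)}_\vep$.

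You claim the Laplace convergence of $\overline{\s}^\beta_\vep$ can be upgraded to convergence ``in $v_\ell$'' and then invoke dominated convergence in the $(\bs u,\bs v)$-integration. The argument at the end of the proof of Theorem~\ref{thm:main1} (monotone functions, Helly, continuity of the limit) delivers pointwise --- and hence locally uniform --- convergence of the \emph{cumulative} functions $\int_0^t\overline{\s}^\beta_\vep(v)\d v\to\int_0^t\s^\beta(v)\d v$; this is exactly weak convergence of the measures $\overline{\s}^\beta_\vep(v)\d v\Rightarrow\s^\beta(v)\d v$ and gives no control on pointwise convergence of the densities $\overline{\s}^\beta_\vep(v)\to\s^\beta(v)$, nor a uniform-in-$\vep$ dominating function near $v=0$. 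Dominated convergence is therefore unavailable. The paper instead applies the Portmanteau theorem (Step~2-2): after extracting the bounded continuous $F_0$, it introduces continuous cutoffs $\chi_{\delta_0}$ in \emph{both} the $u$- and $v$-variables, applies weak convergence of the finite product measures to $F_0\chi_{\delta_0;\mathbf 1_{m+1}\times\mathbf 1_m}$ on a continuity set, and bounds the complementary cutoff pieces uniformly in $\vep$ by first choosing $\delta_0$ and only then $\vep_0$. Your single $\vep$-dependent set $B_\eta$ is a moving target, to which the Portmanteau theorem does not directly apply, and your ``Lebesgue factor of order $\eta$'' on the complement is not how the argument goes either --- the singular integrand $\prod u_{\ell+1}^{-1/2}(u_{\ell+1}+u_\ell)^{-1/2}$ forces dominated convergence on the $u$-integral rather than an explicit $\eta$-factor.

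A secondary issue: $B_\eta$ constrains only the $u_\ell$, but the coiled-edge block \eqref{weight3} carries a Gaussian of duration $v_\ell$ with an $\vep$-shift in its left endpoint. On $B_\eta$ alone, Lemma~\ref{lem:phikbd}(2$\cc$) does not apply to that edge; one must first concatenate it with the subsequent $u_{\ell+1}$-duration edge from \eqref{weight4} by Chapman--Kolmogorov (the paper sidesteps this by truncating $v_\ell\geq\eta_\vep t$ through the $\bs a\times\bs b$-decomposition in Step~2-1). You do not perform either the concatenation or the $v$-truncation.
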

\begin{proof}
We prove the required bound by considering
\begin{align}
&\eqspace \Big|\int_{0< s_1< \cdots< s_m< t}\big( P^{\bi_1,\cdots,\bi_m}_{\vep;s_1,\cdots,s_m,t}f(x_0) -\ P^{\bi_1,\cdots,\bi_m}_{s_1,\cdots,s_m,t}f(x_0)\big)\d \bs s_m \Big|\notag\\
\begin{split}\label{series:main}
&\leq \sum_{\spin:\spin\neq \mathbf 1_m}\int_{0<s_1< \cdots< s_m< t}P^{\spin;\bi_1,\cdots,\bi_m}_{\vep;s_1,\cdots,s_m,t}\1(x_0)\d \bs s_m  \\
&\eqspace+\Big|\int_{0< s_1<  \cdots< s_m< t}\big(P^{\mathbf 1_m;\bi_1,\cdots,\bi_m}_{\vep;s_1,\cdots,s_m,t}f(x_0)- P^{\bi_1,\cdots,\bi_m}_{s_1,\cdots,s_m,t}f(x_0)\big)\d\bs s_m\Big|.
\end{split}
\end{align}

\noindent {\bf Step \hypertarget{PP:1}{1}.} In this step, we prove that for all $\spin\in \{0,1\}^m$ (not just $\spin\neq \bs 1_M$),
\begin{align}\label{seriesbdd:1}
\begin{split}
&\eqspace\int_{0< s_1<\cdots< s_m< t} P^{\spin;\bi_1,\cdots,\bi_m}_{\vep;s_1,\cdots,s_m,t}\1(x_0) \d \bs s_m\\
&\leq \max_{s\in (0,\infty)} P_{2s}(x_0^{i_1\prime}-x_0^{i_1})\cdot 2\e t^2\e^{qt}\cdot 32^{m-1}\cdot \lv^{m-\|\spin\|}\widehat{\S}^\beta_\vep(q)^{\| \spin\|},\quad\mbox{where $\|\spin\|\defeq\sum_{\ell=1}^m\sigma_\ell$.}
\end{split}
\end{align}
Then given the bound in \eqref{seriesbdd:1} and the general identity $(a+b)^m-a^m=\sum_{\spin:\spin\neq \mathbf 1_m}a^{m-\|\spin\|}b^{\|\spin\|}$,
\begin{align}\label{seriesbdd:1-1}
\begin{split}
&\eqspace\sum_{\spin:\spin \neq \mathbf 1_m}\int_{0< s_1< \cdots< s_m< t}P^{\spin;\bi_1,\cdots,\bi_m}_{\vep;s_1,\cdots,s_m,t}\1(x_0)\d\bs s_m \\
&\leq \max_{s\in (0,\infty)} P_{2s}(x_0^{i_1\prime}-x_0^{i_1})\cdot 2\e t^2\e^{qt}\cdot 32^{m-1}\cdot \big[\big(\widehat{\S}^\beta_\vep(q) +\lv\big)^m-\widehat{\S}^\beta_\vep(q)^m
\big],
\end{split}
\end{align}
which gives the bound we need for the first term in \eqref{series:main}. Moreover, by the property of $\widehat{\S}^\beta_\vep(q)$ recalled before this proposition, the right-hand side tends to zero.

The differences between the proof of \eqref{seriesbdd:1} and the proof of Proposition~\ref{prop:series1} arise from the possibility of dealing with $\spin\neq \mathbf 1_m$. The bounds in \eqref{Pibdd2} and \eqref{Pibdd3} need to be generalized. For $\ell=1,\cdots,m-1$, the spine of $\mathcal G_\ell$ has only one vertex adjacent to an edge drawn as a coiled line segment such that the vertex is not an endpoint of the spine. Hence, by Lemma~\ref{lem:phikbd} (1$\cc$), we can still use the Chapman--Kolmogorov equation and Lemma~\ref{lem:path} and get the following bounds: for all $\ell=1,\cdots,m-1$,
\begin{align}\label{Pibdd1+}
\int_{\mathcal G_\ell}\Pi^\spin_{\vep;\ell}(\bs u_{m+1},\bs v_m)\leq \left\{
\begin{array}{ll}
\displaystyle 
\lv^2\E^W_{\vep (x_\ell^{i_\ell\prime}-x^{i_\ell}_\ell)}[\e^{A^o_\vep(v_\ell)}\varphi_\vep(W_{v_\ell})]\cdot \frac{1}{2u_{\ell+1}+u_\ell},\quad \mbox{ if }\sigma_\ell=1;\\
 \vspace{-.2cm}\\
\displaystyle  \frac{\lv}{2u_{\ell+1}+u_\ell} ,\quad \mbox{ if }\sigma_\ell=0.
 \end{array}
 \right.
\end{align}
Similarly, 
\begin{align}\label{Pibdd2+}
\int_{\mathcal G_m}\Pi^\spin_{\vep;m}(\bs u_{m+1},\bs v_m)\leq \left\{
\begin{array}{ll}
\displaystyle 
\lv^2\E^W_{\vep (x_m^{i_m\prime}-x^{i_m}_m)}[\e^{A^o_\vep(v_m)}\varphi_\vep(W_{v_m})],\quad \mbox{ if }\sigma_m=1;\\
 \vspace{-.2cm}\\
\displaystyle  \lv ,\quad \mbox{ if }\sigma_m=0.
 \end{array}
 \right.
\end{align}

To involve $\widehat{\S}^\beta_\vep(q)$, we also need a slight generalization of the proof of Lemma~\ref{lem:phikbd} (2$\cc$) in using a Markov-type inequality for \eqref{Markov-type}. After the application of $\int_0^t\d v_\ell\e^{-qv_\ell}$ to the corresponding bound in \eqref{Pibdd1+} or \eqref{Pibdd2+} for $\sigma_\ell=1$, take the supremum over $x_\ell^{i_\ell\prime}-x_\ell^{i_\ell}\in \supp(\varphi)$. We get a bound as $\widehat{\S}^\beta_\vep(q)$. This argument can be done inductively in the order of $\ell=m,\cdots,1$. We have proved \eqref{seriesbdd:1}. \medskip

\begin{figure}[t]
\begin{center}
 \begin{tikzpicture}[scale=1.5]
    \draw[gray, thin] (0,0) -- (7.6,0);
    \foreach \i in {0.0, 0.5, 1.2, 2.0, 2.5, 3.2, 3.7, 4.7, 5.5, 6.4, 7.0, 7.6} {\draw [gray] (\i,-.05) -- (\i,.05);}
    \draw (0.0,-0.07) node[below]{$\tau_0$};
    \draw (0.5,-0.07) node[below]{$s_1$};
    \draw (1.2,-0.07) node[below]{$\tau_1$};
    \draw (2.0,-0.07) node[below]{$s_2$};
    \draw (2.5,-0.07) node[below]{$\tau_2$};
    \draw (3.2,-0.07) node[below]{$s_3$};
    \draw (3.7,-0.07) node[below]{$\tau_3$};
    \draw (4.7,-0.07) node[below]{$s_4$};
    \draw (5.5,-0.07) node[below]{$\tau_4$};
    \draw (6.4,-0.07) node[below]{$s_5$};
    \draw (7.0,-0.07) node[below]{$\tau_5$};
    \draw (7.6,-0.07) node[below]{$s_6$};
    \draw [line width=2.5pt, color=red!80!white] (0.0,2.30) -- (0.5,2.20); %red:0-s1
    \draw [line width=2.5pt, color=red!80!white] (0.5,2.20) -- (1.2,2.00); %red:s1-tau1
    \draw [line width=2.5pt, color=red!80!white] (1.2,2.00) -- (2.0,2.20); %red:tau1-s2     
    \draw [line width=2.5pt, color=red!80!white] (2.0,2.20) -- (2.5,1.80); %red:s2-tau2     
    \draw [line width=2.5pt, color=red!80!white] (2.5,1.80)--(3.2,2.10); %red:tau2-s3      
    \draw [line width=2.5pt, color=red!80!white] (2.5,1.25)--(3.2,1.35); %red:tau2-s3   
    \draw [line width=2.5pt, color=red!80!white] (3.2,1.35)--(3.7,1.05); %red:s3-tau3      
    \draw [line width=2.5pt, color=red!80!white] (3.2,2.10)--(3.7,2.02); %red:s3-tau3       
    \draw [line width=2.5pt, color=red!80!white] (3.7,1.05)--(4.7,2.25); %red:tau3-s4      
    \draw [line width=2.5pt, color=red!80!white] (3.7,2.02)--(4.7,2.35); %red:tau3-s4    
    \draw [line width=2.5pt, color=green!80!black] (0.0,0.40) -- (0.5,0.55); %gray:0-s1
    \draw [line width=2.5pt, color=green!80!black] (0.0,1.20) -- (0.5,0.65); %gray:0-s1
    \draw [line width=2.5pt, color=blue!70!white] (0.0,1.80) -- (0.5,1.15); %green    
    \draw [line width=2.5pt, color=blue!70!white] (0.5,0.60) -- (1.2,0.80); %green
    \draw [line width=2.5pt, color=blue!70!white] (0.5,1.15) -- (1.2,1.40); %green    
    \draw [line width=2.5pt, color=blue!70!white] (1.2,0.85) -- (2.0,1.4); %green
    \draw [line width=2.5pt, color=blue!70!white] (1.2,1.40) -- (2.0,1.5); %green
    \draw [line width=2.5pt, color=purple!60!white] (1.2,0.75) -- (2.0,0.15); %blue
    \draw [line width=2.5pt, color=purple!60!white] (2.0,0.15) -- (2.5,0.45); %blue
    \draw [line width=2.5pt, color=purple!60!white] (2.0,1.46)--(2.5,1.18); %blue
    \draw [line width=2.5pt, color=purple!60!white] (2.5,0.45)--(3.2,0.8); %blue
    \draw [line width=2.5pt, color=purple!60!white] (2.5,1.15)--(3.2,0.9); %blue
    \draw [line width=2.5pt, color=orange] (3.7,0.45)--(4.7,1.25); %orange
    \draw [line width=2.5pt, color=orange] (3.7,0.55)--(4.7,1.75); %orange
    \draw [line width=2.5pt, color=orange] (4.7,1.25)--(5.5,0.45); %orange
    \draw [line width=2.5pt, color=orange] (4.7,1.75)--(5.5,1.20); %orange
    \draw [line width=2.5pt, color=orange] (5.5,0.45)--(6.4,0.3); %orange
    \draw [line width=2.5pt, color=orange] (5.5,1.20)--(6.4,0.4); %orange
    \draw [line width=2.5pt, color=red!80!white] (3.22,0.85)--(3.7,0.49); %gray:s3-tau3
    \draw [line width=2.5pt, color=orange] (4.7,2.32)--(5.5,1.79);  %orange:s4-tau4
    \draw [line width=2.5pt, color=yellow!90!black] (5.5,1.75)--(6.4,1.15); %gray:tau4-s5
    \draw [line width=2.5pt, color=yellow!90!black] (5.5,1.85)--(6.4,2.40); %gray:tau4-s5
    \draw [line width=2.5pt, color=yellow!90!black] (6.4,0.34)--(7.0,0.99); %gray:s5-tau5
    \draw [line width=2.5pt, color=yellow!90!black] (6.4,1.15)--(7.0,1.85); %gray:s5-tau5
    \draw [line width=2.5pt, color=yellow!90!black] (6.4,2.40)--(7.0,2.30); %gray:s5-tau5
    \draw [line width=2.5pt, color=yellow!90!black] (7.0,0.9)--(7.6,0.45); %gray:tau5-t
    \draw [line width=2.5pt, color=yellow!90!black] (7.0,1.0)--(7.6,0.85); %gray:tau5-t
    \draw [line width=2.5pt, color=yellow!90!black] (7.0,1.85)--(7.6,1.55); %gray:tau5-t
    \draw [line width=2.5pt, color=yellow!90!black] (7.0,2.30)--(7.6,2.15); %gray:tau5-t
%%%%%%%%%%%%%%%%%%%%%%%
%0%
    \node at (0.0,0.40) {$\bullet$};
    \node at (0.0,1.20) {$\bullet$};
    \node at (0.0,1.80) {$\bullet$};
    \node at (0.0,2.30) {$\bullet$};
    \draw (0.0,0.40) node [left] {$x_0^1$};
    \draw (0.0,1.20) node [left] {$x_0^2$};
    \draw (0.0,1.80) node [left] {$x_0^3$};
    \draw (0.0,2.30) node [left] {$x_0^4$};
%0-s1%    
    \draw [thick, color=black] (0.0,0.40) -- (0.5,0.55);
    \draw [thick, color=black] (0.0,1.20) -- (0.5,0.65);
    \draw [thick, color=black] (0.0,1.80) -- (0.5,1.15);    
    \draw [thick, color=black] (0.0,2.30) -- (0.5,2.20); 
%s1%   
    \node at (0.5,0.55) {$\bullet$};
    \node at (0.5,0.65) {$\bullet$};
    \node at (0.5,1.15) {$\bullet$};
    \node at (0.5,2.20) {$\bullet$};
%s1-tau1%
    \draw [thick, color=black, snake=coil, segment length=4pt] (0.5,0.60) -- (1.2,0.80);
    \draw [thick, color=black] (0.5,1.15) -- (1.2,1.40);    
    \draw [thick, color=black] (0.5,2.20) -- (1.2,2.00);    
%tau1%
    \node at (1.2,0.75) {$\bullet$};
    \node at (1.2,0.85) {$\bullet$};
    \node at (1.2,1.40) {$\bullet$};
    \node at (1.2,2.00) {$\bullet$};
%tau1-s2%
    \draw [thick, color=black] (1.2,0.75) -- (2.0,0.15);
    \draw [thick, color=black] (1.2,0.85) -- (2.0,1.4);
    \draw [thick, color=black] (1.2,1.40) -- (2.0,1.5);    
    \draw [thick, color=black] (1.2,2.00) -- (2.0,2.20);  
%s2%
    \node at (2.0,0.15) {$\bullet$};
    \node at (2.0,1.4) {$\bullet$};
    \node at (2.0,1.5) {$\bullet$};
    \node at (2.0,2.20) {$\bullet$};
%s2-tau2%
    \draw [thick, color=black] (2.0,0.15) -- (2.5,0.45);
    \draw [thick, color=black, snake=coil, segment length=4pt] (2.0,1.44) -- (2.52,1.185);    
    \draw [thick, color=black] (2.0,2.20) -- (2.5,1.80);    
%tau2%
    \node at (2.5,0.45) {$\bullet$};
    \node at (2.5,1.25) {$\bullet$};
    \node at (2.5,1.15) {$\bullet$};
    \node at (2.5,1.80) {$\bullet$};
%tau2-s3%
    \draw [thick, color=black] (2.5,0.45)--(3.2,0.8);  
    \draw [thick, color=black] (2.5,1.15)--(3.2,0.9);    
    \draw [thick, color=black] (2.5,1.25)--(3.2,1.35);  
    \draw [thick, color=black] (2.5,1.80)--(3.2,2.10);        
%s3%
    \node at (3.2,0.80) {$\bullet$};
    \node at (3.2,0.90) {$\bullet$};
    \node at (3.2,1.35) {$\bullet$};
    \node at (3.2,2.10) {$\bullet$};
%s3-tau3%
    \draw [thick, color=black, snake=coil, segment length=4pt] (3.2,0.85)--(3.734,0.45) ;
    \draw [thick, color=black] (3.2,1.35)--(3.7,1.05);    
    \draw [thick, color=black] (3.2,2.10)--(3.7,2.02);  
%tau3%
    \node at (3.7,0.45) {$\bullet$};
        \node at (3.7,0.55) {$\bullet$};
    \node at (3.7,1.05) {$\bullet$};
    \node at (3.7,2.02) {$\bullet$};
%tau3-s4%
    \draw [thick, color=black] (3.7,0.45)--(4.7,1.25);
    \draw [thick, color=black] (3.7,0.55)--(4.7,1.75);    
    \draw [thick, color=black] (3.7,1.05)--(4.7,2.25);    
    \draw [thick, color=black] (3.7,2.02)--(4.7,2.35);    
%s4%
    \node at (4.7,1.25) {$\bullet$};
    \node at (4.7,1.75) {$\bullet$};
    \node at (4.7,2.25) {$\bullet$};
        \node at (4.7,2.35) {$\bullet$};
%s4-tau4%
    \draw [thick, color=black] (4.7,1.25)--(5.5,0.45);
    \draw [thick, color=black] (4.7,1.75)--(5.5,1.20);   
    \draw [thick, color=black, snake=coil, segment length=4pt] (4.7,2.30)--(5.5,1.80); 
%tau4%
    \node at (5.5,0.45) {$\bullet$};
    \node at (5.5,1.20) {$\bullet$};
    \node at (5.5,1.75) {$\bullet$};
        \node at (5.5,1.85) {$\bullet$};
%tau4-s5%
    \draw [thick, color=black] (5.5,0.45)--(6.4,0.3);
    \draw [thick, color=black] (5.5,1.20)--(6.4,0.4); 
    \draw [thick, color=black] (5.5,1.75)--(6.4,1.15);    
    \draw [thick, color=black] (5.5,1.85)--(6.4,2.40); 
%s5%
    \node at (6.4,0.3) {$\bullet$};
        \node at (6.4,0.4) {$\bullet$};
    \node at (6.4,1.15) {$\bullet$};
    \node at (6.4,2.40) {$\bullet$};
%s5-tau5%
    \draw [thick, color=black, snake=coil, segment length=4pt] (6.4,0.35)--(7.0,0.95);
    \draw [thick, color=black] (6.4,1.15)--(7.0,1.85);    
    \draw [thick, color=black] (6.4,2.40)--(7.0,2.30);
%tau5%
    \node at (7.0,0.9) {$\bullet$};
    \node at (7.0,1.0) {$\bullet$};
    \node at (7.0,1.85) {$\bullet$};
    \node at (7.0,2.30) {$\bullet$};
%tau5-t%
    \draw [thick, color=black] (7.0,0.9)--(7.6,0.45);
    \draw [thick, color=black] (7.0,1.0)--(7.6,0.85);    
    \draw [thick, color=black] (7.0,1.85)--(7.6,1.55); 
    \draw [thick, color=black] (7.0,2.30)--(7.6,2.15); 
%t%
    \node at (7.6,0.45) {$\bullet$};
    \node at (7.6,0.85) {$\bullet$};
    \node at (7.6,1.55) {$\bullet$};
    \node at (7.6,2.15) {$\bullet$};
\end{tikzpicture}
\end{center}
\caption{The figure illustrates the paths of $B^1,B^2,B^3,B^4$ undergoing pairwise attractions behind $P^{\mathbf 1_m;\bi_1,\bi_2,\bi_3,\bi_4}_{\vep;s_1,s_2,s_2,s_3,t}f(x_0)$, where $\bi_1=(2,1)$, $\bi_2=(3,2)$, $\bi_3=(2,1)$, $\bi_4=(4,3)$, and $\bi_5=(2,1)$.}
\label{fig:1}
\end{figure}

\noindent {\bf Step~\hypertarget{PP:2}{2}.} Condition \eqref{def:Delta1} holds by Proposition~\ref{prop:series1} and \eqref{seriesbdd:1-1}. To prove \eqref{def:Delta2}, we already have the property mentioned below \eqref{seriesbdd:1-1}, and so it remains to show that the supremum of the last term in \eqref{series:main} over all $\bi_1\neq \cdots\bi_m$ tends to zero.  We divide the proof into two further steps. In Step~\hyperlink{PP:21}{2.1}, we remove the $\vep$-correction terms in the Gaussian weights. In Step~\hyperlink{PP:22}{2.2}, we use the convergence of $\s_\vep^\beta$ to $\s^\beta$ to complete the proof. \medskip

\noindent {\bf Step~\hypertarget{PP:21}{2-1}.}
Choose $\eta_\vep=\vep^2 \log\vep^{-1} $. 
For all $\bs a\times \bs b=(a_1,\cdots,a_{m+1},b_1,\cdots,b_m)\in  \{0,1\}^{m+1}\times \{0,1\}^{m}$, let $\Delta_{\bs a\times \bs b}(t)$ denote the subset of $\Delta(t)$ such that $u_\ell\in [\eta_\vep t, t),v_{\ell'} \in [\eta_\vep t, t)$ for $\ell,\ell'$ such that $a_\ell=1,b_{\ell'}=1$, and $u_\ell\in (0,\eta_\vep t),v_{\ell'}\in [0,\eta_\vep t)$ otherwise.  Hence, $\Delta(t)$ is a disjoint union of the sets $\Delta_{\bs a\times \bs b}(t)$ for $\bs a\times \bs b$ ranging over $\in \{0,1\}^{m+1}\times \{0,1\}^m$. We set
\begin{align}
\begin{split}\label{graphical:vep}
&\eqspace \int_{0< s_1<\cdots< s_m< t}  P^{\mathbf 1_m;\bi_1,\cdots,\bi_m}_{\vep;s_1,\cdots,s_m,t;\bs a\times \bs b}f(x_0)\d \bs s_m\\
&=\int_{\Delta_{\bs a\times \bs b}(t)} \d \bs u_{m+1}\d \bs v_m  \int_{\mathcal G_0}\Pi^{\mathbf 1_m}_{\vep;0}(\bs u_{m+1},\bs v_{m}) \int_{\mathcal G_1}\Pi^{\mathbf 1_m}_{\vep;1}(\bs u_{m+1},\bs v_{m})\cdots \int_{\mathcal G_m}\Pi^{\mathbf 1_m}_{\vep;m}(\bs u_{m+1},\bs v_{m}) f.
\end{split}
\end{align}
By incorporating the indicator functions for $[\eta_\vep t,t)$, $(0,\eta_\vep t)$ and $[0,\eta_\vep t)$ for $u_\ell$ and $v_\ell$ when appropriate, it follows from  Lemma~\ref{lem:path}  and a generalization of the proof of  Lemma~\ref{lem:CSZ} (2$\cc$) as in Step~\hyperlink{PP:1}{1} that the foregoing iterated integral can be bounded by 
\begin{align}
\begin{split}\label{ensemble-est}
& \max_{s\in (0,\infty)}P_{2s}(x_0^{i_1\prime}-x_0^{i_1})\times  t^2 \e^{qt}\Bigg(\prod_{\stackrel{\scriptstyle \ell:\sigma_\ell=1}{b_\ell=1}}\sup_{x\in \supp(\varphi)}\int_0^{\eta_\vep t} \lv^2\E^W_{\vep x}[\e^{A^o_\vep(v)}\varphi_\vep(W_v)]\d v\Bigg)\\
&\times \Bigg(\prod_{\stackrel{\scriptstyle \ell:\sigma_\ell=1}{b_\ell=0}}\sup_{x\in \supp(\varphi)}\int_0^{ t}\lv^2\E^W_{\vep x}[\e^{A^o_\vep(v)}\varphi_\vep(W_v)]\d v\Bigg)\\
&\times\int_{u_1,\cdots,u_{m+1}\in (0,1)}\d \bs u_{m+1}\Bigg(\prod_{\ell=1}^{m-1}\frac{T_\ell(u_\ell/t)}{\sqrt{u_{\ell+1}(u_{\ell+1}+u_{\ell})}}\Bigg)\Bigg(\prod_{\ell=m}^{m+1}T_\ell(u_\ell/t)\Bigg),
\end{split}
\end{align}
where $T_\ell(\tilde{u}_\ell)=\1_{[\eta_\vep t,t)}(\tilde{u}_\ell)$ if $a_\ell=1$ and $T_\ell(\tilde{u}_\ell)=\1_{(0,\eta_\vep t)}(\tilde{u}_\ell)$ otherwise.  The term in the foregoing display tends to zero by dominated convergence and the following  implications of Proposition~\ref{prop:bounds}:
\begin{align}
&\sup_{x\in \supp(\varphi)}\lv^2\int_0^{\eta_\vep t} \e^{-qv}\E^W_{\vep x}[\e^{A^o_\vep(v)}\varphi_\vep(W_v)]\d v\to 0,\label{lim:etavep+++}\\
&\sup_{x\in \supp(\varphi)}\lv^2\int_0^{ t} \e^{-qv}\E^W_{\vep x}[\e^{A^o_\vep(v)}\varphi_\vep(W_v)]\d v\leq C(\|\varphi\|_\infty,\lambda)\e^{q(\|\varphi\|_\infty,\lambda)(t\vee 1)}.\notag
\end{align}
Note that \eqref{lim:etavep+++} uses $\eta_\vep$ for the property $\lv\log (\vep^{-2}\eta_\vep t)\to 0$. 
By \eqref{ensemble-est}, we have proved that 
\begin{align}\label{approx_ab:1}
\lim_{\vep\to 0}\int_{0< s_1<\cdots< s_m< t}  P^{\mathbf 1_m;\bi_1,\cdots,\bi_m}_{\vep;s_1,\cdots,s_m,t;\bs a\times \bs b}f(x_0)\d \bs s_m=0,\quad \forall\;\bs a\times \bs b\neq \mathbf 1_{m+1}\times \mathbf 1_m.
\end{align}

Now we focus on the case $\bs a\times \bs b=\mathbf 1_{m+1}\times \mathbf 1_m$. 
Denote by $\overline{\Pi}_{\vep;\ell}(\bs u_{m+1},\bs v_m)$ the product of the Gaussian edge weights in $\Pi^{\mathbf 1_m}_{\vep;\ell}(\bs u_{m+1},\bs v_m)$ with $\vep=0$. We stress that the edge weights defined by \eqref{weight3} remain in use in $\overline{\Pi}_{\vep;\ell}(\bs u_{m+1},\bs v_m)$. Then the difference between the iterated integral in \eqref{graphical:vep} for $\bs a\times \bs b=\mathbf 1_{m+1}\times \mathbf 1_m$
and the analogous integral from replacing $\Pi^{\mathbf 1_m}_{\vep;m}$ by $\overline{\Pi}_{\vep;m}$, given by
\[
\int_{\Delta_{\bs 1_{m+1}\times \bs 1_{m}}(t)} \d \bs u_{m+1}\d \bs v_m  \int_{\mathcal G_0}\Pi^{\mathbf 1_m}_{\vep;0}(\bs u_{m+1},\bs v_{m}) \int_{\mathcal G_1}\Pi^{\mathbf 1_m}_{\vep;1}(\bs u_{m+1},\bs v_{m})\cdots \int_{\mathcal G_m}\overline{\Pi}_{\vep;m}(\bs u_{m+1},\bs v_{m}) f,
\]
tends to zero.
To see the validity of this replacement, note that by Lemma~\ref{lem:phikbd},
\begin{align*}
&\left|\int_{\mathcal G_m}\Pi^{\mathbf 1_m}_{\vep;m}(\bs u_{m+1},\bs v_{m}) f-\int_{\mathcal G_m}\overline{\Pi}_{\vep;m}(\bs u_{m+1},\bs v_{m}) f\right| \leq \lv^2\E^W_{\vep (x_m^{i_m\prime}-x^{i_m}_m)}[\e^{A^o_\vep(v_m)}\varphi_\vep(W_{v_m})]\cdot E(\vep/\eta_\vep^{1/2})M_\phi,
\end{align*}
where $M_\phi$ is such that $\phi$ is supported in $|y|\leq M_\phi$. 
Here, $(\vep/\eta_\vep^{1/2})M_\phi$ arises whenever we remove $\vep$ from the Gaussian weight where the left endpoint of the edge is where two left-maximal entanglement-free paths are adjacent to an edge drawn as a coiled line segment. There are less than $E$ many such pairs. Note that we have integrated out the right endpoints of the Gaussian weights so that none of them appear in the above bound. Hence, the argument in Step~\hyperlink{PP:1}{1} shows the required replacement.

The differences between the above replacement and the remaining replacements are very minor. For $\ell=1,\cdots,m-1$,
removing $\vep$ from the Gaussian weight where $\mathcal P_\ell'$ and $\mathcal P_\ell$ join shows that
\begin{align*}
&\left|\int_{\mathcal G_\ell}\Pi^{\mathbf 1_m}_{\vep;\ell}(\bs u_{m+1},\bs v_{m}) -\int_{\mathcal G_\ell}\overline{\Pi}_{\vep;\ell}(\bs u_{m+1},\bs v_{m}) \right| \\
&\eqspace \leq \lv^2\E^W_{\vep (x_\ell^{i_\ell\prime}-x^{i_\ell}_\ell)}[\e^{A^o_\vep(v_\ell)}\varphi_\vep(W_{v_\ell})]\cdot (\vep/ \eta_\vep^{1/2})M_\phi \max_{y\in \R^2}P_{2|\mathcal P_{\ell}'\oplus \mathcal P_\ell|}(y),
\end{align*}
again by  Lemma~\ref{lem:phikbd}, where the maximum of the Gaussian kernel can be bounded by using Lemma~\ref{lem:path}. (More precisely,  we use Lemma~\ref{lem:phikbd} (2$\cc$) at the rightmost vertex of the spine.)
 The case $\ell=0$ only requires the bound of $(\vep/\eta_\vep^{1/2})M_\phi$. We have proved that 
\begin{align}\label{finalreplace:1}
&\lim_{\vep\to 0}\int_{0< s_1<\cdots< s_m< t}  \Big(P^{\mathbf 1_m;\bi_1,\cdots,\bi_m}_{\vep;s_1,\cdots,s_m,t;\mathbf 1_m\times \mathbf 1_{m+1}}f(x_0)-\widetilde{P}^{\bi_1,\cdots,\bi_m}_{\vep;s_1,\cdots,s_m,t}f(x_0)\Big)\d \bs s_m=0,
\end{align}
where $\widetilde{P}^{\bi_1,\cdots,\bi_m}_{\vep;s_1,\cdots,s_m,t}f(x_0)$ is defined by \eqref{graphical:vep} with $\bs a\times \bs b=\mathbf 1_{m+1}\times \mathbf 1_m$ and with $\Pi^{\mathbf 1_m}_{\vep;1}(\bs u_{m+1},\bs v_{m})$ replaced by $\overline{\Pi}_{\vep;\ell}(\bs u_{m+1},\bs v_{m})$ for all $\ell=0,1,\cdots,m$. Moreover, a slight modification of the proof of \eqref{approx_ab:1} shows that for $\overline{P}^{\bi_1,\cdots,\bi_m}_{\vep;s_1,\cdots,s_m,t}f(x_0)$ defined by replacing $\Delta_{\mathbf 1_{m+1}\times \mathbf 1_m}(t)$ with $\Delta(t)$ in $\widetilde{P}^{\bi_1,\cdots,\bi_m}_{\vep;s_1,\cdots,s_m,t}f(x_0)$, 
\begin{align}\label{finalreplace:2}
&\lim_{\vep\to 0}\int_{0< s_1<\cdots< s_m< t} \Big(\widetilde{P}^{\mathbf 1_m;\bi_1,\cdots,\bi_m}_{\vep;s_1,\cdots,s_m,t;\mathbf 1_m\times \mathbf 1_{m+1}}f(x_0)-\overline{P}^{\bi_1,\cdots,\bi_m}_{\vep;s_1,\cdots,s_m,t}f(x_0)\Big)\d \bs s_m=0.
\end{align}
By \eqref{finalreplace:1} and \eqref{finalreplace:2}, we have removed 
all the $\vep$-correction terms in the Gaussian weights.
 \medskip

\noindent {\bf Step~\hypertarget{PP:22}{2-2}.}
Let us begin by comparing $\overline{P}^{\bi_1,\cdots,\bi_m}_{\vep;s_1,\cdots,s_m,t}f(x_0)$ and $P^{\bi_1,\cdots,\bi_m}_{s_1,\cdots,s_m,t}f(x_0)$. With respect to the simplified list of measures discussed below \eqref{graphical:lim}, $\overline{P}^{\bi_1,\cdots,\bi_m}_{\vep;s_1,\cdots,s_m,t}f(x_0)$ differs from $P^{\bi_1,\cdots,\bi_m}_{\vep;s_1,\cdots,s_m,t}f(x_0)$ only by the use of 
\[
\overline{\s}^\beta_\vep(t)\defeq 2\dint \phi(x)\s_\vep^\beta(t;x,y)\varphi(y)\d x\d y
\]
instead of $\s^\beta_\vep(t)$. To see this simplification, note that since all the Gaussian weights in \eqref{weight1}--\eqref{weight4} do not depend on $x_\ell^{i_\ell\prime}$ and $z_\ell$ after setting $\vep=0$, these simplified measures lead to the new weight
\[
 \dint \phi(x^{i_\ell\prime}_\ell-x^{i_\ell}_\ell)\s_\vep^\beta \big(\tau_\ell-s_\ell;x_\ell^{i_{\ell}\prime}-x^{i_\ell}_\ell,z_\ell\big)\d x^{i_\ell\prime}_\ell \varphi(z_\ell)\d z_\ell=\frac{1}{2}\overline{\s}^\beta_\vep(\tau_\ell-s_\ell)
\] 
for \eqref{weight3}. On the other hand, in $P^{\bi_1,\cdots,\bi_m}_{s_1,\cdots,s_m,t}f(x_0)$, the corresponding weight  under the simplified list of measures is $\tfrac{1}{2}\s_\beta(\tau_\ell-s_\ell)$.
In this step, we work with these characterizations under the simplified list of measures and show that 
\begin{align}\label{final_limit!!}
\lim_{\vep\to 0}\int_{0< s_1<\cdots< s_m< t} \Big(\overline{P}^{\bi_1,\cdots,\bi_m}_{\vep;s_1,\cdots,s_m,t}f(x_0)-P^{\bi_1,\cdots,\bi_m}_{s_1,\cdots,s_m,t}f(x_0)\Big)\d \bs s_m=0.
\end{align}

Recall that for probability measures $\mu_n,\mu,\nu_n,\nu$ on $\R$, $\mu_n\otimes \nu_n\Rightarrow \mu\otimes \nu$ if and only if $\mu_n\Rightarrow \mu$ and $\nu_n\Rightarrow \nu$ \cite[Theorem~2.8 on p.23]{Bill}. This result extends immediately to finite measures with total masses converging in $\R_+$ 
 by normalization.  Recall also the discussion above \eqref{equicontinuity} for convergences of Laplace transforms. 
We have $\int_0^t \overline{\s}_\vep^\beta(v)\d v\to\int_0^t \s^\beta(v)\d v$ for all $t\geq 0$ from the monotonicity of these functions of $t$ and the convergence of the Laplace transform. Hence, by the Portmanteau theorem \cite[Theorem~2.1 on p.16]{Bill}, the following convergence property holds: For all bounded continuous functions $F(\bs u_{m+1},\bs v_m)$ with a compact support,  
\[
\lim_{\vep\to 0}\int_{\R^{m+1}_+\times \R^m_+}  F(\bs u_{m+1},\bs v_m)\prod_{\ell=1}^m\overline{\s}_\vep^\beta(v_\ell)\d \bs u_{m+1}\d \bs v_m= \int_{\R_+^{m+1}\times \R_+^m} F(\bs u_{m+1},\bs v_m) \prod_{\ell=1}^m\s^\beta(v_\ell) \d \bs u_{m+1}\d \bs v_m.
\]
Recall that we write $\Delta(t)$ for the bounded set of $u_1,\cdots,u_{m+1}\in (0,t)$ and $v_1,\cdots,v_m\in [0,t)$ such that $\sum_{\ell=1}^m (u_\ell+v_\ell)+u_{m+1}<t$.
This set is a continuity set of the limiting product measure. Hence,  for all bounded continuous functions $F(\bs u_{m+1},\bs v_{m})$ on $\R_+^{m+1}\times \R_+^m$, we have
\begin{align}\label{conv:port}
\lim_{\vep\to 0}\int_{\Delta(t)} F(\bs u_{m+1},\bs v_m)\prod_{\ell=1}^m\overline{\s}_\vep^\beta(v_\ell) \d \bs u_{m+1}\d \bs v_m= \int_{\Delta(t)} F(\bs u_{m+1},\bs v_m)\prod_{\ell=1}^m\s^\beta(v_\ell) \d \bs u_{m+1}\d \bs v_m.
\end{align}

Write $F_0(\bs u_{m+1},\bs v_m)$ for the iterated integral in \eqref{graphical:lim} over the vertices of $\mathcal G_0,\mathcal G_1,\cdots,\mathcal G_m$ after factoring out $\s_\beta(v_\ell)$ for all $\ell$. (This function $F_0(\bs u_{m+1},\bs v_m)$ is the same as the one when the same factorization is done for the analogous iterated integral for $\overline{P}^{\mathbf 1_m;\bi_1,\cdots,\bi_m}_{\vep;s_1,\cdots,s_m,t}f(x_0)$.) 
We close the proof of this proposition by proving that, with $F_0(\bs u_{m+1},\bs v_{m})$ chosen above, for every $\eta\in(0,1)$, there exists $\vep_0$ such that for all $\vep\in (0,\vep_0)$,
\begin{align}\label{final_claim!!}
\left|\int_{\Delta(t)} F_0(\bs u_{m+1},\bs v_m)\prod_{\ell=1}^m\overline{\s}_\vep^\beta(v_\ell) \d \bs u_{m+1}\d \bs v_m- \int_{\Delta(t)} F_0(\bs u_{m+1},\bs v_m) \prod_{\ell=1}^m\s^\beta(v_\ell)\d \bs u_{m+1}\d \bs v_m\right|<\eta.
\end{align}
This property leads to \eqref{final_limit!!} by the choice of $F_0$. To prove \eqref{final_claim!!},  first, we choose for every $\delta_0>0$ a $[0,1]$-valued continuous function $\chi_{\delta_0}$ defined on $[0,\infty)$ such that $\chi_{\delta_0}(v)=1$ for all $v\geq \delta_0$ and $\chi_{\delta_0}(0)=0$. For all $\bs a\times \bs b\in \{0,1\}^{m+1}\times\{0,1\}^m$, write $\tilde{\chi}=1-\chi$ and
\[
\chi_{\delta_0;\bs a\times \bs b}(\bs u_{m+1},\bs v_m)=\prod_{\ell:a_\ell=1}\chi_{\delta_0}(u_\ell)\prod_{\ell:a_\ell=0}\tilde{\chi}_{\delta_0}(u_\ell)\prod_{\ell:b_{\ell}=1}\chi_{\delta_0}(v_\ell)\prod_{\ell:b_{\ell}=0}\tilde{\chi}_{\delta_0}(v_\ell).
\]
The sum of $\chi_{\delta_0;\bs a\times \bs b}(\bs u_{m+1},\bs v_m)$ over $\bs a\times \bs b$ equals to $1$. 
Then as in Step~\hyperlink{PP:21}{2-1} (see \eqref{ensemble-est}) with $q=0$,
\begin{align}
&\eqspace\int_{\Delta(t)} F_0(\bs u_{m+1},\bs v_m)\chi_{\delta_0;\bs a\times \bs b}(\bs u_{m+1},\bs v_m)\prod_{\ell=1}^m\overline{\s}_\vep^\beta(v_\ell) \d \bs u_{m+1}\d \bs v_m\notag\\
\begin{split}\label{ab:bdd!!!}
&\leq \max_{s\in (0,\infty)}P_{2s}(x_0^{i_1\prime}-x_0^{i_1})\times  t^2\Bigg(\prod_{\ell:b_\ell=1}\int_0^{t} \overline{\s}^\beta_\vep(v)\d v\Bigg)\times \Bigg(\prod_{ \ell:b_\ell=0}\int_0^{\delta_0}\overline{\s}^\beta_\vep(v)\d v\Bigg)\\
&\eqspace\times\int_{u_1,\cdots,u_{m+1}\in (0,1)}\d \bs u_{m+1}\Bigg(\prod_{\ell=1}^{m-1}\frac{\1_{\{a_\ell=1\}}\chi_{\delta_0}(u_\ell/t)+
\1_{\{a_\ell=0\}}\tilde{\chi}_{\delta_0}(u_\ell/t)}{\sqrt{u_{\ell+1}(u_{\ell+1}+u_{\ell})}}\Bigg)\\
&\eqspace\times \prod_{\ell=m}^{m+1}\big\{\1_{\{a_\ell=1\}}\chi_{\delta_0}(u_\ell/t)+
\1_{\{a_\ell=0\}}\tilde{\chi}_{\delta_0}(u_\ell/t)\big\}.
\end{split}
\end{align}

We are ready to prove \eqref{final_claim!!}. Note that  $ \int_0^t\overline{\s}^\vep_\beta(r)\d r$ converges to $\int_0^t\s_\beta(r)\d r$ for all fixed $t\geq 0$, and $\lim_{t\searrow 0}\int_0^t \s_\beta(r)\d r=0$. Whenever there is at least some $\ell$ such that $a_\ell=0$, it follows from Lemma~\ref{lem:CSZ} (1$\cc$) and dominated convergence that 
\begin{align*}
&\lim_{\delta_0\to 0}\int_{u_1,\cdots,u_{m+1}\in (0,1)}\d \bs u_{m+1}\Bigg(\prod_{\ell=1}^{m-1}\frac{\1_{\{a_\ell=1\}}\chi_{\delta_0}(u_\ell/t)+
\1_{\{a_\ell=0\}}\tilde{\chi}_{\delta_0}(u_\ell/t)}{\sqrt{u_{\ell+1}(u_{\ell+1}+u_{\ell})}}\Bigg)\\
&\eqspace\times \prod_{\ell=m}^{m+1}\big\{\1_{\{a_\ell=1\}}\chi_{\delta_0}(u_\ell/t)+
\1_{\{a_\ell=0\}}\tilde{\chi}_{\delta_0}(u_\ell/t)\big\}=0.
\end{align*}
By these observations, we see that for all $\bs a\times \bs b\neq \mathbf 1_{m+1}\times \mathbf 1_m$, the bound in \eqref{ab:bdd!!!} and its analogue with $\overline{\s}^\beta_\vep$ replaced by $\s^\beta$
can be made strictly smaller than $\eta/(3\cdot 2^{2m+1})$ by first choosing $\delta_0$ and then $\vep_0$ according to $\delta_0$. On the other hand, it follows from \eqref{conv:port} that by choosing smaller $\vep_0$ if necessary, 
\begin{align}\label{final_claim!!+++}
\begin{split}
&\Bigg|\int_{\Delta(t)} (F_0\chi_{\delta_0;\mathbf 1_{m+1}\times \mathbf 1_m})(\bs u_{m+1},\bs v_m)\prod_{\ell=1}^m\overline{\s}_\vep^\beta(v_\ell) \d \bs u_{m+1}\d \bs v_m\\
&\hspace{1cm}- \int_{\Delta(t)} (F_0\chi_{\delta_0;\mathbf 1_{m+1}\times \mathbf 1_m})(\bs u_{m+1},\bs v_m)\prod_{\ell=1}^m\s^\beta(v_\ell) \d \bs u_{m+1}\d \bs v_m\Bigg|<\frac{\eta}{3},\quad \forall\;\vep\in (0,\ov).
\end{split}
\end{align}
These bounds are enough to get \eqref{final_claim!!}. The proof is complete.
\end{proof}

\begin{proof}[End of the proof of Theorem~\ref{thm:main2}]
The proof of this theorem follows from Propositions~\ref{prop:series1} and~\ref{prop:series2} if we choose $\vep_0\in (0,\ov)$ such that for appropriate $q\in (\beta,\infty)$ in \eqref{def:Delta1}, 
\begin{align}\label{qvep:bdd}
32^{m}\left[\widehat{\S}^\beta_\vep(q)^m+\left(\frac{4\pi}{\log (q/\beta)}\right)^m\right]\leq  2\cdot\left(\frac{3}{4}\right)^m,\quad \forall\;m\geq 1.
\end{align}
To see this bound, note that we can choose $q$ large enough such that 
\[
\frac{32\cdot 4\pi}{\log (q/\beta)}<\frac{1}{2}. 
\]
Additionally, since $\widehat{\S}^\beta_\vep(q)\to 4\pi/\log (q/\beta)$, we can find $\vep_0\in (0,\ov)$ such that 
\[
\widehat{\S}^\beta_\vep(q)<\frac{4\pi}{\log (q/\beta)}+\frac{1}{4}<\frac{3}{4},\quad \forall\;\vep\in (0,\vep_0).
\] 
We have obtained \eqref{qvep:bdd} by the last two displays. The proof is complete.
\end{proof}

\section{Some special functions and exponential functionals}\label{sec:expmom}
In this section, we collect some properties of the Bessel functions and explicit formulas of exponential functions with positive exponents for one-dimensional Brownian motion and $\BES^2$.

Let us start with properties of the Bessel functions. Recall that given an order $\nu$, $J_\nu$ denotes the Bessel function of the first kind, $I_\nu$ the modified Bessel function of the first kind, and $K_\nu$ the Macdonald function. Then the following integral representations hold:
\begin{align}
K_\nu(z)&=\frac{1}{2}\left(\frac{z}{2}\right)^\nu \int_0^\infty t^{-\nu-1}\e^{-t-(z^2/4t)}\d t,\quad |\arg(z)|<\frac{\pi}{4};\label{def:K}\\
I_\nu(z)&=\frac{(z/2)^\nu}{\sqrt{\pi}\,\Gamma(\nu+1/2)}\int_{-1}^1 (1-t^2)^{\nu-1/2}\cosh(zt)\d t,\quad|\arg (z)|<\pi,\; \Re(\nu)>-1/2\label{def:I}
\end{align} 
\cite[(5.10.25) and (5.10.22) on p.119]{Lebedev}, and $I_0(z)=J_0(\i z)$ for all $z\in \Bbb C$.
We also have the alternative expressions:
\begin{align}\label{def:JIseries}
I_0(z)=\sum_{n=0}^\infty \frac{(z/2)^{2n}}{\Gamma(n+1)^2},\quad 
J_0(z)=\sum_{n=0}^\infty \frac{(-1)^n(z/2)^{2n}}{\Gamma(n+1)^2};
\end{align}
the two series have infinite radii of convergence \cite[(5.3.2) on p.102 and (5.7.1) on p.108]{Lebedev}. 

\begin{lem}\label{lem:asympgauss}
As $a\to 0+$,
\begin{align}
\frac{1}{\pi}K_0(\two a)&=\int_0^\infty \frac{\e^{- t}}{2\pi t}\exp\left\{-\frac{a^2}{t}\right\}\d t=\frac{1}{\pi}\left(\log \frac{1}{a}-\EM\right)+\mathcal O\left(a^2\log \frac{1}{a}\right),\label{asymp:gauss}\\
K_1(a)&=\frac{1}{a}\int_0^\infty  \e^{- s}\int_{a^2/(4s)}^\infty \e^{-v}\d v \d s\sim \frac{1}{a},\label{ineq:K1bdd}
\end{align}
where $\mathcal O(a)$ as $a\to 0$ means $|\mathcal O(a)|\less |a|$ for all $0<|a|\leq 1$.  
\end{lem}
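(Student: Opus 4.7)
The plan is to treat the two displays in turn, both hinging on the integral representation \eqref{def:K} of the Macdonald functions with $\nu=0,1$.

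\textbf{For \eqref{asymp:gauss}.} The first equality is a direct specialization of \eqref{def:K} at $\nu=0$: matching $z^2/4$ with $a^2$ (via a simple rescaling of the integration variable) converts
$K_0(z)=\tfrac12\int_0^\infty t^{-1}\e^{-t-z^2/(4t)}\d t$ into the displayed $\int_0^\infty (2\pi t)^{-1}\e^{-t-a^2/t}\d t$ up to the factor $\pi^{-1}$. For the asymptotic expansion as $a\to 0+$, I would invoke the classical Frobenius-type series
\[
K_0(r)=-I_0(r)\log(r/2)+\sum_{k=0}^\infty\frac{(r/2)^{2k}}{(k!)^2}\psi(k+1),
\]
where $\psi$ is the digamma function and $\psi(1)=-\EM$. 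Combining this with the explicit series for $I_0$ from \eqref{def:JIseries}, i.e., $I_0(r)=1+(r/2)^2+\mathcal O(r^4)$, the $k=0$ contribution yields $-\log(r/2)-\EM$, while all remaining terms are bounded by $Cr^2\log(1/r)$ as $r\to 0$. Substituting the appropriate argument then gives the stated expansion, with the error term $\mathcal O(a^2\log(1/a))$ arising from the product of the $(r/2)^2$ correction in $I_0(r)$ with $\log(r/2)$.

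\textbf{For \eqref{ineq:K1bdd}.} Starting from \eqref{def:K} with $\nu=1$, $z=a$, we have $K_1(a)=(a/4)\int_0^\infty t^{-2}\e^{-t-a^2/(4t)}\d t$. The substitution $t=a^2/(4s)$ (so $\d t=-a^2/(4s^2)\d s$) converts the $t^{-2}$ weight into a plain Lebesgue factor, yielding
\[
K_1(a)=\frac{1}{a}\int_0^\infty \e^{-s-a^2/(4s)}\d s.
\]
Rewriting $\e^{-a^2/(4s)}=\int_{a^2/(4s)}^\infty\e^{-v}\d v$ produces the claimed double-integral identity. For the asymptotic $K_1(a)\sim 1/a$, since $\int_{a^2/(4s)}^\infty\e^{-v}\d v\leq 1$, the upper bound $aK_1(a)\leq 1$ is immediate. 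For the matching lower bound, note that for each fixed $s>0$ the inner integral tends to $1$ as $a\to 0$, and the integrand is dominated by $\e^{-s}$; dominated convergence then gives $aK_1(a)\to 1$.

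\textbf{Main obstacle.} The routine part is the bookkeeping of change of variables; the more delicate step is isolating the leading $-\log(r/2)-\EM$ from the series expansion of $K_0$ with the correct error bound $\mathcal O(r^2\log(1/r))$, since naive Taylor expansion of $\e^{-a^2/t}$ inside the integral would produce a logarithmic singularity at $t=0$ that must be properly accounted for via the $I_0(r)\log(r/2)$ term. Invoking the Frobenius series directly (available from standard references, e.g., \cite{Lebedev}) bypasses this issue cleanly.
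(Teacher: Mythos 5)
Your proof is correct in substance and, for \eqref{ineq:K1bdd}, takes essentially the same route as the paper, with a minor reordering: the paper first inserts $\e^{-t}=\int_t^\infty\e^{-s}\d s$ and applies Fubini before substituting $v=a^2/(4t)$ in the inner integral, whereas you substitute $t=a^2/(4s)$ in the whole integral and then peel off $\e^{-a^2/(4s)}=\int_{a^2/(4s)}^\infty\e^{-v}\d v$; both give the same double integral and the computations are equivalent. Your dominated-convergence argument for $aK_1(a)\to 1$ is a useful addition, since the paper states $\sim 1/a$ without elaboration. For \eqref{asymp:gauss}, the paper simply cites \cite[(5.16.4)]{Lebedev}, while you spell out the Frobenius series $K_0(r)=-I_0(r)\log(r/2)+\sum_{k\geq 0}(r/2)^{2k}\psi(k+1)/(k!)^2$; this is a perfectly valid and more self-contained derivation of the middle-to-right equality, including the sharp $\mathcal O(r^2\log(1/r))$ error term.

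One caveat worth flagging: your phrase ``matching $z^2/4$ with $a^2$ via a simple rescaling of the integration variable'' conceals an inconsistency in the lemma statement itself. Matching $z^2/4=a^2$ gives $z=2a$, so the middle integral is $\tfrac{1}{\pi}K_0(2a)$, not $\tfrac{1}{\pi}K_0(\two a)$. No rescaling of $t$ can reconcile these, because the map $c\mapsto\int_0^\infty t^{-1}\e^{-t-c/t}\d t$ is strictly decreasing in $c$ (so $K_0$ is not scale-invariant in the needed sense). One can also cross-check via the asymptotics: $K_0(\sqrt{2}a)$ would contribute an extra $+\tfrac{1}{2}\log 2$ inside the parentheses, which is absent from the right-hand side, whereas $K_0(2a)=\log(1/a)-\EM+\mathcal O(a^2\log(1/a))$ matches exactly, and it is this form that is consistent with the expansions \eqref{asympG:1} used downstream in the paper. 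So the leftmost expression in \eqref{asymp:gauss} appears to carry a typo ($\two a$ should read $2a$); your derivation implicitly proves the (correct) equality between the middle and right expressions, which is the only equality the paper actually uses, but you should not present the first equality as a routine consequence of a rescaling --- that step, as stated, does not go through.
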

\begin{proof}
We only prove \eqref{ineq:K1bdd}:
\begin{align*}
K_1(a)&=\frac{a}{4}\int_0^\infty \frac{\e^{- t}}{ t^2}\exp\left\{-\frac{a^2}{4t}\right\}\d t
=\frac{a}{4}\int_0^\infty \e^{- s}\int_0^s\frac{1}{ t^2}\exp\left\{-\frac{a^2}{4t}\right\}\d t \d s\\
&=\frac{1}{a}\int_0^\infty  \e^{- s}\int_{a^2/(4s)}^\infty \e^{-v}\d v \d s\sim \frac{1}{a},
\end{align*} 
where the third equality uses the substitution $v=a^2/(4t)$. See also \cite[(5.16.4) on p.136]{Lebedev}. 
\end{proof}

Next, we turn to exponential functionals of Brownian motions.

\begin{thm}\label{thm:expmom1}
For any $\mu\geq 0$ and $M\in (0,\infty)$,
\begin{align}
&\E_a[\e^{-\mu T_b(\rho)}]=\frac{\int_0^\infty t^{-1}\exp\big(-t-\frac{a^2\mu  }{2t}\big)\d t}{\int_0^\infty t^{-1}\exp\big(-t-\frac{b^2\mu }{2t}\big)\d t}=\frac{K_0(a\sqrt{2\mu}\,)}{K_0(b\sqrt{2\mu}\,)},
\hspace{4.3cm}\mbox{$0<b\leq a$};\label{bes:hit1}\\
&\E_a[\e^{-\mu T_b(\rho)}]=\frac{ \int_{-1}^1 (1-t^2)^{-1/2}\cosh(a\sqrt{2\mu}t)\d t}
{\int_{-1}^1 (1-t^2)^{-1/2}\cosh(b\sqrt{2\mu}t)\d t}=\frac{I_0(a\sqrt{2\mu}\,)}{I_0(b\sqrt{2\mu}\,)},
\hspace{3.6cm}\mbox{$0<a\leq b$};\label{bes:hit2}\\
&\E_{\log a}\left[\exp\left\{-\mu\int_0^{T_{\log b}(\beta)}\1_{(-\infty,\log M]}(\beta_v)\d v\right\}\right]=\frac{\cosh\big(\log (\tfrac{M}{a\wedge M})\sqrt{2\mu}\,\big)}{\cosh\big(\log(\tfrac{M}{b\wedge M})\sqrt{2\mu}\,\big)},\hspace{1.5cm} 0<b\leq a;\label{bm:prob1}\\
&\E_{\log a}\left[\exp\left\{-\mu\int_0^{T_{\log b}(\beta)}\e^{2\beta_v}\d v\right\}\right]=\frac{ \int_{-1}^1 (1-t^2)^{-1/2}\cosh(a\sqrt{2\mu}t)\d t}
{\int_{-1}^1 (1-t^2)^{-1/2}\cosh(b\sqrt{2\mu}t)\d t},\hspace{1.8cm}0<a\leq b\label{bm:prob2}.
\end{align}
\end{thm}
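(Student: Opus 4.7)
The plan is to recognize that each of the four identities is a Feynman--Kac Laplace transform for a first-passage functional, determined by a second-order linear ODE with a boundary condition at the exit level and a boundedness/selection rule at the remaining end of the interval.

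For \eqref{bes:hit1}--\eqref{bes:hit2}, let $f(a)\defeq\E_a[\e^{-\mu T_b(\rho)}]$. Since the generator of $\BES^2$ is $\frac{1}{2}D^2+\frac{1}{2a}D$ by \eqref{BES:SDE}, $f$ is bounded and satisfies $\frac{1}{2}f''(a)+\frac{1}{2a}f'(a)=\mu f(a)$ away from $a=b$, with $f(b)=1$. Substituting $r=a\sqrt{2\mu}$ reduces this to the modified Bessel equation of order zero, whose two linearly independent solutions are $I_0$ (bounded at $0$, unbounded at $\infty$) and $K_0$ (unbounded at $0$, vanishing at $\infty$). For $0<b\leq a$ the relevant interval is $[b,\infty)$ and boundedness at infinity forces $f(a)=K_0(a\sqrt{2\mu}\,)/K_0(b\sqrt{2\mu}\,)$, while for $0<a\leq b$ the interval is $(0,b]$ and boundedness near the polar point $0$ forces $f(a)=I_0(a\sqrt{2\mu}\,)/I_0(b\sqrt{2\mu}\,)$. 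The first equalities in \eqref{bes:hit1} and \eqref{bes:hit2} then follow from the integral representations \eqref{def:K} and \eqref{def:I} specialized to $\nu=0$.

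For \eqref{bm:prob1}, set $g(x)\defeq\E_x[\exp\{-\mu\int_0^{T_{\log b}(\beta)}\1_{(-\infty,\log M]}(\beta_v)\d v\}]$; by Feynman--Kac $g$ is bounded and solves the piecewise ODE $\frac{1}{2}g''(x)=\mu\1_{(-\infty,\log M]}(x)g(x)$ on $\R\setminus\{\log b\}$ with $g(\log b)=1$ and $g\in C^1(\R)$. On $(\log M,\infty)$ one has $g''=0$, so boundedness forces $g$ constant there. On $(-\infty,\log M]$ the general solution is $A\cosh((\log M-x)\sqrt{2\mu})+B\sinh((\log M-x)\sqrt{2\mu})$, and $C^1$-matching with a constant at $\log M$ forces $B=0$. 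Imposing $g(\log b)=1$ determines $A$, and evaluating at $\log a$ (with separate cases $a\leq M$ and $a>M$) produces the formula with the $a\wedge M$ and $b\wedge M$ truncations.

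For \eqref{bm:prob2}, the cleanest route is the skew-product representation \eqref{def:skew}: under $\P_{\log a}^\beta$, the process $\rho_t\defeq \e^{\beta_{S^{-1}_t}}$ is a $\BES^2$ started at $a$, and $\rho$ first hits $b$ precisely when $\beta$ first hits $\log b$ in the original time scale, so
\[
\int_0^{T_{\log b}(\beta)}\e^{2\beta_v}\d v=S_{T_{\log b}(\beta)}=T_b(\rho).
\]
Hence the left-hand side of \eqref{bm:prob2} coincides with $\E^\rho_a[\e^{-\mu T_b(\rho)}]$ from \eqref{bes:hit2}, and rewriting $I_0$ via \eqref{def:I} at $\nu=0$ produces the displayed cosh-integral ratio. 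The main technical point is the $C^1$-matching argument in \eqref{bm:prob1} (i.e., justifying that the boundedness of $g$ at $+\infty$ translates into the Neumann condition $g'(\log M-)=0$); everything else is standard Feynman--Kac/ODE bookkeeping plus the classical integral representations of $I_0$ and $K_0$ quoted at the top of the section.
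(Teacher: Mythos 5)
Your proof is correct, and it takes a genuinely different route from the paper. The paper does not re-derive these formulas: it quotes \cite{BS:Handbook} and the original sources \cite{Kent78,GS} for \eqref{bes:hit1}--\eqref{bes:hit2}, quotes \cite{Knight} for \eqref{bm:prob1} (and only for $a\le M$ at that), and observes as you do that \eqref{bm:prob2} reduces to \eqref{bes:hit2} via the skew-product identity $\int_0^{T_{\log b}(\beta)}\e^{2\beta_v}\d v=T_b(\rho)$. You instead rederive the first three from scratch by the Feynman--Kac/ODE route: for \eqref{bes:hit1}--\eqref{bes:hit2} the resolvent equation $\tfrac12 f''+\tfrac1{2a}f'=\mu f$ becomes the modified Bessel equation of order zero after $r=a\sqrt{2\mu}$, and the selection between $I_0$ and $K_0$ is exactly by boundedness at the appropriate endpoint ($0$ polar for $\BES^2$, decay at $+\infty$); the integral representations \eqref{def:K}, \eqref{def:I} with $\nu=0$ then give the displayed ratios, since the prefactors cancel. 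For \eqref{bm:prob1} the piecewise ODE $\tfrac12 g''=\mu\1_{(-\infty,\log M]}g$ with $g(\log b)=1$ and boundedness at $+\infty$ forces $g$ constant on $(\log M,\infty)$, and $C^1$-matching kills the $\sinh$ mode; your three-case evaluation at $\log a$ produces the $a\wedge M$, $b\wedge M$ truncations, and in fact covers the case $a>M$ that the cited source does not. The one place you could be tighter is the attribution of ``the main technical point'': boundedness at $+\infty$ plus $g''=0$ gives the Neumann condition $g'(\log M+)=0$ trivially, whereas the genuinely nontrivial step is justifying that the Feynman--Kac function $g$ is $C^1$ across the potential discontinuity at $\log M$ (a standard fact for Brownian motion with a bounded discontinuous potential, but one that deserves to be named). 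What your derivation buys is self-containedness and coverage of all parameter ranges in \eqref{bm:prob1}; what the paper's buys is brevity, at the cost of deferring to the handbook.
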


See  \cite[(2.0.1) on p.513, (2.5.1) on p.207 and (2.10.3) on p.210]{BS:Handbook} for the four identities
in Theorem~\ref{thm:expmom1}. Identities \eqref{bes:hit1} and \eqref{bes:hit2} are obtained in \cite{Kent78,GS}, and we use the integral representations in \eqref{def:K} and \eqref{def:I}.  For the proof of \eqref{bm:prob1} when $a\leq M$, see  \cite[Example~2.1 on page 437 with $\mu=0$]{Knight}. Note that \eqref{bm:prob2}  coincides with \eqref{bes:hit2} since the stopped additive functional in \eqref{bm:prob2} has the same law as the hitting time of the two-dimensional Bessel process by the skew product representation of two-dimensional Brownian motion. See also \cite{HM} for related formulas.

To extend the formulas in Theorem~\ref{thm:expmom1} to positive exponents, we first recall a basic result of analytic characteristic functions \cite[Section~2.3 starting on p.20, especially Theorem~2.3.2 on p.22]{Lukacs}.

\begin{prop}\label{prop:extension}
Suppose that the characteristic function $\phi$ of a random variable $X$ admits an analytic extension in the open disc $D(0,R)$, with radius $R\leq \infty$ and center at the origin, in $\Bbb C$. Then $\E[\exp a|X|]<\infty$ for all $0\leq a<R$. 
\end{prop}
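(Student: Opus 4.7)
The plan is to reduce the claim to the single inequality $\E[\cosh(aX)] < \infty$ for every $a \in [0,R)$, since $\e^{a|X|} \leq \e^{aX} + \e^{-aX} = 2\cosh(aX)$. Establishing this in turn will rest on two consequences of the analyticity of $\phi$ on $D(0,R)$: first, every even moment $\E[X^{2n}]$ is finite and equals $(-1)^n \phi^{(2n)}(0)$; second, these moments satisfy $\sum_{n\geq 0} r^{2n}\E[X^{2n}]/(2n)! < \infty$ for each $r < R$.

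For the first point, I would argue by induction on $n$. The base case $n = 1$ is obtained from the symmetric-difference identity
$$\frac{\phi(h) - 2\phi(0) + \phi(-h)}{h^2} = -\E\!\left[\frac{4\sin^2(hX/2)}{h^2}\right],$$
whose integrand is nonnegative and converges pointwise to $X^2$ as $h \to 0$. Analyticity at the origin makes $\phi$ of class $C^2$ there, so the left side tends to $\phi''(0)$, and Fatou's lemma yields $\E[X^2] \leq -\phi''(0) < \infty$. Equality $\E[X^2] = -\phi''(0)$ then follows by dominated convergence using $|\sin(hX/2)| \leq |hX|/2$. Once $\E[X^{2n}] < \infty$ is known, differentiation under the integral sign up to order $2n$ is justified, and applying the same Fatou step to $\phi^{(2n)}$ supplies the inductive move to order $2(n+1)$.

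For the second point, absolute convergence of the Taylor series $\phi(z) = \sum_n \phi^{(n)}(0) z^n / n!$ on $D(0,R)$ implies $\sum_n |\phi^{(n)}(0)| r^n / n! < \infty$ for every $r \in [0,R)$; extracting the even-indexed terms gives the required bound on $\sum_n r^{2n}\E[X^{2n}]/(2n)!$. Since every term is nonnegative, Tonelli's theorem permits the interchange of sum and expectation, producing $\E[\cosh(rX)] = \sum_n r^{2n}\E[X^{2n}]/(2n)! < \infty$, which closes the argument. The main obstacle is the inductive moment step: one must extract the probabilistic statement $\E[X^{2n}] < \infty$ from the purely complex-analytic hypothesis that $\phi$ is $C^\infty$ near $0$, without presupposing the very integrability needed to differentiate $\phi$ under the integral sign. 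The Fatou argument at each inductive step is precisely the device that breaks this circularity.
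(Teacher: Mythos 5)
Your proof is correct and is the classical argument for analytic characteristic functions; the paper itself supplies no proof, simply citing \cite[Theorem~2.3.2]{Lukacs}, which establishes exactly this statement by the same route. The Fatou bootstrap through the symmetric-difference identity to obtain $\E[X^{2n}]=(-1)^n\phi^{(2n)}(0)<\infty$ inductively, followed by the Tonelli interchange applied to the convergent power series $\sum_n r^{2n}\E[X^{2n}]/(2n)!$, is exactly the right way to break the circularity you flag, and the reduction $\e^{a|X|}\leq 2\cosh(aX)$ cleanly finishes the argument.
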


In contrast to the rigidity of \eqref{bes:hit1} due to the use of the Macdonald function $K_0$, the other three formulas \eqref{bes:hit2}, \eqref{bm:prob1} and \eqref{bm:prob2} can be extended analytically from positive $\mu$ to negative $\mu$ by the analogous property of the hyperbolic cosine. We show the precise result after the following lemmas.

\begin{lem}\label{lem:Phi}
For $0\neq w\in \Bbb C$, the entire function $\Phi_w(z)\defeq\sum_{n=0}^\infty  (w/2)^{2n}z^n/\Gamma(n+1)^2$ satisfies the following properties:
\begin{enumerate}
\item [\rm (1$\cc$)] For all $z\in \Bbb C$, $\Phi_w(z)=\cosh(w\sqrt{z}\,)$. Here, $\sqrt{z}=\sqrt{|z|}\e^{\i \arg(z)/2}$, for $\arg(z)\in (-\pi,\pi]$,  is well-defined on $\Bbb C$ and is analytic on $\Bbb C\setminus(-\infty,0]$ such that $(\sqrt{z}\,)^2=z$. Hence, $\Phi_w$ is an entire extension of $z\mapsto \cosh(w\sqrt{z}\,)$.
\item [\rm (2$\cc$)] For all $z\in \R_+$, $\Phi_w(-z)=\cos(w\sqrt{z}\,)$.
\item [\rm (3$\cc$)] The set of zeros of $\Phi_w$ is 
$\{-w^{-2}(n\pi+ \pi/2)^2;n\in \Bbb Z_+\}$.
\end{enumerate}
\end{lem}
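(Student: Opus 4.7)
The plan is to identify $\Phi_w$ with the entire extension of $\cosh(w\sqrt{z})$ via its power series, and then deduce {\rm (2$\cc$)} and {\rm (3$\cc$)} from the elementary identity $\cosh(\i u) = \cos u$ together with the known zero set of $\cosh$.

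For {\rm (1$\cc$)}, the key observation is that $\cosh$ is an even entire function, so $\cosh(w\sqrt{z}) = \sum_{n\geq 0}(w\sqrt{z})^{2n}/(2n)!$ depends only on $(w\sqrt{z})^2 = w^2 z$, and is therefore independent of the branch of $\sqrt{z}$ chosen on $\Bbb C\setminus(-\infty,0]$. Regrouping this sum as a power series in $z$ yields a function of $z$ that is manifestly entire and whose coefficients match those of the series defining $\Phi_w$. Hence $\Phi_w(z) = \cosh(w\sqrt{z})$ on $\Bbb C\setminus(-\infty,0]$, and $\Phi_w$ is its entire extension to all of $\Bbb C$.

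For {\rm (2$\cc$)}, given $z > 0$, the principal branch gives $\sqrt{-z} = \i\sqrt{z}$. Substituting into the identity from {\rm (1$\cc$)} and using $\cosh(\i u) = \cos u$ yields $\Phi_w(-z) = \cosh(\i w\sqrt{z}) = \cos(w\sqrt{z})$.

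For {\rm (3$\cc$)}, the zero set of $\cosh$ is $\{\i(n+1/2)\pi : n\in\Bbb Z\}$, and the map $n \mapsto (n+1/2)^2$ identifies each negative integer with a unique nonnegative one. Consequently, by {\rm (1$\cc$)}, $\Phi_w(z) = 0$ if and only if $w\sqrt{z} = \pm \i(k+1/2)\pi$ for some $k \in \Bbb Z_+$, equivalently $z = -(k\pi+\pi/2)^2/w^2$ (using $w \neq 0$). This gives the zero set as claimed. The only slightly delicate point in the whole argument is the coefficient identification in {\rm (1$\cc$)}, which is a direct computation from the Taylor expansion of $\cosh$ at $0$.
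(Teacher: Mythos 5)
Your proof follows essentially the same route as the paper: identify $\Phi_w(z)$ with $\cosh(w\sqrt{z})$ via the even Taylor series of $\cosh$ in (1$\cc$), obtain (2$\cc$) from the principal square root of a negative real (the paper writes out the cosine series instead of quoting $\cosh(\i u)=\cos u$, but this is the same computation), and in (3$\cc$) read off the zeros of $\cosh$ at $\i(n+1/2)\pi$, $n\in\Z$, then collapse the index set to $\Z_+$ using $(n+1/2)^2 = (-n-1+1/2)^2$.

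One thing worth flagging, since you call the coefficient matching in (1$\cc$) the ``slightly delicate point'' but do not carry it out: the series as written in the lemma, $\sum_n (w/2)^{2n} z^n / \Gamma(n+1)^2$, is \emph{not} the Taylor series of $\cosh(w\sqrt z)$ in $z$ --- it is the series for $I_0(w\sqrt z)$, identical to the definition of $\varphi_w$ in the following Lemma~\ref{lem:Psi}. The Taylor series of $\cosh(w\sqrt z)$ is $\sum_n w^{2n} z^n/(2n)!$, and $(2n)!\neq 4^n (n!)^2$. So the displayed formula for $\Phi_w$ appears to be a typographical slip (likely intended as $\sum_n w^{2n}z^n/\Gamma(2n+1)$); the conclusions (1$\cc$)--(3$\cc$) and everything built on them are unaffected once the definition is corrected, but your ``direct computation'' step would have caught it.
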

\begin{proof}
The properties in (1$\cc$) follow from the Taylor expansion of the hyperbolic cosine and a use of the principal branch of logarithm. For (2$\cc$), we use the Taylor expansion of the cosine to write
\[
\forall\;z\in \R_+,\quad \cos(w \sqrt{z}\,)=\sum_{n=0}^\infty (-1)^n\frac{w^{2n}z^n}{(2n)!}=\sum_{n=0}^\infty \frac{w^{2n}(-z)^n}{(2n)!}=\Phi_w(-z).
\]

To see (3$\cc$), note that the zeros of $\cosh(\zeta)$ are given by $\i(n\pi+ \pi/2)$, for all $n\in \Bbb Z$. This property can be seen by solving $\e^\zeta+\e^{-\zeta}=0$ for $\e^\zeta$ and then for $\zeta$. Thus, by the representation of $\Phi_w$ in (1$\cc$), the zero set of $\Phi_w$ is $\{-w^{-2}(n\pi+ \pi/2)^2;n\in \Bbb Z\}$. Moreover, since, for $n\in \Bbb N$, $(-n\pi+\pi/2)^2=(n\pi-\pi/2)^2=((n-1)\pi+\pi/2)^2$, the zero set of $\Phi_w$ simplifies to the required one. 
\end{proof}

\begin{lem}\label{lem:Psi}
For $0\neq w\in \Bbb C$, the entire function $\varphi_w(z)\defeq\sum_{n=0}^\infty (w/2)^{2n}z^{n}/(n!)^2$ satisfies the following properties:
\begin{enumerate}
\item [\rm (1$\cc$)] For all $z\in \Bbb C$, $I_0(w\sqrt{z}\,)=\varphi_w(z)=
J_0(w\sqrt{-z}\,)$.
\item [\rm (2$\cc$)] The zero set of $\varphi_w$ is 
$\{- w^{-2}j_{0,n}^2;n\in \Bbb N\}$, where $0<j_{0,1}<j_{0,2}<\cdots$ are the positive zeros of $J_0$. 
\end{enumerate}
\end{lem}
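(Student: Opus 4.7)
My plan is to prove both items by direct comparison of Taylor coefficients, exactly mirroring the strategy used for $\Phi_w$ in Lemma~\ref{lem:Phi}. The entire function $\varphi_w$ is defined by an explicit power series, so the point is to recognize that series as the composition of the standard series \eqref{def:JIseries} for $I_0$ and $J_0$ with $w\sqrt{z}$ and $w\sqrt{-z}$ respectively, where the branch ambiguity of the square root is harmless because $I_0$ and $J_0$ are even.

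For part (1$\cc$), I would substitute $w\sqrt{z}$ into the series for $I_0$ from \eqref{def:JIseries}:
\[
I_0(w\sqrt{z}\,)=\sum_{n=0}^\infty \frac{(w\sqrt{z}/2)^{2n}}{\Gamma(n+1)^2}=\sum_{n=0}^\infty \frac{(w/2)^{2n}z^n}{(n!)^2}=\varphi_w(z),
\]
since only even powers appear and $(\sqrt{z}\,)^{2n}=z^n$ regardless of the branch. The identity $\varphi_w(z)=J_0(w\sqrt{-z}\,)$ follows in the same way from the series for $J_0$, noting that $(-1)^n(-z)^n=z^n$. Both identities hold on all of $\mathbb{C}$ since both sides are entire functions that agree termwise.

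For part (2$\cc$), I would use the representation $\varphi_w(z)=J_0(w\sqrt{-z}\,)$ from (1$\cc$) together with the classical fact that all zeros of $J_0$ are real, given precisely by $\{\pm j_{0,n};n\in\mathbb{N}\}$ (see, e.g., \cite[Section~5.13]{Lebedev}). Since $J_0$ is even, $J_0(\zeta)=0$ if and only if $\zeta^2\in\{j_{0,n}^2;n\in\mathbb{N}\}$. Applying this with $\zeta=w\sqrt{-z}$ gives $\varphi_w(z)=0$ iff $-w^2z=j_{0,n}^2$ for some $n$, iff $z=-w^{-2}j_{0,n}^2$.

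There is no real obstacle here: the argument is a short coefficient computation plus invocation of a standard fact about the zeros of $J_0$. The only point that warrants care is justifying that the $\sqrt{\cdot}$'s entering $I_0(w\sqrt{z}\,)$ and $J_0(w\sqrt{-z}\,)$ cause no issue; this is immediate once one observes that both Bessel functions are even and that their Taylor series contain only even powers, so the square root gets squared away before any branch choice matters.
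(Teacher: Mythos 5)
Your proof is correct and takes essentially the same route as the paper: part (1$\cc$) by matching Taylor coefficients from \eqref{def:JIseries}, and part (2$\cc$) by invoking that the zeros of $J_0$ are exactly the real numbers $\pm j_{0,n}$ and solving $(w\sqrt{-z}\,)^2 = j_{0,n}^2$. The paper states the last step as ``solving $\pm j_{0,n}=w\sqrt{-z_n}$'' rather than squaring first, but these are the same computation.
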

\begin{proof}
The two identities in (1$\cc$) follow from the Taylor expansions  recalled in \eqref{def:JIseries}. 
To see (2$\cc$), recall that the positive zeros of the even function $J_0$ are $0<j_{0,1}<j_{0,2}<\cdots$ \cite[Section~9.5 p.370]{AS:Handbook}, and the zeros of $J_0$ are all real  \cite[Section~9.5 p.372]{AS:Handbook}. Solving the equations $\pm j_{0,n}=w\sqrt{-z_n}$ gives the required zero set of $\varphi_w$. 
\end{proof}

\begin{prop}\label{prop:expmom2}
For any $M,\nu\in (0,\infty)$, it holds that
\begin{align}
\begin{split}
&\E_{\log a}\left[\exp\left\{\nu\int_0^{T_{\log b}(\beta)}\1_{(-\infty,\log M]}(\beta_v)\d v\right\}\right]=\frac{\cos\big(\log (\tfrac{M}{a\wedge M})\sqrt{2\nu}\,\big)}{\cos\big(\log(\tfrac{M}{b\wedge M})\sqrt{2\nu}\,\big)},\hspace{.3cm}M\e^{-\frac{\pi}{2\sqrt{2\nu}}}<b\leq a;\label{bm:prob3}
\end{split}\\
\begin{split}
&\E_{\log a}\left[\exp\left\{\nu\int_0^{T_{\log b}(\beta)}\e^{2\beta_v}\d v\right\}\right]=\frac{ \int_{-1}^1 (1-t^2)^{-1/2}\cos(a\sqrt{2\nu}t)\d t}
{\int_{-1}^1 (1-t^2)^{-1/2}\cos(b\sqrt{2\nu}t)\d t},\hspace{.6cm}0<a\leq b<\frac{j_{0,1}}{\sqrt{2\nu}},\label{bm:prob4}
\end{split}
\end{align}
where $j_{0,1}=2.40483...$ is the smallest positive zero of $J_0$.  
\end{prop}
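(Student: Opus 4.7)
My plan is to derive \eqref{bm:prob3} and \eqref{bm:prob4} by analytic continuation in the spectral parameter $\mu$ from the negative-exponent identities \eqref{bm:prob1} and \eqref{bm:prob2} of Theorem~\ref{thm:expmom1}, exploiting that the right-hand sides of those identities extend (via Lemmas~\ref{lem:Phi} and~\ref{lem:Psi}) to explicit analytic functions on a disc around the origin in $\Bbb C$ that contains the point $-\nu$ exactly under the hypotheses stated.

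I begin by identifying the extensions. Let $X_1=\int_0^{T_{\log b}(\beta)}\1_{(-\infty,\log M]}(\beta_v)\d v$ and $X_2=\int_0^{T_{\log b}(\beta)}\e^{2\beta_v}\d v$, and abbreviate $w_1=\log(M/(a\wedge M))$, $w_2=\log(M/(b\wedge M))$. By \eqref{bm:prob1} and Lemma~\ref{lem:Phi}(1$\cc$), the map $\mu\mapsto \E_{\log a}[\e^{-\mu X_1}]$ coincides on $\mu\geq 0$ with $\mu\mapsto \Phi_{w_1}(2\mu)/\Phi_{w_2}(2\mu)$, which by Lemma~\ref{lem:Phi}(3$\cc$) is meromorphic on $\Bbb C$ with poles only at $\mu=-(2n+1)^2\pi^2/(8w_2^2)$, $n\in\Bbb Z_+$. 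The nearest pole to the origin sits at $-R_1$ with $R_1=\pi^2/(8w_2^2)$, and the hypothesis $Me^{-\pi/(2\sqrt{2\nu})}<b$ rearranges to exactly $\nu<R_1$, so $-\nu\in D(0,R_1)$. Similarly, combining \eqref{bm:prob2}, the integral representation \eqref{def:I}, and Lemma~\ref{lem:Psi}(1$\cc$), the map $\mu\mapsto \E_{\log a}[\e^{-\mu X_2}]$ coincides on $\mu\geq 0$ with $\mu\mapsto \varphi_a(2\mu)/\varphi_b(2\mu)$, analytic on $D(0,R_2)$ with $R_2=j_{0,1}^2/(2b^2)$ by Lemma~\ref{lem:Psi}(2$\cc$), and $b<j_{0,1}/\sqrt{2\nu}$ is exactly $\nu<R_2$.

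I then promote this analytic extension on the right-hand side to finiteness of the relevant positive-exponent moment on the left. Since $X_i\geq 0$, the characteristic function of $X_i$ satisfies $\phi_i(t)=f_i(-\i t)$, where $f_i(\mu)=\E_{\log a}[\e^{-\mu X_i}]$ for $\mu\geq 0$. The change of variables $\mu=-\i t$ maps the disc $D(0,R_i)$ in the $t$-plane bijectively onto $D(0,R_i)$ in the $\mu$-plane (it is a rotation by a right angle), so the analytic extension of $f_i$ supplies an analytic extension of $\phi_i$ to $D(0,R_i)$. Proposition~\ref{prop:extension} then yields $\E_{\log a}[\e^{a X_i}]<\infty$ for every $0\leq a<R_i$, in particular for $a=\nu$. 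Once this is known, standard Laplace-transform theory for nonnegative random variables gives that $\mu\mapsto \E_{\log a}[\e^{-\mu X_i}]$ is analytic on the half-plane $\{\Re(\mu)>-\nu\}$. On the connected open set $\{\Re(\mu)>-\nu\}\cap D(0,R_i)$, both the probabilistic Laplace transform and the explicit right-hand side are analytic and agree on $(0,\infty)$, so the identity theorem forces them to agree throughout, and in particular at $\mu=-\nu$. Finally, evaluating via Lemma~\ref{lem:Phi}(2$\cc$), $\Phi_w(-2\nu)=\cos(w\sqrt{2\nu})$, and via $\varphi_w(-2\nu)=I_0(\i w\sqrt{2\nu})=\tfrac{1}{\pi}\int_{-1}^1(1-t^2)^{-1/2}\cos(w\sqrt{2\nu}\,t)\d t$ (using $\cosh(\i x)=\cos(x)$ in \eqref{def:I}), yields \eqref{bm:prob3} and \eqref{bm:prob4}; the denominators there are nonzero under the hypotheses since $w_2\sqrt{2\nu}<\pi/2$ and $b\sqrt{2\nu}<j_{0,1}$.

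The main care point, which I expect to be the pacing item, is the passage from analyticity of the continuation on the right to finiteness on the left through Proposition~\ref{prop:extension}: one must insist on the ``disc'' framing rather than a one-sided half-plane framing, since Proposition~\ref{prop:extension} requires the characteristic function to extend across the real axis into a full complex neighborhood of the origin, which is precisely what the $\mu\leftrightarrow-\i t$ rotation accomplishes only because $D(0,R_i)$ is disc-shaped and centered. Every other step is either a direct invocation of Lemmas~\ref{lem:Phi}--\ref{lem:Psi} or a routine identity-theorem argument.
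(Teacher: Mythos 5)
Your proposal is correct and takes essentially the same route as the paper: it extends the negative-exponent identities \eqref{bm:prob1}--\eqref{bm:prob2} analytically, invokes Proposition~\ref{prop:extension} to convert the analyticity of the explicit right-hand side into finiteness of the positive exponential moment, and evaluates the continuation at $\mu=-\nu$ using the $\cosh\leftrightarrow\cos$ and $I_0\leftrightarrow J_0$ relations from Lemmas~\ref{lem:Phi} and~\ref{lem:Psi}. Your explicit disc framing (identifying $D(0,R_i)$, which is exactly what Proposition~\ref{prop:extension} needs) is a slightly more careful rendering of what the paper phrases in half-space language; one tiny imprecision is that after Proposition~\ref{prop:extension} you should claim analyticity of $\mu\mapsto\E[\e^{-\mu X_i}]$ on $\{\Re(\mu)>-R_i\}$ (or at least $\{\Re(\mu)>-\nu'\}$ for some $\nu<\nu'<R_i$) rather than $\{\Re(\mu)>-\nu\}$, since $-\nu$ itself must lie in the open region where the identity theorem applies.
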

\begin{proof}
We start with the proof of \eqref{bm:prob3}. If we restrict attention to $0<b\leq a$, it is enough to further assume $b<M$. In this case, it follows from Proposition~\ref{prop:extension} and Lemma~\ref{lem:Psi} (3$\cc$) that \eqref{bm:prob1} can be extended analytically to the half-space 
\begin{align}\label{def:halfspace}
\Big\{\mu\in \Bbb C\,;\,\Re(2\mu)>-\Big(\log \frac{M}{b}\Big)^{-2}\cdot \Big(\frac{\pi}{2}\Big)^2\Big\}.
\end{align}
Moreover, Lemma~\ref{lem:Phi} (3$\cc$) shows that the explicit form of the analytic extension to $\mu$ such that $\Re(\mu)<0$ can be obtained by replacing the hyperbolic cosine with the cosine. For real $\nu=-\mu> 0$, 
\[
\cosh\big(\log (\tfrac{M}{a})\sqrt{2\mu}\,\big)=\cos\big(\log (\tfrac{M}{a})\sqrt{-2\mu}\,\big)\quad\mbox{and}\quad \cosh\big(\log (\tfrac{M}{b})\sqrt{2\mu}\,\big)=\cos\big(\log (\tfrac{M}{b})\sqrt{-2\mu}\,\big).
\] 
To see the lower bound of $b$ in \eqref{bm:prob3}, note that the defining condition of the half-space \eqref{def:halfspace} requires
\[
2\nu<\Big(\log \frac{M}{b}\Big)^{-2}\cdot \Big(\frac{\pi}{2}\Big)^2\Longleftrightarrow M\e^{-\frac{\pi}{2\sqrt{2\nu}}}<b.
\]
We have proved all of the properties stated in \eqref{bm:prob3}. 

For the proof of \eqref{bm:prob4}, Proposition~\ref{prop:extension} and Lemma~\ref{lem:Phi} again allow for analytic extensions of $\mu\mapsto I_0(a\sqrt{2\mu}\,)$ and $\mu\mapsto I_0(b\sqrt{2\mu}\,)$ to $\Re(\mu)<0$ since $\int_0^1(1-t^2)^{-1/2}\d t<\infty$ (the precise value is $\pi/2$). For real $\nu=-\mu>0$, we can write
\[
I_0(a\sqrt{2\mu}\,)=\int_{-1}^1 (1-t^2)^{-1/2}\cosh(a\sqrt{2\mu}t)\d t=\int_{-1}^1 (1-t^2)^{-1/2}\cos(a\sqrt{2\nu}t)\d t.
\]
A similar integral representation holds for $I_0(b\sqrt{2\mu}\,)$, $\mu<0$. To get an upper bound for the eligible $b$, now we have to consider the zeros of $I_0$ instead by using Lemma~\ref{lem:Psi} (3$\cc$). Hence, \eqref{bm:prob2} can be extended analytically to the half-space
$\{\mu\in \Bbb C\,;\,\Re(2\mu)>-b^{-2}j_{0,1}^2\}$. 
The defining condition of the foregoing half-space requires
\[
2\nu<b^{-2} j_{0,1}^2\Longleftrightarrow b<\frac{j_{0,1}}{\sqrt{2\nu}}.
\]
The formula in \eqref{bm:prob4} follows.
\end{proof}

\bibliographystyle{abbrv}

\end{document}